\pgfplotsset{compat = newest}
\newtheorem{theorem}{Theorem}[section]
\newtheorem{lemma}[theorem]{Lemma}
\newtheorem{proposition}[theorem]{Proposition}
\theoremstyle{definition}
  \newtheorem{definition}[theorem]{Definition}
\theoremstyle{remark}
  \newtheorem{remark}[theorem]{Remark}
\newcommand{\lp}{\varepsilon}
\newcommand{\N}{\mathbb{N}}
\newcommand{\R}{\mathbb{R}}
\newcommand{\C}{\mathbb{C}}
\newcommand{\Id}{\mathbf{Id}}
\newcommand{\id}{\mathbf{id}}
\newcommand{\eps}{\varepsilon}
\newcommand{\vphi}{\varphi}
\newcommand{\weakly}{\rightharpoonup}
\newcommand{\defas}{\coloneqq}
\newcommand{\sym}{\mathrm{sym}}
\newcommand{\cplen}{\mathcal{W}^{\mathrm{cpl}}}
\newcommand{\mechen}{\mathcal{M}}
\newcommand{\toten}{\mathcal{E}}
\newcommand{\totenalpha}{\toten_{\alpha,\theta_c}}
\newcommand{\inten}{W^{\mathrm{in}}}
\newcommand{\diss}{\mathcal{R}}
\newcommand{\indic}{\mathds{1}}
\newcommand{\elpot}{W^{\mathrm{el}}}
\newcommand{\cplpot}{W^{\mathrm{cpl}}}
\newcommand{\hypot}{H}
\newcommand{\felpot}{W}
\newcommand{\disspot}{R}
\newcommand{\xiregnu}{\xi_{\nu, \alpha}^{\rm reg}}
\newcommand{\Wid}{\mathcal{Y}_{\id}}
\newcommand{\pl}{\partial}
\newcommand{\bt}{\theta_\flat}
\newcommand{\thetaepsnu}{\theta_{\eps,\nu}}
\newcommand{\thetaepsnuu}{\theta}
\newcommand{\thetanu}{\theta_{\nu}}
\newcommand{\yepsnu}{y_{\eps, \nu}}
\newcommand{\yepsnuu}{y}
\newcommand{\ynu}{y_{\nu}}
\newcommand{\wepsnu}{w_{\eps, \nu}}
\newcommand{\wnu}{w_{\nu}}
\newcommand{\meps}{m}
\newcommand{\dotyepsnu}{\partial_t{y}_{\eps,\nu}}
\newcommand{\dotnablayepsnu}{\partial_t \nabla {y}_{\eps,\nu}}
\newcommand{\dotnablayepsnuu}{\partial_t \nabla {y}}
\newcommand{\dotnablaynu}{\partial_t \nabla {y}_{\nu}}
\newcommand{\thetaeps}{\theta_{\eps}}
\newcommand{\yeps}{y_{\eps}}
\newcommand{\hc}{\mathbb{K}}
\newcommand{\hcm}{\mathcal{K}}
\newcommand{\drate}{\xi}
\newcommand{\haus}{\mathcal{H}}
\newcommand{\aC}{C_0}
\newcommand{\ac}{c_0}
\newcommand{\intQ}{\int_I\int_\Omega}
\newcommand{\intSN}{\int_I\int_{\Gamma_N}}
\newcommand{\CW}{\mathbb{C}_W}
\newcommand{\CD}{\mathbb{C}_D}
\newcommand{\mechenl}{\overline{\mechen}_0}
\newcommand{\rdrate}{\drate_\alpha^{\rm{reg}}}
\newcommand{\cdddot}{\mathrel{\Shortstack{{.} {.} {.}}}}
\newcommand*{\di}{\mathop{}\!\mathrm{d}}
\DeclareMathOperator{\dist}{dist}
\DeclareMathOperator{\trace}{tr}
\DeclareMathOperator{\diver}{div}
\DeclareMathOperator*{\esssup}{ess\,sup}
\DeclareMathOperator*{\essinf}{ess\,inf}
\DeclarePairedDelimiterX\setof[1]\{\}{#1}
\DeclarePairedDelimiterX\abs[1]\lvert\rvert{#1}
\DeclarePairedDelimiterX\norm[1]\lVert\rVert{#1}
\DeclarePairedDelimiterX\sprod[2]\langle\rangle{#1, #2}
\newcommand{\ZZZ}{\color{black}}
\newcommand{\III}{\color{black}}
\newcommand{\lll}{\color{black}}
\newcommand{\TTT}{\color{black}}
\newcommand{\BBB}{\color{black}}
\newcommand{\MMM}{\color{black}}
\newcommand{\MMMMM}{\color{black}}
\newcommand{\lenni}{\color{black}}
\newcommand{\NNN}{\color{black}}
\newcommand{\EEE}{\color{black}}
\newcommand{\LLL}{\color{black}}
\newcommand{\MKK}{\color{black}} 
\newcommand{\LMM}{\color{black}}
\newcommand{\AAA}{\color{black}}
\newcommand{\rb}{\color{black}}
\newcommand{\rbb}{\color{black}}
\newcommand{\ee}{\color{black}}
\newcommand{\martin}{\color{black}}
\newcommand{\rufb}{\color{black}}
\newcommand{\asdf}{\color{black}}
\newcommandx{\task}[2][1=]{\todo[linecolor=blue,backgroundcolor=blue!30,#1]{#2}}
\newcommandx{\note}[2][1=]{\todo[linecolor=green,backgroundcolor=green!30,#1]{#2}}
\newcommandx{\noterb}[2][1=]{\todo[linecolor=orange,backgroundcolor=orange!30,#1]{#2}}
\newcommandx{\error}[2][1=]{\todo[linecolor=red,backgroundcolor=red!30,#1]{#2}}
\newcommandx{\attention}[2][1=]{\todo[linecolor=magenta,backgroundcolor=magenta!30,#1]{#2}}
\numberwithin{equation}{section}
\begin{document}
\title[Positive temperature in nonlinear thermoviscoelasticity]{Positive temperature in nonlinear thermoviscoelasticity and the derivation of linearized models}

\subjclass[2020]{35A15, \III 35Q74,  74A15, 74D05, \EEE  74D10}
\keywords{Thermoviscoelasticity, frame-indifferent viscous stresses, third law of thermodynamics, linearization.}

\author[R.~Badal]{Rufat Badal}
\address[Rufat Badal]{
  Department of Mathematics \\
  Friedrich-Alexander Universit\"at Erlangen-N\"urnberg \\
  Cauerstr.~11, D-91058 Erlangen, Germany
}
\email{rufat.badal@fau.de}

\author[M.~Friedrich]{Manuel Friedrich} 
\address[Manuel Friedrich]{%
  Department of Mathematics \\
  Friedrich-Alexander Universit\"at Erlangen-N\"urnberg \\
  Cauerstr.~11, D-91058 Erlangen, Germany 
}
\email{manuel.friedrich@fau.de}

\author[M.~Kru\v{z}\'ik]{Martin Kru\v{z}\'ik}
\address[Martin Kru\v{z}\'ik]{
  Czech Academy of Sciences \\
  Institute of Information Theory and Automation \\
  Pod vod\'arenskou v\v{e}\v{z}\'i 4, CZ-182 00 Praha 8, Czechia  \\
  \& Faculty of Civil Engineering \\
  Czech Technical University \\
  Th\'akurova 7, CZ-166 29 Praha 6, Czechia}
\email{kruzik@utia.cas.cz}

\author[L. Machill]{Lennart Machill}
\address[Lennart Machill]{
  Applied Mathematics M\"{u}nster \\
  University of M\"{u}nster \\
  Einsteinstr.~62, D-48149 M\"{u}nster, Germany (corresponding address)
}
\email{lennart.machill@uni-muenster.de}

\maketitle

\begin{abstract}
According to the Nernst theorem or, equivalently, the \AAA {third} \EEE law of thermodynamics, the absolute zero \III temperature \EEE is not attainable. Starting with an initial \MMM  positive \EEE temperature, we show that there exist solutions to a Kelvin-Voigt model for quasi-static nonlinear thermoviscoelasticity at a finite-strain setting \cite{MielkeRoubicek2020}, \rb obeying \ee an exponential-in-time lower bound on the temperature. 
 Afterwards, we focus on the case of deformations near the identity and temperatures near a critical \MMM {positive} \EEE temperature, and we show that weak solutions of the nonlinear system converge in a suitable sense to solutions of a system in linearized thermoviscoelasticity. Our result \rb extends \ee
the recent linearization result \AAA in \EEE \cite{BFK}\MMM, \III as it allows  \EEE the critical temperature to be positive.
\end{abstract}

\maketitle

\section{Introduction}

\ZZZ The \rb rheological \ZZZ Kelvin-Voigt model tracing back to  Lord {\sc Kelvin} (1824--1907) and Woldemar {\sc Voigt} (1850--1919) is a fundamental concept in engineering science. It serves as a tool for describing \rb the evolution \lenni of \ZZZ viscoelastic solids, where slow continuous deformations are observed, tending to a recoverable configuration of a material. The simplified schematic description in its linearized form involves an elastic and a viscous element (spring and dashpot), which are coupled in parallel, i.e., while both elements undergo the same deformation, they may cause different stresses. \martin  Here, the elastic element depends only on the displacement gradient, whereas the viscous element encounters its change in time. \EEE

\AAA The \EEE standard linear Kelvin-Voigt model is only valid \III for sufficiently small deformations \EEE and may break down if the undeformed and deformed configurations are significantly different. \EEE
The so-called large-strain deformation theory \MMM addresses \ZZZ this effect, leading to nonlinear stress-strain relations.  \III
%
In particular, by respecting the fundamental concept of frame indifference in nonlinear continuum mechanics, potentials of the first Piola-Kirchhoff and viscosity stress tensors must be written \III in terms of the right Cauchy-Green tensor and its time derivative, respectively, see \cite{Antmann98Physically}. \MMM In particular, \ZZZ the viscous stress is influenced by strain and strain rate.

\MMM 
\III As a time-dependent deformation of a body may generate heat due to viscosity (internal friction) and hence may influence the material properties, it is reasonable \EEE to couple the mechanical equations with a heat-transfer equation. Although the study of such models in thermoviscoelasticity has a long history dating back to pioneering work of {\sc Dafermos} \cite{dafermos}, only recently there have been advances in the investigation of nonlinear models respecting frame indifference  \cite{BFK,MielkeRoubicek2020}. \EEE In \AAA such  \EEE highly nonlinear and coupled situations, \martin essential \ee features are not yet well understood. In this article, we address the issue of positive temperature for the nonlinear system in \III\cite{BFK, MielkeRoubicek2020}, \EEE and discuss \III its \EEE relation to linearized models in thermoviscoelasticity. \ZZZ

 \MMMMM  We start by giving \EEE an overview \III on the existence theory \EEE for the underlying equations of motion, \AAA see \eqref{strong_formulation}--\eqref{strong_formulation_boundary_conditions} for their exact formulation. \EEE \lenni Already in the isothermal case, \MMM
 the nonlinear nature of the problem \ZZZ leads to the loss of monotonicity in the strain rate and makes the problem highly nontrivial.
\rb Existence of  global-in-time \rb weak solutions given initial data appropriately close \ZZZ to a smooth equilibrium was \rb first \ZZZ proven by {\sc Potier-Ferry} in \cite{potier-ferry-1,potier-ferry-2}, whereas subsequent articles provided a local\rb-\ZZZ in-time existence result \cite{Lewick}   and an existence result in the space of measure-valued solutions \cite{demoulini}. \EEE
The quasi-static version of the equations, i.e., without inertia, can be tackled through gradient flows in metric  \III spaces, \EEE as proposed in \cite{MielkeOrtnerSenguel14Anapproach}\rb, where the authors focus on the one-dimensional case, while also highlighting challenges \ZZZ in higher dimensions. \martin Resorting to energy densities with  higher-order spatial gradients, i.e., to    so-called nonsimple materials \cite{Toupin62Elastic, Toupin64Theory}\EEE\ZZZ,   existence of weak solutions \AAA has been shown in arbitrary space dimensions in  \III \cite{FiredrichKruzik18Onthepassage, MielkeRoubicek2020}. \EEE \EEE   Over the last years, \AAA these \EEE results were \ZZZ subsequently extended in various directions, including models allowing for self-contact \cite{gravina, kroemrou}, a nontrivial coupling with a diffusion equation \cite{liero}, homogenization \cite{gahn}, \MMM dimension reduction \cite{FK_dimred, MFLMDimension2D1D, MFLMDimension3D1D}, \ZZZ applications to fluid-structure interactions \cite{Schwarzacher}, and \lenni inertial  \ZZZ  effects \cite{Schwarzacher}.
\martin While the results mentioned above are formulated using the Lagrangian approach, several recent works employ the alternative Eulerian perspective instead,   see  \cite{Roubicek23Eulerian, Roubicek23Eulerian3,  Roubicek23Eulerian4}.

\MMM In the setting of thermoviscoelasticity, after the one-dimensional study in \cite{dafermos}, \ZZZ the first three-dimensional results appeared many years \MMMMM later \EEE \cite{Blanchard, Bonetti, Roubicek09}, exploiting the existence theory for parabolic equations with measure-valued data developed in \lll \cite{Boccardoetal, BoccardoGallouet89Nonlinear}. \MMM These results, however, \ZZZ are limited to linear viscous stresses. Nonlinear frame-indifferent \MMM models in \ZZZ thermoviscoelasticity were  analyzed only recently, first in \cite{MielkeRoubicek2020} and then subsequently  in \cite{BFK}, again exploiting stresses depending on higher-order gradients.
\AAA Both works derive existence of weak solutions \EEE which are \ZZZ consistent with the first two laws of thermodynamics: \MMM the first law, namely \MMMMM conservation \EEE of the total energy, up to the work induced by
the external loading or the \rb heat flux \ZZZ through the boundary, is addressed in \cite[Equation (2.21)]{MielkeRoubicek2020}. In contrast, the second law is expressed in the form of the Clausius-Duhem inequality, \MMMMM see \EEE \cite[Equation (2.22)]{MielkeRoubicek2020}. However, the question \martin of whether \EEE  weak solutions satisfy the third law of thermodynamics remained \rb open\ZZZ. According to \rb this \ZZZ law, also known as Nernst theorem, the temperature cannot \rb reach \III absolute zero. \EEE Similarly \III to \EEE the isothermal case, the Eulerian description has been recently used in thermoviscoelastic models, \lenni see~\cite{Roubicek23Eulerian2, Roubicek24Eulerian, Roubicek23Eulerian4}. \AAA Also there, \EEE the existence  results   only guarantee nonnegativity of the temperature.

\MMM In the first part of the article, we show that weak solutions of the model considered in \III\cite{BFK, MielkeRoubicek2020} \EEE indeed comply with the third law of thermodynamics. \MMM More precisely, this is achieved by proving an exponential-in-time lower bound on the temperature. 
\AAA To  our best knowledge, this is the first result proving \III positivity \EEE of the temperature in a fully nonlinear coupled system of thermoviscoelasticity.  \EEE 
  Our second result addresses \MMM the derivation of  \ZZZ linearized models for deformations near the identity and temperatures near a critical \emph{positive}  temperature $\theta_c >0$.  Here, we extend the \III work   in \cite{BFK}, \MMM where a \ZZZ linearization was performed around \MMM zero temperature \rb(\MMM $\theta_c =0$\rb)\MMM. (See also \cite{RBMFLM} for a \rb related \MMM problem \rb in \MMM dimension reduction.)  In \cite{BFK}, the argument was restricted to the case $\theta_c =0$ due to a missing a priori bound for the temperature below $\theta_c$.  We can now close this gap by suitably adapting the proof of the abovementioned exponential-in-time lower bound.  

\MMM While the \emph{nonnegativity} of the temperature for weak solutions has also been proved in nonlinear \MMMMM models \EEE \III\cite{BFK, MielkeRoubicek2020},  \EEE  it is considerably more challenging \ZZZ to show \emph{positivity} of the temperature. In fact, \ZZZ  such results in the literature are \MMM scarce, \ZZZ in \III particular, \EEE in \III highly \EEE nonlinear and coupled \III situations \EEE where the heat conductivity, the heat capacity, and the \lenni sink and source terms in \MMM the heat equation depend on deformation gradients and on \MMM the \ZZZ temperature itself. \rb Yet, another difficulty arises in the presence of a \III heat source with low integrability \EEE and an adiabatic \MMM heat-absorbing \rb term in the nonlinear heat equation. \MMM For instance, \rb the latter \MMM phenomenon is relevant in shape-memory alloys \cite{BJ92, Bha03} \III and  shape-memory polymers \cite{shapememorypolymers}, where different microstructures form \EEE upon cooling below a critical temperature. \ZZZ

To our best knowledge, the first result showing positivity of the temperature appeared in {\sc Colli  and Sprekels}  \cite{colli-sprekels} for a Fr\'{e}mond's model
of shape-memory alloys described in terms of linearized elasticity. 
\III  {\sc Paw\l ow and Zaj\c{a}czkowski} \cite{pawlow-zajackowski}  \AAA address \EEE positivity in a two-dimensional thermoelastic system with a \MMMMM mechanical equation governed by linear elasticity   and a \EEE nonlinear \lll heat-transfer equation \EEE with a constant heat-conductivity tensor, under the condition that solutions are sufficiently smooth. \EEE 
   \lenni This result was then \MMM  \ZZZ extended to \III a three dimensional model \EEE for shape-memory alloys described by a quasi-linear system in \cite{Yoshikawa-Pawlow-Zajackowski}, and to a linear Kelvin-Voigt type model in \cite{pawlow-zajackowski-1}, \AAA see also \cite{Roubicek-book}. \EEE

\MMM 
In \III\cite{BFK, MielkeRoubicek2020}, \EEE weak solutions have been identified \martin using \ee a time-discretized variational scheme, and the analysis of the corresponding minimization problem for the temperature directly showed that minimizers are nonnegative. Yet, this strategy cannot be transferred to the question of \lll preserving \martin the positivity \EEE of the temperature. For this, we follow a completely different approach. To explain the gist of the proof\ZZZ, we present the basic strategy in the simple case of a \rb classical \ZZZ heat equation
\rb\begin{equation}\label{EEE}
  \left\{
  \begin{aligned}
    c_V  \partial_t\theta - 
    \diver(\mathcal{K}  \nabla\theta)
    &= \AAA  h \EEE &&\text{in } [0,T] \times \Omega, \\
    \nabla \theta \cdot \nu &= 0 &&\text{on } [0,T] \times  \partial \Omega,
  \end{aligned}
  \right. 
    \end{equation}\ZZZ
    where $c_V$ and $\mathcal{K}$ are constants representing  the \emph{heat capacity} and the   \emph{heat conductivity}, respectively, $\nu$ denotes  the outward pointing unit normal on $\partial \Omega$, and $h\colon [0,T] \times \Omega \to [0,\infty)$ denotes \LMM \rb an \ZZZ external heat source. \MKK Consider a solution $\theta \colon [0,T] \times \Omega \to \R$ with $\theta(0) = \theta_0$ and $\inf_{x \in \Omega} \theta_0 \ge \lambda_0$ for some $\lambda_0 >0$.  Setting   $c_V=1$ and $\mathcal{K} = \mathbf{Id} \in \R^{d \times d }$ for simplicity, and letting $\lambda(t) :=  \lambda_0 \exp(-  t)$ for $t\in  [0,T] $ be the solution of    the \MMM differential equation \ZZZ $\frac{{\rm d}}{{\rm d}t} \lambda = - \lambda$, the goal is to show that $\theta(t) \ge \lambda(t)$ a.e.\ in $\Omega$ for all $t \in [0,T]$. \AAA This \EEE immediately provides positivity of the temperature \MMM and the \ZZZ exponential-in-time lower bound. \ZZZ Since $\theta_0 \ge \lambda_0$\lll, \EEE it suffices to show that  
\begin{align*}
  \frac{{\rm d}}{{\rm d}t} \int_\Omega  \frac{1}{2}  (\lambda(t)  - \theta(t))_+^2 \di x \le 0,
\end{align*}
where $(\cdot)_+$ denotes the positive part. This formally follows from the computation  
\begin{align}\label{FFF}
  \frac{{\rm d}}{{\rm d}t} \int_\Omega  \frac{1}{2}  (\lambda(t)  - \theta(t))_+^2 \di x  & = \int_\Omega (\lambda-\theta)_+ \big(\rb\tfrac{\di}{\di t} \ZZZ\lambda- \partial_t \theta \big) \, \di x = \int_\Omega (\lambda-\theta)_+  \big( -  \lambda - h - \Delta \theta  \big) \, \di x \notag \\ 
  &  = - \int_{\lbrace  \lambda \ge \theta \rbrace} |\nabla \theta|^2 \, \di x - \int_\Omega (\lambda-\theta)_+  \big(  \lambda  +  h)   \, \di x 
 \le 0,
\end{align} 
\martin where  we used the equation \eqref{EEE} in the second step, and in the third step, we performed an integration by parts. \EEE The actual realization of this computation in our framework is delicate, as $c_V$,  $\mathcal{K}$, and $h$ all depend on $\theta$ \emph{and} the deformation, and \eqref{EEE} is coupled additionally to a mechanical equation\lenni, see \eqref{strong_formulation} below. \EEE Moreover, a boundary term arises for nonzero Neumann boundary conditions, and in our setting $h$ can also be negative (but with $h \to 0$ as $\theta \to 0$), which complicates the last inequality in \eqref{FFF}. Further\rbb more\ee, the chain rule in the first step of \eqref{FFF} is intricate for \rb weak solutions and hence requires justification\ZZZ, see Section~\ref{sec:chainrule} for details. In fact, since the datum $h$ in \eqref{EEE} will only be in $L^1$, one \MMMMM expects \EEE $\partial_t \theta$ to \III have  \MMMMM low regularity. \EEE  Therefore, as an auxiliary step, we \ZZZ show a chain rule for a regularized problem. \MMM Then, \ZZZ once positivity of the regularized problem is established with bounds \MMM independent \ZZZ on the regularization itself, \MMM we \ZZZ send the regularization to zero \MMM and obtain the result for the original problem.

\AAA

In the second part of the paper, we focus on the case of small strains and temperatures close to a critical temperature $\theta_c \lll >0$, \EEE i.e., when  $ \nabla y - \Id $ is of order $\eps$ for some small $\eps >0$ and $\theta-\theta_c$ is of order $\eps^\alpha$ for some exponent $\alpha >0$. Then, in terms of rescaled displacements $u_\eps = \eps^{-1}(y - \id)$ and rescaled temperatures $\mu_\eps = \eps^{-\alpha} \III (\theta - \theta_c) \EEE$,  we rigorously pass to an effective linearized system as $\eps \to 0$, see \eqref{viscoel_small}--\eqref{initial_conds_lin}. \III With this, \AAA we contribute to the  understanding of the relations between nonlinear and linearized \III models, \EEE which \III has \AAA been an active field of research in the last years, see e.g.\  \cite{virginina, Braides-Solci-Vitali:07, DalMasoNegriPercivale02Linearized, DavoliFriedrich20Two-well, Friedrich:15-2, kostas, FiredrichKruzik18Onthepassage, JesenkoSchmidt21Geometric, MaininiPercivale21Linearization, MaorMora21Reference, Ulisse, Schmidt08Linear}.  In particular, from a modeling point of view, new interesting phenomena occur in the limiting system compared to \cite{BFK} where  linearization was performed in a rather   nonphysical case  $\theta_c=0$. Indeed, whereas   in \cite{BFK}   the mechanical and \III heat equation \AAA  decouple in a certain scaling regime \MMMMM for \EEE $\alpha$, in the present \III setting,  \EEE we  always   obtain a  coupled   system. Our argument relies on adapting the strategy in \eqref{FFF} for the choice $\lambda = \theta_c$.  This allows us to obtain  suitable    a priori bounds on $(\theta_c - \lll \theta)_+ \EEE $. For  all other a priori bounds we then rely on the strategy developed in   \cite{BFK}.


The plan of the paper is as follows. Section~\ref{sec:model} introduces the nonlinear and linearized models   and states our main results. Then, in Section~\ref{sec: reg sol}, we \rb show \martin the existence \EEE  of \ZZZ solutions to a \rb related \ZZZ regularized \rb model\ZZZ. 
Section~\ref{sec:chainrule} is devoted to the proof of a chain rule, which is \rb subsequently \ZZZ applied in Section~\ref{sec:strictposregu} to show the positivity of the temperature.
\martin In Section~\ref{sec:proofmaintheorem}, we perform the \rb rigorous \ZZZ linearization \rb at a positive critical temperature\ZZZ. \EEE 
While Section\rb s\ZZZ~\ref{sec:apriorireg}--\ref{sec: a priori} address \rb the derivation of   \ZZZ a priori bounds, the \AAA linear \EEE limiting equations are derived in Section~\ref{subsec: lin}.

\section{The model \MMM and main results\EEE}\label{sec:model}

\subsection*{Notation}
In what follows, we use standard notation for Lebesgue and Sobolev spaces. The lower index $_+$ means nonnegative elements, i.e., $L^2_+(\Omega)$ denotes the convex cone of nonnegative functions belonging to $L^2(\Omega)$\III, and  we \EEE  set $\R_+\defas [0,+\infty)$.  \lenni Given a measurable set $E$, $\indic_E$ denotes the \MMMMM characteristic \EEE function.  Let $a \wedge b \defas \min\setof{a, b}$ and $a \vee b \defas \max\setof{a, b}$ for $a, \, b \in \R$.
\martin Moreover, for any scalar function $f$, we write \ZZZ $f_+$ and $f_-$ \EEE for the positive \ZZZ and negative \AAA part, \EEE \lenni respectively. \EEE
Denoting by $d \ge 2$ the \rb space \ee dimension, we let $\Id \in \R^{d \times d}$  be the identity matrix, and  $\id(x) \defas x$ stands for the identity map on $\R^d$.
We define  the subsets  $SO(d) \defas \setof{A \in \R^{d \times d} \colon A^T A = \Id, \,  \det A = 1  }$, $GL^+(d) \defas \setof{F \in \R^{d \times d} \colon \det(F) > 0}$,  and  $\R^{d \times d}_\sym \defas \setof{A \in \R^{d \times d} \colon A^T = A}$.
Furthermore, for $F \in GL^+(d)$ we denote by $F^{-T} \defas (F^{-1})^T=(F^T)^{-1}$ the inverse of the transpose of $F$, and given a tensor $G$ (of arbitrary dimension), $\abs{G}$ \ZZZ indicates \EEE its Frobenius norm.
The scalar product between vectors, matrices, and \lenni third-order \EEE tensors will be written as $\cdot$, $:$, and $\cdddot$, \ZZZ respectively.
\AAA For \EEE $T \in \R^{d \times d \times d \times d}$ and $A \in \R^{d \times d}$,  $TA \in \R^{ d \times d}$ \rb is given by \ee \ZZZ $(TA)_{ij} = T_{ijkl} A_{kl}$  for $1 \leq i, \, j \leq d$, \AAA where we employ Einstein's summation convention. \EEE  \rb Any \lenni fourth-order tensor  $T \in \R^{d \times d \times d \AAA \times d}$ induces a bilinear form $T \colon \R^{d \times d} \times \R^{d \times d} \to \R$ given by $T[A,B] \defas TA : B = T_{ijkl} A_{kl} B_{ij}$ for any $A, \, B \in \R^{d \times d}$. \ee
As usual, generic constants   may vary from line to line.
If not stated otherwise, all constants \AAA  only \ee depend on \MMM the dimension \EEE $d$, on $p \geq \LLL 2 d \EEE$, \rb on \ee $\Omega$, \rb on a scalar \ee $\alpha \rb \in [1, 2]\ee$ \III introduced in Subsection~\ref{sec:linearization}, \EEE and the potentials \AAA and data \EEE defined \MMM in Subsection \ref{sec:setting}. \EEE

\subsection{\MMM Modeling assumptions\EEE}\label{sec:setting}
\MMM We start by introducing  the \EEE model of thermoviscoelasticity treated in \ZZZ \cite{BFK, RBMFLM, MielkeRoubicek2020}. \EEE Consider an open, bounded, \AAA and connected \MMM \emph{reference configuration} \EEE $\Omega \subset \R^d$ with Lipschitz boundary $\Gamma \defas \partial \Omega$.
Let $\Gamma_D, \, \Gamma_N$ be disjoint subsets of $\Gamma$ such that $\Gamma = \Gamma_D \cup \Gamma_N$ and $\haus^{d-1}(\Gamma_D) > 0$,   representing \emph{Dirichlet and Neumann parts} of the boundary, respectively.  
We further assume that $\Gamma_D$ itself has Lipschitz boundary in $\Gamma$. For $p \geq \LLL 2 d \EEE$, we introduce the set of \emph{admissible deformations} \MMM as \EEE
\begin{equation*}
  \Wid \defas \setof*{
    y \in W^{2, p}(\Omega; \R^d) \colon
    y = \id \text{ on } \Gamma_D, \,
    \det(\nabla y) > 0 \text{ in } \Omega
  },
\end{equation*}
and \rb further define the set \ee
\begin{equation}\label{def_Wzero}
   H^1_{\Gamma_D}(\Omega; \R^d)  \defas \setof{y \in \rb H^1\ee(\Omega;\R^d)  \colon y = 0 \text{ on } \Gamma_D}.
\end{equation}
Let $\ac, \, \aC$ with $0 < \ac < \aC < \infty $ be some fixed constants.
Our variational setting is as follows:

\subsection*{Mechanical energy and coupling energy:}
\ZZZ Adopting the concept of 2nd-grade nonsimple materials, see \cite{Toupin62Elastic, Toupin64Theory}, we assume that the \emph{mechanical energy} $\mechen \colon \Wid \to \R_+$ depends on both the gradient and the second gradient of a deformation $y \in \Wid$, and \EEE is   defined as the sum
\begin{equation}\label{mechanical}
  \mechen(y) \defas  \ZZZ  \int_\Omega \elpot(\nabla y) \di x + \int_\Omega \hypot(\nabla^2 y) \di x, \EEE
\end{equation}
\ZZZ where \rb the potentials \ZZZ $\elpot$ and $\hypot$ \rb have \AAA the \EEE following properties\ZZZ. The \emph{elastic energy density} $\elpot \colon GL^+(d) \to \R_+$ \AAA satisfies \EEE \MMM standard \EEE assumptions in nonlinear elasticity:
\begin{enumerate}[label=(W.\arabic*)]
  \item \label{W_regularity} $\elpot$ is $C^2$, and $C^3$ in a neighborhood of $SO(d)$;
  \item \label{W_frame_invariace} Frame indifference: $\elpot(QF) = \elpot(F)$ for all $F \in GL^+(d)$ and $Q \in SO(d)$;
  \item \label{W_lower_bound} Lower bound: $W^{\rm el}(F) \ge \ac \big(|F|^2 + \det(F)^{-q}\big) - \aC$ for all $F \in GL^+(d)$, where $q \ge \frac{pd}{p-d}$.
\end{enumerate}
The \ZZZ potential \EEE $\hypot \colon \R^{d \times d \times d} \to \R_+$ satisfies \III the following conditions: \EEE
\begin{enumerate}[label=(H.\arabic*)]
  \item \label{H_regularity} $\hypot$ is convex and $C^1$;
  \item \label{H_frame_indifference} Frame indifference: $\hypot(QG) = \hypot(G)$ for all $G \in \R^{d \times d \times d}$ and $Q \in SO(d)$;
  \item \label{H_bounds} $\ac \abs{G}^p \leq H(G) \leq \aC (1+ \abs{G}^p)$ and $\abs{\pl_G H(G)} \leq \aC \abs{G}^{p-1}$ for all $G \in \R^{d \times d \times d}$ \III and \EEE some $p\geq 2d$.
\end{enumerate}
Besides the mechanical energy, we introduce the \emph{coupling energy} \MMM which,  \ZZZ in addition to \EEE the deformation, also depends on \emph{temperature}\rb. More precisely, \EEE $\cplen \colon \Wid \times  L^1_+(\Omega)   \to \R$ \rb is \ee given by
\begin{equation}\label{couplenergy}
  \cplen(y, \theta) \defas \int_\Omega \cplpot(\nabla y, \theta) \di x,
\end{equation}
where its potential $\cplpot \colon GL^+(d) \times \R_+ \to \R$ satisfies \III the following conditions: \EEE
\begin{enumerate}[label=(C.\arabic*)]
  \item \label{C_regularity} $\cplpot$ is continuous, and $C^{\NNN 3}$ in $GL^+(d) \times (0, \infty)$;
  \item \label{C_frame_indifference} $\cplpot(QF, \theta) = \cplpot(F, \theta)$ for all $F \in GL^+(d)$, $\theta \geq 0$, and $Q \in SO(d)$;
  \item \label{C_zero_temperature} $\cplpot(F, 0) = 0$ for all $F \in GL^+(d)$;
  \item \label{C_lipschitz} $|\cplpot(F,\theta) - \cplpot(\tilde{F}, \theta)| \le \aC(1 + |F| + |\tilde{F}|)|F - \tilde{F}|$ for all $F, \, \tilde F \in GL^+(d)$, and $\theta \geq 0$;
  \item \label{C_bounds} For all $F \in  GL^+(d)$ and $\theta > 0$ it holds that
  \begin{align*}
    \abs{\partial_{F}^2 W^{\rm cpl}(F,\theta)} &\le \aC, &
    \abs{\pl_{F \theta} \cplpot(F, \theta)} & \leq \frac{\aC(1+|F|)}{\theta \vee 1}, &
    \ac & \leq -\theta \pl_\theta^2 \cplpot(F, \theta) \leq \aC.
  \end{align*}
\end{enumerate}
\MMM We \EEE remark that by \ref{C_zero_temperature} and the second bound in \ref{C_bounds}, $\pl_F \cplpot$ can be continuously extended to zero temperatures with $\pl_F W^{\rm cpl}(F, 0) = 0$ \rb for all $F \in GL^+(d)$\ee.
For $F \in GL^+(d)$ and $\theta \geq 0$, we define the \emph{total free energy potential} \rb as \ee
\begin{equation}\label{eq: free energy}
  \felpot(F, \theta) \defas \elpot(F) + \cplpot(F, \theta).
\end{equation}

\subsection*{Dissipation potential:} 
The \emph{dissipation functional} $\diss \colon \Wid \times \ZZZ H^1(\Omega;\R^d) \EEE \EEE \times  L^1_+(\Omega)  \to \R_+$  is defined as 
\begin{equation*}
  \diss( y, \partial_t y   , \theta)
  \defas \int_\Omega \disspot(\nabla   y,  \partial_t\nabla  y   , \theta) \di x,
\end{equation*}
where $\disspot \colon \R^{d \times d} \times \R^{d \times d} \times \R_+ \to \R_+$ is \MMM a \EEE \emph{potential of dissipative forces} satisfying
\begin{enumerate}[label=(D.\arabic*)]
  \item \label{D_quadratic} $\disspot(F, \dot F, \theta) \defas \frac{1}{2} D(C, \theta)[\dot C, \dot C] \defas \frac{1}{2} \dot C : D(C, \theta) \dot C$, where $C \defas F^T F$, $\dot C \defas \dot F^T F + F^T \dot F$, and $D \in C(\R^{d \times d}_\sym \times \R_+; \R^{d \times d \times d \times d})$ with $D_{ijkl} = D_{jikl}= D_{klij}$ for $1 \le i,\,j,\,k,\,l \le d$;
  \item \label{D_bounds} $\ac \abs{\dot C}^2 \leq \dot C : D(C, \theta) \dot C \leq \aC \abs{\dot C}^2$ for all $C, \, \dot C \in \R^{d \times d}_\sym$, and $\theta \geq 0$.
\end{enumerate}
The fact that $\disspot$ can be written as a function depending on the right Cauchy-Green tensor $C = F^T F$ and its time derivative $\dot C$ is equivalent to \emph{dynamic frame indifference}, \MMM see e.g.~\ZZZ\cite{Antmann98Physically}. \EEE
The symmetries of $D$ stated in \ref{D_quadratic} yield (see e.g.~\cite[\MMM Equation \EEE (2.8)]{BFK})
\begin{equation}\label{chain_rule_Fderiv}
  \partial_{\dot F} R(F, \dot F, \theta) = 2 F (D(C, \theta) \dot C).
\end{equation}
Moreover, we define the associated \emph{dissipation rate} $\drate \colon \R^{d \times d} \times \R^{d \times d} \times \R_+ \to \R_+$ as
\begin{align}\label{diss_rate}
  \drate(F, \dot F, \theta)
  \defas \pl_{\dot F} \disspot(F, \dot F, \theta) : \dot F
   = 2 F (D(C, \theta) \dot C) : \dot F
  = D(C, \theta) \dot C : (\dot F^T F + F^T \dot F)
   = 2 R(F, \dot F,\theta),
\end{align}
where the second identity follows from \eqref{chain_rule_Fderiv}, and the third from the symmetries stated in \ref{D_quadratic}.

\subsection*{Heat \III conductivity:} \EEE
The map $\hc \colon  \R_+  \to \R^{d \times d}_\sym$ \MMM denotes \EEE the \rb temperature-dependent \ee \emph{heat conductivity tensor} of the material in the deformed configuration.
We require that $\hc$ is continuous, symmetric, uniformly positive definite, and bounded.
More precisely, for all  $\theta \geq 0$  it holds that
\begin{equation}\label{spectrum_bound_K}
  \ac \leq  \hc(\theta)  \leq \aC,
\end{equation}
where the inequalities are meant in the eigenvalue sense.
We \rb further \ee define the pull-back $\hcm \colon  GL^+(d)  \times \R_+ \to \R^{d \times d}_\sym$ of $\hc$ into the reference configuration by (see also \cite[\ZZZ Equation~(2.24)\EEE]{MielkeRoubicek2020})
\begin{equation}\label{hcm}
   \hcm(F, \theta)  \defas \det(F) F^{-1}  \hc(\theta)  F^{-T}.
\end{equation}

\subsection*{Internal energy:} 
The  density    of the \EEE \textit{(thermal part of the)  internal energy}   $\inten \colon GL^+(d) \times (0, \infty) \to \R$ \ZZZ is given by \EEE 
\begin{equation}\label{Wint}
  \inten(F, \theta) \defas \cplpot(F, \theta) - \theta \pl_\theta \cplpot(F, \theta).
\end{equation}
 \MMM Then, we define the   \emph{heat capacity} by \EEE 
\begin{equation}\label{inten_mon}
 c_V(F,\theta) \defas \partial_{\theta} \inten (F, \theta)
  = -\theta \pl_\theta^2 \cplpot(F, \theta) \in [\ac, \aC]
  \qquad \text{for all $F \in GL^+(d)$ and $\theta  >  0$},
\end{equation}
\MMM where \ZZZ the bounds follow from  \EEE the third bound in \ref{C_bounds}. Hence, \III using \EEE \ref{C_zero_temperature}, the following \III relation between the internal energy and the temperature  holds true: \EEE
\begin{equation}\label{inten_lipschitz_bounds}
  \ac \theta \leq \inten(F, \theta) \leq \aC \theta.
\end{equation}
\MMM This also shows \EEE that $\inten$ can be continuously extended to zero temperatures by setting $\inten(F, 0) = 0$ for all $F \in GL^+(d)$.


\rb 
We remark that the above assumptions on the potentials $\elpot$, $\hypot$, $\cplpot$, and $\disspot$ coincide with the ones in \cite[Section 2.1]{BFK} up to a higher power $p \geq 2d$ instead of $p > d$ in the definition of the hyperelastic potential $\hypot$ \AAA (needed in \eqref{A_3estimate2.5} and \eqref{lennitrick} below). \EEE
For a comparison of the above conditions with the ones stated in \cite[Section 2]{MielkeRoubicek2020}, we refer to \MMMMM \cite[Remark~2.1]{BFK}. \EEE

\MMM Without further notice, all properties on the potentials  introduced above are assumed throughout the paper. Later, for specific results we require refined \MMMMM bounds \EEE which \rb will be \MMM always indicated explicitly. \EEE

\subsection*{Equations of nonlinear thermoviscoelasticity:}

Fixing a finite time horizon $T > 0$,   let us from now on shortly write $I \defas [0, T]$.
\MMM We \EEE consider a \emph{dead force} $\MMM  f \EEE \in W^{1, 1}(I; L^2(\Omega; \R^d))$, a \emph{boundary traction} $\MMM g \EEE \in W^{1, 1}(I; L^2(\Gamma_N; \R^d))$, and an \emph{external temperature} $ \MMM \theta_{\flat} \EEE  \in \NNN L^2(I; L^{ 2}_+(\Gamma))$.  We study thermoviscoelastic materials, \EEE governed by the following system of equations
\begin{subequations}\label{strong_formulation}
\begin{align}
  f &=
    -\diver\big(
      \pl_F \felpot(\nabla y, \theta)
      + \pl_{\dot F} \disspot(\nabla y, \partial_t \nabla y, \theta)
      - \diver(\pl_G \hypot(\nabla^2 y))
    \big), \label{strong_formulation_mechanical} \\
  \MMM c_V \EEE  (\nabla y, \theta) \, \partial_t{\theta} &=
    \diver\big(\hcm(\nabla y, \theta) \nabla \theta\big)
    + \xi (\nabla y, \partial_t \nabla y, \theta)
    + \theta \pl_{F \theta} \cplpot(\nabla y, \theta) : \partial_t \nabla y, \label{strong_formulation_thermal}
\end{align}
\end{subequations}
which is complemented by  \emph{initial conditions}
\begin{equation}\label{strong_formulation_initial_conditions}
  y\NNN(0)\EEE = y_{0} \in \Wid \qquad \text{and} \qquad \theta(0) = \theta_{0} \in L^2_+(\Omega),
\end{equation}
and the \emph{boundary conditions}
\begin{subequations}\label{strong_formulation_boundary_conditions}
\begin{align}
  \big(
    \pl_F \felpot(\nabla y, \theta)
    + \pl_{\dot F} \disspot(\nabla y, \partial_t \nabla y, \theta)
  \big) \nu
  - \diver_S \big( \pl_G \hypot(\nabla^2 y) \nu \big)
    &= g & &\text{ on }  I \times \Gamma_N, \label{strong_formulation_boundary_g} \\
  y &= \id & &\text{ on } I \times \Gamma_D, \label{strong_formulation_boundary_id}\\
  \pl_G \hypot(\nabla^2 y) : (\nu \otimes \nu)
    &= 0 & &\text{ on }I \times  \Gamma, \label{strong_formulation_boundary_H} \\
  \hcm(\nabla y, \theta) \nabla \theta \cdot  \nu  + \kappa \theta
    &= \kappa \theta_{\flat} & &\text{ on } I \times \Gamma. \label{strong_formulation_heat_transfer}
\end{align}
\end{subequations}
Above, $\nu$ denotes the outward pointing unit normal on $\Gamma$ and $\kappa \ge 0$ is a \emph{phenomenological heat-transfer coefficient} on $\Gamma$. Moreover, $\diver_S$ represents the \emph{surface divergence}, defined by $\diver_S(\cdot) = \trace(\nabla_S(\cdot))$, where $\trace$ denotes the trace and $\nabla_S \defas (\Id - \nu \otimes \nu) \nabla$ denotes the surface gradient (see e.g.~\cite[\ZZZ Equations~(2.28)--(2.29)\EEE]{MielkeRoubicek2020} for further details).  
\MMM We refer to \cite[Section 2]{MielkeRoubicek2020} for the derivation of the equations and details on the physical meaning of each term. 

 The first part of this paper \MMMMM addresses \III the existence of solutions where the temperature is not \EEE   only nonnegative but actually \emph{positive}. In the second part, we will perform a linearization of the system at a critical temperature $\theta_c > 0$ and small strains. \EEE

\subsection{Positivity of temperature in large-strain thermoviscoelasticity}
We consider the following  notion of weak solutions.
\begin{definition}[Weak solution of the nonlinear system]\label{def:weak_formulation-classic}
A couple $(y, \theta) \colon I \times \Omega \to \R^d \times \R$ is called a \emph{weak solution} of the initial-boundary-value problem \ZZZ \eqref{strong_formulation}--\eqref{strong_formulation_boundary_conditions} \EEE if and only if $y \in L^\infty(I; \Wid) \cap H^1(I; H^1(\Omega; \R^d))$ with $y(0, \cdot)= y_{0}$ \lll a.e.~in $\Omega$, \EEE $\theta \in L^1(I; W^{1,1}(\Omega))$ with $\theta \NNN \geq \EEE 0$ a.e.~in $I \times \Omega$, and if it satisfies the identities
\begin{equation}\label{weak_formulation_mechanical}
\begin{aligned}
  &\intQ \pl_G
    \hypot(\nabla^2 y) \cdddot \nabla^2 z
    + \Big(
      \pl_F \felpot(\nabla y, \theta)
      + \pl_{\dot F} \disspot(\nabla y, \partial_t \nabla y, \theta)
    \Big) : \nabla z \di x \di t \\
  &\quad=  \intQ f \cdot z \di x \di t
    +  \int_I \int_{\Gamma_N} g \cdot z \di \haus^{d-1} \di t
\end{aligned}
\end{equation}
for any test function $z \in C^\infty(I \times \overline{\Omega}; \R^d)$ with $z = 0$ on $I \times \Gamma_D$, as well as  
\begin{equation}\label{weak_limit_heat_equation}
\begin{aligned}
  &\intQ \hcm(\nabla y, \theta) \nabla \theta \cdot \nabla \vphi
    -\big(
      \drate(\nabla y, \partial_t \nabla y, \theta)
      + \pl_F \cplpot(\nabla y, \theta) : \partial_t \nabla y
    \big) \vphi
     - \inten(\nabla y, \theta) \partial_t \vphi   \di x \di t \\
  &\quad = \kappa \int_I \int_{\Gamma} (\theta_{\flat} - \theta) \vphi \di \haus^{d-1} \di t
     +\int_\Omega \inten(\nabla y_{0}, \theta_{0}) \, \vphi(0) \di x
\end{aligned}
\end{equation}
for any test function $\vphi \in C^\infty(I \times \overline \Omega)$ with $  \varphi(T) = 0$.
\end{definition}
In \ZZZ \cite[Theorem~2.3(ii)]{BFK} and \cite[Theorem 2.2]{MielkeRoubicek2020}, existence of weak solutions \rb for \ee the initial-boundary-value problem \ZZZ \eqref{strong_formulation}--\eqref{strong_formulation_boundary_conditions} \EEE  \MMM  \ZZZ in the sense of Definition \ref{def:weak_formulation-classic} \EEE is shown. \AAA One can \MMMMM check \EEE that sufficiently smooth weak solutions lead to the classical formulation \eqref{strong_formulation} along with the boundary conditions \eqref{strong_formulation_boundary_conditions}, see e.g.~the reasoning after \ZZZ \cite[Equation~(2.28)]{MielkeRoubicek2020}. \EEE

\MMM We \EEE  stress the important requirement of a.e.~\emph{nonnegativity} of solutions in the above definition.
This can be seen as a physical justification of the system \ZZZ \eqref{strong_formulation}--\eqref{strong_formulation_boundary_conditions} \EEE as it assures that along the evolution the temperature inside the material \III never drops \EEE below absolute zero. Nevertheless, the current existence theory potentially allows for temperatures reaching absolute zero in a set of non-negligible Lebesgue measure contradicting the \emph{third law of thermodynamics}.
The first main result of this paper shows that, under \MMM mild additional \EEE assumptions  \ZZZ compared to \cite{BFK}  \EEE  (on the potentials as well as \MMM on the \EEE boundary and initial conditions), \MMM there exist \EEE   weak solutions in the sense of Definition \ref{def:weak_formulation-classic} that are a.e.~\emph{strictly positive}.

The additional requirements on the coupling potential $\cplpot$ and \AAA the \EEE internal energy $W^{\rm in}$ are as follows: 
\begin{enumerate}[label=(C.\arabic*)]
  \setcounter{enumi}{5}
  \item \label{C_third_order_bounds}
  \ZZZ The function \EEE $\partial_{F\theta\theta} W^{\rm cpl}$  can be continuously extended to $GL^+(d) \times \R_+$ and satisfies \newline $\abs{ \III \partial_{F\theta\theta} W^{\rm cpl}(F,\theta)} \EEE
  \leq C_0 (1+ \vert F \vert)$  for all $F \in GL^+(d)$ and $\theta \geq 0$\III; \EEE
  \item \label{C_Wint_regularity}  
  $W^{\rm in}$ \MMM can be continuously extended to \rb a map in $C^{3}(GL^+(d) \times \R_+; \R_+)$ \ee and satisfies $|\partial_\theta^2 W^{\rm in}(F, \theta)| \leq C_0 (1+ \vert F \vert)$ for all $F \in GL^+(d)$ and $\theta \geq 0$.
\end{enumerate}
\rb In \AAA the example in Appendix \ref{expl:shapememory}, \EEE we will show that the classes of free energy potentials introduced in \ZZZ \cite[Example 2.4 and Example 2.5]{MielkeRoubicek2020} \rb contain examples which comply with all the abovementioned \MMMMM conditions. \EEE


\begin{theorem}[Positivity of the temperature]\label{thm:positivity_of_temperature}
   Assume that \ref{C_third_order_bounds}--\ref{C_Wint_regularity} hold. \EEE  Suppose \EEE that $\theta_{0, \rm min} \defas \essinf_{x \in \Omega} \theta_0 > 0$ and that there exists a constant $\tilde C > 0$ such that $\bt(t) \geq \theta_{0, \rm min} \exp(-\tilde C t)$ for all $t \in I$.  
 \ZZZ Then, there exists a weak solution $(y, \theta)$ of the boundary value problem \ZZZ \eqref{strong_formulation}--\eqref{strong_formulation_boundary_conditions} \EEE  in the sense of Definition~\ref{def:weak_formulation-classic} \EEE and a constant $C>0$ such that $\theta(t, x) \geq C^{-1} \exp(-C t)$ for a.e.~$(t,x)\in I \times \Omega$.
\end{theorem}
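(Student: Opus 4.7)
The plan is to follow the roadmap outlined after equation~\eqref{FFF}: test the (properly regularized) heat equation with a truncation comparing $\theta$ to the time-dependent barrier $\lambda(t) \defas \theta_{0,\mathrm{min}} \exp(-Ct)$, for a sufficiently large constant $C$ (to be chosen depending on $\tilde C$ and the structural constants $\ac, \aC$). Since the right-hand side $h = \xi(\nabla y, \partial_t \nabla y, \theta) + \theta\, \pl_{F\theta} \cplpot(\nabla y,\theta) : \partial_t \nabla y$ in \eqref{strong_formulation_thermal} lies only in $L^1$, the natural argument cannot be performed directly on weak solutions of \eqref{strong_formulation}--\eqref{strong_formulation_boundary_conditions}. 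I would therefore first establish the bound for the regularized system constructed in Section~\ref{sec: reg sol} (whose solutions $(y_\nu, \theta_\nu)$ have improved time regularity sufficient to justify the chain rule of Section~\ref{sec:chainrule} applied to $t \mapsto \int_\Omega \tfrac{1}{2}(\lambda(t) - \theta_\nu(t))_+^2 \di x$), obtain the lower bound with constants independent of $\nu$, and finally pass to the limit $\nu\to 0$.

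The core computation is to test the regularized heat equation against $-(\lambda - \theta_\nu)_+$ and to account for each contribution. On the set $\{\theta_\nu < \lambda\}$, integration by parts of the diffusion term produces $-\int \hcm(\nabla y_\nu,\theta_\nu)\nabla\theta_\nu \cdot \nabla\theta_\nu$, which is nonpositive by \eqref{spectrum_bound_K}. The boundary heat-transfer term contributes $\kappa \int_\Gamma (\theta_\flat - \theta_\nu)(\lambda - \theta_\nu)_+ \di \haus^{d-1}$; by the assumption $\theta_\flat \ge \theta_{0,\mathrm{min}} \exp(-\tilde C t)$ and the choice $C \ge \tilde C$, we have $\theta_\flat \ge \lambda > \theta_\nu$ on this set, so the boundary term has the favorable sign. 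The dissipation rate $\xi \geq 0$ is likewise favorable. The only potentially problematic term is the adiabatic coupling $\theta_\nu\, \pl_{F\theta}\cplpot(\nabla y_\nu,\theta_\nu) : \partial_t \nabla y_\nu$, but the second bound in~\ref{C_bounds} gives $|\theta_\nu\, \pl_{F\theta}\cplpot| \le \aC (1+|\nabla y_\nu|)\, (\theta_\nu \wedge 1)$, so that on $\{\theta_\nu<\lambda\}$ (where eventually $\lambda \le 1$) it contributes at most $\aC \int (\lambda-\theta_\nu)_+ (1+|\nabla y_\nu|)|\partial_t \nabla y_\nu|\,\lambda \di x$, which Young's inequality converts into $\tfrac{1}{2} \int (\lambda-\theta_\nu)_+^2 \di x + \lambda^2 \cdot (\text{something } L^1 \text{ in time from the a priori bound on } \partial_t \nabla y_\nu)$. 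The contribution $\lambda'= -C\lambda$ on the left-hand side produces a term that, after multiplication by $c_V(\nabla y_\nu, \theta_\nu) \in [\ac,\aC]$, yields a favorable $-C \ac \int \lambda (\lambda-\theta_\nu)_+ \di x$; for $C$ large enough this absorbs the bad contributions.

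Putting the pieces together gives a differential inequality of the form
\begin{equation*}
  \frac{\di}{\di t}\! \int_\Omega \tfrac{1}{2}\, c_V(\nabla y_\nu, \theta_\nu)(\lambda - \theta_\nu)_+^2 \di x
  \le A(t)\! \int_\Omega (\lambda - \theta_\nu)_+^2 \di x + B(t) \lambda(t)^2,
\end{equation*}
where $A \in L^1(I)$ and $B \in L^1(I)$ by the a priori bounds on $\nabla y_\nu$ and $\partial_t \nabla y_\nu$. Since $\lambda(0) \le \theta_{0,\mathrm{min}} \le \theta_0$, the initial value vanishes, and a Gr\"onwall argument yields $(\lambda - \theta_\nu)_+ = 0$ provided the second term can be absorbed; I expect that refining the weight (e.g.\ replacing $\lambda$ by $\theta_{0,\mathrm{min}}\exp(-Ct)$ with an even larger $C$, or by working with $(\lambda-\theta_\nu)_+^2$ directly without the $c_V$ weight while tracking $\partial_t c_V$ via \ref{C_third_order_bounds}--\ref{C_Wint_regularity}) suffices. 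Finally, the $\nu \to 0$ limit in the compactness statements of Section~\ref{sec: reg sol} transfers the pointwise lower bound $\theta_\nu \ge \lambda$ to the limiting weak solution via Fatou's lemma.

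The main obstacle I anticipate is the rigorous handling of the chain rule combined with the weighted nature of the left-hand side: because $c_V$ depends on $(\nabla y_\nu, \theta_\nu)$, differentiating $\int_\Omega c_V (\lambda-\theta_\nu)_+^2 \di x$ in time produces extra terms involving $\partial_t c_V$, and one must show that these are controllable using only the a priori regularity available at the regularized level. This is precisely why \ref{C_third_order_bounds} and \ref{C_Wint_regularity} have been imposed, and I would allocate most of the effort here: carefully formulating the chain rule at the regularized level so that all stray terms are absorbed into $A(t)$ and $B(t)$ above, with constants independent of $\nu$.
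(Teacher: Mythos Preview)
Your overall strategy matches the paper's (work at the $\nu$-regularized level, compare $\theta_\nu$ against the barrier $\lambda$, pass $\nu\to 0$), but two implementation choices would not close as written. First, testing with $(\lambda-\theta_\nu)_+$ and monitoring $\int_\Omega c_V(\lambda-\theta_\nu)_+^2$ forces you to handle $\partial_t c_V$, which contains $\partial_\theta c_V\cdot\partial_t\theta_\nu$ with $\partial_t\theta_\nu$ available only in a dual space even at the regularized level. The paper instead monitors the \emph{unweighted} quantity $\int_\Omega(\lambda-\theta_\nu)_+^2$ and tests the heat equation with $\varphi=(\lambda-\theta_\nu)_+/c_V(\nabla y_\nu,\theta_\nu)$; the chain rule (Theorem~\ref{thm:chainrule}) is built precisely so that $\langle\partial_t w_\nu,\varphi\rangle$ with this $1/c_V$ weight reproduces $\frac{\di}{\di t}\frac{1}{2}\int_\Omega(\lambda-\theta_\nu)_+^2$. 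The price is an extra term coming from $\nabla(c_V^{-1})$ in $\nabla\varphi$ (the paper's $A_4$), and \emph{this}, not $\partial_t c_V$, is where \ref{C_third_order_bounds}--\ref{C_Wint_regularity} and the hypothesis $p\ge 2d$ are used, via a Sobolev embedding bounding $\|(\lambda-\theta_\nu)_+^2\|_{L^{p/(p-2)}}$ by $\|(\lambda-\theta_\nu)_+^2\|_{W^{1,1}}$.

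Second, your Young-inequality treatment of the adiabatic term leaves a forcing $B(t)\lambda(t)^2$ that Gr\"onwall cannot annihilate; with nonzero forcing you obtain a bound of order $\lambda^2$, not $(\lambda-\theta_\nu)_+\equiv 0$. The paper avoids any forcing by playing the adiabatic term against the regularized dissipation rate already present (with a minus sign) in the equation: from $|\theta\,\partial_{F\theta}\cplpot:\partial_t\nabla y|\le C(\theta\wedge 1)\,\xi^{1/2}$ and $\theta\wedge 1\le\theta^{1/2}$ on $\{\theta_\nu\le\lambda\}$ one gets, for $\xi\le\nu^{-1}$, $C(\theta\wedge 1)\xi^{1/2}\le C^2\theta+\xi^{\rm reg}_{\nu,2}$, and the $\xi^{\rm reg}_{\nu,2}$ cancels against $-\xi^{\rm reg}_{\nu,2}\varphi$. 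The residual contributions from both this term ($A_5$) and from $A_4$ are then of the form $C\lambda\int_\Omega(\lambda-\theta_\nu)_+$ and are absorbed by the barrier term $A_1=-\tilde D\lambda\int_\Omega(\lambda-\theta_\nu)_+$ upon choosing $\tilde D$ large and $\lambda_0$ small. One obtains $\Pi\le 0$ directly, with no Gr\"onwall step and no leftover $\lambda^2$ term.
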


\MMM We emphasize that our proof relies on an approximation scheme for weak solutions and  \MMMMM that \EEE the statement of Theorem \ref{thm:positivity_of_temperature} holds for \rb weak solutions which arise as \MMM limits of such approximate solutions. Therefore, it is not guaranteed that \emph{every} weak solution in the sense of Definition \ref{def:weak_formulation-classic} \rb is strictly positive\ee, but we can prove only the existence of such a solution. \rb In this regard, the \martin  above-mentioned  \EEE approximation procedure \MMM may serve as a selection principle for physically relevant weak solutions. \EEE

\subsection{Linearization at a positive temperature}\label{sec:linearization}

\MMM In the second main result of this article, we perform a linearization of the system \eqref{strong_formulation}--\eqref{strong_formulation_boundary_conditions} for deformations close to the identity $\id$ and for temperatures close to a critical  \emph{positive} temperature   $\theta_c$. \EEE In particular, this overcomes a modeling issue of the linearization result in \III \cite{BFK}, \EEE where linearization was performed in a rather \ZZZ nonphysical \EEE case of temperatures close to absolute zero. \EEE \MMM  We fix a parameter  $\eps \in (0, 1]$ representing  the magnitude of the elastic strain.  For the temperature $\theta$ instead, we assume that $\theta - \theta_c$ is of order $\eps^\alpha$ for some $\alpha >0$.  In order to guarantee \ZZZ that \rb solutions \ZZZ  compl\rb y \EEE with \martin the smallness \EEE of strains and \AAA with small deviations \EEE from the critical temperature, \rb we   require appropriate $\eps$-scalings \ee for initial configurations, external loadings, boundary tractions, and external temperatures. More precisely, we assume that the data $f$, $g$, $\theta_{\flat}$ in \eqref{strong_formulation} and \eqref{strong_formulation_boundary_conditions} are replaced by 
\begin{align}\label{def:externalforces}
f_\eps \defas \eps f, \qquad g_\eps \defas \eps g, \qquad \theta_{\flat, \eps} = \theta_c + \eps^\alpha \mu_\flat
\end{align}
\ZZZ for \EEE $  f  \in W^{1, 1}(I; L^2(\Omega; \R^d))$,   $ g  \in W^{1, 1}(I; L^2(\Gamma_N; \R^d))$, and   $ \mu_\flat  \in  L^2(I; L^{ 2}(\Gamma))$,  and that the initial conditions in \eqref{strong_formulation_initial_conditions} take the form 
\begin{equation}\label{linearization_initial_conditions}
  y_{0, \eps} \defas \id + \eps u_0 \quad \text{and} \quad
  \theta_{0,\eps} \defas \theta_c + \eps^\alpha \mu_0,
\end{equation}
 where \ZZZ $u_0 \in W^{2,p}(\Omega;\R^d)\cap H^1_{\Gamma_D}(\Omega; \R^d)$  \EEE and $\mu_0 \in L^{2}(\Omega)$. \MMM To ensure that small strains imply small stresses, we need to assume that  \EEE
\begin{enumerate}[label=(W.\arabic*)]
  \setcounter{enumi}{3}
  \item \label{W_prefers_id} $\AAA  W \EEE (F, \theta_c) \geq c_0 \dist(F,SO(d))^2$ for all $F \in GL^+(d)$ and $\AAA  W \EEE (F, \theta_c) = 0$ if $F \in SO(d)$;
\end{enumerate}
\begin{enumerate}[label=(H.\arabic*)]
  \setcounter{enumi}{3}
  \item \label{H_prefers_id} $H(0) = 0$.
\end{enumerate} 
\MMM These are natural requirements to perform linearization for viscoelastic materials, see e.g.\ \cite{FiredrichKruzik18Onthepassage}. \EEE

\subsection*{Formal derivation of the linearized system}

\III Rewriting \eqref{strong_formulation}--\eqref{strong_formulation_boundary_conditions}
   in terms of  the \EEE
   \emph{rescaled displacement}  $u = \lp^{-1} (y - \id)$ and the \emph{rescaled temperature} $\mu = \lp^{-\alpha}(\theta - \theta_c)$,  dividing \eqref{strong_formulation_mechanical} by $\eps$ and \eqref{strong_formulation_thermal} by $\eps^\alpha$, and letting $\lp \to 0$ we obtain, at least formally, the system
\begin{equation}\label{viscoel_small}
\left\{
\begin{aligned}
  - \diver \big( \CW e(u) + \C_D e(\partial_t u)  + \mathbb{B}^{(\alpha)} \mu \big) &= f, \\
	\bar c_V \partial_t\mu - \diver(\mathbb{K}(\theta_c) \nabla \mu) 
  &= \CD^{(\alpha)} e(\partial_t u): e(\partial_t u)
    + \theta_c \hat{\mathbb{B}}   : e(\partial_t u),
\end{aligned}
\right.
\end{equation}
along with the boundary conditions
\begin{align}\label{viscoel_small_bdy}
	u &= 0 \text{ on } I \times \Gamma_D, \qquad
  \big( \CW e(u) + \CD e(\partial_t u)    + \mathbb{B}^{(\alpha)} \mu\big) \nu = g \text{ on } I \times \Gamma_N, \notag \\
 &\qquad \qquad \qquad \mathbb{K}(\theta_c) \nabla \mu \cdot  \nu  + \kappa \mu
   = \kappa \mu_\flat \text{ on } I \times \Gamma \ZZZ , \EEE
\end{align}
and \MMM the \EEE  initial conditions
\begin{equation}\label{initial_conds_lin}
  u(0) = u_0, \quad \mu(0) = \mu_0.
\end{equation}
Here, $e(u) \defas \frac{1}{2} (\nabla u + (\nabla u)^T)$ denotes the linearized strain tensor, and the tensors of elasticity and viscosity coefficients are given by
\begin{align}\label{def_WD_tensors}
  \C_W &\defas \partial^2_{F}\elpot(\Id) +\partial^2_{F} W^{\rm cpl}(\Id,\theta_c), &
  \C_D &\defas \partial^2_{\dot F} R(\Id,  0 ,   \theta_c) = 4D(\Id,\theta_c).
\end{align}
Moreover, $\overline c_V$   corresponds to a constant heat capacity of the linearized model \MMM which \EEE is related to  $c_V$  \MMM in  \EEE the nonlinear model  (see  \eqref{inten_mon}) \MMM by \EEE  
\begin{equation}\label{linearized_heat_capacity}
  \bar c_V \defas c_V(\Id, \theta_c).
\end{equation}
\rb Furthermore\MMM, $\hat{\mathbb{B}}$ is defined by 
\begin{align}\label{Bhatt}
\hat{\mathbb{B}} = \lim_{\eps \to 0}  \eps^{1-\alpha}   \partial_{F\theta} W^{\rm cpl} (\Id,\theta_c),
\end{align} \ZZZ and,   eventually, \EEE  the $\alpha$-dependent tensors are given by
\begin{align}\label{alpha_dep}
  \mathbb{B}^{(\alpha)} &= \begin{cases}
 +\infty &  \text{if } 0 < \alpha < 1,\\  
    \hat{\mathbb{B}}   & \text{if } \alpha =1,  \\ 0 & \text{if } \alpha >1 \ZZZ, \EEE \end{cases}\qquad  \qquad 
  &\CD^{(\alpha)} = \begin{cases}
    0 \ZZZ  \EEE & \text{if } 0 <  \alpha < 2 \III , \EEE \\
    \CD  & \text{if } \alpha =2, \\
    + \infty & \ZZZ \text{if } \EEE \alpha >2.
  \end{cases}
\end{align} 
\MMM As in the \MMM  linearization at zero temperature \ZZZ \cite{BFK}, \EEE the limiting model is only relevant in the range $\alpha \in [1,2]$ due to \eqref{alpha_dep}. In contrast to  \ZZZ \cite[Equation~(2.29)]{BFK}\MMM, the limiting heat equation in \eqref{viscoel_small} features the additional term \ZZZ $\theta_c \hat{\mathbb{B}} : e(\partial_t u) =  \theta_c \partial_{F\theta} W^{\rm cpl} (\Id,\theta_c): e(\partial_t u) $ if $\alpha = 1$. \MMM In order to allow for the entire range $\alpha \in [1,2]$, we suppose that the limit in \eqref{Bhatt} exists \emph{for all} $\alpha \in [1,2]$ which corresponds to an $\eps$-dependent coupling potential with $\partial_{F\theta} W^{\rm cpl} (\Id,\theta_c) \sim  \eps^{\alpha-1}$, i.e., to an asymptotically vanishing material parameter \MMMMM (for $\alpha>1$). \EEE This choice is also reflected in assumption \ref{C_adiabatic_term_vanishes} below.  (For \MMMMM notational \EEE convenience, we write $W^{\rm cpl}$ instead of $W^{\rm cpl}_\eps$. \MMMMM All conditions \ref{C_regularity}--\ref{C_Wint_regularity} and the ones mentioned below hold uniformly in $\eps$.)  

From a modeling point of view, \ZZZ $\mathbb{C}_W^{-1}\mathbb{B}^{(\alpha)}$  \EEE can be interpreted \EEE  as a  \emph{thermal expansion matrix} of the linearized evolution whereas $\hat{\mathbb{B}}$ \EEE plays the role of a heat source \MMMMM and \EEE sink, \ZZZ see \cite[Section~8.3]{KruzikRoubicek19mathmodels}. \EEE It is worth noting that, \AAA due to the presence of $\hat{\mathbb{B}}$, \EEE we cannot expect $\mu$ in \eqref{viscoel_small} to be nonnegative. \MMMMM In fact, \EEE it represents the (rescaled) deviation from the critical \EEE temperature $\theta_c$.  
Interestingly, the equations decouple if  $\alpha \in (1,2)$ and $\hat{\mathbb{B}} = 0$.
In the case $\alpha = 2$, the linearized heat equation additionally depends on the linearized mechanical equation via the linearized dissipation rate term $\C_D^{(\alpha)} e(\partial_t u) : e(\partial_t u)$ which can be interpreted as friction. \AAA The \EEE temperature contributes to the linearized mechanical equation only in the case $\alpha = 1$.

 Eventually, although the nonlinear system is given for a nonsimple material, as  a consequence of the growth conditions in \ref{H_bounds}, in the limit we obtain equations without spatial gradients of $e(u)$. \EEE 
\EEE

\ZZZ Finally, we address \martin the \ee properties of the tensors defined above. By Taylor expansion, polar decomposition, and frame indifference (see \ref{W_regularity}, \ref{W_frame_invariace}, \ref{C_regularity}, \ref{C_frame_indifference}, and \ref{D_quadratic}) one
can observe that the tensors $\mathbb{C}_W$ and $\mathbb{C}_D$ only depend on the symmetric part of the strain
and strain rate, respectively. Similarly, \ref{C_frame_indifference} implies that
$\hat{\mathbb{B}}$ is symmetric. Moreover, \AAA using additionally \EEE \ref{W_prefers_id} and \ref{D_bounds}, we see that the tensors induce positive definite quadratic forms on $\R^{d \times d}_{\rm sym}$, i.e., there exists a constant $c>0$ such that \begin{align}\label{positivedefiniteness}
\mathbb{C}_W[A,A] \geq c \,\vert \sym(A) \vert^2 \qquad \text{ and } \qquad \mathbb{C}_D[A,A] \geq c \,\vert \sym(A) \vert^2\quad \quad \text{for all $A \in \R^{d \times d}$}.
\end{align}
\textbf{Additional assumptions.}  \EEE
For the rigorous linearization procedure, we \AAA need \EEE to truncate the dissipation rate if $\alpha < 2$, similarly to \ZZZ \cite{BFK}. \EEE
More precisely, given $\Lambda \geq 1$, we define a truncated version $\xi^{(\alpha)} \colon GL^+(\R^d) \times \R^{d \times d} \times \R_+ \to \R_+$ of the dissipation rate as \MMM
\begin{equation}\label{def_xi_alpha}
  \xi^{(\alpha)}(F, \dot F, \theta) \defas \begin{cases}
    \xi(F, \dot F, \theta)
      &\text{if } \alpha \in [1,2]
      \text{ and } \xi(F, \dot F, \theta) \leq \Lambda, \\
   \Lambda^{1-\alpha/2}  \xi(F, \dot F, \theta)^{\alpha/2}
      &\text{if } \alpha \in [1,2]
      \text{ and } \xi(F, \dot F, \theta) > \Lambda.
  \end{cases}
\end{equation} \EEE
Notice that in the case $\alpha = 2$ no truncation is applied as we have $\xi^{(\alpha)} = \xi$.  \MMM For  $\alpha \in [1,2)$, the dissipation is changed for large strain rates. \martin Since we deal with small strains and strain rates, we heuristically have $\xi \le 1$, and the system is essentially not affected. \EEE  Indeed, this regularization has no influence on the effective model in  \eqref{viscoel_small}--\eqref{initial_conds_lin}.

With this regularization at hand, weak solutions in the nonlinear setting are defined as follows.  \EEE


\begin{definition}[Weak solution of the \AAA regularized \EEE nonlinear system]\label{def:weak_formulation}
A couple $(y, \theta) \colon I \times \Omega \to \R^d \times \R$ is called a \emph{weak solution} of the initial-boundary-value problem \ZZZ \eqref{strong_formulation}--\eqref{strong_formulation_boundary_conditions} \MMMMM (with initial conditions  $y_{0,\eps} \in \Wid $  and $\theta_{0,\eps}\in L^2_+(\Omega)$) \EEE \EEE if and only if $y \in L^\infty(I; \Wid) \cap H^1(I; H^1(\Omega; \R^d))$ with $y(0, \cdot)= y_{0,\eps}$  \lll a.e.~in $\Omega$, \EEE $\theta \in L^1(I; W^{1,1}(\Omega))$ with $\theta \NNN \geq \EEE 0$ a.e.~in $I \times \Omega$, and if it satisfies the identities  
\begin{equation}\label{weak_formulation_mechanical_eps}
\begin{aligned}
  &\intQ \pl_G
    \hypot(\nabla^2 y) \cdddot \nabla^2 z
    + \Big(
      \pl_F \felpot(\nabla y, \theta)
      + \pl_{\dot F} \disspot(\nabla y, \partial_t \nabla y, \theta)
    \Big) : \nabla z \di x \di t \\
  &\quad=  \intQ f_\eps \cdot z \di x \di t
    +  \int_I \int_{\Gamma_N} g_\eps \cdot z \di \haus^{d-1} \di t
\end{aligned}
\end{equation}
for any test function $z \in C^\infty(I \times \overline{\Omega}; \R^d)$ with $z = 0$ on $I \times \Gamma_D$, as well as  
\begin{equation}\label{weak_limit_heat_equation_eps}
\begin{aligned}
  &\intQ \hcm(\nabla y, \theta) \nabla \theta \cdot \nabla \vphi
    -\big(
      \drate^{(\alpha)}(\nabla y, \partial_t \nabla y, \theta)
      + \pl_F \cplpot(\nabla y, \theta) : \partial_t \nabla y
    \big) \vphi
     - \inten(\nabla y, \theta) \partial_t \vphi   \di x \di t \\
  &\quad = \kappa \int_I \int_{\Gamma} (\theta_{\flat,\eps} - \theta) \vphi \di \haus^{d-1} \di t
     +\int_\Omega \inten(\nabla y_{0,\eps}, \theta_{0,\eps}) \, \vphi(0) \di x
\end{aligned}
\end{equation}
for any test function $\vphi \in C^\infty(I \times \overline \Omega)$ with $  \varphi(T) = 0$.
\end{definition}

\ZZZ Existence of weak solutions in the sense of Definition~\ref{def:weak_formulation} for truncations of the form \eqref{def_xi_alpha} was shown in \cite[Proposition 2.5(ii)]{BFK}   for   the choice $\Lambda = 1$. The existence \AAA result \EEE  extends to general truncations as given in \eqref{def_xi_alpha} \MMM in a  straightforward way. \EEE

Due to technical reasons, \MMM for the rigorous linearization, we need additional assumptions: we \EEE require  \III that \EEE
 \begin{enumerate}[label=(C.\arabic*)]
  \setcounter{enumi}{7}
    \item   \label{C_adiabatic_term_vanishes}  \III there \EEE exists $\Lambda>0$ such that \EEE for all $F \in GL^+(d)$ it holds that  
  \begin{align*}
\vert \partial_{F\theta} W^{\rm cpl} (F,\theta_c) \vert \leq  
\MMM C_0 \lll (1+ \vert F \vert)  \Big( \EEE \eps^{\alpha-1}  \wedge \frac{1}{\Lambda} \lll \Big) \EEE \quad \  \text{ and } \ \quad  \vert \partial_{FF\theta} W^{\rm cpl} (F,\theta_c) \vert \leq  
C_0 \eps^{\alpha-1} \III ; \EEE
\end{align*} 
  \item \label{C_more_third_order_bounds} \III for \EEE all $F \in GL^+(d)$ and $\theta > 0$ it holds that
  \begin{equation*}
\MMMMM
    \vert  \partial_{F \theta \theta}   W^{\rm cpl}(F, \theta) \vert   \leq \frac{C_0(1+ \vert F \vert)}{ \III (\theta \vee 1)^2 \EEE }; \EEE
  \end{equation*}
  \item \label{C_entropy_vanishes} $\vert \MMM \partial_{\theta}^2 W^{\rm in}(F,\theta) \EEE \vert \leq c_V(F,\theta)  \III \frac{1}{2\theta_c}  \EEE$ for all $F \in GL^+(d)$ \asdf and $\theta > 0$. \EEE 
\end{enumerate}
\MMM The scaling in \ref{C_adiabatic_term_vanishes} has been motivated \III in the discussion \EEE below \eqref{alpha_dep}. The condition \EEE 
 \III in \ref{C_more_third_order_bounds} \MMMMM is a technical requirement \asdf and allows \EEE to \EEE control  the remainder resulting from Taylor expansions.
 \III  Due to \ref{C_entropy_vanishes}, the strategy \MMMMM in the proof of \III Theorem~\ref{thm:positivity_of_temperature} can be adjusted to derive a suitable $\eps$-dependent bound \MMMMM on  $( \theta_c- \theta)_+$, \III see Proposition~\ref{prop:lowerbound}. \EEE 
 \ZZZ Notice that the conditions in \ref{C_more_third_order_bounds}--\ref{C_entropy_vanishes} particularly refine the bounds in \ref{C_third_order_bounds}--\ref{C_Wint_regularity}.

\MMM 
\subsection*{Passage to the linearized model.} \EEE
Recalling \eqref{def_Wzero},  \MMM we start by defining \EEE  weak solutions of the linearized system \eqref{viscoel_small}--\eqref{initial_conds_lin}.

\begin{definition}[Weak solution of the linearized system]\label{def:weak_form_linear_evol}
A pair $(u, \mu) \colon I \times \Omega \to \R^d \times \R$ is a \emph{weak solution} to the initial-boundary-value problem \eqref{viscoel_small}--\eqref{initial_conds_lin} if $u \in H^1(I; H^1_{\Gamma_D}(\Omega; \R^d))$ with $u(0) = u_0$ \lll a.e.~in $\Omega$\EEE, $\mu \in L^1(I; W^{1,1}(\Omega))$, and if the following identities hold true:
\begin{equation}\label{linear_evol_mech}
  \intQ \big( \CW e(u) + \CD e(\partial_t u)   + \mathbb{B}^{(\alpha)} \mu  \big) : \nabla z \di x \di t
    = \intQ f \cdot z \di x \di t + \intSN g \cdot z \di \haus^{d-1} \di t
\end{equation}
for any $z \in C^\infty(I \times \overline\Omega; \R^d)$ with $z = 0$ on $I \times \Gamma_D$, as well as
\begin{align}\label{linear_evol_temp}
  &\intQ
      \mathbb{K}(\theta_c) \nabla \mu \cdot \nabla \vphi
      - \CD^{(\alpha)} e(\partial_t u) : e(\partial_t u) \vphi
      -  \theta_c \hat{\mathbb{B}} : e(\partial_t u) \vphi
      - \bar c_V \mu \partial_t \vphi \di x \di t \notag \\
  &= \kappa \int_I \int_\Gamma (\mu_\flat - \mu) \vphi \di \haus^{d-1} \di t
    + \bar c_V \int_\Omega \mu_0 \vphi(0) \di x
\end{align}
for any $\vphi \in C^\infty(I \times \overline\Omega)$   with $\vphi(T) = 0$. 
\end{definition}
Indeed, it is a standard matter to check that sufficiently smooth weak solutions lead to the classical formulation \AAA \eqref{viscoel_small}--\eqref{initial_conds_lin}. \EEE We are ready to state  our second main result.
\begin{theorem}[Passage to linearized thermoviscoelasticity at positive temperatures]\label{thm:linearization_positive_temp}
\MMM Suppose that \ref{W_prefers_id}, \ref{H_prefers_id},   and \ref{C_third_order_bounds}--\ref{C_entropy_vanishes} hold. \EEE Given $\alpha \in \MMM [1, \EEE 2]$ and $\eps \in (0, 1]$, we assume that the \MMM data and the initial conditions are as in \eqref{def:externalforces}--\eqref{linearization_initial_conditions}. For $\alpha=1$, we further assume that $\Lambda$ in \eqref{def_xi_alpha} and \ref{C_adiabatic_term_vanishes}  is \rb chosen \ee large  enough. \EEE
Then, the following holds true:
\begin{enumerate}[label=(\alph*)]
\item\label{linearization_comp} There exists a sequence of weak solutions $((y_{\eps}, \theta_{\eps}))_\eps$ in the sense of Definition~\ref{def:weak_formulation}  such that the \rb rescaled \ee functions $u_{\eps} \defas \eps^{-1} (y_{\eps} - \id)$ and $\mu_\eps \ZZZ \defas \EEE \eps^{-\alpha}(\theta_{\eps} - \theta_c)$ satisfy  
\begin{align}
  u_{\eps} &\to u \text{ in } L^\infty(I;H^1(\Omega; \R^d)), &
  \partial_t u_{\eps} &\to \partial_t u \text{ in } L^2(I; H^1(\Omega; \R^d)), \label{convergence:u} \\ 
  \mu_{\eps} &\to \mu \text{ in } L^s(I \times \Omega), & 
  \mu_{\eps} &\weakly \mu \text{ weakly in } L^r(I; W^{1, r}(\Omega)) \label{convergence:mu}
\end{align}
for any $s \in [1,\frac{2}{\alpha} + \frac{4}{\alpha d})$ and $r \in [1,  \frac{2d+4}{\alpha d +2})$. 
\item \label{linearization_limit} The limit $(u, \mu)$ from \ref{linearization_comp} is the unique weak solution of \eqref{viscoel_small}--\eqref{initial_conds_lin} in the sense of Definition~\ref{def:weak_form_linear_evol}.
\end{enumerate} 
\end{theorem}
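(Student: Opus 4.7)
\medskip

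The plan is to follow an existence-then-compactness scheme: for each $\eps \in (0,1]$ pick a weak solution $(y_\eps,\theta_\eps)$ of the $\eps$-regularized nonlinear system (these exist by the remark following Definition~\ref{def:weak_formulation}), establish $\eps$-uniform a priori bounds on the rescaled quantities $u_\eps=\eps^{-1}(y_\eps-\id)$ and $\mu_\eps=\eps^{-\alpha}(\theta_\eps-\theta_c)$, extract a subsequence converging to some $(u,\mu)$ in the topologies \eqref{convergence:u}--\eqref{convergence:mu}, and identify the limit as the unique weak solution of the linearized system \eqref{viscoel_small}--\eqref{initial_conds_lin}. The overall strategy mirrors \cite{BFK}, but at a positive critical temperature the decisive new difficulty is producing a uniform lower bound on $\theta_\eps$ of the correct order in $\eps$.

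For the mechanical estimates I would test \eqref{weak_formulation_mechanical_eps} with $\partial_t y_\eps$ and combine the resulting energy identity with \ref{W_prefers_id}, \ref{H_prefers_id}, the coercivity of $R$ from \ref{D_bounds}, and a geometric rigidity / nonlinear Korn inequality adapted to the initial scaling $y_{0,\eps}=\id+\eps u_0$. This yields $u_\eps$ bounded in $L^\infty(I;H^1(\Omega;\R^d))\cap L^\infty(I;W^{2,p}(\Omega;\R^d))$ and $\partial_t u_\eps$ bounded in $L^2(I;H^1(\Omega;\R^d))$, exactly as in \cite{BFK}. On the thermal side, a total-energy identity (testing the heat equation with the constant $1$) yields an $L^\infty(I;L^1(\Omega))$ bound on $\inten(\nabla y_\eps,\theta_\eps)$, and a Boccardo--Gallou\"et-type argument applied to \eqref{weak_limit_heat_equation_eps} transforms it into the required uniform $L^r(I;W^{1,r}(\Omega))$ and $L^s(I\times\Omega)$ bounds on $\mu_\eps$ for the stated exponent ranges.

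The crux of the proof, and what assumptions \ref{C_adiabatic_term_vanishes}--\ref{C_entropy_vanishes} are tailored for, is the uniform estimate $\norm{(\theta_c-\theta_\eps)_+}_{L^\infty(I;L^2(\Omega))} \le C\eps^\alpha$ (the missing lower bound, referred to as Proposition~\ref{prop:lowerbound}). I would obtain it by specializing the strategy of \eqref{FFF} to the \emph{constant} choice $\lambda\equiv\theta_c$: the rigorous chain rule of Section~\ref{sec:chainrule} (applied to a suitable regularization and then passed to the limit) justifies using $-(\theta_c-\theta_\eps)_+$ as a test function in \eqref{weak_limit_heat_equation_eps} and computing
\begin{equation*}
\frac{\di}{\di t}\int_\Omega \tfrac12(\theta_c-\theta_\eps)_+^2\,\di x
+ \int_{\{\theta_\eps<\theta_c\}}\hcm(\nabla y_\eps,\theta_\eps)\nabla\theta_\eps\cdot\nabla\theta_\eps\,\di x
\le \text{source, coupling and boundary terms}.
\end{equation*}
The conduction term on the left is coercive by \eqref{spectrum_bound_K}; on the right, the boundary contribution is controlled by $\theta_{\flat,\eps}-\theta_c=\eps^\alpha\mu_\flat$, the coupling contribution $\partial_F W^{\rm cpl}:\partial_t\nabla y_\eps$ is of order $\eps^\alpha$ thanks to \ref{C_adiabatic_term_vanishes} combined with the mechanical bound on $\partial_t\nabla y_\eps$, and the truncated dissipation rate $\xi^{(\alpha)}$ enters with a favorable sign. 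Assumption \ref{C_entropy_vanishes} bounds $|\partial_\theta^2 W^{\rm in}|$ by $c_V/(2\theta_c)$, so that the chain-rule terms stemming from the $\theta$-dependence of $c_V$ can be absorbed on the left; Gronwall then delivers the desired $\eps^\alpha$-estimate. This is the step I expect to be the main technical obstacle, both because of the chain-rule justification and because of the delicate sign bookkeeping of the coupling and dissipation terms.

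With these bounds in hand, compactness is standard: strong convergence of $u_\eps$ in $L^\infty(I;H^1)$ and of $\partial_t u_\eps$ in $L^2(I;H^1)$ follows from the uniform convexity implicit in \ref{D_bounds} together with the a priori estimates, and strong convergence of $\mu_\eps$ in $L^s(I\times\Omega)$ is obtained via Aubin--Lions using a negative-Sobolev bound on $\partial_t\mu_\eps$ read off from \eqref{weak_limit_heat_equation_eps}. To pass to the limit, I Taylor-expand $\partial_F W^{\rm el}$, $\partial_F W^{\rm cpl}$, $\partial_{\dot F}R$, and $c_V$ around $(\Id,\theta_c)$, reduce to symmetric parts via frame indifference, and divide \eqref{weak_formulation_mechanical_eps} by $\eps$ and \eqref{weak_limit_heat_equation_eps} by $\eps^\alpha$; the case distinction \eqref{alpha_dep} then emerges from \ref{C_adiabatic_term_vanishes}, and the truncation \eqref{def_xi_alpha} becomes inactive in the limit since $\xi(\nabla y_\eps,\partial_t\nabla y_\eps,\theta_\eps)=O(\eps^2)$ on the bulk, provided $\Lambda$ is chosen large enough when $\alpha=1$. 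This yields \eqref{linear_evol_mech}--\eqref{linear_evol_temp} and proves part~\ref{linearization_comp}. For part~\ref{linearization_limit}, uniqueness of the linearized solution is standard for the coupled linear parabolic system: testing the difference of two solutions with itself in the mechanical equation and with $\mu_1-\mu_2$ in the heat equation and using \eqref{positivedefiniteness} together with the uniform ellipticity of $\mathbb{K}(\theta_c)$ closes by Gronwall.
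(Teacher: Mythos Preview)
Your high-level scheme is the right one and you correctly single out the lower bound $\norm{(\theta_c-\theta_\eps)_+}_{L^\infty(I;L^2)}=O(\eps^\alpha)$ as the heart of the matter. Two structural points, however, are missing and would block the argument as written.

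\textbf{The chain rule cannot be applied to the weak solutions you start from.} Theorem~\ref{thm:chainrule} requires $(y,\theta)\in\mathcal{S}_{\rm chain}$, in particular $\theta\in L^2(I;H^1(\Omega))$ and $w=W^{\rm in}(\nabla y,\theta)\in H^1(I;(H^1(\Omega))^*)$. Weak solutions in the sense of Definition~\ref{def:weak_formulation} only give $\theta\in L^1(I;W^{1,1}(\Omega))$ and the source term $\xi^{(\alpha)}$ is merely $L^1$, so neither regularity is available. The paper's remedy is to introduce the second layer of approximation $\xi^{\rm reg}_{\nu,\alpha}$ of Definition~\ref{def:weak_solutions_regularized}: for $\nu$-regularized solutions $(y_{\eps,\nu},\theta_{\eps,\nu})$ the source is $L^2$, which forces $\partial_t w\in L^2(I;(H^1)^*)$ and yields membership in $\mathcal{S}_{\rm chain}$. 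All the a priori bounds (Propositions~\ref{lem:fineapriori-new}--\ref{prop:scalingtotalenergy}) are therefore proved for $\nu$-regularized solutions, uniformly in $\nu$, and only afterwards does one pass $\nu\to 0$ via Proposition~\ref{thm:existence_positivity_regularized}(iv). Your phrase ``applied to a suitable regularization and then passed to the limit'' does not make this clear; the regularization is of the \emph{equation}, not of the test function, and it determines which solutions $(y_\eps,\theta_\eps)$ you end up with (not every weak solution necessarily satisfies the bounds).

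\textbf{The mechanical and thermal bounds are circularly coupled and need a bootstrap.} You treat the mechanical estimate $\norm{\partial_t\nabla y_\eps}_{L^2}\le C\eps$ as available before the lower temperature bound, and then use it to control the coupling term. In fact, testing the mechanical equation with $\partial_t y_\eps$ produces the residual $\int\!\!\int(\partial_F W^{\rm cpl}(\nabla y,\theta)-\partial_F W^{\rm cpl}(\nabla y,\theta_c)):\partial_t\nabla y$, whose control requires the temperature deviation bound. The paper resolves this by proving the lower bound (Proposition~\ref{prop:lowerbound}) and the auxiliary energy bound (Proposition~\ref{prop:scalingtotalenergy}) each with a leftover $\eps_{\alpha,\Lambda}^{2/\alpha}\norm{\xi}_{L^1}^{1/\alpha}$ on the right, and then closes the loop in the proof of Proposition~\ref{lem:fineapriori-new} via a Young-type absorption (for $\alpha>1$) or by taking $\Lambda$ large (for $\alpha=1$). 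Without making this circular structure explicit, your argument has a gap at the point where the ``mechanical bound on $\partial_t\nabla y_\eps$'' is invoked.

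A smaller remark: for $\alpha=1$ uniqueness of the linear limit is not completely standard, since the equations are genuinely coupled through $\hat{\mathbb{B}}$. The paper's Step~5 tests the mechanical difference with $\partial_t(u_2-u_1)$ and the heat difference with $(\bar c_V)^{-1}\theta_c^{-1}(\mu_2-\mu_1)$ and sums, so that the two $\hat{\mathbb{B}}$ cross-terms cancel exactly; a plain Gronwall on each equation separately would not close.
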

\ZZZ Observe that \EEE it is not necessary to select a subsequence in the previous theorem due to the uniqueness of \AAA the solution to \EEE the limit problem.


\subsection*{Example}

\ZZZ In \cite{MielkeRoubicek2020}, the \rb authors provide \ZZZ a family of free energy potentials 
modeling austenite-martensite transformations in so-called shape-memory alloys, where the free energy potential in \eqref{eq: free energy}  takes the form
\begin{equation}\label{freeenergyexample}
  W(F, \theta) = (1 - a(\theta)) W_M(F) + a(\theta) W_A(F) + C_1 \theta (1 -  \log\theta).
\end{equation}
\martin Here, $C_1 > 0$ denotes a constant, and $a\colon \R_+ \to [0,1]$ \AAA represents \EEE the volume fraction \AAA  between   austenite and \EEE  martensite, which we assume to depend only on temperature. \EEE
Moreover, $W_M$ and $W_A$ denote the potentials governing the  martensite \EEE and \AAA the \EEE austenite states, respectively.
For  convenience of the reader,   \AAA  in \lll Lemma~\ref{lem:compatibilityshapememory} and \EEE Lemma~\ref{lem:specificchoice} of  Appendix \ref{expl:shapememory} we address \ZZZ suitable choices of $W_M$, $W_A$, and $a$ \rb complying \ZZZ with all the aforementioned conditions \ref{W_regularity}--\ref{W_prefers_id} \rb as well as \ZZZ \ref{C_regularity}--\ref{C_entropy_vanishes}.

\section{Regularized solutions} \label{sec: reg sol}

\MMM
The main results of this paper (Theorem \ref{thm:positivity_of_temperature} and Theorem \ref{thm:linearization_positive_temp}) crucially rely on the chain rule established in Section \ref{sec:chainrule}.  For technical reasons, \III   this chain rule \MMMMM is proved \III for solutions $(y, \theta)$ that particularly satisfy \EEE \ZZZ $W^{\rm in}(\nabla y, \theta) \in H^1(I;(H^1(\Omega))^*)$. \MMM To achieve this regularity, we first solve an  approximate system of \ZZZ equations \ZZZ where   $\xi$ in \eqref{weak_limit_heat_equation} or \EEE $\xi^{(\alpha)}$ in \eqref{weak_limit_heat_equation_eps} \rb are \ZZZ replaced by a regularization $\xi_{\nu, \alpha}^{\rm reg}$ defined in \eqref{def_xi_alpha_reg} below. Although the applicability of the chain rule relies on the regularity  \AAA of \EEE approximate solutions, the \MMMMM corresponding \EEE a priori bounds \AAA do \EEE not depend on the regularization itself. \AAA In particular, as $\nu \to 0$,  \EEE approximate solutions  \rb converge \MMM to solutions as given in  Definition \ref{def:weak_formulation-classic} and Definition \ref{def:weak_formulation}, respectively.
\rb This will allow us to \MMM recover properties of the original \ZZZ solutions, namely, \MMM positivity of the temperature and a priori bounds.

To keep the argument concise and  to treat both settings at the same time, we consider in the sequel general parameters   $\eps \in (0,1]$ and $\alpha \in [1, 2]$. We mention, however,  that \III in the context of \EEE Theorem \ref{thm:positivity_of_temperature} it suffices to set $\eps=1$ and $\alpha = 2$. \ZZZ Indeed, the notion of weak \rb solutions \ZZZ in Definition~\ref{def:weak_formulation-classic} and Definition~\ref{def:weak_formulation} coincide in this case. \EEE

Given $\nu \in \ZZZ (0,1] \EEE$, consider $\xi^{\rm reg}_{\alpha, \nu} \colon GL^+(d) \times \R^{d \times d} \times \R_+ \to \R_+$ given by
\begin{equation}\label{def_xi_alpha_reg}
  \xi_{\nu, \alpha}^{\rm reg}(F, \dot F, \theta)
  \defas \begin{cases}
    \xi^{(\alpha)}(F, \dot F, \theta)
    &\text{if } \xi^{(\alpha)} \leq \nu^{-1}, \\
    \nu^{1/\alpha-1} \xi^{(\alpha)}(F, \dot F, \theta)^{1/\alpha}
    &\text{else.}
  \end{cases}
\end{equation}
\AAA The key idea of the regularization is that the available  a priori  bound  $\xi(\nabla y, \partial_t \nabla  y, \theta) \in L^1(I \times \Omega)$ for weak solutions  ensures a required $L^2(I \times \Omega)$ bound on $\xiregnu (\nabla y, \partial_t \nabla  y, \theta)$. \EEE \MMM Moreover, notice that $\xiregnu \nearrow \xi^{(\alpha)}$ as $\nu \searrow 0$. We start with the definition of weak solutions. \EEE

\begin{definition}[Weak solution of \AAA $\nu$-regularized \MMMMM problem\EEE]\label{def:weak_solutions_regularized} 
 A couple $(y, \theta)$ is called a \emph{weak solution} to a regularized version of the initial-boundary-value problem  \ZZZ \eqref{strong_formulation}--\eqref{strong_formulation_boundary_conditions} \MMMMM (with initial conditions  $y_{0,\eps} \in \Wid $  and $\theta_{0,\eps}\in L^2_+(\Omega)$) \EEE if and only if $y \in L^\infty(I; \Wid) \cap H^1(I; H^{1}(\Omega; \R^d))$ with $y(0)= y_{0,\eps}$  \lll a.e.~in $\Omega$, \EEE $\theta \in L^2(I; H^1(\Omega)) \cap \ZZZ C(I; L^2(\Omega)) \EEE $ with $\theta \geq 0$ a.e.~in $I \times \Omega$, $\theta(0) = \theta_{0,\eps}$  \lll a.e.~in $\Omega$, \EEE and $w \defas W^{\rm in}(\nabla y, \theta) \in H^1(I; (H^{1}(\Omega))^*)$, and if it satisfies the identities
\begin{equation}\label{weak_limit_mechanical_equation_nu}
\begin{aligned}
  &\intQ \pl_G
    \hypot(\nabla^2 y) \cdddot \nabla^2 z
      + \big(
      \pl_F \felpot(\nabla y, \theta)
      + \pl_{\dot F} \disspot(\nabla y, \partial_t \nabla y, \theta)
    \big) : \nabla z    \di x \di t \\ \rb &\quad
  = \ee \intQ f_\eps \cdot z \di x \di t
    +   \int_I \int_{\Gamma_N} g_\eps \cdot z \di \haus^{d-1} \di t
\end{aligned}
\end{equation}
for any test function $z \in C^\infty(I \times \overline{\Omega}; \R^d)$ with $z = 0$ on $I \times \Gamma_D$, as well as  
\begin{equation}\label{weak_limit_heat_equation_nu}
\begin{aligned}
  &\intQ \hcm(\nabla y, \theta) \nabla \theta \cdot \nabla \vphi
    -\big(
      \NNN\xi_{\nu, \alpha}^{\rm reg}\EEE(\nabla y, \partial_t \nabla y, \theta)
      + \pl_F \cplpot(\nabla y, \theta) : \partial_t \nabla y
    \big) \vphi
     \di x \di t  + \int_I \langle \partial_t w , \vphi \rangle \di t  \\
  &\quad= \kappa \int_I \int_{\Gamma} (\theta_{\flat,\eps} - \theta) \vphi \di \haus^{d-1} \di t
\end{aligned}
\end{equation}
for any test function $\vphi \in \NNN L^2(I ;  H^1( \Omega))\EEE$, where  \rb $\langle \cdot, \cdot \rangle$ \ee in \eqref{weak_limit_heat_equation_nu} \III denotes \EEE the dual pairing of    $H^1(\Omega)$ and $(H^{1}(\Omega))^*$.
\end{definition}

\AAA
The weak formulations in Definition~\ref{def:weak_formulation-classic} and Definition~\ref{def:weak_formulation}  differ from \eqref{weak_limit_heat_equation_nu} by an integration by parts: here, the time derivative is applied to the solution $w$ instead of $\varphi$.  This stronger formulation has the advantage that the class of test functions in \eqref{weak_limit_heat_equation_nu} is larger and does not require regularity in time. In particular, this will allow us to test \eqref{weak_limit_heat_equation_nu} with functions of the form  $\vphi\indic_{[0,t]}$  for any $t \in I$\ee. \EEE   

\III
Before addressing the existence of weak solutions, we  \EEE recall  \eqref{mechanical} and introduce the functionals $\ell_\eps(t)$ on $H^1(\Omega;\R^d)$ defined by
\begin{align}\label{def:forcefunctional}
\langle \ell_\eps(t), v \rangle \defas \int_\Omega f_\eps(t) \cdot v \di x + \int_{\Gamma_N} g_\eps(t) \cdot v \di \mathcal{H}^{d-1}\rb, \ZZZ
\end{align}  \EEE \MMM 
\rb where $\langle \cdot, \cdot \rangle$ denotes the dual pairing between $H^1(\Omega; \R^d)$ and  $(H^1(\Omega; \R^d))^*$. \III
Then,  the following existence and convergence result holds. \EEE
\begin{proposition}[\lll Solutions to the regularized system]\label{thm:existence_positivity_regularized} 
\MMMMM For each \EEE $\nu>0$, $\alpha \in [1,2]$, and $\eps \in (0, 1]$,  
the following holds:   

\noindent  \lll \emph{(i)\hspace*{5.6pt}Existence:}  There exists  \AAA a  \MMMMM  $\nu$-regularized \EEE weak solution $(\yepsnu, \thetaepsnu)$  \EEE  in the sense of Definition \ref{def:weak_solutions_regularized}.

\noindent  \lll \emph{(ii) Energy balance:} \III For all~$t \in I$,  the solution $(\yepsnu, \thetaepsnu)$ satisfies \EEE
\begin{align}\label{energybalanceregularized}
&\mathcal{M}(\yepsnu(t)) +   \int_0^t \int_\Omega \xi(\nabla \yepsnu, \dotnablayepsnu, \thetaepsnu) \di x \di s  \nonumber \\
& \qquad = \mathcal{M}(\yepsnu(0))  + \int_0^t \langle \ell_\eps(s), \dotyepsnu \AAA (s) \EEE \rangle \di s - \int_0^t \int_\Omega \partial_F W^{\rm cpl} (\nabla \yepsnu, \thetaepsnu) : \dotnablayepsnu\di x \di s.
\end{align}

\noindent  \lll \emph{(iii) Uniform bounds:} \III There exists some $M>0$  such that for all $\eps  \in (0,1]$ and $\nu \in (0,1]$ \III the solution $(\yepsnu, \thetaepsnu)$ satisfies \EEE \begin{align}
 &    \esssup_{t \in I}   \III \mechen(   \yepsnu(t) ) \EEE  \leq M, \label{toten_bound_scheme}\\
   &           \norm{\yepsnu}_{L^\infty(I;W^{2, p}(\Omega)\ZZZ ) \EEE} \leq M  , \label{higherorerbounds} \\
&\norm{\yepsnu}_{L^\infty(\ZZZ I; \EEE C^{1, 1-d/p}(\Omega))} \leq M, \ \
    \norm{(\nabla \yepsnu)^{-1}}_{L^\infty(\ZZZ I; \EEE C^{1 - d/p}(\Omega))} \leq M, \ \
    \det(\nabla \yepsnu) \geq \frac{1}{ M}  \text{ \rb a.e. \MMM in } I \times \Omega, \label{pos_det} \\
&  \textstyle \int_I \int_\Omega \ZZZ \xi (\nabla \yepsnu, \dotnablayepsnu, \thetaepsnu) \di x \di t \leq M, \label{bound:dissipationepsnu}
\end{align} \EEE
where $M$ is independent of $\nu$, $\eps$, \MMM and $\Lambda$ in \III \eqref{def_xi_alpha}. \EEE

\noindent  \lll \emph{(iv) Convergence:} \LLL Given a sequence of solutions $(\yepsnu, \thetaepsnu)_\nu$ \ZZZ  to the $\nu$-regularized  \EEE system in Definition~\ref{def:weak_solutions_regularized}, \EEE there exists a subsequence (not relabeled) such that \III   $\yepsnu \to y_\eps$ in $L^\infty(I; W^{1,\infty}(\Omega;\R^d))$, $\thetaepsnu \to \theta_\eps$ in  $L^1 (I \times \Omega)$, and $(y_\eps,\theta_\eps)$ \EEE is a weak solution to the boundary value problem \eqref{strong_formulation}--\eqref{strong_formulation_boundary_conditions} in the sense of \ZZZ   Definition~\ref{def:weak_formulation}.
 \end{proposition}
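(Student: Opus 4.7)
The plan is to construct $\nu$-regularized solutions via a time-discretization (minimizing-movements) scheme analogous to the one in \cite{BFK, MielkeRoubicek2020}, derive a priori bounds uniform in $\nu$ from a discrete energy balance, pass to the limit in the time step to obtain (i)--(iii), and finally let $\nu \to 0$ using the uniform bounds to establish (iv).

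For part (i), fix a time step $\tau > 0$, set $(y_\tau^{(0)}, \theta_\tau^{(0)}) \defas (y_{0,\eps}, \theta_{0,\eps})$, and inductively construct $y_\tau^{(k)}$ as a minimizer over $\Wid$ of
\begin{align*}
y \mapsto \mechen(y) + \tau \diss\Big(y_\tau^{(k-1)}, \tfrac{y - y_\tau^{(k-1)}}{\tau}, \theta_\tau^{(k-1)}\Big) + \int_\Omega \cplpot(\nabla y, \theta_\tau^{(k-1)}) \di x - \langle \ell_\eps(k\tau), y \rangle,
\end{align*}
whose existence follows from the direct method (coercivity from \ref{W_lower_bound}, \ref{H_bounds}, and \ref{C_lipschitz}; weak lower semicontinuity from \ref{H_regularity} combined with the compact embedding $W^{2,p}(\Omega) \hookrightarrow \hookrightarrow C^{1,1-d/p}(\overline{\Omega})$, thanks to $p \geq 2d$). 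Given $y_\tau^{(k)}$, solve a linear elliptic problem for $\theta_\tau^{(k)}$ with source $\xiregnu(\nabla y_\tau^{(k)}, \tfrac{\nabla y_\tau^{(k)} - \nabla y_\tau^{(k-1)}}{\tau}, \theta_\tau^{(k-1)})$, which by the very construction of $\xiregnu$ lies in $L^2(\Omega)$, so that Lax--Milgram applies. Interpolating in time and passing to the limit $\tau \to 0$ via Aubin--Lions then yields a weak solution of the $\nu$-regularized system; here the $L^2$-source crucially produces the extra regularity $w = \inten(\nabla \yepsnu, \thetaepsnu) \in H^1(I; (H^1(\Omega))^*)$ demanded in Definition~\ref{def:weak_solutions_regularized}.

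For parts (ii)--(iii), the energy balance \eqref{energybalanceregularized} results from testing \eqref{weak_limit_mechanical_equation_nu} with $\dotyepsnu$ (admissible by the regularity $\yepsnu \in H^1(I;H^1)\cap L^\infty(I;W^{2,p})$), using \eqref{diss_rate} to identify the dissipation rate, and rearranging the coupling term. The uniform bound \eqref{toten_bound_scheme} then follows via a Gr\"onwall argument: the term $\int \pl_F \cplpot : \dotnablayepsnu$ is absorbed into the dissipation by Young's inequality together with \ref{C_lipschitz} and the coercivity in \ref{D_bounds}. The pointwise bounds \eqref{higherorerbounds}--\eqref{pos_det} follow from the resulting $L^\infty$-control of $\mechen$, the Sobolev embedding $W^{2,p} \hookrightarrow C^{1,1-d/p}$, and the determinant lower bound built into \ref{W_lower_bound} via $q \geq pd/(p-d)$, while \eqref{bound:dissipationepsnu} is a direct consequence of the balance. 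Crucially, because $\xiregnu \leq \xi^{(\alpha)} \leq \xi$ and the left-hand side of \eqref{energybalanceregularized} involves $\xi$, none of these estimates depends on $\nu$ or $\Lambda$.

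For part (iv), the uniform bounds yield a subsequence with $\yepsnu \weaklystar y_\eps$ in $L^\infty(I; W^{2,p})$ and $\dotnablayepsnu \weakly \dotnablayepsnuu$ in $L^2(I \times \Omega)$; an Aubin--Lions argument based on the compact embedding $W^{2,p} \hookrightarrow \hookrightarrow C^{1,1-d/p}$ upgrades this to $\yepsnu \to y_\eps$ in $L^\infty(I; W^{1,\infty}(\Omega;\R^d))$, preserving the determinant lower bound. For the temperature, a Boccardo--Gallou\"{e}t-type testing argument applied to \eqref{weak_limit_heat_equation_nu} yields uniform $L^r(I; W^{1,r}(\Omega))$-bounds for some $r > 1$; combined with the $H^1(I; (H^1)^*)$-regularity of $w$ (and the comparability \eqref{inten_lipschitz_bounds}), Aubin--Lions delivers $\thetaepsnu \to \theta_\eps$ a.e.\ and in $L^1(I\times\Omega)$. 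The main obstacle is passage to the limit in the nonlinear source $\xiregnu(\nabla \yepsnu, \dotnablayepsnu, \thetaepsnu)$: to overcome it I would first upgrade $\dotnablayepsnu \to \dotnablayepsnuu$ to strong convergence in $L^2(I\times\Omega)$ by a Minty-type argument exploiting the strict monotonicity of $\dot F \mapsto \pl_{\dot F} R(F, \dot F, \theta)$ that follows from \ref{D_quadratic}--\ref{D_bounds}, and then combine this with the monotone convergence $\xiregnu \nearrow \xi^{(\alpha)}$ and the uniform $L^1$-bound \eqref{bound:dissipationepsnu} to apply Vitali's theorem. The a.e.\ convergence preserves $\theta_\eps \geq 0$, and passage to the limit in \eqref{weak_limit_mechanical_equation_nu}--\eqref{weak_limit_heat_equation_nu} yields a weak solution in the sense of Definition~\ref{def:weak_formulation}.
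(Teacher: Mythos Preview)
Your overall strategy is sound and broadly parallels the paper's, which largely proceeds by citation to \cite{BFK,MielkeRoubicek2020} after observing that $\xiregnu$ has the same qualitative behavior as the regularization in \cite[Equation~(2.35)]{BFK}. However, there are two genuine gaps.

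First, in part (ii) you claim that testing \eqref{weak_limit_mechanical_equation_nu} with $\dotyepsnu$ is ``admissible by the regularity $\yepsnu \in H^1(I;H^1)\cap L^\infty(I;W^{2,p})$''. This is not correct: the weak formulation contains the second-gradient term $\pl_G H(\nabla^2 y) \cdddot \nabla^2 z$, and for this to make sense you need $\nabla^2 z \in L^{p'}(I\times\Omega)$. But you have no control on $\partial_t\nabla^2\yepsnu$; the available regularity $\dotyepsnu \in L^2(I;H^1)$ only gives $\nabla^2\dotyepsnu$ as a distribution. The paper addresses exactly this point: the energy balance is obtained not by direct testing but via a chain rule for the (nonsmooth, nonconvex) functional $t\mapsto \mechen(\yepsnu(t))$, as in \cite[Equation~(5.9)]{MielkeRoubicek2020}. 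Without this, your derivation of \eqref{energybalanceregularized} fails, and with it the uniform bounds in (iii).

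Second, in part (i) you describe the thermal step as ``solve a linear elliptic problem for $\theta_\tau^{(k)}$ \ldots\ so that Lax--Milgram applies''. The heat equation is genuinely nonlinear in $\theta$: the heat capacity $c_V(\nabla y,\theta)$, the conductivity $\hcm(\nabla y,\theta)$, and the adiabatic term $\theta\,\partial_{F\theta}\cplpot$ all depend on $\theta$. A semi-implicit linearization (freezing these coefficients at $\theta_\tau^{(k-1)}$) is possible, but you then need to verify nonnegativity of $\theta_\tau^{(k)}$ and the pointwise comparability \eqref{inten_lipschitz_bounds}, neither of which follows from Lax--Milgram. The scheme actually used in \cite{BFK,MielkeRoubicek2020} is variational also in the thermal variable (working with the internal energy $w=\inten(\nabla y,\theta)$), and the paper here simply invokes that existence result, then separately upgrades to the regularity $w\in H^1(I;(H^1(\Omega))^*)$ and $\theta\in C(I;L^2(\Omega))$ required by Definition~\ref{def:weak_solutions_regularized} using the $L^2$-bound on $\xiregnu$ and Lemma~\ref{lem:deriphi}.
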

\III The existence of solutions in similar scenarios, even without regularization, can be shown by a variational time-discretization scheme \cite{BFK, MielkeRoubicek2020}.
We highlight that versions of the statements \eqref{toten_bound_scheme}--\eqref{bound:dissipationepsnu} with fine dependence on the scaling of external loadings and initial data will be proved in Proposition~\ref{lem:fineapriori} below. 
Moreover, part (iv) of the statement also encompasses convergence to weak solutions in the sense of Definition \ref{def:weak_formulation-classic} as this corresponds to the case $\eps = 1$ and $\alpha = 2$.

 
\begin{proof} 
\MMMMM We start the proof by noting that, in view of \eqref{def_xi_alpha} and \eqref{def_xi_alpha_reg},  for $\xi \ge \max \lbrace \nu^{-1}, \Lambda \rbrace$ it holds that
\begin{align}\label{very new}
\xi_{\nu, \alpha}^{\rm reg} = \nu^{\frac{1}{\alpha}-1} \Lambda^{\frac{1}{\alpha} - \frac{1}{2}} \xi^{1/2}. 
\end{align}
This shows that for $\xi$ large enough the regularization coincides, up to a constant, with the one considered in \cite[Equation~(2.35) for $\alpha = 1$]{BFK}. Therefore, \cite[Proposition~2.5(ii)]{BFK} (again for the choice $\alpha = 1$ therein) and   \cite[Remark~4.3(iii)]{BFK} yield the existence of a weak solution $(y_{\eps,\nu}, \theta_{\eps,\nu})$ to the system in the sense of Definition~\ref{def:weak_formulation} with $\xi^{(\alpha)}$ replaced by $\xiregnu$, satisfying  particularly $\theta_{\eps,\nu} \in L^2(I; H^1(\Omega))$. (Indeed, minor adaptations of the proof show that the regularization in \cite[Equation~(2.35)]{BFK} can be replaced by $\xiregnu$ as they have the same qualitative behavior at zero and infinity.)  

\MMMMM To complete the proof of (i), we need \III to recover the stronger notion of weak solutions in Definition~\ref{def:weak_solutions_regularized}. Firstly,   \eqref{weak_limit_mechanical_equation_nu} and the regularity of $y_{\eps,\nu}$ coincide in both notions. \MMMMM Therefore, it remains to show $w_{\eps,\nu} \defas W^{\rm{in}}(y_{\eps,\nu}, \theta_{\eps,\nu}) \in H^1(I;(H^1(\Omega))^* )$, $\theta_{\eps,\nu} \in C(I; L^2(\Omega) )$, as well as \eqref{weak_limit_heat_equation_nu}. For the regularity properties, we will particularly  make use of the regularity results stated in  Lemma \ref{lem:deriphi}(ii),(iii) below. 

Firstly, we show $w_{\eps,\nu}   \in H^1(I;(H^1(\Omega))^* )$. By $\theta_{\eps,\nu} \in L^2(I; H^1(\Omega))$ and   Lemma \ref{lem:deriphi}(ii)
below we get $w_{\eps,\nu}   \in L^2(I; H^1(\Omega))$, so we can focus on $\partial_t w_{\eps,\nu}$.  To this end, for a.e.~$t \in I$ we can define \III the \asdf functional \EEE $\sigma(t)$ by    \MMMMM    
\begin{align*}
  \langle \sigma(t),\varphi  \rangle &\defas  \int_\Omega \big(      \xi_{\nu, \alpha}^{\rm reg}(\nabla y_{\eps,\nu}, \partial_t \nabla y_{\eps,\nu}, \theta_{\eps,\nu})   + \pl_F \cplpot(\nabla y_{\eps,\nu}, \theta_{\eps,\nu}) : \partial_t \nabla y_{\eps,\nu} \big)   \vphi
     \di x \di t  \\
   &  \qquad  - \int_\Omega \hcm(\nabla y_{\eps,\nu}, \theta_{\eps,\nu}) \nabla \theta_{\eps,\nu}\cdot \nabla \vphi
          \di x \di t      + \kappa  \int_{\Gamma} (\theta_{\flat,\eps} - \theta_{\eps,\nu}) \vphi \di \haus^{d-1}  
\end{align*}    \III
for every $\varphi \in  \asdf H^1(\Omega)$, \III where all functions appearing on the right-hand side are evaluated at $t$.
      Then, as $(y_{\eps,\nu}, \theta_{\eps,\nu})$ is a weak solution in the sense of Definition \ref{def:weak_formulation} \MMMMM (with $\xi_{\nu, \alpha}^{\rm reg}$ in place of $  \drate^{(\alpha)}$), \III we see that for every $\psi  \in C^\infty_{c} (I)$ and $\varphi \in \asdf C^\infty(  \overline{\Omega}) \EEE$ \EEE it holds that
      \begin{equation*}
        \int_I \langle \sigma(t), \varphi \rangle \psi(t)\di t
        = - \int_I \int_{\Omega} w_{\eps,\nu} \partial_t \psi(t) \varphi \di x  \di t.
      \end{equation*}
  The  arbitrariness of $\varphi$ implies that the weak time derivative of $ w_{\eps,\nu}$ coincides in the distributional sense with $\sigma$ for a.e.~$t \in I$.  Hence, it remains to \III show that $\sigma\in L^2(I;  \III (H^1 (\Omega))^* \EEE )$. To this end, we consider an element $ \III \tilde \vphi \EEE \in L^2(I;H^1(\Omega))$ of the dual satisfying $\Vert \III \tilde \vphi \EEE \Vert_{L^2(I;H^1(\Omega))} \leq 1$.
  By 
  \eqref{spectrum_bound_K}, \eqref{hcm}, \eqref{pos_det}, Hölder's inequality,  and trace estimates
 we find that 
  \begin{align*}
\int_I \langle \sigma(t),\III \tilde \vphi \MMMMM (t) \III  \EEE \rangle \III \di t \EEE  &\leq C \Vert  \nabla \theta_{\eps,\nu} \Vert_{L^2(I\times \Omega)}\Vert   \nabla \III \tilde \vphi \EEE \Vert_{L^2(I\times \Omega)} + C \left( \int_I \int_\Omega  \xi_{\nu, \alpha}^{\rm reg}(\nabla y_{\eps,\nu}, \partial_t \nabla y_{\eps,\nu}, \theta_{\eps,\nu})^2 \di x \di t \right)^{1/2} \hspace{-0.1cm} \Vert \III \tilde \vphi \EEE \Vert_{L^2(I \times \Omega)}
   \\& \quad+ C  \left( \Vert \pl_F \cplpot(\nabla y_{\eps,\nu}, \theta_{\eps,\nu}) : \partial_t \nabla y_{\eps,\nu} \Vert_{L^2(I \times \Omega)}  +   \Vert (\theta_{\flat,\eps} - \theta_{\eps,\nu}) \Vert_{L^2(I \times \Gamma)} \right) \Vert \III \tilde \vphi \EEE \Vert_{L^2(I;H^1(\Omega ))}.
  \end{align*}
By \eqref{pos_det},  \eqref{bound:dissipationepsnu},    \MMMMM   \eqref{very new}, \III   \eqref{avoidKorn},  the fact that $\theta_{\eps,\nu} \in L^2(I; H^1(\Omega) )$, and the regularity of $\theta_{\flat,\eps}$ (see \eqref{def:externalforces}) we thus get $\partial_t w_{\eps,\nu} \in L^2(I; (H^1(\Omega))^* )$.
%
%
 
 \MMMMM
Next, we show $\theta_{\eps,\nu} \in C(I; L^2(\Omega) )$. By \eqref{toten_bound_scheme} and
\ref{W_lower_bound} we get  $(\det \nabla y_{\eps,\nu})^{-1}  \in L^\infty(I; L^q(\Omega;(0,\infty))) $. Therefore, the solution $(y_{\eps,\nu}, \theta_{\eps,\nu})$ lies in the set $ \mathcal{S}_{\rm chain} $ defined at the beginning of Section \ref{sec:chainrule}. Then,  Lemma~\ref{lem:deriphi}(iii) below yields the desired regularity of $\theta$. We also get $w_{\eps,\nu}\in C(I; L^2(\Omega) )$.

Eventually, we derive the formulation  \eqref{weak_limit_heat_equation_nu}.  \III Using test functions $ \III \hat \varphi \EEE \in C^\infty(  I \times \overline{\Omega}   )$ with $ \III \hat \varphi \EEE(T) =0$, the integration by parts $\int_I \III \int_\Omega w_{\eps,\nu}  \partial_t  \III \hat \varphi \EEE \di x \EEE \di t = \III - \EEE \int_I \langle \partial_t w_{\eps,\nu} ,  \III \hat \varphi \EEE\rangle \di t -  \int_\Omega w_{\eps,\nu}(0)  \III \hat \varphi \EEE(0)\di x  $,  the weak formulation \eqref{weak_limit_heat_equation_eps}, and the fact that $W^{\rm in}(F,\cdot)$ is monotonously  increasing (see \eqref{inten_mon}) we also find $\theta(0) = \theta_{0,\eps}$. The latter integration by parts also implies that \eqref{weak_limit_heat_equation_nu} holds for $\varphi \in C^\infty(  I \times \overline{\Omega}   )$ with $\varphi(T) =0$. A standard density argument shows that \eqref{weak_limit_heat_equation_nu}  also holds for test functions $\varphi \in L^2(I; H^1(\Omega))$.
This \MMMMM completes (i).

\MMMMM Next, we address (ii).  \LLL Formally, one can derive \eqref{energybalanceregularized}  by testing the mechanical equation \eqref{weak_limit_mechanical_equation_nu} with $\partial_t \yepsnu$. However,   there is no control on $\partial_t \nabla^2 \yepsnu$ for weak solutions in the present setting.
For the rigorous argument, one needs to apply a chain rule, as discussed in \cite[Equation~(5.9)]{MielkeRoubicek2020}, yielding $t \mapsto \mathcal{M}(\yepsnu(t)) \in W^{1,1}(\lll I \EEE)$ and \eqref{energybalanceregularized}. \MMMMM Concerning (iii), \EEE
  \cite[Theorem~3.13 and Lemma~3.1]{BFK} provide the bounds \eqref{toten_bound_scheme}--\eqref{bound:dissipationepsnu} in a time-discrete setting. By lower semicontinuity of norms, the bounds are preserved in the limiting passage. Here,   the crucial observation \III is that the regularized dissipation rate satisfies $\xiregnu \leq \xi$, allowing the heat source to be bounded by an integrable function that does not depend on the regularization $\nu$ and the \MMMMM parameter $\Lambda$.   This ensures that the bounds can be chosen uniformly in $\nu$. \III Similarly, the bounds are independent of $\eps$ as the data in \eqref{def:externalforces} and \eqref{linearization_initial_conditions} can be bounded uniformly for $\eps \in (0,1]$. \MMMMM Eventually,   \EEE for the proof of the limiting passage $\nu \to 0$ in (iv), one argues along the lines of \cite[Section~6]{MielkeRoubicek2020}. 
\end{proof}

\section{Chain rule}\label{sec:chainrule}
\MMMMM In this section we state and prove \EEE a chain rule \MMMMM for \EEE weak solutions $(y, \theta)$ in the sense of Definition~\ref{def:weak_solutions_regularized}. 
 \ZZZ For convenience, we introduce the space  
\begin{align*}
 \mathcal{S}_{\rm chain}  &  \defas \Big\{ (y,\theta) \colon  y    \in   L^\infty(I;W^{2,p}(\Omega;\R^d)) \cap H^1(I; H^1(\Omega;\R^d) ),  \ \rb (\det \nabla y)^{-1} \ZZZ \AAA \in L^\infty\big(I; L^q(\Omega;(0,\infty))\big), \EEE \\   & \quad\qquad\qquad\qquad\qquad\qquad\qquad\qquad \qquad\theta \in L^2(I;H^1_{\rufb +}\rufb(\Omega)\EEE), \ w \defas W^{\rm in}(\nabla y,\theta)  \in    H^1(I;(H^{1}(\Omega))^*) \Big\}, 
\end{align*}
where  $q$ is \III defined \EEE in \ref{W_lower_bound}.
\MMM  Recall that $\langle \cdot , \cdot \rangle $ denotes the  dual pairing \rb between  $H^1(\Omega)$ and $(H^1(\Omega))^*$.

\begin{theorem}[Chain rule]\label{thm:chainrule}
 Let $(y, \theta) \MMM \in \ZZZ \mathcal{S}_{\rm chain} \EEE$, \III  $\lambda \in C^{1}(I)$, \EEE and assume that \AAA \ref{C_third_order_bounds}--\ref{C_Wint_regularity} hold. \III Then,  we have that \EEE $t \mapsto \int_\Omega ((\lambda - \theta)_+(t))^2 \di x$ lies in $W^{1,1}(I)$ and for a.e.\ $t \in I$ it holds \ZZZ that   \EEE  
 \begin{align}\label{final chain rule}
 & \frac{\rm d}{{\rm d}t} \frac{1}{2} \int_\Omega ((\lambda - \theta)_+)^2 \di x   \EEE  \\
 &\quad=  \int_\Omega  (\lambda  - \theta )_+ \big( \partial_t \lambda   + \rb c_V(\nabla y, \theta)^{-1} \ee \partial_F W^{\rm in}(\nabla y, \theta) : \partial_t \nabla y \big) \di x  -   \rb \langle \partial_t w, (\lambda  - \theta )_+ c_V(\nabla y, \theta)^{-1} \rangle. \notag
  \end{align} 
\end{theorem}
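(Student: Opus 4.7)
The plan is to re-express $\int_\Omega \tfrac12(\lambda-\theta)_+^2\,\di x$ as a functional of $w$ instead of $\theta$, and then invoke the Lions--Magenes abstract chain rule for functions in $L^2(I;H^1(\Omega))\cap H^1(I;(H^1(\Omega))^*)$. The advantage is that, by the very definition of $\mathcal{S}_{\rm chain}$, the variable $w$ is absolutely continuous in time with values in $(H^1(\Omega))^*$, whereas $\theta$ itself has only the $L^2(I;H^1(\Omega))$-regularity; the abstract chain rule then transfers the time regularity of $w$ to the target functional.

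Since $\partial_\theta W^{\rm in}(F,\theta)=c_V(F,\theta)\in[\ac,\aC]$ by \eqref{inten_mon}, the inverse function theorem applied to $W^{\rm in}(F,\cdot)$ yields a $C^2$ map $\Theta$ on $GL^+(d)\times\R_+$ with $\theta=\Theta(\nabla y,w)$. Implicit differentiation gives $\partial_w\Theta=1/c_V$ and $\partial_F\Theta=-(\partial_F W^{\rm in})/c_V$, with arguments evaluated at $(F,\Theta(F,w))$. Setting
\begin{equation*}
  \Xi(t,F,w) \defas \tfrac12(\lambda(t)-\Theta(F,w))_+^2,
\end{equation*}
the identity $\int_\Omega\tfrac12(\lambda-\theta)_+^2\,\di x=\int_\Omega\Xi(t,\nabla y,w)\,\di x$ holds pointwise in $t$, and a direct calculation (noting that $s\mapsto\tfrac12 s_+^2$ is $C^{1,1}$) gives
\begin{equation*}
  \partial_t\Xi=(\lambda-\theta)_+\partial_t\lambda,\quad \partial_F\Xi=(\lambda-\theta)_+\,c_V^{-1}\,\partial_F W^{\rm in},\quad \partial_w\Xi=-(\lambda-\theta)_+\,c_V^{-1}.
\end{equation*}
Thus \eqref{final chain rule} is precisely the expected total derivative $\tfrac{d}{dt}\int_\Omega\Xi\,\di x=\int_\Omega(\partial_t\Xi+\partial_F\Xi:\partial_t\nabla y)\,\di x+\langle\partial_t w,\partial_w\Xi\rangle$.

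To make this formal computation rigorous, I first verify the regularity needed by the abstract chain rule. Writing $\nabla w=\partial_F W^{\rm in}(\nabla y,\theta):\nabla^2 y+c_V(\nabla y,\theta)\nabla\theta$ and combining $\theta\in L^2(I;H^1(\Omega))$, $y\in L^\infty(I;W^{2,p}(\Omega))$ with $p\ge 2d$, the lower bound on $\det\nabla y$, and \ref{C_Wint_regularity}, one obtains $w\in L^2(I;H^1(\Omega))$, hence $w\in C(I;L^2(\Omega))$ by Lions--Magenes. An analogous product estimate using \ref{C_third_order_bounds}--\ref{C_Wint_regularity} shows that $(\lambda-\theta)_+c_V^{-1}\in L^2(I;H^1(\Omega))$, so the pairing $\langle\partial_t w,(\lambda-\theta)_+c_V^{-1}\rangle$ is well defined and integrable in $t$. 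I then mollify in time: let $\eta_\delta$ be a standard mollifier and set $y_\delta\defas\eta_\delta\ast y$, $w_\delta\defas\eta_\delta\ast w$ (after extending by constants outside $I$). The map $t\mapsto\int_\Omega\Xi(t,\nabla y_\delta,w_\delta)\,\di x$ is absolutely continuous with
\begin{equation*}
  \tfrac{d}{dt}\int_\Omega\Xi(t,\nabla y_\delta,w_\delta)\,\di x=\int_\Omega\bigl(\partial_t\Xi^\delta+\partial_F\Xi^\delta:\partial_t\nabla y_\delta+\partial_w\Xi^\delta\,\partial_t w_\delta\bigr)\,\di x,
\end{equation*}
where $\Xi^\delta$ denotes $\Xi$ evaluated at $(\nabla y_\delta,w_\delta)$. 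Integrating in $t$ and sending $\delta\to 0$, the pointwise integrals converge by continuity of the coefficients and dominated convergence, while the last term converges to $\int_I\langle\partial_t w,\partial_w\Xi\rangle\,dt$ via the strong $L^2(I;(H^1(\Omega))^*)$-convergence of $\partial_t w_\delta\to\partial_t w$ paired with the strong $L^2(I;H^1(\Omega))$-convergence of $\partial_w\Xi(\cdot,\nabla y_\delta,w_\delta)\to\partial_w\Xi(\cdot,\nabla y,w)$. This delivers \eqref{final chain rule} together with the asserted $W^{1,1}(I)$-regularity.

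The main obstacle is the spatial-regularity verification $(\lambda-\theta)_+c_V^{-1}\in L^2(I;H^1(\Omega))$: its spatial gradient contains $\nabla\theta$, $\nabla^2 y$, and derivatives of $c_V$ in both $F$ and $\theta$, each of which has only borderline integrability. The required Hölder estimates rely crucially on the refined third-order bounds \ref{C_third_order_bounds}--\ref{C_Wint_regularity} and on the exponent $p\ge 2d$, which is precisely why these stronger hypotheses are imposed in the statement. Once this single spatial estimate is in hand, the remaining passage to the limit reduces to the standard abstract chain rule for parabolic duality pairings.
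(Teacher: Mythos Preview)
Your blueprint is the same as the paper's: rewrite the functional through the inverse $\Theta=\Psi$ so that it becomes a function of $(\lambda,\nabla y,w)$, mollify to gain time regularity, apply a chain rule at the regularized level, and pass to the limit.  The paper, however, inserts an additional layer that you skip: it first replaces $(\cdot)_+$ by a $C^3$ approximation $\phi_\beta$ (Lemma~\ref{lem:propertyphibeta}), proves the regularized chain rule (Proposition~\ref{lem:chainrulepos}) by appealing to the abstract chain rule for \emph{locally semiconvex} functionals due to Mielke--Roub\'{\i}\v{c}ek (Proposition~\ref{lem:mielkechainrule}), and only afterwards lets $\beta\to 0$.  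At the semiconvex-functional level the paper also mollifies $y$ in \emph{space} (to reach $H^1(I;H^{k_0})$ with $k_0>1+d/2$, so that $\nabla y$ is continuous and the pointwise constraints in the definition of $\mathcal J$ make sense), whereas you mollify $y$ in time.  Your more direct route---a pointwise $C^{1,1}$ chain rule after time mollification---is legitimate, and avoids both the $\phi_\beta$ layer and the semiconvexity machinery; what you gain in brevity the paper gains in robustness, since the abstract Proposition~\ref{lem:mielkechainrule} handles the interchange of integral and derivative without any pointwise-in-$x$ considerations.

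There is one genuine gap in your limit passage.  You assert that $\partial_w\Xi(\cdot,\nabla y_\delta,w_\delta)\to\partial_w\Xi(\cdot,\nabla y,w)$ \emph{strongly} in $L^2(I;H^1(\Omega))$.  The spatial gradient of this quantity contains the factor $\indic_{\{\theta_\delta<\lambda\}}\nabla\theta_\delta$, and strong $L^2$-convergence of such a product is not immediate.  The paper avoids this by establishing only a \emph{uniform} $L^2(I;H^1)$ bound on $\varphi_\beta$ (Lemma~\ref{lemma: phii}), from which \emph{weak} convergence follows; combined with the strong convergence $\partial_t w_\delta\to\partial_t w$ in $L^2_{\rm loc}((0,T);(H^1(\Omega))^*)$, this suffices to pass to the limit in the pairing via the splitting
\[
\langle\partial_t w_\delta,\varphi_\delta\rangle-\langle\partial_t w,\varphi\rangle
=\langle\partial_t w_\delta-\partial_t w,\varphi_\delta\rangle+\langle\partial_t w,\varphi_\delta-\varphi\rangle.
\]
You should replace your strong-convergence claim by this weak-plus-bounded argument; the uniform $H^1$ bound is precisely the ``main obstacle'' you already identified, and its proof (the analogue of Lemma~\ref{lemma: phii} for $(\cdot)_+$ in place of $\phi_\beta$) goes through via the Lipschitz chain rule.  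Note also that the mollification only yields convergence of $\partial_t w_\delta$ in $L^2_{\rm loc}((0,T);(H^1)^*)$; the extension of the resulting integrated identity to all $t_1,t_2\in I$ requires the continuity $\theta\in C(I;L^2(\Omega))$, which the paper establishes separately in Lemma~\ref{lem:deriphi}(iii).
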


\MMM

\begin{remark} \label{rem:consequencechainrule} \AAA We proceed with some comments on the chain rule. \EEE
 
{\normalfont
\begin{itemize}

\ZZZ
\item[(i)] A weak solution $(y, \theta)$ in the sense of Definition \ref{def:weak_solutions_regularized} lies in the space  
$\mathcal{S}_{\rm chain}$. In particular,
\eqref{toten_bound_scheme} and
\ref{W_lower_bound} imply that $(\det \nabla y)^{-1} \AAA \in L^\infty(I; L^q(\Omega;(0,\infty)))\EEE$.  
\item[(ii)] In the proof, we particularly show that $(\lambda  - \theta )_+  c_V(\nabla y, \theta)^{-1} \in   L^2   (I;H^1(\Omega))$. This along with \lll the regularity of $y$, \rb $\theta$, and $w$, \lll \eqref{inten_mon}, and \rb $\partial_F W^{\rm in}(\nabla y, \theta) \in L^\infty(I \times \Omega; \R^{d \times d})$ (see \eqref{est:coupl}) \lll guarantees \EEE that the right-hand side of \eqref{final chain rule} lies in $L^1(I)$.  \EEE 

%

\end{itemize}
}
\end{remark}

\MMM 
The proof of Theorem \ref{thm:chainrule} relies on regularization of the positive part $(\cdot)_+$. To this end, \EEE given $\beta > 0$ we define the function $\phi_\beta \colon \R \to \R_+$ through
  \begin{align}
  \phi_\beta (s) \defas \begin{cases}
  (s^4 + \beta^4)^{1/4} - \beta & \text{\MMM  if } s > 0, \\
  0 & \text{else}. 
  \end{cases}\label{def:phi_beta}
  \end{align}
This function has the following properties.

\begin{lemma}[Properties of $\phi_\beta$]\label{lem:propertyphibeta}
We have $\phi_\beta \in C^3(\R)$ and $\phi_\beta > 0$, $\phi_\beta' > 0$, and $\phi_\beta'' > 0$ in $(0, \infty)$.
Moreover, as $\beta \searrow 0$, the sequences of functions $(\phi_\beta)_\beta$ and $(\phi_\beta')_\beta$ are \MMM increasing \EEE with pointwise limits $(\cdot)_+$ and $ {\lenni \indic \EEE }_{(0, \infty)}$, respectively. Finally, for all $s \in \R$ \MMM it holds that \EEE 
\begin{align}\label{eq: the good inequality}
\MMM \phi_\beta(s) \leq s^+, \EEE \qquad 
\phi_\beta(s) \leq \phi_\beta'(s) s \leq 4 \phi_\beta(s), \qquad \text{ and } \qquad \phi_\beta''(s) s \leq 3 \phi_\beta'(s).
\end{align}
\end{lemma}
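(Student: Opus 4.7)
The entire lemma reduces to elementary calculus once one writes down explicit derivatives on $(0,\infty)$. Differentiation gives
\begin{align*}
\phi_\beta'(s) = \frac{s^3}{(s^4+\beta^4)^{3/4}}, \qquad \phi_\beta''(s) = \frac{3s^2\beta^4}{(s^4+\beta^4)^{7/4}}, \qquad s > 0,
\end{align*}
and an analogous rational expression for $\phi_\beta'''$. Each of these tends to zero as $s \searrow 0$, so matching with the identically zero extension on $(-\infty,0]$ yields $\phi_\beta \in C^3(\R)$ by a standard mean-value-theorem argument. Strict positivity of $\phi_\beta$, $\phi_\beta'$, $\phi_\beta''$ on $(0,\infty)$ is evident from the formulas.

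Monotonicity in $\beta$ at fixed $s > 0$ is checked by differentiating the explicit formulas with respect to $\beta$. For $\phi_\beta$ one obtains $\beta^3(s^4+\beta^4)^{-3/4} - 1$, which is non-positive because $\beta^3 \le (s^4+\beta^4)^{3/4}$; the analogous computation for $\phi_\beta'$ gives $-3 s^3\beta^3(s^4+\beta^4)^{-7/4} < 0$. Hence, as $\beta \searrow 0$, both sequences $(\phi_\beta)_\beta$ and $(\phi_\beta')_\beta$ increase, and passing to the limit in the explicit formulas recovers $s_+$ and $\indic_{(0,\infty)}(s)$, respectively.

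For the inequalities in \eqref{eq: the good inequality} it suffices to consider $s > 0$, since all quantities vanish for $s \le 0$. The bound $\phi_\beta(s) \le s_+$ amounts to $(s^4+\beta^4)^{1/4} \le s + \beta$, which follows by raising to the fourth power. Setting $u \defas s^4 + \beta^4$, the chain $\phi_\beta(s) \le s\phi_\beta'(s) \le 4\phi_\beta(s)$ becomes, after clearing denominators,
\begin{align*}
\beta^4 \le u \qquad \text{and} \qquad 4\beta\, u^{3/4} \le 3u + \beta^4,
\end{align*}
where the first is trivial and the second is Young's inequality $4ab \le a^4 + 3b^{4/3}$ applied to $a = \beta$ and $b = u^{3/4}$. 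Finally, $s\phi_\beta''(s) \le 3\phi_\beta'(s)$ reduces once more to $\beta^4 \le u$.

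The only mildly nontrivial step is recognizing the upper bound $s\phi_\beta'(s) \le 4\phi_\beta(s)$ as an instance of Young's inequality; every other assertion follows directly from the explicit formulas for $\phi_\beta$ and its derivatives.
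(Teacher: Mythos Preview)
Your proof is correct and follows essentially the same route as the paper's: compute the derivatives explicitly, read off positivity and the $C^3$ matching at $s=0$, differentiate in $\beta$ for monotonicity, and reduce the inequalities \eqref{eq: the good inequality} to elementary algebra. The only cosmetic difference is in the verification of $s\phi_\beta'(s)\le 4\phi_\beta(s)$: the paper substitutes $u=\beta^4 s^{-4}$ and invokes the weighted AM--GM inequality $a^{1-\nu}b^\nu\le(1-\nu)a+\nu b$ with $\nu=3/4$, whereas you substitute $u=s^4+\beta^4$ and apply Young's inequality $4ab\le a^4+3b^{4/3}$; these are of course two phrasings of the same bound.
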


  \MMM
In order to formulate an auxiliary chain rule for $\phi_\beta$, we need to control the analog of $(\lambda  - \theta )_+  c_V(\nabla y, \theta)^{-1}$ in the regularized framework. \EEE   
 
\begin{lemma}\label{lemma: phii}
\MMM  Let $\beta>0$, $\lambda \in \AAA C^1\EEE(I)$, and $(y,\theta) \in \ZZZ \mathcal{S}_{\rm chain} \EEE$. \EEE   Then, the function
\begin{equation*}
  \varphi_\beta\defas \frac{\phi_\beta(\lambda - \theta) \phi_\beta'(\lambda - \theta)}{c_V(\nabla y, \theta)}
\end{equation*}
lies in  $L^2(I;H^1(\Omega))$ \MMM and satisfies the bound  \III
\begin{align}\label{eq: H1bound0}
\Vert   \varphi_\beta \Vert_{L^2(I;H^1(\Omega))}         
\III & \leq C \Vert \lambda \Vert_{L^\infty(I)} \left( \III 1+  \Vert \nabla y \Vert_{L^\infty(I)} \EEE \right) \left( \Vert \nabla \theta \Vert_{L^2(I\times \Omega)} + \Vert \lambda \Vert_{L^\infty(I)}   \Vert \nabla^2 y \Vert_{L^2(I\times \Omega)}    \right) 
\notag \\& \qquad + C \left(  \Vert \nabla \theta \Vert_{L^2(I\times \Omega)}+ \Vert \lambda \Vert_{L^\infty(I)} \right) .
\end{align} 
 for some universal $C>0$. 
\end{lemma}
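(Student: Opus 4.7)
The plan is a direct differentiation-and-estimate argument exploiting the regularity built into $\mathcal{S}_{\rm chain}$, the uniform two-sided bounds on the heat capacity coming from \eqref{inten_mon}, and the quantitative properties of $\phi_\beta$ from Lemma~\ref{lem:propertyphibeta}. The key conceptual point is that $\varphi_\beta$ vanishes outside the set $\{\lambda>\theta\}$, so on its support one may freely replace $\theta$ by $\lambda$ (and hence by $\|\lambda\|_{L^\infty(I)}$) whenever a problematic factor of $\theta$ appears.

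First, I would establish the $L^2(I \times \Omega)$ bound. By \eqref{eq: the good inequality} one has $\phi_\beta(\lambda-\theta)\le(\lambda-\theta)_+\le\|\lambda\|_{L^\infty(I)}$, and $\phi_\beta'$ is bounded by $1$ on $\R$ (a consequence of $\phi_\beta'(s)=s^3/(s^4+\beta^4)^{3/4}$ for $s>0$, or alternatively from $\phi_\beta'(s)s\le 4\phi_\beta(s)$ combined with $\phi_\beta(s)\le s_+$). Combined with the lower bound $c_V(\nabla y,\theta)\ge c_0$ from \eqref{inten_mon}, this produces an $L^\infty$-in-$x$ bound on $\varphi_\beta$, which gives the $L^2(I \times \Omega)$ control needed.

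Second, I would compute $\nabla\varphi_\beta$ by the chain rule. Since $\lambda$ is spatially constant,
\begin{equation*}
\nabla\varphi_\beta=-\frac{(\phi_\beta\phi_\beta')'(\lambda-\theta)}{c_V(\nabla y,\theta)}\,\nabla\theta-\frac{\phi_\beta(\lambda-\theta)\phi_\beta'(\lambda-\theta)}{c_V(\nabla y,\theta)^2}\,\big(\partial_F c_V(\nabla y,\theta)\nabla^2 y+\partial_\theta c_V(\nabla y,\theta)\nabla\theta\big).
\end{equation*}
For the first summand I would use $(\phi_\beta\phi_\beta')'=(\phi_\beta')^2+\phi_\beta\phi_\beta''$ together with $\phi_\beta''(s)s\le3\phi_\beta'(s)$ and $\phi_\beta(s)\le\phi_\beta'(s)s$ to deduce $\phi_\beta\phi_\beta''\le 3(\phi_\beta')^2$, so $|(\phi_\beta\phi_\beta')'|\le 4$, and the $c_V^{-1}\le c_0^{-1}$ bound yields an $L^2(I\times\Omega)$ estimate of order $\|\nabla\theta\|_{L^2(I\times\Omega)}$.

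Third, and this is the only delicate point, I would bound the second summand. Differentiating $c_V=-\theta\,\partial_\theta^2 W^{\rm cpl}=\partial_\theta W^{\rm in}$ yields $\partial_F c_V=-\theta\,\partial_{F\theta\theta}W^{\rm cpl}$ and $\partial_\theta c_V=\partial_\theta^2 W^{\rm in}$; by \ref{C_third_order_bounds} and \ref{C_Wint_regularity} these are bounded by $C\theta(1+|\nabla y|)$ and $C(1+|\nabla y|)$, respectively. The factor of $\theta$ in $\partial_F c_V$ is the only obstacle, but it is harmless on the support of $\phi_\beta(\lambda-\theta)$: there $\theta<\lambda\le\|\lambda\|_{L^\infty(I)}$, so one may estimate
\begin{equation*}
\phi_\beta(\lambda-\theta)\,|\partial_F c_V(\nabla y,\theta)|\le C\|\lambda\|_{L^\infty(I)}^2(1+|\nabla y|).
\end{equation*}
Combined with $\phi_\beta'\le1$, $c_V^{-2}\le c_0^{-2}$, the $L^\infty$ control of $\nabla y$ furnished by \eqref{pos_det}, and $\nabla^2 y,\nabla\theta\in L^2(I\times\Omega)$, this produces an $L^2$ estimate of the displayed form in \eqref{eq: H1bound0}. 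Collecting the three contributions yields the claimed bound.
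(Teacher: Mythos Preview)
Your proposal is correct and follows essentially the same route as the paper's proof: an $L^\infty$ bound on $\varphi_\beta$ via $\phi_\beta(\lambda-\theta)\le\|\lambda\|_{L^\infty(I)}$ and $c_V\ge c_0$, a chain-rule computation of $\nabla\varphi_\beta$, and then separate control of the $\nabla\theta$-factor (via the inequalities of Lemma~\ref{lem:propertyphibeta}) and of the $\nabla(c_V^{-1})$-factor (via the bounds of \ref{C_third_order_bounds}--\ref{C_Wint_regularity} together with $\theta\le\lambda$ on the support). The only cosmetic difference is that the paper packages the gradient bound for $c_V^{-1}$ into the auxiliary Lemma~\ref{lemma: phii-neu} (see \eqref{gradcVbound}) rather than computing $\partial_F c_V$ and $\partial_\theta c_V$ inline as you do; also, your reference to \eqref{pos_det} for the $L^\infty$ control of $\nabla y$ is slightly off, since that estimate is for the particular regularized solutions---for general $(y,\theta)\in\mathcal{S}_{\rm chain}$ the relevant fact is simply $y\in L^\infty(I;W^{2,p})$ with $p>d$, and in any case $\|\nabla y\|_{L^\infty}$ appears explicitly on the right-hand side of \eqref{eq: H1bound0}.
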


For technical reasons, \MMM the chain rule contains both the temperature  $\theta$  and the internal energy $w$, although $w$ can be expressed by $\theta$ and $y$ in terms of $w =  W^{\rm in }(\nabla y, \theta)$. \EEE In this regard, it will turn \MMM out \EEE to be useful to introduce the inverse function of $\inten$ with respect to the $w$-variable, \MMM namely \EEE
\begin{equation}\label{def_Phi}
\Psi(F, w) \defas W^{\rm in}(F, \cdot)^{-1}(w) \quad \text{\ZZZ for \EEE any } F \in GL^+(d) \text{ and } w \geq 0,
\end{equation}
 where the inverse of $\inten(F, \cdot)$ exists for all $F \in GL^+(d)$ due to \eqref{inten_mon}. \MMM In particular,   for all $F \in GL^+(d)$ and $w \geq 0$ we have \EEE
 \begin{equation}\label{Phi_identity}
  \inten(F, \Psi(F, w)) = w.
 \end{equation}

\begin{lemma}[Properties of $\Psi$; regularity of $\theta$ and $w$]\label{lem:deriphi} \hfill\newline
\rb \lll \emph{(i)}  \MMM The function $\Psi$ defined in \eqref{def_Phi}   is $C^3$ on $GL^+(d) \times \R_+$. In particular,  for all $F \in GL^+(d)$ and $w \geq 0$, it holds that  \EEE
\begin{align}
\partial_w \Psi(F, w) = \frac{1}{c_V(F, \theta)}, \quad \quad 
 \partial_F \Psi(F,w) = - \frac{\partial_F W^{\rm in} (F, \theta )}{c_V(F, \theta)} \asdf, \EEE   \label{deri1}
\end{align}
where we shortly \AAA write \EEE $\theta$ for $\Psi(F, w)$.

\rb \noindent \lll \emph{(ii)}  \MMM Let \ZZZ $y \in L^\infty(I;W^{2,p}(\Omega;\R^d))$   and set $w  = W^{\rm in}(\nabla y,\theta)$. \EEE Then,  we have $w \in L^2(I;H^1(\Omega))$ if and only if  $\theta \in L^2(I;H^1(\Omega))$, and there exists $C>0$ such that    
\begin{align}\label{what an estimate}
  C^{-1} \Vert \nabla  w \Vert_{L^2(I \times \Omega)} - C\big(1+ \ZZZ \Vert   y\Vert^2_{L^{\infty}(I; W^{2,p}(\Omega))} \EEE \big)   \le  \Vert \nabla  \theta \Vert_{L^2(I \times \Omega)}   \le  C \Vert \nabla  w \Vert_{L^2(I \times \Omega)} + C\big(1+ \ZZZ \Vert   y\Vert^2_{L^{\infty}(I; W^{2,p}(\Omega))} \EEE  \big).  
  \end{align}
  

\rb \noindent \lll \emph{(iii)}  \ZZZ Let  $(y,\theta)\in \mathcal{S}_{\rm chain} $.
Then, $w \in  C(I; L^2(\Omega))$  and $\theta \in C(I; L^2(\Omega))$.   \EEE 
\end{lemma}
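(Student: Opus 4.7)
For part (i), the key observation is that by \ref{C_Wint_regularity}, $W^{\rm in}$ is $C^3$ on $GL^+(d) \times \R_+$, and by \eqref{inten_mon} we have $\partial_\theta W^{\rm in}(F,\theta) = c_V(F,\theta) \in [c_0, C_0]$. Combined with \eqref{inten_lipschitz_bounds}, which gives $c_0 \theta \le W^{\rm in}(F,\theta) \le C_0 \theta$, this implies that for each fixed $F \in GL^+(d)$, the map $W^{\rm in}(F,\cdot)\colon \R_+ \to \R_+$ is a $C^3$-diffeomorphism. The inverse function theorem then yields $\Psi \in C^3(GL^+(d) \times \R_+)$. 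The identities in \eqref{deri1} follow by implicit differentiation of \eqref{Phi_identity} with respect to $w$ and $F$, respectively, using $\partial_\theta W^{\rm in} = c_V$ to divide.

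For part (ii), I would apply the chain rule spatially. For sufficiently regular $(y,\theta)$,
\begin{equation*}
  \nabla w = \partial_F W^{\rm in}(\nabla y, \theta)\, \nabla^2 y + c_V(\nabla y, \theta)\, \nabla \theta,
\end{equation*}
and analogously $\nabla \theta = -c_V^{-1}\, \partial_F W^{\rm in}(\nabla y, \theta)\, \nabla^2 y + c_V^{-1}\, \nabla w$ via part (i). The bound $|\partial_F W^{\rm in}(F,\theta)| \le C(1+|F|)$ follows from combining \ref{C_lipschitz} (which controls $|\partial_F W^{\rm cpl}|$) with the second bound in \ref{C_bounds} (which controls $|\theta\, \partial_{F\theta} W^{\rm cpl}|$), together with the identity $\partial_F W^{\rm in} = \partial_F W^{\rm cpl} - \theta\, \partial_{F\theta} W^{\rm cpl}$. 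Using $c_V \in [c_0,C_0]$, Sobolev embedding $W^{2,p}(\Omega) \hookrightarrow C^1(\bar\Omega)$ (valid since $p \ge 2d > d$), H\"older's inequality, and the control $\|\nabla^2 y\|_{L^2(I \times \Omega)} \le C \|y\|_{L^\infty(I;W^{2,p}(\Omega))}$, both inequalities in \eqref{what an estimate} follow. The $L^2$-in-time-space control on $\theta$ and $w$ themselves is automatic from \eqref{inten_lipschitz_bounds}.

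For part (iii), I would first obtain $w \in C(I; L^2(\Omega))$ from the Gelfand-triple embedding
\begin{equation*}
  L^2(I; H^1(\Omega)) \cap H^1(I; (H^1(\Omega))^*) \hookrightarrow C(I; L^2(\Omega))
\end{equation*}
(Lions--Magenes), which applies since $w \in H^1(I;(H^1(\Omega))^*)$ by hypothesis and $w \in L^2(I; H^1(\Omega))$ by part (ii). For $\theta = \Psi(\nabla y, w)$, the main obstacle is that $\Psi$ is not globally Lipschitz in $F$: its $F$-derivative only satisfies $|\partial_F \Psi(F,w)| \le C(1+|F|)$. To circumvent this, I would exploit that $y \in H^1(I;H^1(\Omega)) \hookrightarrow C(I;H^1(\Omega))$, giving $\nabla y \in C(I; L^2(\Omega))$, while simultaneously $y \in L^\infty(I; W^{2,p}(\Omega))$ with $p \ge 2d$ provides a uniform bound $\|\nabla y\|_{L^\infty(I \times \Omega)} \le M$. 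Splitting
\begin{equation*}
  \theta(t_n) - \theta(t) = \big[\Psi(\nabla y(t_n), w(t_n)) - \Psi(\nabla y(t_n), w(t))\big] + \big[\Psi(\nabla y(t_n), w(t)) - \Psi(\nabla y(t), w(t))\big],
\end{equation*}
the first bracket is controlled in $L^2(\Omega)$ by $C \|w(t_n) - w(t)\|_{L^2(\Omega)}$ using $|\partial_w \Psi| \le c_0^{-1}$, and the second by $C(1+2M) \|\nabla y(t_n) - \nabla y(t)\|_{L^2(\Omega)}$ using the uniform $L^\infty$-bound on $\nabla y$. Both terms vanish as $t_n \to t$, yielding $\theta \in C(I;L^2(\Omega))$.
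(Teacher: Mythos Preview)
Your proofs of (i) and (ii) match the paper's approach essentially line by line: implicit differentiation of \eqref{Phi_identity} for (i), and the spatial chain rule combined with the bound $|\partial_F W^{\rm in}(F,\theta)| \le C(1+|F|)$ (which the paper records as \eqref{est:coupl}) for (ii).

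For (iii), your overall strategy is also the paper's---Lions--Magenes for $w$, then transfer to $\theta$ via $\Psi$---but there is a genuine gap in the second bracket of your splitting. The Lipschitz-type estimate
\[
  |\Psi(\nabla y(t_n,x),w) - \Psi(\nabla y(t,x),w)| \le C(1+2M)\,|\nabla y(t_n,x) - \nabla y(t,x)|
\]
implicitly applies the fundamental theorem of calculus along the straight segment joining $\nabla y(t_n,x)$ and $\nabla y(t,x)$. But $\Psi$ is only defined on $GL^+(d)\times\R_+$, and that segment may leave $GL^+(d)$ (e.g.\ the midpoint of two rotations can be singular). The uniform $L^\infty$ bound on $\nabla y$ alone does not prevent this.

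The paper closes this gap in two steps. First, it uses the hypothesis $(\det\nabla y)^{-1}\in L^\infty(I;L^q(\Omega))$ from $\mathcal{S}_{\rm chain}$ together with the Healey--Kr\"omer type result \cite[Theorem~3.1]{MielkeRoubicek2020} to obtain $c_\infty:=\inf_{I\times\Omega}\det\nabla y>0$, so that $\nabla y(t,x)$ lies for all $(t,x)$ in the \emph{compact} set $K_{c_\infty}=\{F:\det F\ge c_\infty,\ |F|\le c_\infty^{-1}\}\subset GL^+(d)$. Second, since $K_{c_\infty}$ is path-connected inside $GL^+(d)$, any two points $F_1,F_2\in K_{c_\infty}$ can be joined by a smooth path $\gamma$ with $\|\gamma'\|_{L^\infty}\le C|F_1-F_2|$; applying the fundamental theorem of calculus along $\gamma$ and bounding $|\partial_F\Psi|$ on the compact image of $\gamma$ then gives the desired Lipschitz estimate. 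You should incorporate both points: the determinant lower bound (which you never invoke) and the path argument replacing the line segment.
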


\MMM  We defer the \MMMMM proofs \EEE of the three lemmas to Subsection \ref{aux: lemma} below. We now formulate a regularized \ZZZ chain \EEE rule\rufb, \EEE where compared to  Theorem~\ref{thm:chainrule} \EEE the positive part $(\cdot)_+$ is replaced by $\phi_\beta$ \MMMMM defined in \EEE \eqref{def:phi_beta}.

\begin{proposition}[Chain rule \MMM for \EEE  regularized positive part]\label{lem:chainrulepos}  
\rb Given $\beta > 0$, let $\phi_\beta$ be as in \eqref{def:phi_beta}. \rb Moreover, \MMM let $(y, \theta) \in \ZZZ \mathcal{S}_{\rm chain} \EEE$,  \III $\lambda \in   C^1  (I)$, \EEE  and \EEE assume that   \AAA \ref{C_third_order_bounds}--\ref{C_Wint_regularity} hold. \EEE
Then, \III it holds that \EEE
\begin{align}\label{formula:chainrulereg}
 &\frac{\di}{\di t} \frac{1}{2} \int_\Omega (\phi_\beta(\lambda  - \theta))^2 \di x  \\
 &\quad=\int_\Omega \phi_\beta(\lambda  - \theta ) \phi_\beta'(\lambda  - \theta) \big( \partial_t \lambda    + \rb c_V(\nabla y, \theta)^{-1} \ee \partial_F W^{\rm in}(\nabla y, \theta) : \partial_t \nabla y \big) \di x -   \rb\Big\langle \partial_t w, \frac{\phi_\beta(\lambda  - \theta ) \phi_\beta'(\lambda  - \theta)}{c_V(\nabla y, \theta)} \Big\rangle \ee \notag
\end{align}
  for a.e.~$t \in I$.
\end{proposition}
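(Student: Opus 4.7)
The plan is to view $\theta = \Psi(\nabla y, w)$ via \eqref{def_Phi}, mollify $(y, w, \lambda)$ in time, apply a classical chain rule to the smooth approximations, and pass to the limit $\eta \to 0$, obtaining \eqref{formula:chainrulereg} in integrated form first and then a.e.

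\emph{Integrability of the right-hand side.} I would first check that every term in \eqref{formula:chainrulereg} is integrable in $t$. By Lemma \ref{lemma: phii}, $\varphi_\beta \defas \phi_\beta(\lambda-\theta)\phi_\beta'(\lambda-\theta)/c_V(\nabla y, \theta) \in L^2(I;H^1(\Omega))$, so the duality pairing $\langle \partial_t w, \varphi_\beta\rangle$ lies in $L^1(I)$ since $\partial_t w \in L^2(I;(H^1(\Omega))^*)$. The remaining two terms are controlled via \eqref{inten_mon}, \ref{C_Wint_regularity}, $\partial_F W^{\rm in} \in L^\infty_{\rm loc}$, Lemma \ref{lem:propertyphibeta}, and $\partial_t\nabla y \in L^2(I\times\Omega)$.

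\emph{Approximation by time mollification.} Since $(y,\theta) \in \mathcal{S}_{\rm chain}$, Lemma \ref{lem:deriphi}(iii) gives $w \in C(I;L^2(\Omega))$. Extend $w$, $y$, $\lambda$ continuously past $I$ and let $\rho_\eta \in C^\infty_c((-\eta,\eta))$ be a standard mollifier. Set $w_\eta \defas w *_t \rho_\eta$, $y_\eta \defas y *_t \rho_\eta$, $\lambda_\eta \defas \lambda *_t \rho_\eta$, and define $\theta_\eta \defas \Psi(\nabla y_\eta, w_\eta)$ (well-defined since $w_\eta \geq 0$ and $\det \nabla y_\eta$ stays uniformly positive by continuity of $\nabla y$ in $x$). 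The mollified objects are smooth in $t$ with classical time derivatives in $L^2(\Omega)$, and by Lemma \ref{lem:deriphi}(i) together with \eqref{deri1}, $\theta_\eta \in C^1(I;L^2(\Omega))$ with
\begin{equation*}
\partial_t \theta_\eta = \frac{\partial_t w_\eta}{c_V(\nabla y_\eta, \theta_\eta)} - \frac{\partial_F W^{\rm in}(\nabla y_\eta, \theta_\eta)}{c_V(\nabla y_\eta, \theta_\eta)} : \partial_t \nabla y_\eta.
\end{equation*}
Since $\phi_\beta \in C^3$ (Lemma \ref{lem:propertyphibeta}), a classical chain rule yields for all $s < t$ in $I$ (with $\eta$ sufficiently small)
\begin{align*}
&\tfrac{1}{2}\|\phi_\beta(\lambda_\eta(t) - \theta_\eta(t))\|_{L^2}^2 - \tfrac{1}{2}\|\phi_\beta(\lambda_\eta(s) - \theta_\eta(s))\|_{L^2}^2 \\
&\quad= \int_s^t \int_\Omega \phi_\beta(\lambda_\eta - \theta_\eta)\phi_\beta'(\lambda_\eta - \theta_\eta)\Big(\partial_t \lambda_\eta + \frac{\partial_F W^{\rm in}(\nabla y_\eta, \theta_\eta)}{c_V(\nabla y_\eta, \theta_\eta)}:\partial_t \nabla y_\eta\Big)\di x \di \tau \\
&\qquad - \int_s^t \int_\Omega \frac{\phi_\beta(\lambda_\eta - \theta_\eta)\phi_\beta'(\lambda_\eta - \theta_\eta)}{c_V(\nabla y_\eta, \theta_\eta)} \partial_t w_\eta \di x \di \tau.
\end{align*}

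\emph{Passage to the limit $\eta \to 0$.} Standard mollifier properties yield $w_\eta \to w$ in $L^2(I;H^1) \cap C(I;L^2)$, $y_\eta \to y$ in $C(I;W^{1,\infty})$ (using $y \in L^\infty(I;W^{2,p})$ with $p \geq 2d$), $\partial_t w_\eta \to \partial_t w$ in $L^2(I;(H^1)^*)$, $\partial_t \nabla y_\eta \to \partial_t \nabla y$ in $L^2(I\times\Omega)$, and $\lambda_\eta \to \lambda$ in $C^1(I)$. Continuity of $\Psi$, $\partial_F W^{\rm in}$, $c_V$, and Lemma \ref{lem:deriphi}(ii) (to transfer $H^1$-control from $w_\eta$ to $\theta_\eta$) ensure $\theta_\eta \to \theta$ in $L^2(I;H^1) \cap C(I;L^2)$, whence the left-hand side converges pointwise in $s,t$ and the first two terms on the right pass to the limit by dominated convergence together with the uniform bound from Lemma \ref{lemma: phii}. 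The third, currently an $L^2$-integral, converges to $\int_s^t \langle \partial_t w, \varphi_\beta\rangle \di \tau$ provided $\varphi_{\beta,\eta} \defas \phi_\beta(\lambda_\eta-\theta_\eta)\phi_\beta'(\lambda_\eta-\theta_\eta)/c_V(\nabla y_\eta,\theta_\eta) \to \varphi_\beta$ strongly in $L^2(I;H^1)$. Taking $s \searrow 0$ and using that the right-hand side of \eqref{formula:chainrulereg} belongs to $L^1(I)$, the map $t \mapsto \tfrac{1}{2}\int_\Omega \phi_\beta(\lambda-\theta)^2\,\di x$ is absolutely continuous and differentiating in $t$ yields \eqref{formula:chainrulereg} for a.e.\ $t \in I$.

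\emph{Main obstacle.} The delicate point is the strong convergence $\varphi_{\beta,\eta} \to \varphi_\beta$ in $L^2(I;H^1)$, i.e., controlling $\nabla \theta_\eta$ uniformly in $L^2$ and passing to the limit. For this, one computes $\nabla \theta_\eta = \partial_w\Psi(\nabla y_\eta, w_\eta)\,\nabla w_\eta + \partial_F\Psi(\nabla y_\eta, w_\eta)\cdot \nabla^2 y_\eta$ and combines the bounds from Lemma \ref{lem:deriphi}(i)--(ii), the $L^\infty(I;W^{2,p})$-regularity of $y$, the $L^2(I;H^1)$-convergence of $w_\eta$, and pointwise-a.e.\ convergence of the coefficients. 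The gradient of the $c_V$-factor is handled via \ref{C_Wint_regularity} and $\nabla\theta_\eta \to \nabla\theta$ in $L^2$, and the $\phi_\beta,\phi_\beta'$ factors remain Lipschitz and bounded by Lemma \ref{lem:propertyphibeta}, so that an elementary product-rule argument completes the strong-convergence step.
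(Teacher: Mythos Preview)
Your approach is correct and follows the same \emph{mollify-then-pass-to-the-limit} strategy as the paper, but the realization differs in two respects.

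First, the paper does \emph{not} invoke a classical $C^1(I;L^2)$ chain rule for the mollified objects. Instead it establishes an auxiliary statement (Proposition~\ref{prop:chainrulepos-new}) under the stronger hypotheses $w\in H^1(I;H^1(\Omega))$ and $y\in H^1(I;H^{k_0}(\Omega;\R^d))$, $k_0>1+d/2$, by applying the abstract chain rule for locally semiconvex functionals (Proposition~\ref{lem:mielkechainrule}). To reach that regime it mollifies $w$ in time but $y$ in \emph{space}. Your choice to time-mollify both $y$ and $w$ and to differentiate $\tfrac12\|\phi_\beta(\lambda_\eta-\theta_\eta)\|_{L^2}^2$ directly is more elementary and bypasses the semiconvexity machinery; it works because, for fixed $\beta$, $\phi_\beta$, $\phi_\beta'$, $\phi_\beta''$ are bounded on the relevant range and $\theta_\eta=\Psi(\nabla y_\eta,w_\eta)\in C^1(I;L^2)$. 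One point to tighten: your claim $y_\eta\to y$ in $C(I;W^{1,\infty})$ does not follow merely from $y\in L^\infty(I;W^{2,p})$; you need the extra observation that $\nabla y\in C(I\times\overline\Omega)$ (which holds by combining $\nabla y\in C(I;L^2)$ from $H^1(I;H^1)$ with the uniform $C^{0,1-d/p}$ bound and Arzel\`a--Ascoli), so that time mollification converges uniformly and $\det\nabla y_\eta$ stays bounded below. The paper's space mollification sidesteps this by getting $\nabla y_\eps\to\nabla y$ in $L^\infty(I\times\Omega)$ directly from Sobolev embedding.

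Second, for the term $\int_s^t\langle\partial_t w_\eta,\varphi_{\beta,\eta}\rangle\,\di\tau$ the paper only uses \emph{weak} convergence $\varphi_\eps\rightharpoonup\varphi_\beta$ in $L^2(I;H^1)$ together with strong convergence $\partial_t w_\eps\to\partial_t w$ in $L^2_{\rm loc}((0,T);(H^1)^*)$, via the splitting $|\langle\partial_t w_\eta-\partial_t w,\varphi_{\beta,\eta}\rangle|+|\langle\partial_t w,\varphi_{\beta,\eta}-\varphi_\beta\rangle|$. Your strong convergence claim is achievable (via $\nabla w_\eta\to\nabla w$ and $\nabla^2 y_\eta\to\nabla^2 y$ in $L^2(I\times\Omega)$ plus dominated convergence for the bounded coefficients), but it is more than what is needed; uniform boundedness of $\varphi_{\beta,\eta}$ in $L^2(I;H^1)$, which you already have from Lemma~\ref{lemma: phii}, suffices.
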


\MMM Observe that Lemma \ref{lemma: phii}  \AAA  guarantees \EEE that the last term \EEE of \eqref{formula:chainrulereg} lies in $L^1(I)$. The proof of this auxiliary chain rule will be given below in Subsection \ref{sec: aux chr}. We first show that  Proposition \ref{lem:chainrulepos}   implies Theorem \ref{thm:chainrule}.

\begin{proof}[Proof of Theorem~\ref{thm:chainrule}]
We can employ Proposition~\ref{lem:chainrulepos}   and integrate the resulting equation over $[t_1, t_2]$ for general $0 \leq t_1 \leq t_2 \leq T$ yielding 
\begin{align}\label{formula:chainrulemaxproof}  
&\int_{t_1}^{t_2} \int_\Omega \phi_\beta(\lambda  - \theta ) \phi_\beta' (\lambda  - \theta ) \rb \big( \ee \partial_t  \lambda + \rb c_V(\nabla y, \theta)^{-1} \ee \partial_F W^{\rm in}(\nabla y, \theta) : \partial_t \nabla y \rb \big) \ee \di x \,  - \rb\Big\langle \partial_t w, \frac{\phi_\beta (\lambda  - \theta ) \phi_\beta '(\lambda  - \theta)}{c_V(\nabla y, \theta)}\Big\rangle \ee \di t  \notag \\
  &\qquad = \frac{1}{2} \int_\Omega \phi_\beta(\lambda(t_2)  - \theta(t_2) )^2 \di x -   \frac{1}{2} \int_\Omega \phi_\beta(\lambda(t_1)  - \theta(t_1) )^2 \di x  .
\end{align}
\MMMMM Here, we also used that   $\theta \in C(I; L^2(\Omega) )$ by  Lemma~\ref{lem:deriphi}(iii). \AAA Our goal is to show that in the limit $\beta \to 0$ it holds that \EEE  
\begin{align}\label{chainrule}
 &  \int_{t_1}^{t_2} \int_\Omega  (\lambda  - \theta )_+\rb\big(\ee \partial_t \lambda   + \rb c_V(\nabla y, \theta)^{-1} \ee \partial_F W^{\rm in}(\nabla y, \theta) : \partial_t \nabla y \rb\big)\ee \di x \, \di t -  \int_{t_1}^{t_2} \rb\Big\langle \partial_t w, (\lambda  - \theta )_+  c_V(\nabla y, \theta)^{-1} \Big\rangle \ee \di t \notag \\
  &\qquad = \frac{1}{2} \int_\Omega ((\lambda - \theta)_+(t_2))^2 \di x - \frac{1}{2} \int_\Omega ((\lambda-\theta)_+(t_1))^2 \di x  .
  \end{align} 
\EEE Due to Lemma~\ref{lem:propertyphibeta}, \AAA as $\beta  \searrow 0$, \EEE $\ZZZ (\phi_\beta)_\beta \EEE $ is \MMM an increasing and nonnegative \EEE sequence \rb converging \rufb pointwise \lll to \EEE $(\cdot)_+$. \ee Moreover, \MMM as \ZZZ $\theta \in C(I; L^2(\Omega) )$ by \ZZZ Lemma~\ref{lem:deriphi}(iii), \EEE we get \EEE $\theta(t_i) \in \ZZZ L^2(\Omega) \EEE$ for $i =1,2$. Thus, we have $ \phi_\beta(\lambda(t_i)-\theta(t_i))\nearrow (\lambda(t_i)-\theta(t_i))_+$ pointwise for a.e.~$x\in \Omega$, and \MMM then \EEE the \ZZZ right-hand \EEE side of \eqref{formula:chainrulemaxproof} converges to the \ZZZ right-hand \EEE side of \eqref{chainrule} by the monotone convergence theorem.

  For the convergence of the \ZZZ left-hand side, \EEE \lll we first show \EEE that  $\varphi_\beta  = c_V(\nabla y, \theta)^{-1} \phi_\beta(\lambda - \theta) \phi_\beta'(\lambda-\theta) $ \AAA defined in Lemma \ref{lemma: phii} \EEE converges weakly in $L^2(I; H^1(\Omega))$ to  $c_V(\nabla y, \theta)^{-1}(\lambda - \theta)_+$ as $\beta\to 0$.  \MMM By Lemma~\ref{lem:propertyphibeta}\ZZZ, \EEE we find that   $(\varphi_\beta)_\beta$ converges pointwise a.e.\ in $ I \times \Omega$   to $c_V(\nabla y, \theta)^{-1}(\lambda - \theta)_+$. Moreover, \III using \EEE Lemma~\ref{lemma: phii} and recalling the regularity of $(\lambda, \MMMMM y,\theta)$\III, \EEE we get that  $( \varphi_\beta)_\beta $ is bounded in $L^2(I; H^1(\Omega))$. This  shows $\varphi_\beta \rightharpoonup c_V(\nabla y, \theta)^{-1}(\lambda - \theta)_+$ weakly in $L^2(I;H^1(\Omega))$, as desired.  \MMMMM In  particular, we have  $c_V(\nabla y, \theta)^{-1}(\lambda - \theta)_+ \in L^2(I;H^1(\Omega))$, i.e.,  Remark~\ref{rem:consequencechainrule}(ii) holds. \EEE   \lll The remaining terms on the left-hand side converge due to Lemma~\ref{lem:propertyphibeta}, \eqref{est:coupl}, \eqref{inten_mon}, the regularity of $y$, and the dominated convergence theorem. \EEE
  Summarizing, we have shown that  \EEE    the \ZZZ left-hand \EEE side of \eqref{formula:chainrulemaxproof} converges to the \ZZZ left-hand \EEE side of \eqref{chainrule}.
  
  \MMM Eventually, as \eqref{chainrule} is satisfied for arbitrary $t_1,t_2 \in I$, we  conclude    that  $t \mapsto \int_\Omega ((\lambda - \theta)_+(t))^2 \di x$ lies in $W^{1,1}(I)$ and for a.e.\ $t \in I$ the chain rule \eqref{final chain rule} holds. \EEE  
\end{proof}

\MMM 

\subsection{Proof of the auxiliary chain rule}\label{sec: aux chr}
 
This subsection is devoted to the proof of the auxiliary chain rule stated in Proposition \ref{lem:chainrulepos}. \lll We \EEE will first prove the result under a \III higher \EEE regularity assumption on $w$ \ZZZ and $y$, \EEE and pass to the general case by approximation at the end of this subsection.  
\ZZZ
More precisely, we will first consider functions $(y,\theta) \in \mathcal{S}_{\rm chain}$ such that $w \rb \defas \inten(\nabla y, \theta) \ZZZ \in \III H^1 \EEE(I; H^1(\Omega))$ and $y \in H^1(I;H^{k_0}(\Omega;\R^d))$ for  some  $k_0 \in \N$  with  $k_0 > 1 + \frac{d}{2}$.
Here, we note that the choice of $k_0$ and Morrey's inequality ensure that
\begin{align}\label{embed}
 \III C(\overline{\Omega})  \EEE \subset\subset   H^{k_0 - 1}  (\Omega).
\end{align}

\begin{proposition}[Auxiliary chain rule for more regular \MMMMM $y$ and \EEE $w$]\label{prop:chainrulepos-new}

Let $\phi_\beta$ be as in \eqref{def:phi_beta} for arbitrary $\beta > 0$.  Let $(y, \theta) \in \ZZZ \mathcal{S}_{\rm chain} \EEE$, \III $\lambda \in C^1(I)$, \EEE  and  assume that  \AAA \ref{C_third_order_bounds}--\ref{C_Wint_regularity} hold. \EEE  Assume in addition that  \III $w \in \III H^1 \EEE(I;H^1(\Omega))  $ \ZZZ and $y \in H^1(I;H^{k_0}(\Omega;\R^d))$. \EEE  Then, the statement of Proposition \ref{lem:chainrulepos} holds \III with the dual pairing $\langle \cdot, \cdot \rangle$ in \eqref{formula:chainrulereg} replaced by the scalar product in $L^2(\Omega)$.
\end{proposition}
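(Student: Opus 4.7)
The plan is to invert the relation $w = W^{\rm in}(\nabla y, \theta)$ via Lemma~\ref{lem:deriphi}(i), writing $\theta = \Psi(\nabla y, w)$ for the $C^3$ function $\Psi$ defined in \eqref{def_Phi}, and then derive the pointwise identity
\begin{align*}
\partial_t \theta &= \partial_w \Psi(\nabla y, w)\, \partial_t w + \partial_F \Psi(\nabla y, w) : \partial_t \nabla y \\
&= \frac{1}{c_V(\nabla y, \theta)}\bigl(\partial_t w - \partial_F W^{\rm in}(\nabla y, \theta) : \partial_t \nabla y\bigr),
\end{align*}
using the explicit formulas \eqref{deri1}. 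The enhanced regularity $y \in H^1(I; H^{k_0}(\Omega;\R^d))$ and $w \in H^1(I; H^1(\Omega))$ is tailored to make this chain rule rigorous: by \eqref{embed}, $\nabla y$ is continuous in $x$ and lies in $H^1(I; C(\overline\Omega))$, while $\partial_t w \in L^2(I; H^1(\Omega))$.

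To verify that $\partial_t \theta \in L^2(I \times \Omega)$ and hence $\theta \in H^1(I; L^2(\Omega))$, I would use (i) the two-sided bound $c_V \in [c_0, C_0]$ from \eqref{inten_mon}; (ii) the pointwise estimate $|\partial_F W^{\rm in}(\nabla y, \theta)| \le C(1 + |\nabla y|)$, which follows from \eqref{Wint} combined with \ref{C_lipschitz} and the second bound in \ref{C_bounds}, together with $\nabla y \in L^\infty(I \times \Omega)$ (via $y \in L^\infty(I; W^{2,p})$ with $p \ge 2d$); and (iii) the $L^2$-in-time bounds on $\partial_t w$ and $\partial_t \nabla y$ imposed by the hypotheses. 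With $\theta \in H^1(I; L^2(\Omega))$ and $\phi_\beta \in C^3(\R)$ satisfying $0 \le \phi_\beta' \le 1$ by Lemma~\ref{lem:propertyphibeta}, the standard Sobolev chain rule yields $t \mapsto \tfrac12 \int_\Omega (\phi_\beta(\lambda-\theta))^2 \,dx \in W^{1,1}(I)$ with
\[
\frac{d}{dt}\,\tfrac12\int_\Omega (\phi_\beta(\lambda-\theta))^2\,dx = \int_\Omega \phi_\beta(\lambda-\theta)\phi_\beta'(\lambda-\theta)(\partial_t \lambda - \partial_t \theta)\,dx
\]
for a.e.\ $t \in I$. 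Substituting the expression for $\partial_t \theta$ above and recalling that $\partial_t w(t) \in L^2(\Omega)$ for a.e.\ $t$, one obtains precisely \eqref{formula:chainrulereg} with the dual pairing interpreted as the $L^2(\Omega)$ scalar product.

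The main delicacy will be the rigorous justification of the Sobolev chain rule for $\theta = \Psi(\nabla y, w)$: although $\Psi$ is $C^3$ on $GL^+(d) \times \R_+$, it is only locally smooth on an open subset of $\R^{d \times d} \times \R$, so direct composition requires care. I would handle this either by a smooth cut-off of $\Psi$ supported in a compact subset of $GL^+(d) \times \R_+$ containing the essential range of $(\nabla y, w)$ (exploiting the uniform $L^\infty$ bound on $\nabla y$ and the lower bound on $\det \nabla y$ built into $\mathcal{S}_{\rm chain}$, together with $w \le C_0\,\theta$ from \eqref{inten_lipschitz_bounds}), or alternatively by mollifying $(y,w)$ in time, applying the classical chain rule to the resulting smooth approximations, and passing to the limit in the identity by exploiting the regularity bounds on $\Psi$ and its derivatives on the relevant compact sets. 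Once this technical point is settled, the remaining computation is a direct substitution.
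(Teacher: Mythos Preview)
Your approach is correct and takes a genuinely different, more elementary route than the paper. The paper does \emph{not} first establish $\partial_t\theta\in L^2(I\times\Omega)$; instead it applies the abstract chain rule for locally semiconvex functionals (Proposition~\ref{lem:mielkechainrule}, from \cite{MielkeRoubicek2020,MielkeRossiSavare}) directly to the functional
\[
\mathcal{J}(\hat\lambda,\hat y,\hat w)=\int_\Omega \Phi_\beta(\hat\lambda,\nabla\hat y,\hat w)\,\di x
\]
on $X=\R\times H^1(\Omega;\R^d)\times L^2(\Omega)$, carefully verifying semiconvexity, lower semicontinuity, and the subdifferential, and then patching together local chain rules on short time intervals $I_\tau$ where $\nabla y$ varies little. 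A key ingredient in the paper's argument is that $\Phi_\beta(\hat\lambda,F,\hat w)$ vanishes once $\Psi(F,\hat w)\ge\hat\lambda$, so on the set where $\Phi_\beta>0$ one has the pointwise bound $\hat w\le C_0\hat\lambda\le C_0 C_\infty$; this is what allows the paper to work on the compact set $K$ in \eqref{eq: K} and control all second derivatives of $\Phi_\beta$ uniformly.

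Your direct route bypasses this machinery, which is a real simplification. One small correction: your proposed cutoff of $\Psi$ on ``a compact subset of $GL^+(d)\times\R_+$ containing the essential range of $(\nabla y,w)$'' does not work as stated, because $w$ need not be bounded (the inequality $w\le C_0\theta$ does not help, $\theta$ being unbounded too). What saves the argument is rather that $\partial_w\Psi$ and $\partial_F\Psi$ are \emph{uniformly bounded} on $K\times[0,\infty)$ for any compact $K\subset GL^+(d)$, by \eqref{deri1} combined with \eqref{inten_mon} and \eqref{est:coupl}. This Lipschitz control, together with the uniform continuity of $\nabla y\in C(I\times\overline\Omega;\R^{d\times d})$ (ensuring that segments between $\nabla y(t,x)$ and $\nabla y(t+h,x)$ remain in $GL^+(d)$ for small $h$), is enough to run the difference-quotient argument for $\theta=\Psi(\nabla y,w)$ and obtain $\partial_t\theta\in L^2(I\times\Omega)$ with the claimed formula. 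Once that is in hand, the remaining steps you describe go through.
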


A key ingredient for the proof is  a  chain rule for locally semiconvex functionals, see  \EEE  \cite[Proposition~3.6]{MielkeRoubicek2020} or also \cite[Proposition~2.4]{MielkeRossiSavare}. Before stating this result, we briefly recall the definition of local semiconvexity and the definition of the Fréchet subdifferential.
In this regard, let $X$ be a reflexive and separable Banach space.
A functional $\mathcal{J} \colon X \to \R \cup \{ + \infty \}$ is called locally semiconvex if for all $z \in X$ with $\mathcal{J}(z) < + \infty$ \MMM there \EEE exist $\Lambda(z) \geq 0$ \rufb and $r(z) > 0$ \ee such that the restriction of $\mathcal{J}$ \rufb to the ball $B_{r(z)}(z) \defas \{ \hat z \in X \colon \Vert \hat z - z \Vert_X \leq r(z) \}$ \ee is \MMM $\Lambda(z)$-semiconvex, \EEE i.e.,
\begin{align}\label{def:lambdasemiconv}
\mathcal{J}((1-s) z_0 + sz_1) \leq (1-s) \mathcal{J}(z_0) + s \mathcal{J}(z_1) + \Lambda(z) \frac{s-s^2}{2} \Vert z_1 - z_0 \III \Vert_X^2 \EEE
\end{align}
for all $ z_0, z_1 \in B_{\MMMMM r(z)}(z)$  and all $s \in [0,1]$. \rufb Furthermore, t\ee he Fréchet subdifferential is defined by
\begin{align*}
\overline{\partial} \mathcal{J}(z) = \left\{ \Theta \in X^* : \mathcal{J}(\tilde z) \geq \mathcal{J}(z) + \langle \Theta, \tilde z - z \ZZZ \rangle_X \EEE -  \frac{1}{2}\EEE   \Lambda (z) \Vert \tilde z - z \Vert_X^2 \text{ for } \tilde z \in B_{r( z) } (z) \right\},
\end{align*}
where $X^*$ denotes the dual space of $X$ \rb and $\langle \cdot, \cdot \rangle_X$ stands for the dual pairing between \MMMMM $X$ and $X^*$. \EEE
For \MMMMM convenience, \EEE  we \MMM formulate \EEE the result \cite[Proposition~3.6]{MielkeRoubicek2020} in the special case $q = 2$:

\begin{proposition}[Chain rule for locally semiconvex functionals]\label{lem:mielkechainrule}
Consider a separable\ZZZ, \EEE reflexive Banach space $X$ and let $\mathcal{J}\colon X \to \R \cup \{+\infty\}$ be a lower semicontinuous and locally semiconvex functional. If $z \in H^1(I;X)$ and $\Theta \in L^2(I; X^*)$ satisfy
\begin{align}
&\sup\EEE \{ \mathcal{J}(z(t)) \colon t \in I \} < + \infty  \text{ and } \label{bed1} \\
&\Theta(t) \in \overline{\partial} \mathcal{J}(z(t)) \text{ for a.e.~} t \in I,  \label{bed2}
\end{align}
then
\begin{align}\label{eq: the chain rule}
\MMM t \mapsto \mathcal{J}(z(t))  \text{ lies in } \EEE   W^{1,1}(I) \quad \text{ and } \quad \frac{\di}{\di t} \mathcal{J} (z(t) ) = \langle \Theta(t),  \AAA \partial_t \EEE z (t) \rb \rangle_{\AAA X} \EEE \text{ for a.e.~} t \in I.
\end{align}
\end{proposition}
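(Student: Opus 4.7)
\medskip

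The plan is to recast the statement as an instance of the abstract chain rule in Proposition~\ref{lem:mielkechainrule}. On the separable reflexive Banach space $X \defas H^{k_0}(\Omega;\R^d) \times H^1(\Omega) \times \R$ I would define the functional
\[
  \mathcal{J}(y,w,\lambda) \defas \tfrac{1}{2}\int_\Omega \phi_\beta\bigl(\lambda - \Psi(\nabla y, w)\bigr)^2 \di x
\]
for $(y,w,\lambda)$ in the open subset $\mathcal{U}\subset X$ on which $\det\nabla y$ is strictly positive on $\overline\Omega$ and $w\ge 0$ a.e., extended by $+\infty$ elsewhere. Note that $\Psi$ is well-defined there by \eqref{def_Phi}. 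The curve $z(t) \defas (y(t), w(t), \lambda(t))$ lies in $H^1(I;X)$ by the hypotheses (with $\lambda \in C^1(I)\subset H^1(I;\R)$), and by \eqref{embed} together with pointwise positivity of $\det\nabla y(t,\cdot)\in C(\overline\Omega)$, we have $z(t)\in\mathcal{U}$ for every $t\in I$.

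Next I would verify the structural assumptions. Lower semicontinuity of $\mathcal{J}$ on $X$ is immediate from the continuous embedding $H^{k_0}\hookrightarrow C^1(\overline\Omega)$ and the continuity of $\phi_\beta$ and $\Psi$. For local semiconvexity around a point of $\mathcal{U}$: on a sufficiently small $X$-ball, $\nabla y$ stays in a compact subset of $GL^+(d)$ uniformly in $x$, so that, combining Lemma~\ref{lem:deriphi}(i) ($\Psi\in C^3$) with Lemma~\ref{lem:propertyphibeta} ($\phi_\beta\in C^3$), $\mathcal{J}$ is $C^2$ on that ball with bounded second Fréchet derivative, which yields $\Lambda$-semiconvexity in the sense of \eqref{def:lambdasemiconv}. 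Under such smoothness, the Fréchet subdifferential reduces to the classical Gâteaux differential, which using \eqref{deri1} acts on a test triple $(\delta y,\delta w,\delta\lambda)\in X$ as
\[
  \bigl\langle \Theta(t), (\delta y,\delta w,\delta\lambda)\bigr\rangle_X
  = \int_\Omega \phi_\beta\phi_\beta' \Bigl(\tfrac{\partial_F W^{\mathrm{in}}(\nabla y,\theta)}{c_V(\nabla y,\theta)} : \nabla \delta y \;-\; \tfrac{\delta w}{c_V(\nabla y,\theta)} \;+\; \delta\lambda\Bigr) \di x,
\]
evaluated at $(y(t),w(t),\lambda(t))$ with $\theta=\Psi(\nabla y,w)$. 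That $\Theta \in L^2(I; X^*)$ follows from the uniform lower bound on $c_V$ in \eqref{inten_mon}, the bounds on $\partial_F W^{\mathrm{in}}$ implied by \ref{C_third_order_bounds}--\ref{C_Wint_regularity}, and the regularity of $y$, $w$, $\lambda$.

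To conclude, since $\theta\ge 0$ one has $\phi_\beta(\lambda-\theta)\le \lambda_+$ pointwise (Lemma~\ref{lem:propertyphibeta}), so $\sup_{t\in I}\mathcal{J}(z(t)) \le \tfrac12|\Omega|\,\|\lambda\|_{L^\infty(I)}^2 < \infty$, giving \eqref{bed1}, while the identification of $\Theta(t)$ with an element of the Fréchet subdifferential is \eqref{bed2}. Proposition~\ref{lem:mielkechainrule} then delivers $t\mapsto\mathcal{J}(z(t))\in W^{1,1}(I)$ and
\[
  \tfrac{d}{dt}\mathcal{J}(z(t)) = \bigl\langle \Theta(t), \partial_t z(t)\bigr\rangle_X
\]
for a.e.\ $t$. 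Expanding the pairing against $\partial_t z(t) = (\partial_t y(t), \partial_t w(t), \partial_t \lambda(t))$ with the displayed gradient reproduces exactly the right-hand side of \eqref{formula:chainrulereg}, but with the duality pairing $\langle \partial_t w,\cdot\rangle$ replaced by the $L^2(\Omega)$ scalar product, as desired.

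The main obstacle I anticipate is the semiconvexity/gradient identification step, which hinges on keeping $\det\nabla y$ uniformly bounded below and $\nabla y$ uniformly bounded under $H^{k_0}$-perturbations; this is where the high regularity $k_0 > 1 + d/2$ together with the embedding \eqref{embed} is essential, since without a uniform $C^0$ control on $\nabla y$ the composition with $\Psi$ (defined only on the cone $GL^+(d)$) would not be smooth on a Banach ball. Once this uniformity is in place, the rest of the argument is an application of the abstract machinery plus the explicit chain-rule formulae \eqref{deri1}.
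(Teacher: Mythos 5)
Your proposal does not prove the statement in question. The statement is the \emph{abstract} chain rule for a lower semicontinuous, locally semiconvex functional $\mathcal{J}$ on a \emph{general} separable reflexive Banach space $X$, with $\Theta(t)\in\overline{\partial}\mathcal{J}(z(t))$. What you have written is instead a proof of the downstream concrete result (essentially Proposition~\ref{prop:chainrulepos-new}, the auxiliary chain rule for $\Phi_\beta$ with regular $y$ and $w$), and — fatally — your very first sentence invokes Proposition~\ref{lem:mielkechainrule} as the tool that makes the argument work. As a proof of Proposition~\ref{lem:mielkechainrule} itself this is circular: you assume the conclusion in order to derive an application of it. All of the content of your write-up (the choice of $X=H^{k_0}\times H^1\times\R$, the definition of the integral functional, the verification of semiconvexity via $C^2$-bounds on $\Phi_\beta$ using \eqref{embed}, the identification of the Fr\'echet subdifferential with the G\^ateaux derivative) belongs to the verification of hypotheses \eqref{bed1}--\eqref{bed2} in a \emph{particular} instance, not to the abstract statement, which knows nothing about $\Omega$, $\Psi$, $\phi_\beta$, or determinants.

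For the record, the paper does not prove this proposition either: it is quoted verbatim (in the case $q=2$) from \cite[Proposition~3.6]{MielkeRoubicek2020}, see also \cite[Proposition~2.4]{MielkeRossiSavare}. A genuine proof would run roughly as follows: for $s,t$ close enough that $z(t)\in B_{r(z(s))}(z(s))$ (possible by continuity of $z$ and a compactness/covering argument on $I$), apply the subdifferential inequality at $z(s)$ tested with $z(t)$ and at $z(t)$ tested with $z(s)$ to obtain the two-sided estimate
\begin{align*}
\langle \Theta(s), z(t)-z(s)\rangle_X - \tfrac{\Lambda}{2}\norm{z(t)-z(s)}_X^2
\;\le\; \mathcal{J}(z(t))-\mathcal{J}(z(s))
\;\le\; \langle \Theta(t), z(t)-z(s)\rangle_X + \tfrac{\Lambda}{2}\norm{z(t)-z(s)}_X^2.
\end{align*}
Since $z\in H^1(I;X)$ gives $\norm{z(t)-z(s)}_X\le\int_s^t\norm{\partial_t z}_X\,\di r$ and $\Theta\in L^2(I;X^*)$, this yields absolute continuity of $t\mapsto\mathcal{J}(z(t))$ (hence membership in $W^{1,1}(I)$, using \eqref{bed1} for integrability), and dividing by $t-s$ and passing to the limit at common Lebesgue points of $\partial_t z$ and $\Theta$ identifies the derivative as $\langle\Theta(t),\partial_t z(t)\rangle_X$. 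None of this appears in your proposal, so the statement remains unproved.
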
  
\lll In view of \eqref{def_Phi}, formula \EEE \eqref{formula:chainrulereg} contains the function $ \Phi_\beta \colon \lll \R \EEE \times GL^+(d) \times \R_+ \to \R$ defined  by
\begin{equation}\label{shorthand0}
\Phi_\beta(\lambda, F, w) \defas  \frac{1}{2}\phi_\beta\big(\lambda - \Psi(F, w)\big)^2,
\end{equation}
as well as its $\lambda$-derivative, which we denote by 
\begin{equation}\label{shorthand}
\tilde \Phi_\beta(\lambda, F, w) \defas \phi_\beta\big(\lambda - \Psi(F, w)\big) \phi'_\beta\big(\lambda - \Psi(F, w)\big).
\end{equation} 
Their properties are summarized in the following lemma.

\begin{lemma}[\rb Properties of \III $\Phi_\beta$\EEE]\label{Phi-lemma} \III The function \EEE  $\Phi_\beta$ is $C^3$ on $ \lll \R \EEE \times GL^+(d) \times \R_+$, and  has the first-order partial derivatives
\begin{align}\label{ttt0.}
\partial_\lambda   \Phi_\beta = \tilde \Phi_\beta,  \quad 
\partial_F   \Phi_\beta = - \tilde \Phi_\beta \partial_F \Psi, \quad 
\partial_w   \Phi_\beta= - \tilde \Phi_\beta \partial_w \Psi.
\end{align}
\end{lemma}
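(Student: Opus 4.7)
The plan is to establish this lemma by a direct application of the chain rule to the composition $\Phi_\beta = \frac{1}{2} (\phi_\beta \circ (\lambda - \Psi))^2$, treating $\lambda$, $F$, and $w$ as independent variables and exploiting the smoothness of each factor separately.

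First, I would note that $\Psi \in C^3(GL^+(d) \times \R_+)$ by Lemma~\ref{lem:deriphi}(i), and $\phi_\beta \in C^3(\R)$ by Lemma~\ref{lem:propertyphibeta}. Since $(\lambda, F, w) \mapsto \lambda - \Psi(F, w)$ is $C^3$ on $\R \times GL^+(d) \times \R_+$, the composition $(\lambda, F, w) \mapsto \phi_\beta(\lambda - \Psi(F, w))$ is also $C^3$, and hence so is its square divided by two. This immediately gives the claimed $C^3$-regularity of $\Phi_\beta$.

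For the first-order partial derivatives, I would simply apply the chain rule to $\Phi_\beta(\lambda, F, w) = \frac{1}{2} \phi_\beta(\lambda - \Psi(F, w))^2$. Writing shortly $s \defas \lambda - \Psi(F, w)$ and using $\frac{d}{ds}\bigl(\tfrac{1}{2}\phi_\beta(s)^2\bigr) = \phi_\beta(s) \phi_\beta'(s)$, I compute
\begin{align*}
\partial_\lambda \Phi_\beta &= \phi_\beta(s)\phi'_\beta(s) \cdot \partial_\lambda s = \phi_\beta(s) \phi'_\beta(s) = \tilde\Phi_\beta, \\
\partial_F \Phi_\beta &= \phi_\beta(s)\phi'_\beta(s) \cdot (-\partial_F \Psi) = -\tilde \Phi_\beta \, \partial_F \Psi, \\
\partial_w \Phi_\beta &= \phi_\beta(s)\phi'_\beta(s) \cdot (-\partial_w \Psi) = -\tilde \Phi_\beta \, \partial_w \Psi,
\end{align*}
which are the three identities in \eqref{ttt0.}.

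There is essentially no obstacle here beyond keeping track of the definitions: everything reduces to the chain rule together with the two regularity facts quoted above. The only point worth flagging (but not really a difficulty) is that $\phi_\beta$ is defined piecewise in \eqref{def:phi_beta}, so the $C^3$-regularity across $s = 0$ is not visually obvious; this is however already absorbed into the statement of Lemma~\ref{lem:propertyphibeta}, so I would just cite it rather than re-verify it here.
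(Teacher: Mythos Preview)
Your proof is correct and follows essentially the same approach as the paper: both rely on the chain rule together with $\phi_\beta \in C^3(\R)$ (Lemma~\ref{lem:propertyphibeta}) and $\Psi \in C^3(GL^+(d)\times\R_+)$ (Lemma~\ref{lem:deriphi}(i)). The only cosmetic difference is that the paper verifies $C^3$-regularity by noting that the second- and third-order derivatives of $\Phi_\beta$ are products of derivatives of $\phi_\beta$ and $\Psi$ up to third order, whereas you invoke directly that a composition of $C^3$ maps is $C^3$; these are equivalent.
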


\begin{proof} 
By an elementary \ZZZ computation, \EEE we obtain \eqref{ttt0.}. In a similar fashion, we get that the second-order \MMMMM and third-order \EEE partial derivatives feature products of  $\phi_\beta$ and its derivatives up to third order, as well as $\Psi$ and its derivatives up to \MMMMM third \EEE order. Since $\phi_\beta \in C^3(\R)$ and $\Psi \in C^3(GL^+(d) \times \R_+)$ by Lemma \ref{lem:propertyphibeta} and Lemma~\ref{lem:deriphi}(i), respectively, the statement \III follows.  
\end{proof}
\EEE

We proceed with the proof of the auxiliary chain rule, under the  additional  assumptions   $w \in \III H^1 \EEE(I;H^1(\Omega))   $ \AAA and   $y \in H^1(I;H^{k_0}(\Omega;\R^d))$. \EEE

\begin{proof}[Proof of Proposition~\ref{prop:chainrulepos-new}]
 The proof is divided into \III four \EEE steps. We first show that the chain rule holds in a small time interval. To this end, we define suitable $X$ and $\mathcal J$ from Proposition~\ref{lem:mielkechainrule} in our present setting, and start by showing that condition \eqref{bed1} is satisfied (Step 1). \III In Step 2  we compute the subdifferential, address  the local semiconvexity, \III and show  \eqref{bed2}. In \EEE Step \III 3  \EEE we prove \EEE the lower semicontinuity of $\mathcal{J}$, \III and \EEE in Step \III 4 \EEE we pass to a global version of the chain rule on \MMM the time interval \EEE  $I$.  For notational convenience, we drop the subscript $\beta$, and write \III  $\Phi$ and $\tilde{\Phi}$ in place of    $\Phi_\beta$ and $\tilde{\Phi}_\beta$, respectively. \EEE

\EEE

\emph{Step 1 (Definition of $X$ and $\mathcal J$,  \ZZZ and property \EEE \eqref{bed1}):}  
\MMM Consider $\lambda$, $y$,  and $w$ as in the statement, i.e.,  $(y,\theta) \in \ZZZ \mathcal{S}_{\rm chain} \EEE$,  \III $w \in \III H^1 \EEE (I;H^1(\Omega;   \R_+))   $, \ZZZ 
$y \in H^1(I;H^{k_0}(\Omega;\R^d))$,  \III and $\lambda \in \AAA C^1 \EEE (I)$. \III 
 Moreover, let \EEE
\begin{equation}\label{Linftytemp}
C_\infty \defas \III \sup_{t \in I}  \big( \vert \lambda(t) \vert + \Vert w(t) \Vert_{ \III L^2(\Omega) } +  \Vert y(t) \Vert_{ \III H^{1}(\Omega)} \big) \EEE   < +\infty,
\end{equation} 
where \III in \eqref{Linftytemp} we can write sup in place of  $\esssup$ since $w \in C(I; H^1(\Omega) )$ and $y \in C(I;H^{k_0}(\Omega;\R^d) )$. \EEE
\ZZZ Then, the regularity of $y$, and $(\det \nabla y )^{-1} \in \AAA L^\infty(I; L^q(\Omega;(0,\infty))) \EEE$ for $q \geq \frac{pd}{p-d}$ along with \cite[Theorem~3.1]{MielkeRoubicek2020} (see also \cite[Theorem~3.1]{HealeyKroemer09Injective}) \EEE show  that  
\begin{equation}\label{Linftytemp2}
c_\infty \defas \inf_{I \times \Omega }  \det \nabla y  >0,
\end{equation}
where  we can write inf in place of \ZZZ $\essinf$ \EEE since $\nabla y \in \rufb C(I \times \overline{\Omega}; \R^{d\times d})\ee$, \III see \eqref{embed}. \EEE In particular, \III we find \ZZZ a constant \EEE $C >0$ depending only on $C_\infty$ and $\Omega$ such that
\begin{align}\label{pointwise}
\text{$\nabla y \in GL^+_{2c_\infty}(d) \cap B^{d \times d}_{C}$  on $I \times \Omega$},
\end{align}
 \ZZZ where for $c>0$ we let $GL^+_{c}(d) \defas \lbrace F \in GL^+(d) \colon \, \det F \ge c/2\rbrace$, and
$B^{d \times d}_{C} \subset \R^{d \times d}$ denotes the ball centered at zero with radius $C$. \EEE \ZZZ The continuity of the determinant implies that there \EEE exists $\eps >0$ such that  
\begin{align}\label{GL bound}
\text{$F \in GL^+_{c_\infty}(d)$ for all $F \in \R^{d \times d}$ with ${\rm dist}\big(F,  GL^+_{2c_\infty}(d) \cap B^{d \times d}_{C}\big) \le \eps$.}
\end{align}
 As $\nabla y \in \rufb C(I \times \overline{\Omega}; \R^{d\times d} )$, we find $\delta >0$ such that, for each fixed $\tau \in I$, we have  
\begin{align}\label{auch noch} 
\text{$\Vert \nabla y(t) - \nabla y(\tau) \Vert_{L^\infty(\Omega)} \leq \eps$ for all $t \in I_\tau \defas I \cap (\tau - \delta, \tau + \delta)$.}
 \end{align} \EEE 
Our first goal is to show that for fixed $\tau \in I$   \MMM the chain rule \eqref{formula:chainrulereg} holds for a.e.~$t \in I_{\tau}$, \III  where the dual pairing $\langle \cdot, \cdot \rangle$ in \eqref{formula:chainrulereg} is replaced by the scalar product \III in $L^2(\Omega)$\ee.  \EEE The global version \ZZZ is \EEE deferred to Step \lll 4. \EEE We want to employ Proposition \ref{lem:mielkechainrule}: we choose as $X$ the separable, reflexive Banach space \III $\R \times H^{1}(\Omega;\R^d) \times \III L^2(\Omega) \EEE$.   Note that \III the assumed regularity on $y$, $w$\rufb, \III and $\lambda$  particularly guarantees $(\lambda,y,w) \in H^1(I;X)$. \EEE We \MMM define \EEE  the functional $\mathcal J \colon X \to \R \NNN \cup \{+\infty\}$ \ZZZ as  
\begin{align}\label{def_mathcal_J}
 \mathcal{J}(\hat \lambda, \hat  y, \hat w)
 &\defas\begin{cases}  \int_\Omega \MMM \Phi( \hat \lambda, \nabla \hat  y, \hat  w)   \EEE \di x &   \text{if} \quad   \III \hat w \ge 0, \   \vert \hat \lambda \vert + \Vert \hat w \Vert_{ \III L^2(\Omega)} + \Vert  \hat y \Vert_{ \III H^{1}( \Omega)}    \leq C_\infty,\\ & \quad \ \,  \Vert \nabla \hat y - \nabla y(\tau) \Vert_{L^\infty(\Omega)} \leq \eps, \EEE\\
  + \infty &  \text{else}\rbb, \ee
  \end{cases}   
\end{align} 
for $(\hat \lambda, \hat y, \hat w) \in X$, \III where $\Phi$ is defined in \eqref{shorthand0}.  In the first case of \eqref{def_mathcal_J}, the integrand $\Phi$ \III  is   well-defined since the choice of $\eps>0$ and  \eqref{GL bound} ensure  that $\nabla \hat y \in\MMM  GL^+_{c_\infty}(d)$  in $\Omega$. \EEE  This specific definition is made for two reasons:

Firstly, \EEE 
for \MMM  $\lambda$, $y$, and $w$ as in the statement,   we see by the definition of $C_\infty$ in \eqref{Linftytemp},  $\phi(s) \leq s^+$ for all $s \in \R$ (see Lemma~\ref{lem:propertyphibeta}), \MMM \eqref{auch noch}, and $\theta = \Psi(\nabla \ZZZ y, \EEE w) \ge 0$    that   
\begin{align}\label{for-bed1}
\sup_{ t \in I_\tau \EEE} \mathcal{J} (\lambda(t), y(t), w(t)) \leq \frac{1}{2} \int_\Omega \III C_\infty \EEE \di x < +\infty,
\end{align}
\MMM where \III the supremum coincides with the essential supremum due to the continuity of $\lambda$.  This shows \eqref{bed1}.  

\MMM Secondly, \III provided $(\hat \lambda, \hat y, \hat w) \in X$ with $\mathcal{J} (\hat \lambda, \hat y, \hat w) <\infty$,  
\III for a.e.~$x\in \Omega$  \lll satisfying \EEE  $\Phi(\hat \lambda, \nabla \hat y(x),\hat w(x))>0$ (or equivalently $\Psi(\nabla \hat y(x), \hat w(x)) < \hat \lambda$)      the value $(\hat \lambda,\nabla \hat y(x),\hat w(x))$  lies in the compact set $K \defas K_\lambda \times K_y \times K_w \subset  \lll \R \EEE \times GL^+(d) \times \R_+ $ \rufb given by \ee
\begin{align}\label{eq: K}
 K_\lambda \defas [ - C_\infty , C_\infty ], \quad \quad  K_y \defas GL^+_{c_\infty}(d) \cap B^{d \times d}_{ \III 2C}, \quad \quad K_w \defas [0,C_0 C_\infty]. 
 \end{align} 
In fact, for $\hat{\lambda}$ this follows by definition, and for $\nabla \hat{y}$ we use   \eqref{pointwise}--\eqref{GL bound}.  Eventually,  \EEE  using \eqref{inten_lipschitz_bounds}, \eqref{Phi_identity}, and  $\Psi(\nabla \hat y(x), \hat w(x)) < \hat \lambda$,   we have \III
\begin{equation*}
 \EEE 0 \le \hat{w}(x) = \inten(\nabla \hat y(x), \Psi(\nabla \hat y(x), \hat w(x))) \leq C_0 \Psi(\nabla \hat y(x), \hat w(x)) \III \leq C_0 \vert \hat\lambda \vert \leq C_0 C_\infty.
\end{equation*}
\III \emph{Step 2 (Semiconvexity of $\mathcal{J}$, property \eqref{bed2}, and subdifferential  of $\mathcal{J}$):}  
 Consider $(\hat \lambda, \hat y, \hat w) \in X$ with $\mathcal{J} (\hat \lambda,  \hat y, \hat w)  < \infty$.  Let $\Xi \colon X \to \R$ be \MMM the linear functional \EEE given by  
\begin{equation}\label{def-Xi}
  \big\langle \Xi, (\tilde\lambda, \tilde y, \tilde w) \big\rangle_{\III X}
  \defas \int_\Omega \tilde \Phi(\hat \lambda, \nabla \hat y, \hat w) \big(
    \tilde \lambda
    - \partial_F \Psi(\nabla \hat y, \hat w) : \nabla \tilde y    - \partial_w \Psi(\nabla \hat y, \hat w) \, \tilde w \big) \di x 
\end{equation}
for any $(\tilde \lambda, \tilde y, \tilde w) \in \III X$ \MMM which corresponds to the pointwise derivative of $\Phi$ under the integral, \ZZZ see \III \eqref{shorthand} and  \eqref{ttt0.}. 
\MMMMM This  functional will be instrumental to  compute  the subdifferential of $\mathcal{J}$.
By  Lemma~\ref{Phi-lemma} we have \EEE $\tilde \Phi\in C^2 ( \lll \R \EEE \times GL^+(d) \times \R_+)$. Combining this fact with \lll the discussion  \MMMMM preceding \EEE \eqref{eq: K} and Lemma~\ref{lem:deriphi}(i) \MMMMM we find \EEE
\begin{equation}\label{Theta} 
  \big|\big\langle \Xi, (\tilde\lambda, \tilde y, \tilde w) \big\rangle_{\MMM X}\big|
  \leq C \left( \vert \tilde \lambda \vert +  \Vert \nabla \tilde y \Vert_{L^1(\Omega)}  + \Vert \tilde w \Vert_{ L^{1}(\Omega)} \right)  \leq C \left( \vert \tilde \lambda \vert +  \Vert \nabla \tilde y \Vert_{L^2(\Omega)}  + \Vert \tilde w \Vert_{ L^{2}(\Omega)} \right),
\end{equation} \III for a suitable constant $C>0$ \MMMMM only depending on $C_\infty$. \III Thus, $\Xi \in X^*$. \EEE The core of this step lies in showing that \MMM there exists $\bar{C}>0$ such that \III   for \EEE any   $(\tilde \lambda, \tilde y, \tilde w) \in \III X$ \III it \EEE holds  \ZZZ that \EEE
\begin{equation}\label{chainrulelambdaconv}
\begin{aligned}
  &\mathcal{J}(\tilde \lambda, \tilde y, \tilde w) -\mathcal{J}(\hat \lambda, \hat y, \hat w)- \big\langle \Xi, (\tilde \lambda - \hat \lambda,   \tilde y -   \hat y, \tilde w - \hat w) \big\rangle_{\III X}  \geq - \MMM \bar{C} \EEE \big(\ZZZ \vert \tilde \lambda - \hat \lambda \vert^2 + \EEE \Vert \tilde w - \hat w \Vert^2_{\III L^{2}(\Omega)} + \Vert \tilde y - \hat y \Vert^2_{\III H^{1}(\Omega)} \big).
\end{aligned}
\end{equation}
  First, \EEE  \eqref{chainrulelambdaconv} is trivially satisfied in the case $\mathcal J(\tilde \lambda, \tilde y, \tilde w) = \infty$. \martin  \MMMMM Therefore, we \EEE can assume that $\mathcal J(\tilde \lambda, \tilde y, \tilde w) < \infty$. \EEE
By \MMM  applying \III the fundamental theorem of calculus  twice, and by recalling \EEE the definition in \eqref{def-Xi}, we see that
\begin{align*}
&\mathcal{J}(\tilde \lambda, \tilde y, \tilde w) - \mathcal{J}(\hat \lambda,  \hat y, \hat w) - \big\langle \Xi, (\tilde \lambda - \hat \lambda,   \tilde y -   \hat y, \tilde w - \hat w) \big\rangle_{\III X} \\
&\quad=    \int_\Omega \int_0^1  \III (1-z) \EEE
   \MMM D^2\Phi \III (\lambda_z, \nabla y_z, w_z) \EEE [(\tilde \lambda - \hat \lambda,   \nabla \tilde y -  \nabla  \hat y, \tilde w - \hat w),(\tilde \lambda - \hat \lambda,   \nabla \tilde y -   \nabla\hat y, \tilde w - \hat w)] \di \III z \EEE \di x,
\end{align*}
where \III $(\lambda_z,\nabla y_z,w_z) \defas (1-z) (\hat \lambda, \nabla \hat y, \hat w) + z(\tilde \lambda, \nabla \tilde y, \tilde w)$   for $z \in [0,1]$. \III By the convexity of the norms in the constraints of \eqref{def_mathcal_J},  \lll we \EEE get that convex combinations satisfy  $\mathcal{J} \III (\lambda_{ z},  y_z, w_z) \EEE < +\infty $.  
\MMM By using  \III Lemma~\ref{Phi-lemma} and the arguments in \eqref{eq: K} above, we \III derive for $z \in [0,1]$ that \EEE $ \III
\Vert D^2\Phi  (\lambda_z, \nabla y_z, w_z) \Vert_{L^\infty(\Omega)}    \le \III C
$,
passing to a possibly larger constant $C$.  \EEE
By Young's inequality, we see that \eqref{chainrulelambdaconv} is satisfied.

\MMMMM We \EEE  now show that  $\mathcal{J}$ is semiconvex in the sense of \eqref{def:lambdasemiconv}. To this end, consider \MMM $(\lambda_0,y_0, w_0), \ (\lambda_1,   y_1, w_1) \in X$. Without restriction, we   assume that $\mathcal{J} ( \lambda_0, y_0, w_0) , \ \mathcal{J} ( \lambda_1, y_1, w_1)  < +\infty$ as otherwise the inequality in \eqref{def:lambdasemiconv} is trivial. For $s\in [0,1]$, we   define  $(\lambda_s,  y_s,w_s) \defas (1-s)( \lambda_0, y_0, w_0) +  s( \lambda_1,   y_1,  w_1)$. \III As above, \EEE we have that $\mathcal{J}(\lambda_s,  y_s,w_s) <+\infty $. Using \eqref{chainrulelambdaconv} for \MMM  $(\hat \lambda,\hat y,\hat w)= (\lambda_s,  y_s,w_s)$ and   $(\tilde \lambda,\tilde  y,\tilde  w) = ( \lambda_i, y_i, w_i) $ for $i=0,1$,   \EEE   an elementary computation leads to  
\begin{align*}
\mathcal{J}(\lambda_s,  y_s, w_s)  & \leq (1-s)\mathcal{J}( \lambda_0, y_0, w_0) + s \mathcal{J}(\lambda_1,  y_1,  w_1)  \\ & \ \ \ + \MMM \bar{C} \EEE  s(1-s) \big( \ZZZ \vert  \lambda_1 -  \lambda_0 \vert^2 + \EEE\Vert  w_1 -  w_0\Vert^2_{ \III L^2(\Omega)} + \Vert  y_1 -  y_0 \Vert_{H^{1}(\Omega)}^2 \big).
\end{align*}
This shows that  $\mathcal{J}$ is \MMM locally \EEE semiconvex.

\MMM 
\III Next, we deduce \eqref{bed2}. \MMM Consider $\lambda$, $y$,  and $w$ as in the statement, and recall that for each $t \in \III I_\tau \EEE$ we have  $(\lambda(t), y(t),w(t)) \in X$ and $\mathcal{J} (\lambda(t),   y(t),  w(t))  < \III + \EEE \infty$ by \eqref{for-bed1}. Then, \MMMMM  \eqref{Theta}  shows \EEE that $\Theta(t)$ defined by \EEE 
\begin{align}\label{Theta-def}
  \big\langle \Theta(t), (\tilde\lambda, \tilde y, \tilde w) \big\rangle_{\MMM X}
  \defas \int_\Omega & \tilde \Phi( \lambda(t), \nabla  y(t), w(t)) \big(
    \tilde \lambda
    - \partial_F \Psi(\nabla  y(t),   w(t)) : \nabla \tilde y  -   \III  \partial_w \Psi(\nabla y(t),  w(t) \ZZZ ) \EEE  \tilde w  \big) \di x  
\end{align}
for $t \in \III I_\tau \EEE$  is an element of $X^*$, and \III \eqref{def-Xi} and \eqref{chainrulelambdaconv} imply that $\Theta$ lies  in \EEE the subdifferential of $\mathcal{J}$ at $(\lambda(t), y(t),w(t))$. This  shows  \eqref{bed2}.  Moreover, $\Theta \in L^2( \III I_\tau;X^*)$ \MMMMM also follows from \eqref{Theta}. \EEE


\MMM

\emph{Step \III 3  \EEE (Lower semicontinuity of $\mathcal{J}$):} \EEE  \III Consider \EEE a sequence  $(\lambda_n, y_n, w_n)_n$ in $X$ converging strongly \MMM in $X$ \EEE \lll to \EEE some $(\hat \lambda,\hat y,\hat w) \in X$. Without loss of generality, we can assume that there exists a subsequence (not relabeled) such that $\mathcal{J}(\lambda_n, y_n, w_n)<+\infty$ for all $n \in \N$.  In particular, this implies \III $ \vert \lambda_n \vert + \Vert w_n \Vert_{L^2(\Omega)} + \Vert y_n \Vert_{H^{1}(\Omega)} \le C_\infty$ and $\Vert \nabla y_n - \nabla y(\tau) \Vert_{L^\infty(\Omega)} \leq \eps$. \EEE Thus, also \III  $ \vert \hat \lambda \vert + \Vert \hat w \Vert_{L^2(\Omega)} +\Vert \hat y \Vert_{H^{1}(\Omega)}\le C_\infty$ and $\Vert \nabla \hat y - \nabla y(\tau) \Vert_{L^\infty(\Omega)} \leq \eps$ by the lower semicontinuity of norms. 
  \III This \EEE shows $\mathcal{J}(\hat \lambda, \hat y, \hat w)<+\infty$ \EEE and we can apply \eqref{chainrulelambdaconv} for  $(\hat \lambda, \hat y, \hat w)$ and  $(\lambda_n, y_n, w_n)$. The fact $(\lambda_n, y_n, w_n) \to (\hat \lambda, \hat y, \hat w)$ in $X$ then shows that $\mathcal{J}$ is lower semicontinuous.\EEE

\emph{Step \III 4 \EEE (Chain rule on $I$):}  
To summarize \MMM the previous steps, \EEE  we have \III verified \EEE all   assumptions of Proposition~\ref{lem:mielkechainrule}. 
\III Thus, \EEE the chain rule in its localized version on $I_{\tau}$ follows from \eqref{eq: the chain rule}, the definition of $\Theta$ in \eqref{Theta-def}, and the formulas in \eqref{deri1}. \MMM Now, it suffices to cover $I$ with a finite number of open intervals of length $2\delta$, i.e., we choose $\tau_1, ..., \tau_m \in I$ for  some $m \in \N$ such that $I \subset \bigcup_{i = 1}^m  I_{\tau_i}$.  \EEE Since the chain rule holds locally on  \MMM  each interval,  it also holds on $I$. \EEE This concludes the proof. 
\end{proof} 
  
%

%

\MMM By an approximation argument we now reduce the proof of Proposition \ref{lem:chainrulepos}   to Proposition \ref{prop:chainrulepos-new}. \EEE

\begin{proof}[Proof of Proposition \ref{lem:chainrulepos}]
 \MMM Let $(y, \theta) \in \ZZZ \mathcal{S}_{\rm chain} \EEE$  and  let  $\lambda \in \AAA C^1 \EEE (I)$. Recall that by assumption and by Lemma~\ref{lem:deriphi}(ii) we have  $w \in L^2(I; H^1(\Omega)) \cap H^1(I;(H^{1}(\Omega))^*)$. \EEE   We   extend $w$ by $0$ on $(-\infty,0)$ and $(T,+\infty)$ and define $w_\eps \defas \eta_\eps \ast w$, where $\eta_\eps \in C^\infty_c(-\eps,\eps)$ denotes a standard mollifier. It is \AAA a \EEE standard matter to check that this mollification satisfies, \AAA as $\eps \to 0$, \EEE
\begin{align}\label{approxchainrule}
w_\eps \to w \quad {\rm in} \ L^2(I; H^1(\Omega)) \qquad {\rm and} \qquad \partial_t w_\eps \to \partial_t w \quad {\rm in} \ \MMM L^2_{\rm loc} \EEE ( \rb (0, \AAA T) \EEE  \EEE; (H^{1}(\Omega))^* ).
\end{align}
In particular, we have $w_\eps \in \III H^1 (I;H^1(\Omega)) \EEE$ for all $\eps >0$. \ZZZ
\rb Furthermore\ZZZ, \AAA an in-space mollification (for each $t \in I$) \EEE provides $y_\eps \in L^\infty(I; W^{2,p}(\Omega;\R^d) \III ) \EEE \cap H^1(I;H^{k_0}(\Omega;\R^d))$ such that
\begin{align}\label{approxchainruley}
y_\eps \to y \quad {\rm in } \ L^{\infty}(I; W^{2,p}(\Omega;\R^d) ) \cap H^1(I; H^1(\Omega;\R^d) ).
\end{align}
Defining \ZZZ $\theta_\eps \defas \Psi(\nabla y_\eps, w_\eps)$, our  goal is to apply Proposition~\ref{prop:chainrulepos-new} for the functions $y_\eps$ and $\theta_\eps$. To this end, we need to show that $(y_\eps,\theta_\eps) \in \mathcal{S}_{\rm chain} $. Due to Lemma~\ref{lem:deriphi}(ii), we have $\theta_\eps \in L^2(I;H^1(\Omega))$.  Then, \AAA as in \eqref{Linftytemp2}, we get \EEE that  $\essinf_{I \times \Omega }  \det \nabla y  >0$. \rb As $p > d$, Sobolev embedding and \ZZZ \eqref{approxchainruley} imply that $\nabla y_\eps \to \nabla y$ in $L^\infty (I; L^\infty(\Omega;\R^{d \times d} ) )$, and thus $\essinf_{I \times \Omega }  \det \nabla y_\eps  >0$ for $\eps>0$ sufficiently small. This yields $(\det \nabla y_\eps )^{-1} \in  L^\infty(I; L^q(\Omega;(0,\infty))) \EEE$. \EEE

Thus, \EEE all assumptions of Proposition~\ref{prop:chainrulepos-new}  \AAA hold, \EEE and \EEE the curve \BBB $(\lambda, \ZZZ y_\eps, \EEE \theta_\eps)$    \AAA satisfies \EEE the identity \eqref{formula:chainrulereg} \III with the dual pairing $\langle \cdot, \cdot \rangle$ replaced by the scalar product  \EEE in $L^2(\Omega)$. Integrating \MMM in time, this shows for every $t_1,t_2 \in \rb (0, T)$ that
\begin{align}\label{regularizedidentity000}
 & \frac{1}{2} \int_\Omega \phi(\lambda(t_2)  - \theta_\eps(t_2) )^2 \di x -   \frac{1}{2} \int_\Omega \phi(\lambda(t_1)  - \theta_\eps(t_1) )^2 \di x  \notag \\
 &\quad= \int_{t_1}^{t_2} \int_\Omega \varphi_\eps \big( \BBB c_V(\nabla \ZZZ y_\eps \EEE, \theta_\eps)    \partial_t  \lambda    +  \partial_F W^{\rm in}(\nabla \ZZZ y_\eps, \EEE \theta_\eps) : \partial_t \nabla \ZZZ y_\eps \EEE \big)  \III- \partial_t w_\eps \varphi_\eps \di x \, \di t, \EEE
\end{align}
where \MMM for convenience we \rb wrote \ee $\phi$ in place of $\phi_\beta$ \TTT and \EEE set $\varphi_\eps \defas \frac{\phi(\lambda - \theta_\eps)\phi'(\lambda - \theta_\eps)}{c_V(\nabla \ZZZ y_\eps \EEE, \theta_\eps)}$ \MMM for brevity. \EEE  

  By the continuity of $\Psi$   (see Lemma \ref{lem:deriphi}(i)), \lll \eqref{approxchainrule}, and \eqref{approxchainruley} \EEE it holds that \ZZZ $\theta_\eps(t) \to \theta  (t) \defas   \Psi(\nabla y(t), w(t))$ \EEE a.e.~in $\Omega$ for a.e.~$t \in I$.  Thus, \BBB we get that $\phi(\lambda - \theta_\eps)$, $\phi'(\lambda - \theta_\eps)$, $c_V(\nabla \ZZZ y_\eps \EEE, \theta_\eps)$, \ZZZ and $ \partial_F W^{\rm in}(\nabla  y_\eps, \theta_\eps) $ \EEE  converge \ZZZ pointwise \EEE to their respective limits with $\theta$ and \ZZZ $y$ in place of $\theta_\eps$ and $y_\eps$ \EEE \MMM a.e.~in $I \times \Omega$. \EEE  In the same way, $\varphi_\eps$ converges pointwise to $\varphi  \defas \frac{\phi(\lambda - \theta)\phi'(\lambda - \theta)}{c_V(\nabla y, \theta)}$ a.e.~in $I \times \Omega$. Since $\theta_\eps\geq0$ a.e.~in \MMM $I \times \Omega$, \EEE we have by the choice of $\lambda$ \MMM and the fact that $\phi$ is increasing, \ZZZ see Lemma~\ref{lem:propertyphibeta}, \EEE the uniform bound $\phi(\lambda-\theta_\eps)\leq \phi(\lambda) \leq \phi(\lambda_0)$ a.e.~in $I \times \Omega$, \lll where $\lambda_0 \defas \max_{t \in I} \vert\lambda(t)\vert$. \EEE \MMM Moreover, combining the first two estimates in \eqref{eq: the good inequality}, we get $\phi'(\lambda-\theta_\eps) \le 4$.  \BBB Recalling   \eqref{inten_mon}  and using \ZZZ the \EEE uniform bound on  $\partial_F W^{\rm in}(\nabla \ZZZ y_\eps \EEE, \theta_\eps)$ in \eqref{est:coupl}, the dominated convergence theorem  \ZZZ together with \eqref{approxchainruley} \EEE implies that
\begin{align}\label{eq:somelimits}
& \lim\limits_{\eps \to 0} \frac{1}{2} \int_\Omega \phi(\lambda(t)  - \theta_\eps(t) )^2 \di x = \frac{1}{2} \int_\Omega \phi(\lambda(t)  - \theta(t) )^2 \di x \quad \text{for a.e.~$t \in I$}, \notag\\
&  \BBB \lim\limits_{\eps \to 0}  \int_{t_1}^{t_2} \int_\Omega \varphi_\eps   c_V(\nabla \ZZZ y_\eps \EEE, \theta_\eps)  \partial_t  \lambda   \di x \, \di t   = \int_{t_1}^{t_2} \int_\Omega \varphi   c_V(\nabla y, \theta)  \partial_t  \lambda   \di x \, \di t, \notag\\ 
&  \BBB  \lim\limits_{\eps \to 0} \int_{t_1}^{t_2} \int_\Omega \varphi_\eps    \partial_F W^{\rm in}(\nabla \ZZZ y_\eps \EEE, \theta_\eps) : \partial_t \nabla \ZZZ y_\eps \EEE   \di x \, \di t = \int_{t_1}^{t_2} \int_\Omega \varphi    \partial_F W^{\rm in}(\nabla y, \theta) : \partial_t \nabla y   \di x \, \di t  
\end{align}
for \ZZZ $t_1,t_2 \in I$. \EEE
By \III Lemma~\ref{lemma: phii}, \eqref{what an estimate}, \eqref{approxchainrule}, and  \eqref{approxchainruley} we see that $(\varphi_\eps)_\eps$ is bounded in $L^2(I; H^1(\Omega))$, \EEE
implying that $\varphi_\eps \rightharpoonup \varphi$ weakly in $L^2(I;H^1(\Omega))$, up to selecting a subsequence. 
\III Moreover, we have
\begin{align}\label{5Z}
\int_{t_1}^{t_2} \int_\Omega \partial_t w_\eps \varphi_\eps \di x \, \di t = \int_{t_1}^{t_2} \langle \partial_t w_\eps , \varphi_\eps \rangle \, \di t,
\end{align}
where $\langle \cdot, \cdot \rangle$ denotes the dual pairing of $H^1(\Omega)$ and $(H^1(\Omega))^*$.
Hence, by the triangle \ZZZ inequality, \EEE we derive that  
\begin{align*}
&\rb\left\vert \int_ {t_1}^{t_2} \langle \partial_t  w_\eps, \varphi_\eps \rangle - \langle \partial_t  w, \varphi \rangle \di t \right\vert  \leq \rb \left\vert \int_{t_1}^{t_2} \langle \partial_t  w_\eps -   \partial_t w,    \varphi_\eps \rangle   \di t \right\vert + \left\vert \int_ {t_1}^{t_2} \langle\partial_t  w,    \varphi_\eps   - \varphi \rangle \di t \right\vert \\
&\rb\quad\leq  \Vert \partial_t  w_\eps - \partial_t  w \Vert_{L^2( (t_1,t_2); (H^{1}(\Omega))^* )} \Vert \varphi_\eps \Vert_{L^2( (t_1,t_2); H^1(\Omega) )}  + \left\vert \int_ {t_1}^{t_2}  \langle \partial_t  w, \varphi_\eps   - \varphi   \rangle \di t \right\vert.
\end{align*}
Using \eqref{approxchainrule}   and \AAA the \EEE weak convergence of $(\varphi_\eps)_\eps$ \MMM in $L^2(I;H^1(\Omega))$, \EEE the right-hand side converges to zero \ZZZ   \ZZZ for each $t_1,t_2 \in \rb (0, T)\ee$. \EEE This along with \eqref{eq:somelimits} \MMMMM and \eqref{5Z} \EEE implies that we can pass to the limit \AAA in \eqref{regularizedidentity000} \rb yielding \ZZZ
\begin{align}\label{formula:chainrulemaxproof2}  
 &\rb \frac{1}{2} \ZZZ \int_\Omega \phi(\lambda(t_2)  - \theta (t_2) )^2 \di x -   \frac{1}{2} \int_\Omega \phi(\lambda(t_1)  - \theta (t_1) )^2 \di x \notag\\ &\quad\rb= \ZZZ \int_{t_1}^{t_2} \int_\Omega \varphi  \big(  c_V(\nabla y, \theta )    \partial_t  \lambda    +  \partial_F W^{\rm in}(\nabla y, \theta ) : \partial_t \nabla y \big) \di x  - \left\langle \III \partial_t w   , \varphi \EEE \right\rangle \, \di t
 \end{align}
  for \ZZZ a.e.~$t_1,t_2 \in  \rb (0, T) \ee $. \EEE
 \MMM Since $\theta \in C(I; \ZZZ L^2(\Omega))$ by Lemma \ZZZ \ref{lem:deriphi}(iii), \ZZZ and $\phi(\lambda - \theta)^2 \in [0, \lambda_0^2]$ a.e.~in $I \times \Omega$ \MMMMM due \EEE to \eqref{eq: the good inequality} and $\theta \geq 0$ a.e.~in $I \times \Omega$, \EEE \eqref{formula:chainrulemaxproof2} holds in fact for all \ZZZ $t_1,  t_2 \in I$. \EEE   \MMM Eventually, as \eqref{formula:chainrulemaxproof2} is satisfied for arbitrary $t_1,t_2 \in I$, we conclude that  $t \mapsto  \frac{1}{2} \int_\Omega (\phi (\lambda  - \theta))^2 \di x$ lies in $W^{1,1}(I)$ and for a.e.\ $t \in I$ the chain rule \eqref{formula:chainrulereg} holds. \EEE  
\end{proof}

\subsection{\MMM Proof of auxiliary lemmas\EEE}\label{aux: lemma}

\MMM In this subsection, we prove the auxiliary results in Lemmas \ref{lem:propertyphibeta}, \ref{lemma: phii}, and  \ref{lem:deriphi}\lll.\EEE

\begin{proof}[Proof of Lemma \ref{lem:propertyphibeta}]
We start by computing several derivatives of $\phi_\beta$. For any $s>0$, it holds that
\begin{align*}
\phi_\beta'(s)&= (s^4 + \beta^4)^{-3/4} s^3, & \phi_\beta''(s) &= \frac{3 s^2\beta^4 }{(s^4 + \beta^4)^{7/4}}, & \phi_\beta'''(s) &= \frac{3\beta^4 (2\beta^4s - 5 s^5)}{(s^4+\beta^4)^{11/4}}.
\end{align*}
From the computation above, \III it follows \rb that \AAA $\phi_\beta$, $\phi_\beta'$,  $\phi_\beta'' > 0$ \EEE in $(0, \infty)$.
Moreover, notice that
\begin{equation*}
  \lim_{s \searrow 0} \phi_\beta(s)
  = \lim_{s \searrow 0} \phi_\beta'(s)
  = \lim_{s \searrow 0} \phi_\beta''(s)
  = \lim_{s \searrow 0} \phi_\beta'''(s) = 0.
\end{equation*}
This shows $\phi_\beta \in C^3(\R)$.
We also see that $\beta \mapsto \phi_\beta'(s)$ is monotonously decreasing for every $s \in \R$,   with
\begin{equation*}
  \lim_{\beta \searrow 0} \phi_\beta'(s) = (s^{4})^{-3/4} s^3 \MMM  = \EEE 1 \qquad \text{for any } s > 0.
\end{equation*}
For all $s \leq 0$, we have $\phi_\beta'(s) = 0$.
This shows that \MMM $(\phi'_\beta)_\beta$ \EEE are converging monotonously from below   towards  ${\lenni \indic \EEE }_{(0, \infty)}$ \MMM as $\beta \searrow 0$. \EEE To prove the monotone convergence of $(\phi_\beta)_\beta$, we note that
\begin{align*}
  \partial_\beta \left((s^4 + \beta^4)^{1/4} - \beta\right)
  = \frac{\beta^3}{(s^4 + \beta^4)^{3/4}}  - 1 \leq \frac{\beta^3}{(\beta^4 )^{3/4}}  - 1 = 0.
\end{align*}
This shows that the sequence of functions $(\phi_\beta)_\beta$ is monotonously increasing as $\beta \to 0$ with $\lim_{\beta \searrow 0} \phi_\beta(s) = s$ for every $s > 0$. As $\phi_\beta(s) = 0$ for every $s \leq 0$, we \III derive \EEE that $\phi_\beta \to (\cdot)_+$ pointwise.

It remains to show \eqref{eq: the good inequality}.
Notice that the inequalities stated in \eqref{eq: the good inequality} are clearly satisfied for $s \leq 0$.
\martin Therefore, the case $s > 0$ remains to be investigated. \EEE  \MMM First,  \EEE using the monotonicity of $\phi_\beta$ in $\beta$ and the fact that $\phi_\beta(s) \to s$ for any $s >0$\III, we derive \EEE that $\phi_\beta(s) \leq s$. \AAA This shows the first inequality in \eqref{eq: the good inequality}. \EEE We \rb further have
\begin{align*}
\frac{\phi_\beta(s)}{\phi_\beta'(s)} = \frac{s^4 + \beta^4 - \beta (s^4 + \beta^4)^{3/4} }{s^3} = s + \beta \cdot \frac{\beta^3 - (s^4 + \beta^4)^{3/4}}{s^3} \leq s,
\end{align*}
where in the last inequality we have used $\beta^3 = (\beta^4)^{3/4} \leq (s^4 + \beta^4)^{3/4}$.
This shows the \MMM second \EEE inequality of \eqref{eq: the good inequality}.
In order to show the \MMM third \EEE inequality, we define $u \defas \beta^4 s^{-4}$ and apply the AM-GM inequality in the version $a^{1-\nu} b^\nu \leq (1-\nu) a + \nu b$ for $a =1$, $b = 1+ u^{-1}$, and $\nu = 3/4$. \MMM This yields \EEE
\begin{align*}
  (1 + u^{-1})^{3/4} \leq \MMM   \frac{1}{4} + \frac{3}{4}\Big(1 + \frac{1}{u} \Big) = \EEE  1 + \frac{1}{u} - \frac{1}{4 u}.
\end{align*}
Multiplying with $u$ leads to
\begin{align*}
1+u-u(1+u^{-1})^{3/4} \geq \frac{1}{4}.
\end{align*}
Thus, we discover that
\begin{align*}
\frac{\phi_\beta'(s) s}{\phi_\beta(s)} = \frac{s^4}{s^4 + \beta^4 - \beta (s^4 + \beta^4)^{3/4}} = \frac{s^4}{s^4 + \beta^4 - \beta^4 (1 + u^{-1})^{3/4}} = \frac{1}{1+u-u(1+u^{-1})^{3/4}} \leq 4,
\end{align*}
which is the \MMM third \EEE inequality in \eqref{eq: the good inequality}.
Finally, the last inequality of \eqref{eq: the good inequality} follows from
\begin{align*}
\frac{\phi_\beta''(s) s}{\phi_\beta'(s)} = \frac{3 \beta^4}{s^4 + \beta^4} \leq 3.
\end{align*}
\MMM This concludes the proof. \EEE
\end{proof}

\begin{proof}[Proof of Lemma \ref{lemma: phii}]
 \MMM Note that  $\phi_\beta(s) \leq s^+$ for any $s \in \R$ by Lemma \ref{lem:propertyphibeta}. \EEE Hence, by the \MMM second estimate \EEE in \eqref{eq: the good inequality} we derive that $\phi_\beta'(s) \leq 4 \phi_\beta (s) s^{-1} \leq 4$ for any $s > 0$. This is trivially satisfied also for $s \leq 0$.
Thus, \eqref{inten_mon} and $\phi_\beta(\lambda - \theta) \leq (\lambda - \theta)_+ \leq \lambda$ \ZZZ yield \EEE
\begin{align}\label{eq: H1bound1}
\MMM \Vert \varphi_\beta \Vert_{L^\infty(I \times \Omega)} \EEE \leq 4 c_0^{-1}\MMM  \Vert \lambda \Vert_{L^\infty(I)}. \EEE 
\end{align}
As $\lambda$  is constant \MMM in space,  by applying the chain rule   
the  gradient is given by \EEE  
\begin{align*}
\nabla \varphi_\beta =  \ZZZ \phi_\beta \EEE(\lambda - \theta) \phi_\beta '(\lambda - \theta) \nabla \big( c_V(\nabla y, \theta)^{-1} \big) - \frac{\nabla \theta \big( \phi'_\beta (\lambda - \theta)^2 + \phi_\beta (\lambda - \theta)\phi''_\beta (\lambda - \theta) \big)}{ c_V(\nabla y, \theta)} 
\end{align*}
  for \TTT a.e.~$(t,x) \in I \times  \Omega$. \EEE
 \MMM Note that \EEE \eqref{eq: the good inequality} leads to   $\phi_\beta (z) \phi_\beta''(z) = \phi_\beta (z) \phi_\beta''(z) \MMM z^2 z^{-2}  \leq 12 \phi_\beta(z)^2 z^{-2} \le 12 \EEE$. Then, by $\phi_\beta(\lambda - s) \leq \lambda$, $\phi'_\beta(\lambda - s) \leq 4$ for any $s \geq 0$, \ZZZ and \eqref{inten_mon}  \MMM we get 
\begin{align*}
\Vert \nabla \varphi_\beta \Vert_{L^2(I\times \Omega)}   \leq 4  \Vert \lambda \Vert_{L^\infty(I)} \Vert   \nabla \big( c_V(\nabla y, \theta)^{-1} \big) {\lenni \indic \EEE }_{\lbrace  \nabla \varphi_\beta  \neq 0 \rbrace} \Vert_{L^2(I\times \Omega)} + 28c_0^{-1}\Vert \nabla \theta \Vert_{L^2(I\times \Omega)}. 
\end{align*}
This along with  Lemma \ref{lemma: phii-neu}, \III see \eqref{gradcVbound}, \EEE and the fact that $\theta \le \lambda$ on $\lbrace  \nabla \varphi_\beta  \neq 0 \rbrace$ yields   
\begin{align*}
\Vert \nabla \varphi_\beta \Vert_{L^2(I\times \Omega)}   
\III \leq C \Vert \lambda \Vert_{L^\infty(I)} \left(1+  \Vert \nabla y \Vert_{L^\infty(I)} \right) \left( \Vert \nabla \theta \Vert_{L^2(I\times \Omega)} + \Vert \lambda \Vert_{L^\infty(I)}   \Vert \nabla^2 y \Vert_{L^2(I\times \Omega)}    \right) 
+ C \Vert \nabla \theta \Vert_{L^2(I\times \Omega)} . \EEE
\end{align*} 
Combining \AAA this with \EEE \eqref{eq: H1bound1}, we find \eqref{eq: H1bound0}. \AAA This \EEE  shows    $\varphi_\beta \in L^2(I; H^1(\Omega))$  since    $y \in L^\infty(I;\AAA W^{2,p}\EEE (\Omega;\R^d))$,    $\theta \in \MMM L^2(I;H^1(\Omega))$, and $\lambda \in  \AAA C^1 \EEE (I)$. \EEE   
\end{proof}

\begin{proof}[Proof of Lemma \ref{lem:deriphi}]
(i) First, notice that $W^{\rm in}$ is $C^3$   \MMM by  \ref{C_Wint_regularity}. \EEE   Consequently, differentiating \eqref{Phi_identity} with respect to $w$ and using the definition of $\theta \MMM = \Psi(F,w)$ \EEE we derive that
\begin{align}\label{the first}
\partial_w \Psi(F, w)&  = \frac{1}{\partial_\theta W^{\rm in}(F, \Psi(F, w))  } = \frac{1}{\partial_\theta W^{\rm in}(F, \theta)} = \frac{1}{c_V(F, \theta)},
\end{align}
where in the last step we recall the definition  $c_V(F, \theta) = \partial_\theta W^{\rm in}(F, \theta)$ in \eqref{inten_mon}. \MMM This is the first part of \eqref{deri1}. \EEE Differentiating the identity \eqref{Phi_identity} with respect to $F$ yields
  \begin{align*}
   0 = \partial_F W^{\rm in} (F, \theta) + \partial_\theta W^{\rm in} (F, \theta) \partial_F \Psi(F,w).
  \end{align*}
  Solving for $\partial_F \Psi(F, w)$ directly leads to    \asdf \eqref{deri1}. \EEE
 \asdf Proceeding similarly, \EEE we get that $D^2 \Psi$ and $D^3 \Psi$ consist of products of derivatives of $ W^{\rm in} $ up to third order, multiplied by $c_V(F, \theta)^{-k}$ for some $k \ge 1$. By  the fact that  $W^{\rm in}$ is $C^3$  on  $GL^+(d) \times \R_+$, the continuity of $c_V(F, \theta)$ (see \eqref{inten_mon}),  and the continuity of  $\theta \MMM = \Psi(F,w)$, this shows that  $\Psi$    is $C^3$ on $GL^+(d) \times \R_+$.  

(ii) \MMM   Consider \ZZZ $y \in \AAA L^\infty \EEE (I;\AAA W^{2,p} \EEE(\Omega;\R^d))$. \EEE    \MMM We first check that $\theta \in L^2(I;H^1(\Omega))$ implies  $w \in L^2(I; H^1(\Omega))$.  \EEE In this regard, by \eqref{inten_lipschitz_bounds} and $\theta \in L^2(I;H^1(\Omega))$\ZZZ, \EEE we get $w \in L^2(I\times\Omega)$. \ZZZ The chain rule  yields \EEE
\begin{align*}
 \nabla {w} = \partial_F W^{\rm in}(\nabla y,\theta) \colon  \nabla^2 {y} + \partial_\theta W^{\rm in}(\nabla y,\theta)   \nabla \theta.
 \end{align*}
By  \eqref{est:coupl} \MMM and \EEE  \eqref{inten_mon} \MMM  we  find
$$   \Vert \nabla w \Vert_{L^2(I \times \Omega)}   \le  C\big(1 +  \Vert \nabla  y \Vert_{L^\infty(I \times \Omega)}\big) \Vert \nabla^2  y \Vert_{L^2(I \times \Omega)}   + C_0 \Vert \nabla  \theta \Vert_{L^2(I \times \Omega)}. $$
Then,  \ZZZ as $p>d$, \rb Sobolev \ZZZ embedding and Young's inequality show \EEE 
$$ {  \Vert \nabla w \Vert_{L^2(I \times \Omega)}   \le  C \Vert \nabla  \theta \Vert_{L^2(I \times \Omega)} + C\big(1+  \ZZZ \Vert   y\Vert^2_{L^{\infty}(I; \AAA W^{2,p} \EEE (\Omega))} \big). } $$
Thus, $w \in L^2(I; H^1(\Omega))$ since $\theta \in L^2(I;H^1(\Omega))$ and $y \in L^{\infty}(I; \AAA W^{2,p} \EEE (\Omega;\R^d))$. The reverse implication and the corresponding bound follow along similar lines, by using $\Psi$ in place of $W^{\rm in}$. \EEE

\ZZZ 
(iii) \MMMMM Since   $(y,\theta)\in \mathcal{S}_{\rm chain} $, we have \EEE $w=W^{\rm in}(\nabla y, \theta) \in   H^1(I;(H^1(\Omega))^*)$. \AAA By (ii) we also have $w \in L^2(I;H^1(\Omega;\R_+))$. \EEE  This immediately gives $w\in C(I;L^2(\Omega))$, \AAA see  \cite[Lemma 7.3]	{Roubicek-book}. \EEE  It remains to prove that $\theta \in C(I; L^2(\Omega))$. 
 
%
%
 \AAA 
 As  $(y,\theta)\in \mathcal{S}_{\rm chain} $, similarly to  \eqref{Linftytemp2},  we find $c_\infty = \inf_{I \times \Omega }  \det \nabla y  >0$. \MMMMM The set \III  $K_{c_\infty} = \{ F \in \R^{d \times d}: \det(F) \geq  \MMMMM c_\infty , \III \ \vert F \vert \leq  c_\infty^{-1} \}$ is a compact subset of $GL_+(d)$. \MMMMM By Lemma~\ref{lem:deriphi}(i) (see \eqref{deri1}) along with \eqref{inten_mon} and \eqref{est:coupl}   this implies \EEE  that  $\Vert \partial_w \Psi   \Vert_{L^\infty(\III K_{c_\infty}  \times \R_+)} + \Vert\partial_F \Psi\Vert_{L^\infty( \III K_{c_\infty}  \times \R_+)}  < + \infty$. \EEE
\III $K_{c_\infty}$ is a path connected  subset of $GL^+(d)$, and thus  for all $F_1, F_2 \in K_{c_\infty}$ we can find a smooth path $\gamma \colon [0,1] \to K_{c_\infty}$ with $\gamma(0) = F_1$, $\gamma(1) = F_2$ and a constant $C>0$ only depending on $K_{c_\infty}$ such that $\Vert \gamma' \Vert_{L^\infty([0, 1])} \leq C \vert F_1 - F_2 \vert $.  Let $\gamma_{\MMMMM x,t,s}$ be such a smooth path \MMMMM from $\nabla y(s,x)$ to $\nabla y(t,x)$. \III Then, the fundamental theorem of calculus and Jensen's inequality imply that
 \begin{align}\label{convonA2}
&\Vert \theta(t) - \theta(s) \Vert_{L^2({\III \Omega  \EEE })}^2 = \Vert \Psi\big(\nabla y(t), w(t) \big) - \Psi\big(\nabla y(s), w(s) \big)\Vert_{L^2({\III \Omega   })}^2 \notag  \\
 & \leq \ZZZ \int_0^1  \MMMMM \int_\Omega \Big| \EEE  \partial_F  \Psi  \big(\gamma_{\MMMMM x,t,s}(z) , z w(t)+ (1-z) w(s) \big)   :    \gamma_{x,t,s}'(z) \notag \\ & \qquad +     \partial_w  \Psi \big( \gamma_{x,t,s}(z) ,   z   w(t) + (1-z) w(s)\big)   \big( w(t)  - w(s)\rb\big) \MMMMM \Big|^2 \, {\rm d}x \, \EEE \di z \notag \\
&\leq C \Vert \partial_w \Psi   \Vert_{L^\infty(\III K_{c_\infty}  \times \R_+)}^2 \Vert w(t) - w(s) \Vert_{L^2(\Omega)}^2  + C    \Vert \partial_{F} \Psi \Vert^2_{L^\infty(\III K_{c_\infty} \times \R_+)}     \Vert \nabla y(s) - \nabla y(t) \Vert_{L^2(\Omega)}^2 .
\end{align} 
As $w\in C(I;L^2(\Omega))$ and $\nabla y \in C(I; L^2(\Omega ;   \R^{d \times d}   ))$,   we can pass to the limit $s \to t$  on the right-hand side of \III \eqref{convonA2}, respectively. This yields \ee the \rb desired \III continuity \EEE of \ZZZ $\theta$.
\end{proof}


%

\section{Strict positivity of the temperature \AAA in \EEE the nonlinear model\EEE}\label{sec:strictposregu}

In this section, we derive the positivity of \MMM the temperature for weak solutions to \ZZZ \eqref{strong_formulation}--\eqref{strong_formulation_boundary_conditions}, i.e., \EEE we show Theorem~\ref{thm:positivity_of_temperature}. We start by establishing a corresponding result for  $\nu$-regularized solutions considered in Section \ref{sec: reg sol} for the choice $\eps=1$ and $\alpha=2$, and then obtain our main result in the limit $\nu \to 0$. \AAA For notational convenience, we write $\theta_\flat$ and $\theta_0$ in place of $\theta_{\flat,1}$ and $\theta_{0,1}$ for the data in Definition \ref{def:weak_solutions_regularized}. \EEE

\MMM Let us start by proving \EEE that the temperature is positive in the $\nu$-regularized setting.  \MMM We recall the general strategy of the proof mentioned already in the introduction: \EEE Considering the  solution $\lambda\colon I \to [0,\infty)$ of the ODE
\begin{align}\label{lambda def}
\rb\frac{\di}{\di t}\ee \lambda = - \MMM \tilde D \EEE \lambda, \quad \lambda(0) = \lambda_0,
\end{align}
\MMM a suitable \EEE choice of $\tilde D>0$ and $\lambda_0>0 $ will guarantee
\begin{align*}
  \frac{{\rm d}}{{\rm d}t} \int_\Omega  \frac{1}{2}  (\lambda(t)  - \thetanu(t))_+^2 \di x \le 0.
\end{align*}
\martin This, along with the assumption $\theta_0 \ge \lambda_0$ a.e.\ at  the initial time, implies that $\thetanu(t) \ge \lambda(t)$ a.e.\ in $\Omega$ for all $t \in I$, and establishes the positivity of the temperature. \MMM The exact arguments crucially rely on \EEE Theorem~\ref{thm:chainrule}.\EEE 

\begin{proposition}[Strict positivity of the temperature \rb in the regularized setting\EEE]\label{prop:positivity_of_temp}
Assume that the initial datum \ZZZ in \eqref{strong_formulation_initial_conditions} \EEE satisfies $\theta_{0, \rm min}\defas \essinf_{x \in \Omega} \ZZZ \theta_0 \EEE >0$ and that there exists a constant \MMM $\hat{D}>0$ \EEE such that $\bt(t) \geq \theta_{0, \rm min} \exp(-\hat{D} t)$ for all $t \in I$. Moreover, 
suppose that \MMM \ref{C_third_order_bounds}--\ref{C_Wint_regularity} \EEE hold.
Then, there exist constants \rb $C, \, \nu_0, \, \lambda_0  > 0$ such that \MMM for all $\nu \le \nu_0$    \EEE the following holds true:
\MMMMM For every weak \EEE solution $(\ynu, \thetanu)$ \rb of \ee \eqref{weak_limit_mechanical_equation_nu}\rb--\ee\eqref{weak_limit_heat_equation_nu} \rb in the sense of \ZZZ Definition~\ref{def:weak_solutions_regularized}  \EEE it holds \rb that \ee \EEE
\begin{equation}\label{positivity_of_temp}
  \thetanu(t) \geq \lambda_0 \exp(- \MMM C \EEE  t) \qquad \text{for a.e.~} t \in I.
\end{equation}

\end{proposition}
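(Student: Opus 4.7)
The plan is to emulate the formal argument \eqref{FFF} in the full coupled thermoviscoelastic setting, using Theorem~\ref{thm:chainrule} as the principal tool. I set $\lambda(t) \defas \lambda_0 \exp(-Ct)$ with $\lambda_0 \in (0, \theta_{0, \rm min}]$ and $C \geq \hat D$ to be determined; the hypothesis on $\bt$ then ensures $\lambda(t) \leq \bt(t)$ throughout $I$, and the choice of $\lambda_0$ yields $(\lambda - \thetanu)_+(0) \equiv 0$. Since solutions in the sense of Definition~\ref{def:weak_solutions_regularized} lie in $\mathcal{S}_{\rm chain}$ by Remark~\ref{rem:consequencechainrule}(i), Theorem~\ref{thm:chainrule} applies.

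Combining the resulting chain rule identity with the weak heat equation \eqref{weak_limit_heat_equation_nu}, tested against the admissible function $\varphi \defas (\lambda - \thetanu)_+ / c_V(\nabla \ynu, \thetanu) \in L^2(I; H^1(\Omega))$ (Remark~\ref{rem:consequencechainrule}(ii)), I obtain an identity for $\tfrac{\di}{\di t} \tfrac{1}{2} \int_\Omega (\lambda - \thetanu)_+^2 \, \di x$ whose right-hand side splits into five contributions. Three are manifestly nonpositive: the ODE term $-C\lambda \int (\lambda - \thetanu)_+ \, \di x$, the dissipation $-\int \xiregnu \varphi \, \di x$, and the boundary flux $-\kappa \int_\Gamma (\bt - \thetanu) \varphi \, \di \haus^{d-1}$ (using $\bt \geq \lambda \geq \thetanu$ on $\{\varphi > 0\}$). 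The two mechanical coupling contributions, $\partial_F W^{\rm in}$ from the chain rule and $-\partial_F W^{\rm cpl}$ from the heat equation, combine through the identity $\partial_F W^{\rm in} - \partial_F W^{\rm cpl} = -\theta \, \partial_{F\theta} W^{\rm cpl}$ into the single term $-\int (\lambda - \thetanu)_+ c_V^{-1} \thetanu \, \partial_{F\theta} W^{\rm cpl} : \partial_t \nabla \ynu \, \di x$, while the diffusion term $\int \mathcal{K}(\nabla \ynu, \thetanu) \nabla \thetanu \cdot \nabla \varphi \, \di x$ remains to be analyzed.

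The diffusion term is handled by expanding $\nabla \varphi = -\indic_{\{\lambda \geq \thetanu\}} c_V^{-1} \nabla \thetanu + (\lambda - \thetanu)_+ \nabla c_V^{-1}$ to isolate the coercive piece $-\int \mathcal{K} \nabla \thetanu \cdot \nabla \thetanu \, \indic c_V^{-1} \, \di x \leq 0$. The estimate $|\nabla c_V^{-1}| \leq C(|\nabla \thetanu| + \thetanu |\nabla^2 \ynu|)$, obtained from \ref{C_third_order_bounds}--\ref{C_Wint_regularity} via the identity $\partial_{F\theta} W^{\rm in} = -\theta \, \partial_{F\theta\theta} W^{\rm cpl}$, together with the smallness $\thetanu \leq \lambda_0$ on the support and the a priori bound \eqref{higherorerbounds}, allows Young's inequality to absorb the remainder into the coercive piece up to a term of order $C \int (\lambda - \thetanu)_+^2 \, \di x$ with constant vanishing as $\lambda_0 \to 0$.

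The main obstacle is the mechanical coupling term. By \ref{C_bounds} and $\thetanu \leq \lambda \leq \lambda_0 \leq 1$ on the support, the bound $|\thetanu \, \partial_{F\theta} W^{\rm cpl}(\nabla \ynu, \thetanu)| \leq C \thetanu (1 + |\nabla \ynu|) \leq C \lambda_0$ (using \eqref{pos_det}) reduces this contribution to $C \lambda_0 \int (\lambda - \thetanu)_+ |\partial_t \nabla \ynu| \, \di x$; however, $\partial_t \nabla \ynu$ is only bounded in $L^2(I \times \Omega)$ uniformly in $\nu$. I propose to split $\Omega$ into $\{\xi(\nabla \ynu, \partial_t \nabla \ynu, \thetanu) \leq \nu^{-1}\}$, on which $\xiregnu = \xi$ and hence $|\partial_t \nabla \ynu|^2 \leq C \xiregnu$ by \ref{D_bounds}, and its complement, on which \eqref{def_xi_alpha_reg} gives $|\partial_t \nabla \ynu|^2 \leq C \nu (\xiregnu)^2$. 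On both pieces, Young's inequality absorbs the quadratic expressions into the dissipation $-\int \xiregnu \varphi \, \di x$ up to a remainder of the form $C \lambda_0 \int (\lambda - \thetanu)_+^2 \, \di x$, with constants independent of $\nu$ for $\nu \leq \nu_0$ suitably small. Collecting everything and choosing $\lambda_0$ small yields a differential inequality of the form $\tfrac{\di}{\di t} \int_\Omega (\lambda - \thetanu)_+^2 \, \di x \leq \tilde C \int_\Omega (\lambda - \thetanu)_+^2 \, \di x$ with $\tilde C$ independent of $\nu$; Gr\"onwall's lemma combined with the vanishing initial datum then forces $(\lambda - \thetanu)_+ \equiv 0$ on $I$, which is exactly \eqref{positivity_of_temp}. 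The delicate step is the quantitative two-regime splitting for the mechanical term, which is precisely what motivates the regularization $\xiregnu$ in the first place.
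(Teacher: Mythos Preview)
Your overall architecture is exactly that of the paper: apply the chain rule of Theorem~\ref{thm:chainrule}, test \eqref{weak_limit_heat_equation_nu} with $\varphi=(\lambda-\thetanu)_+/c_V$, split $\nabla\varphi$, identify the coercive diffusion piece and the nonpositive boundary term, combine $\partial_FW^{\rm in}-\partial_FW^{\rm cpl}=-\theta\,\partial_{F\theta}W^{\rm cpl}$, and use the two-regime split $\xi\le\nu^{-1}$ versus $\xi>\nu^{-1}$ to feed the mechanical term into $-\int\xiregnu\varphi$. Two points, however, do not close as written.

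\emph{(a) The residual is linear, not quadratic.} When you absorb the mechanical coupling into the dissipation you must keep the factor $(\lambda-\thetanu)_+$ on \emph{both} sides of Young's inequality, since the available sink is $-\int\xiregnu\varphi=-\int\xiregnu(\lambda-\thetanu)_+/c_V$; the term $\int|\partial_t\nabla\ynu|^2$ alone is not controlled by it. The correct splitting on the regime $\xi\le\nu^{-1}$ is
\[
C\lambda_0(\lambda-\thetanu)_+|\partial_t\nabla\ynu|
\le \delta(\lambda-\thetanu)_+\xiregnu+\tfrac{C^2\lambda_0^2}{4\delta}(\lambda-\thetanu)_+,
\]
which leaves a \emph{linear} remainder $C'\int(\lambda-\thetanu)_+$, not $C'\int(\lambda-\thetanu)_+^2$. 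A linear remainder cannot be handled by Gr\"onwall with vanishing initial datum (this is the $\dot y=\sqrt{y}$ obstruction). The paper instead notes that the ODE term contributes $-\tilde D\lambda\int(\lambda-\thetanu)_+$ and chooses the decay rate $\tilde D$ large enough to swallow the linear residual, so that $\Pi\le 0$ directly. You already left the decay rate ``to be determined''; this is where it is determined.

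\emph{(b) The diffusion cross term needs a Sobolev step.} After Young, the cross term with $\nabla c_V^{-1}$ produces $C\lambda\int(\lambda-\thetanu)_+^2|\nabla^2\ynu|^2$. Since $\nabla^2\ynu$ is only in $L^p$, this is \emph{not} bounded by $C\int(\lambda-\thetanu)_+^2$; invoking \eqref{higherorerbounds} alone is insufficient. The paper uses H\"older with exponents $p/2$ and $p/(p-2)$ together with the Sobolev embedding $W^{1,1}\hookrightarrow L^{d/(d-1)}$ (valid because $p\ge 2d$ forces $p/(p-2)\le d/(d-1)$) to reduce this to $C\lambda\int(\lambda-\thetanu)_+(1+|\nabla\thetanu|^2)$. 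The gradient part is absorbed by the coercive piece for $\lambda_0$ small, and the remaining $C\lambda\int(\lambda-\thetanu)_+$ is again linear and absorbed by the ODE term as in (a).
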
 

  \begin{proof}
  \emph{Step 1 (Preparations):}  Let  $\lambda\colon I \to [0,\infty)$ be the solution \MMMMM to \EEE  the ODE \MMM in \EEE \eqref{lambda def} for a constant $\tilde D \MMM \ge \hat{D} \EEE $ and an initial value \MMM $\lambda_0 \MMMMM \in (0,1) \EEE $ \EEE satisfying  $\lambda_0 \le \theta_{0,{\rm min}}$, where we will tune   the constants $\lambda_0$ and $\tilde D$ throughout the proof, \MMM see  \eqref{A_3estimate1}, \eqref{A_3estimate1.5}, \eqref{A_3estimate3}  (for $\lambda_0$) and \eqref{A_5estimate-neu} (for $\tilde D$). 
   \lll  The \III unique \EEE solution  \rb $\lambda$ \ee is given by 
\begin{align}\label{lambda def2}
\lambda(t) = \lambda_0   \exp(-\tilde D t).
\end{align}
\MMM As $\tilde D \ge \hat{D}$, \EEE it holds that  
\begin{align}\label{eq bt}
\bt(t) \ge \lambda(t) \quad \text{\AAA a.e.\ in \lll $\Gamma$ \EEE for all $t \in I$.}
\end{align}
Moreover, \MMM  we have \EEE
\begin{align}\label{timezero}
 \int_\Omega (\lambda(0) - \thetanu(0))_+^2 \di x = 0,
 \end{align}
 where we used the fact that   $\lambda(0) = \lambda_0 \le \theta_{0,{\rm min}} \leq  \ZZZ \theta_{0} = \EEE \thetanu(0) $ for a.e.~$x \in \Omega$, \AAA see Definition \ref{def:weak_solutions_regularized}. \EEE We get  $(y_\nu, \theta_\nu)  \in \ZZZ\mathcal{S}_{\rm chain} \MMM$ \ZZZ (see Remark~\ref{rem:consequencechainrule}(i)), \EEE and thus the   chain rule in Theorem \ref{thm:chainrule} is applicable.     For any $t_2 \in I$, by \eqref{final chain rule} we have 
   \begin{align*}
 \Pi  &\defas   \frac{1}{2} \int_\Omega (\lambda(t_2) - \thetanu(t_2))_+^2 \di x -  \frac{1}{2} \int_\Omega (\lambda(0) - \thetanu(0))_+^2 \di x   \notag \\ & =  \int_{0}^{t_2} \int_\Omega  (\lambda  - \thetanu )_+\big( \rb \frac{\di}{\di t} \ee \lambda   + \rb c_V(\nabla \ynu, \thetanu)^{-1}\ee \partial_F W^{\rm in}(\nabla \ynu, \thetanu) : \partial_t \nabla \ynu \big) \di x   -     \rb\Big\langle  \partial_t \wnu, \frac{(\lambda  - \thetanu )_+ }{c_V(\nabla \ynu, \thetanu)} \Big\rangle \ee \di t,
   \end{align*}
   where $\wnu \MMM \defas \EEE W^{\rm in} (\nabla \ynu, \thetanu)$. \MMM Therefore, in view of \eqref{timezero}, it suffices to check that $\Pi \le 0$ for each $t_2 \in I$.  \EEE 
  In the following, we frequently use that $\thetanu \geq 0$ \lenni a.e.~in \AAA $I \times\Omega$, \MMM which holds  by Definition \ref{def:weak_solutions_regularized}.

\emph{Step 2 (Bound on $\Pi$):}  \AAA Define \EEE 
\begin{align}\label{piphi}
\varphi \rb \defas \ee  \frac{(\lambda  - \thetanu )_+}{c_V(\nabla \ynu, \thetanu)} .
\end{align}
Notice that $\varphi {\lenni \indic \EEE }_{(0,t_2)} \in L^2(I;H^1(\Omega))$, \MMM see  \ZZZ Remark~\ref{rem:consequencechainrule}(ii). \EEE Therefore,  $\varphi{\lenni \indic \EEE }_{(0,t_2)}$ is an admissible test function  in \eqref{weak_limit_heat_equation_nu} and we derive with   \lenni  \eqref{lambda def2} \ee  
\begin{align*}
\Pi &  =  \int_{0}^{t_2} \int_\Omega -  (\lambda  - \thetanu )_+ \tilde D\lambda  +   \hcm(\nabla \ynu, \thetanu) \nabla \thetanu \cdot \nabla \varphi \di x \di t \notag \\
    &\quad - \int_{0}^{t_2} \big(
      \MMM \xi_{\nu, 2}^{\rm reg} \EEE  (\nabla \ynu, \dotnablaynu, \thetanu)
      + \pl_F \cplpot(\nabla \ynu, \thetanu) : \dotnablaynu
    \big) \varphi  \di x \di t  \notag \\
  &\quad +  \int_{0}^{t_2} \kappa\int_{\Gamma} (\thetanu -\theta_\flat) \varphi \di \haus^{d-1} \di t +   \int_{0}^{t_2} \int_\Omega \varphi \, \partial_F W^{\rm in}(\nabla \ynu, \thetanu) : \dotnablaynu \di x \di t.
 \end{align*}
Computing the gradient of $\varphi$ \MMM by the product and chain rule, \III and \EEE using $\nabla (\lambda  - \thetanu )_+  = - \nabla \thetanu  {\lenni \indic \EEE }_{\{\thetanu \leq \lambda\}}$,  \EEE this implies that
\begin{align}\label{timederivfunc2}
\Pi & =    -  \int_{0}^{t_2}  \int_\Omega (\lambda  - \thetanu )_+ \tilde D\lambda \di x \di t   -   \int_{0}^{t_2}  \int_\Omega \hcm(\nabla \ynu, \thetanu) \nabla \thetanu \cdot \nabla \thetanu \frac{ {\lenni \indic \EEE }_{\{\thetanu \leq \lambda\}}}{ c_V(\nabla \ynu, \thetanu)}
    \di x \di t    \notag\\
  &\quad   +   \int_{0}^{t_2}   \kappa\int_{\Gamma} (\thetanu -\theta_\flat) \varphi \di \haus^{d-1}  \di t +  \int_{0}^{t_2}  \int_\Omega \hcm(\nabla \ynu, \thetanu) \nabla \thetanu \cdot  (\lambda  - \thetanu )_+\nabla \big(c_V(\nabla \ynu, \thetanu)^{-1} \big)   \di x \di t   \notag \\
  &\quad  +  \int_{0}^{t_2} \int_\Omega    
   \Big( \big( \partial_F W^{\rm in}(\nabla \ynu, \thetanu) -\partial_F\cplpot(\nabla \ynu, \thetanu) \big): \dotnablaynu - 
       \MMM \xi_{\nu, 2}^{\rm reg} \EEE  (\nabla \ynu, \dotnablaynu, \thetanu) \Big) \varphi  \di x \di t \notag \\
       & \ZZZ \eqqcolon \EEE \int_{0}^{t_2}  \big( A_1(t) + A_2(t) + A_3(t) + A_4(t) + A_5(t) \big) \, \di t ,
 \end{align}
 where each $A_i$, $i = 1,...,5$, corresponds to a term involving exactly one integrand in its respective order.
 
\MMM Our goal is to show that $A_1$, $A_2$, $A_3$ are nonpositive and \EEE  that we can control $A_4$ and $A_5$ with $\rb-\ee A_1$ and $\rb - \ee A_2$ \MMM for \AAA a.e.\ \EEE $t \in (0,t_2)$ \EEE such that the sum of those terms are negative, as long as the constants \MMM $\lambda_0^{-1}$ and \EEE $\tilde D$ \rb are \ee chosen sufficiently large \MMM independently of $t$. As all following arguments are performed pointwise in time for a fixed   $t \in (0,t_2)$,       for \MMMMM notational \EEE convenience, we will drop the integration in time   and omit $t$ in the notation.   \EEE 

 Notice that $A_1$ \MMM is nonpositive, \ZZZ see \eqref{lambda def2}. \EEE For  $A_2$, we use  \eqref{spectrum_bound_K}--\eqref{hcm} and \eqref{pos_det} to \rb derive \ee that  $\hcm(\nabla y_\nu,\theta_\nu)$ is uniformly bounded from below   (in the eigenvalue sense). This along with \eqref{inten_mon} shows that  \rb there exists \ee a  constant $c >0$ such that 
\begin{align}\label{forA2}
A_2 \le - c\int_{\{\thetanu \leq \lambda\}} | \AAA \nabla \theta_\nu \EEE |^2 \di x.
\end{align} \EEE
Due to $\theta_\flat \geq \lambda$, see \eqref{eq bt}, and the nonnegativity of $(\cdot)_+$ and $c_V$, we derive that
\begin{align}\label{A_2estimate}
A_3 =  \kappa\int_{\Gamma \cap \{ \rb\thetanu \leq \lambda\ee \}} (\thetanu -\theta_\flat) \frac{(\lambda  - \thetanu )_+}{c_V(\nabla \ynu, \thetanu)}  \di \haus^{d-1} \leq 0.
\end{align}
We proceed to estimate $A_4$.
By \eqref{pos_det} \rb     together \ee with  \MMM  \eqref{spectrum_bound_K}--\eqref{hcm} \EEE  and   \eqref{gradcVbound}, we \ZZZ have  \EEE
\begin{align*}
A_4 & = \int_{\lbrace \thetanu \le \lambda\rbrace} \hcm(\nabla \ynu, \thetanu) \nabla \thetanu \cdot  (\lambda  - \thetanu )_+\nabla \big(c_V(\nabla \ynu, \thetanu)^{-1} \big)    \di x \\ &\leq C \ZZZ \frac{C_0^2}{c_0^2}  \EEE \int_{\lbrace \thetanu \le \lambda\rbrace} \AAA  (\lambda  - \thetanu )_+ \EEE \big(\vert \nabla \thetanu \vert^2 + \lambda \vert \nabla \thetanu \vert \vert \nabla^2 \ynu \vert \big)\di x. 
\end{align*}
Thus, choosing $\lambda_0$ sufficiently small,   we see by $\lambda \le \lambda_0$  \ZZZ  and \EEE \eqref{forA2} \EEE that
\begin{align}\label{A_3estimate1}
A_4\leq  - \frac{A_2}{3} + C\lambda  \int_\Omega (\lambda  - \thetanu )_+ \vert \nabla \thetanu \vert \vert \nabla^2 \ynu \vert  \di x 
\end{align}
for $C>0$ sufficiently large. \rb Up to possibly further decreasing $\lambda_0$, we get by Young's inequality that \ee
\begin{align}\label{A_3estimate1.5}
C \lambda \int_\Omega (\lambda - \thetanu)_+ \vert \nabla^2 \ynu \vert     \vert \nabla \thetanu \vert   \di x &\leq C  \lambda \int_{\lbrace \thetanu \le \lambda\rbrace}  \vert \nabla \thetanu \vert^2 \di x  + C \lambda \int_\Omega (\lambda - \thetanu)^2_+ \vert \nabla^2 \ynu \vert^2  \di x \notag \\
&\leq -\frac{A_2}{3} + C \lambda \int_\Omega (\lambda - \thetanu)^2_+ \vert \nabla^2 \ynu \vert^2  \di x.
\end{align}
By  Hölder's inequality with powers $p/2$ and $p/(p-2)$ and the fact that $\Vert \nabla^2 \ynu \Vert_{L^\infty(I;L^p(\Omega))} \leq \MMM M \EEE$, see \eqref{higherorerbounds},  we then deduce 
\begin{align}\label{A_3estimate- do not remove}
\int_\Omega (\lambda - \thetanu)^2_+ \vert \nabla^2 \ynu \vert^2  \di x\leq \MMM C \EEE  \Vert (\lambda - \thetanu)^2_+  \Vert_{L^{p/(p-2)}(\Omega) }. 
\end{align}
\MMMMM As \EEE  $p \geq 2d$, we have $p/(p-2) \le d/(d-1)$, and thus the Sobolev inequality implies   that
\begin{align}\label{A_3estimate2.5}
 \Vert (\lambda - \thetanu)^2_+  \Vert_{L^{p/(p-2)}(\Omega) } & \leq \MMM C \EEE  \Vert (\lambda - \thetanu)^2_+    \Vert_{L^{d/(d-1)}(\Omega) }  \leq  C \Vert (\lambda - \thetanu)^2_+  \Vert_{W^{1,1}(\Omega) } \notag \\
& \MMM \le \EEE C \int_\Omega (\lambda - \thetanu)^2_+  \di x + 2 C\int_\Omega (\lambda  - \thetanu )_+ \vert \nabla \thetanu \vert  \di x  \notag \\
& \leq   C  \int_\Omega (\lambda  - \thetanu )_+   \big( 1 + \vert \nabla \thetanu \vert^2 \big)  \di x,
\end{align}
where in the last step we used $\lambda_0 <1$ and the fact that $s \le 1 +s^2$ for all $s \ge 0$. Recalling \MMM \eqref{forA2} \EEE once again, we choose $\lambda_0$ even smaller,   and combine \ZZZ \eqref{A_3estimate1}--\eqref{A_3estimate2.5} \MMM  to find \EEE
\begin{align}\label{A_3estimate3}
A_4 \le  C \lambda \int_\Omega (\lambda - \thetanu)_+ \di x  -  A_2.
\end{align}
We now estimate the term $A_5$.
In view of \eqref{avoidKorn}  and \eqref{pos_det}, we obtain \AAA pointwise a.e.\ \EEE
\begin{align*}
\big(\partial_F W^{\rm in}(\nabla \ynu, \thetanu) -\partial_F\cplpot(\nabla \ynu, \thetanu) \big): \dotnablaynu \leq C  (\thetanu \wedge 1) \  \xi(\nabla \ynu, \dotnablaynu, \thetanu)^{1/2}. 
\end{align*}
 \MMM Recall \AAA  the definition of $\xi_{\nu, 2}^{\rm reg}$ in \eqref{def_xi_alpha_reg} \EEE and the fact that $\xi^{(2)} = \xi$\ZZZ, see \eqref{def_xi_alpha}. \MMM   Using  $s \wedge 1 \leq s^{1/2}$ for $s \geq 0$\rb, \ZZZ Young's inequality \ZZZ \AAA (in the case $\xi (\nabla \ynu, \dotnablaynu, \thetanu)  \leq \nu^{-1}$) and  \EEE   $s \wedge 1 \leq 1$ we derive that \AAA pointwise a.e.\ \EEE
\begin{align*}
C (\thetanu \wedge 1) \xi(\nabla \ynu, \dotnablaynu, \thetanu)^{1/2}  \leq \begin{cases}  
 C^2\thetanu + \ZZZ \xi_{\nu, 2}^{\rm reg} \EEE(\nabla \ynu, \dotnablaynu, \thetanu)   &\text{if }  \ZZZ \xi (\nabla \ynu, \dotnablaynu, \thetanu)  \EEE \leq \nu^{-1},\\
 \ZZZ C \nu^{1/2} \xi_{\nu, 2}^{\rm reg} \EEE(\nabla \ynu, \dotnablaynu, \thetanu)    &\text{else}.\end{cases}
\end{align*}
  \AAA The combination of  the aforementioned estimates \ee yields for \lll $\nu \leq \nu_0$ \EEE sufficiently small \AAA that  pointwise a.e.\   \EEE
\begin{align*}
{\lenni \indic \EEE }_{\{\thetanu \leq \lambda\}} \big(\partial_F W^{\rm in}(\nabla \ynu, \thetanu) -\partial_F\cplpot(\nabla \ynu, \thetanu) \big): \dotnablaynu \le  {\lenni \indic \EEE }_{\{\thetanu \leq \lambda\}} \MMM  \lll \left( \ZZZ \xi_{\nu, 2}^{\rm reg} \EEE(\nabla \ynu, \dotnablaynu, \thetanu) \EEE + \AAA C^2\EEE  \lambda \lll \right) \EEE.  
\end{align*}
 This along with  \eqref{inten_mon} and  \eqref{piphi} leads to
\begin{align}\label{A_5estimate}
A_5 \leq  \MMM c_0^{-1} \ZZZ C \EEE \lambda \int_\Omega   (\lambda  - \thetanu )_+  \di x.
\end{align}
By  \eqref{timederivfunc2}, \eqref{A_2estimate}, \eqref{A_3estimate3}, and \eqref{A_5estimate} we then derive that
\begin{align}\label{A_5estimate-neu}
\Pi = \int_{0}^{t_2}  \sum\nolimits_{i=1}^5 A_i \, \di t   \leq  \int_{0}^{t_2} \Big( A_1 + A_2 + 0   -A_2  +  (C + \MMM c_0^{-1} \EEE \ZZZ C \rb) \EEE \lambda \int_\Omega  (\lambda-\thetanu)_+ \di x      \Big)  \di t .
\end{align}
Thus, in view of the definition of $A_1$,  by choosing $\tilde D$  introduced in \eqref{lambda def}  large enough, namely $\tilde D \ge C + \ZZZ c_0^{-1}C \EEE$, we get $\Pi \le 0$. This concludes the proof. 
\end{proof}  

 \MMM 
 We now come to the proof of Theorem~\ref{thm:positivity_of_temperature}.

 \begin{proof}[Proof of Theorem~\ref{thm:positivity_of_temperature}]
\ZZZ Consider a sequence of solutions $(\ynu,\thetanu)_\nu$  to the $\nu$-regularized system as given in Definition~\ref{def:weak_solutions_regularized} \AAA (for $\alpha = 2$ and $\eps=1$). \rb The \ZZZ existence \rb of such a sequence \ZZZ is guaranteed by Proposition~\ref{thm:existence_positivity_regularized}(i)\rb. \ZZZ In view of \III Proposition~\ref{thm:existence_positivity_regularized}(iv), \EEE there exists a weak solution $(y,\theta)$ to the boundary value problem \eqref{strong_formulation}--\eqref{strong_formulation_boundary_conditions} in the sense of Definition~\ref{def:weak_formulation-classic} such that $\thetanu \to \theta$ pointwise a.e.~in $I \times \Omega$, up to selecting a subsequence.
Thus, Proposition~\ref{prop:positivity_of_temp} implies the result since the constants $C$ and $\lambda_0$ in \eqref{positivity_of_temp} do not depend on $\nu$. \EEE
\end{proof}
\EEE

\section{\MMM Linearization at positive temperature\rb s\ee}\label{sec:proofmaintheorem} 

\MMM This section is devoted to the proof of Theorem \ref{thm:linearization_positive_temp}. In Subsections \ref{sec:apriorireg}--\ref{sec: a priori}, we derive \EEE a priori bounds on the deformation and temperature with optimal scaling in $\eps$. \rb We remark here that t\ee he bound in \eqref{toten_bound_scheme} does not provide the desired scaling of the mechanical energy\rb. \AAA Based on the a priori estimates, in  Subsection~\ref{subsec: lin}, we prove the linearization result. \EEE

\subsection{\MMM A priori estimates on energy and dissipation}\label{sec:apriorireg}

\MMM The crucial point consists in deriving \ZZZ an a priori \EEE bound on the energy and \ZZZ the dissipation. \EEE Once this is achieved, the remaining bounds \AAA can be derived \EEE by \rb closely following \ee the reasoning in \cite[Section 3.4]{BFK} or \cite[Section 4.3]{RBMFLM}. To formulate the main statement, we need to introduce \EEE the shifted \emph{total energy functional} $\totenalpha \colon \Wid \times L^\alpha_+(\Omega) \to \R_+$ (compare with \ZZZ e.g.~\cite[Equation~(2.15)]{BFK}) \EEE   by
\begin{equation}\label{toten_shifted}
\begin{aligned}
  \totenalpha(y, \theta)
  &\defas \mechen(y) + \mathcal{W}^{\rm cpl}(y,\theta_c) + \mathcal{W}^{\rm in}_{\alpha,\theta_c}(y,\theta), \\
  \text{with } \mathcal{W}^{\rm in}_{\alpha,\theta_c}(y,\theta) &\defas \frac{\alpha}{2} \int_\Omega \vert\inten(\nabla y, \theta) - \inten(\nabla y, \theta_c)\vert^{2/\alpha} \di x,
\end{aligned} 
\end{equation}
where \ZZZ $\mechen$ and $\mathcal{W}^{\rm cpl}$ are defined in \eqref{mechanical} and \eqref{couplenergy}. \EEE
Heuristically,   \MMM including the `shifting' by $\theta_c$  in $\mathcal{W}^{\rm in}_{\alpha,\theta_c}$ ensures \EEE that \MMM an energy bound of order $\eps^2$ induces \EEE that  the deformations and temperatures are close to the identity and the critical temperature, respectively, \MMM namely \EEE \ref{W_prefers_id} implies   $\Vert\dist^2(\nabla y , SO(d))\Vert_{L^{1}(\Omega)} \leq C\eps^2$   and  \III we have \EEE $\Vert \inten(\nabla y, \theta) - \inten(\nabla y, \theta_c) \Vert_{L^{2/\alpha}(\Omega)} \leq C\eps^\alpha$ for a constant $C>0$ \rb independent of $\eps$.  \MMM The geometric rigidity result \ZZZ \cite[Theorem~3.1]{FrieseckeJamesMueller} along with the boundary condition \AAA in \eqref{strong_formulation_boundary_id} \EEE then yield \EEE a control on $\rb|\ee y - \id\rb|\ee$, and $|\theta-\theta_c|$ is controlled by  the following Lipschitz estimate, which is a consequence of  \eqref{inten_mon}: For each  $F \in GL^+(d)$ and  $0 \le \theta_1 \le \theta_2$, letting $w_i = W^{\rm in}(F,\theta_i)$, we have   $w_2 \ge w_1$ and 
\begin{align}\label{inten_mon-new}
 w_2-w_1 \le C_0(\theta_2 - \theta_1),   \quad \quad  \theta_2 - \theta_1 \le c_0^{-1}  (w_2 - w_1).
\end{align}
\EEE
 We note that $\eps^2$ is the natural energy scaling since for initial data $(y_{0,\eps}, \theta_{0,\eps})$ as in \eqref{linearization_initial_conditions} we have 
\begin{align}\label{eq: initial energy}
\totenalpha(y_{0,\eps}, \theta_{0,\eps}) \le C\eps^2
\end{align}\MMM
by \ref{W_prefers_id}, \ZZZ the second bound in \ref{H_bounds}, \EEE \ref{H_prefers_id}, and \eqref{inten_mon-new}.
We now formulate the main a priori \lll  bounds \EEE on the shifted energy and the dissipation.
\III To this end, recall that $\Lambda$ corresponds to a modeling parameter, introduced in \eqref{def_xi_alpha} and \ref{C_adiabatic_term_vanishes}. \MMMMM Consequently, \III although the statements of Propositions~\ref{lem:fineapriori-new}--\ref{lem:fineapriori} hold for \MMMMM all \III $\Lambda \gg 1$, we cannot take the limit $\Lambda \to + \infty$. \EEE
As the proof relies on the chain rule in Theorem~\ref{thm:chainrule}, it is formulated for \ZZZ regularized \EEE solutions introduced in Section \ref{sec: reg sol}.

\begin{proposition}[A priori bounds \AAA for the \EEE shifted energy and the dissipation of \ZZZ regularized \EEE solutions]\label{lem:fineapriori-new}
\MMMMM Let \EEE $(\yepsnu, \thetaepsnu)$ be a weak solution to \EEE the $\nu$-regularized evolution in the sense of Definition~\ref{def:weak_solutions_regularized}. Suppose that   \ref{C_third_order_bounds}--\ref{C_entropy_vanishes},  \ref{W_prefers_id}, and \ref{H_prefers_id}  hold.  \AAA Then, there exist some $\eps_0,\nu_0,\Lambda_0>0$ (with $\Lambda_0=1$ for $\alpha \in (1,2]$) and a constant $C>0$, independent of $\eps$, $\nu$,   such that for all $\eps \le\eps_0$, $\nu \le \nu_0$,  and  $\Lambda \ge \Lambda_0$ it holds that  \EEE   
\rb
\begin{align}
 \esssup_{t \in I}\totenalpha(  \yepsnu(t),\thetaepsnu(t)) &\leq C\eps^2, \label{toten_bound_schemeimprovedsecfinal-neu} \\
\int_I \int_{\Omega} \xi(\nabla \yepsnu, \dotnablayepsnu, \thetaepsnu) \di x \di t &\leq C \eps^2. \label{boundres:dissipation-neu}
\end{align}
\ee
\end{proposition}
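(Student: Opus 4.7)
The plan is to combine three ingredients: the mechanical energy identity \eqref{energybalanceregularized}, the heat equation \eqref{weak_limit_heat_equation_nu} tested with $\vphi \equiv 1$, and the chain rule of Theorem~\ref{thm:chainrule} applied with the constant curve $\lambda(t) \equiv \theta_c$. Starting from \eqref{energybalanceregularized}, the $\eps$-scaling $\ell_\eps = \eps \ell$ from \eqref{def:externalforces}, and geometric rigidity (via \ref{W_prefers_id}, \ref{H_prefers_id}, the second bound in \ref{H_bounds}, and the Dirichlet condition \eqref{strong_formulation_boundary_id}), I would obtain $\|\yepsnu - \id\|_{H^1(\Omega)} \le C\mechen(\yepsnu)^{1/2}$, so that an integration by parts in time of the loading term together with Young's inequality bounds the mechanical right-hand side by $C\eps^2 + \tfrac14\int\xi$. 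For the coupling integral $\int \partial_F W^{\rm cpl}(\nabla\yepsnu,\thetaepsnu):\partial_t\nabla\yepsnu\,\di x\,\di s$, a Taylor expansion around $\theta_c$ combined with \ref{C_adiabatic_term_vanishes} splits it into a $\theta_c$-part (handled as in \cite{BFK}) and a remainder bounded by $C\min\{\eps^{\alpha-1},\Lambda^{-1}\}|\thetaepsnu - \theta_c|\,|\partial_t\nabla\yepsnu|$, whose Young's-inequality estimate yields a further contribution $\tfrac14\int\xi$ plus a term of the form $C\min\{\eps^{2\alpha-2},\Lambda^{-2}\}\int|\thetaepsnu - \theta_c|^2$ to be absorbed later.

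To bound $\mathcal{W}^{\rm in}_{\alpha,\theta_c}$, I would exploit the two-sided Lipschitz bound \eqref{inten_mon-new} and reduce the problem to showing $\|(\thetaepsnu - \theta_c)_+\|_{L^{2/\alpha}(\Omega)}^{2/\alpha} + \|(\theta_c - \thetaepsnu)_+\|_{L^{2/\alpha}(\Omega)}^{2/\alpha} \le C\eps^2$. For the positive part, subtracting the identity obtained by testing \eqref{weak_limit_heat_equation_nu} with $\vphi \equiv 1$ from the mechanical balance cancels the $\int\xi$ and $\partial_F W^{\rm cpl}:\partial_t\nabla\yepsnu$ contributions, leaving only the boundary heat flux (of size $\eps^\alpha$ by the scaling $\theta_{\flat,\eps} - \theta_c = \eps^\alpha\mu_\flat$) and the initial data (of order $\eps^\alpha$ by \eqref{linearization_initial_conditions} and \eqref{eq: initial energy}), delivering the required bound after taking $2/\alpha$-powers and using interpolation with the $L^1$-level control already available from Proposition~\ref{thm:existence_positivity_regularized}.

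The negative-part estimate is the main novelty compared to \cite{BFK} and the principal obstacle: it mirrors the proof of Proposition~\ref{prop:positivity_of_temp} with $\lambda \equiv \theta_c$ in Theorem~\ref{thm:chainrule}, the difference being that the term $A_1$ vanishes, that the right-hand side must now be quantified by $\eps^{2\alpha}$ rather than being merely nonpositive, and that assumption \ref{C_entropy_vanishes} is used precisely to ensure that the structural terms involving $\partial_\theta^2 W^{\rm in}$ produced by the chain rule have the favorable sign despite $\theta_c > 0$. The resulting differential inequality has the schematic form $\tfrac{\di}{\di t}\|(\theta_c - \thetaepsnu)_+\|_{L^2}^2 \le C\eps^{2\alpha} + C\|(\theta_c - \thetaepsnu)_+\|_{L^2}^2 + \text{($\xi$-remainder)}$, where the remainder is absorbed thanks to the definition \eqref{def_xi_alpha_reg} of $\xiregnu$ and the mechanical bound. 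The most delicate case is $\alpha = 1$, where the coupling contribution does not vanish with $\eps$; there one must invoke the alternative bound $|\partial_{F\theta} W^{\rm cpl}| \le C_0(1+|F|)/\Lambda$ from \ref{C_adiabatic_term_vanishes} and take $\Lambda_0$ large enough that the coefficient of the absorbed dissipation stays below $\tfrac12$. A final Gronwall argument couples the mechanical and thermal pieces and produces \eqref{toten_bound_schemeimprovedsecfinal-neu} and \eqref{boundres:dissipation-neu} simultaneously, with constants independent of $\eps$, $\nu$, and (for $\alpha \in (1,2]$) $\Lambda$.
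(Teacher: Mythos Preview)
Your overall architecture is right and matches the paper: the mechanical energy identity \eqref{energybalanceregularized}, the chain rule of Theorem~\ref{thm:chainrule} with $\lambda\equiv\theta_c$ for the negative part (this is Proposition~\ref{prop:lowerbound}), and a Gronwall closure. The negative-part argument you sketch is essentially the paper's proof of Proposition~\ref{prop:lowerbound}, including the role of \ref{C_entropy_vanishes} and the $\alpha=1$ use of the $\Lambda^{-1}$ branch of \ref{C_adiabatic_term_vanishes}.

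The genuine gap is in your treatment of the positive part $(\thetaepsnu-\theta_c)_+$ when $\alpha<2$. Testing \eqref{weak_limit_heat_equation_nu} with $\vphi\equiv 1$ only yields an $L^1$-type bound on $m_\eps=\inten(\nabla\yepsnu,\thetaepsnu)-\inten(\nabla\yepsnu,\theta_c)$, namely $\int_\Omega (m_\eps)_+\,\di x \lesssim \eps^\alpha$. But $\mathcal{W}^{\rm in}_{\alpha,\theta_c}$ is an $L^{2/\alpha}$ quantity, and the interpolation you invoke cannot upgrade this: even granting a uniform (non-$\eps$-scaled) bound $\|(m_\eps)_+\|_{L^\infty}\le C$, one only gets $\int (m_\eps)_+^{2/\alpha}\,\di x \le \|(m_\eps)_+\|_{L^\infty}^{2/\alpha-1}\|(m_\eps)_+\|_{L^1} \lesssim \eps^\alpha$, which for $\alpha<2$ is strictly weaker than the required $\eps^2$. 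No $\eps$-scaled higher integrability of $\thetaepsnu$ is available from Proposition~\ref{thm:existence_positivity_regularized} to close this.

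The paper (Proposition~\ref{prop:scalingtotalenergy}, Steps~3--4) resolves this by testing the heat equation with the \emph{nonlinear} function $\vphi=\chi'(m_\eps)\indic_{[0,t]}$, where $\chi(s)=\tfrac{\alpha}{2}(\eps^\alpha+s_+)^{2/\alpha}-\tfrac{\alpha}{2}\eps^2$, so that the time-derivative term directly produces $\int_\Omega (m_\eps)_+^{2/\alpha}\,\di x$. The price is that $\nabla\vphi$ and the product $\xiregnu\,\chi'(m_\eps)$ are nontrivial; the former is handled via a weighted gradient estimate (see \eqref{hcm_lower_bound}--\eqref{reg:est:5}, using $p\ge 2d$), and the latter via Young's inequality with powers $2/\alpha$ and $2/(2-\alpha)$ together with $(\xiregnu)^{2/\alpha}\le \Lambda\,\xi$. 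Your $\vphi\equiv 1$ argument is precisely the paper's Step~2, which covers only $\alpha=2$.
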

Once Proposition \ref{lem:fineapriori-new} is shown, we obtain \ZZZ the \EEE remaining a priori estimates by following the strategy in \ZZZ \cite[Section~3.4]{BFK}. \MMM By passing to the limit $\nu \to 0$, the \ZZZ desired \EEE a priori bounds hold for solutions to the original nonlinear problem in Definition \ref{def:weak_formulation}, see Proposition \ref{lem:fineapriori} for details. 

Let us come to the proof strategy of Proposition \ref{lem:fineapriori-new}. In the linearization result \cite{BFK} for $\theta_c = 0$, the main idea was to suitably test the \AAA equations \eqref{weak_formulation_mechanical_eps}--\eqref{weak_limit_heat_equation_eps}. \EEE Eventually, summing both equations then resulted in an \ee energy  control of the form 
$$\mechen(y)  +  \frac{\alpha}{2} \int_\Omega  \inten(\nabla y, \theta)_+^{2/\alpha} \di x \le C\eps^2. $$
Repeating this argumentation in our setting for the shifted energy is not sufficient since it would only deliver control on the \emph{positive part} $\frac{\alpha}{2} \int_\Omega (\inten(\nabla y, \theta) - \inten(\nabla y, \theta_c))_+^{2/\alpha} \di x$. To control the \emph{negative part}, we use an argument similar to the one in Proposition \ref{prop:positivity_of_temp} with $\theta_c$ in place of $\lambda$. This leads to the following statement. 

\begin{proposition}[Lower bound on the deviation from the critical temperature]\label{prop:lowerbound}
Let $(\yepsnu, \thetaepsnu)$ be a solution \ZZZ to  the $\nu$-regularized evolution \EEE in the sense of Definition~\ref{def:weak_solutions_regularized},   and suppose that \MMM \ref{C_third_order_bounds}--\ref{C_entropy_vanishes} hold.  \AAA Then, there exist some $\eps_0,\nu_0,\Lambda_0>0$ (with $\Lambda_0=1$ for $\alpha \in (1,2]$) and a constant $C>0$, independent of $\eps$, $\nu$,   such that for all $\eps \le\eps_0$, $\nu \le \nu_0$,  and $\Lambda \ge \Lambda_0$ it holds that  \EEE   
\begin{align} 
 \Vert (\theta_c - \thetaepsnu)_+ \Vert_{L^\infty(I;L^2(\Omega))} &\leq C  \eps^\alpha  + C \MMM  \eps_{\alpha,\Lambda} \EEE   \Vert \xi \Vert_{L^1(I\times\Omega)}^{1/2} , \label{lowerboundtemperature}\\ 
 \Vert (\theta_c - \thetaepsnu)_+ \Vert_{L^2(I \times \Gamma)} &\leq C  \eps^\alpha  + C    \MMM  \eps_{\alpha,\Lambda} \EEE   \Vert \xi \Vert_{L^1(I\times\Omega)}^{1/2} , \label{lowerboundtemperatureGamma} \\
 \Vert \nabla (\theta_c - \thetaepsnu)_+ \Vert_{L^2(I\times\Omega)} &\leq C  \eps^\alpha  + C    \MMM \eps_{\alpha,\Lambda} \EEE   \Vert \xi \Vert_{L^1(I\times\Omega)}^{1/2} ,\label{lowerboundtemperaturegrad}
\end{align}
where $ \xi   \defas     \xi(\nabla \yepsnu, \dotnablayepsnu, \thetaepsnu)$ in $I\times \Omega$ and \MMM  $\eps_{\alpha,\Lambda} \defas \eps^{\alpha -1} \wedge \Lambda^{-1}$. \EEE
\end{proposition}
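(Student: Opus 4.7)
\medskip

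\noindent\textbf{Proof plan.} The strategy is to mirror the positivity argument of Proposition~\ref{prop:positivity_of_temp} with the \emph{constant} choice $\lambda\equiv\theta_c$ (so $\partial_t\lambda=0$), and to exploit the assumption \ref{C_adiabatic_term_vanishes} to track the $\eps_{\alpha,\Lambda}$-scaling. Writing $u\defas (\theta_c-\thetaepsnu)_+$ and $\varphi\defas u/c_V(\nabla\yepsnu,\thetaepsnu)$, I note first that $(\yepsnu,\thetaepsnu)\in\mathcal{S}_{\rm chain}$ (Remark~\ref{rem:consequencechainrule}(i)) and that $\varphi\in L^2(I;H^1(\Omega))$ by Remark~\ref{rem:consequencechainrule}(ii). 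Applying Theorem~\ref{thm:chainrule} with $\lambda\equiv\theta_c$ and then testing the regularized weak heat equation \eqref{weak_limit_heat_equation_nu} with $\varphi\indic_{[0,t_2]}$ eliminates the duality pairing and yields, for every $t_2\in I$, an identity of the form
\begin{equation*}
\tfrac{1}{2}\|u(t_2)\|_{L^2(\Omega)}^2-\tfrac{1}{2}\|u(0)\|_{L^2(\Omega)}^2=\int_0^{t_2}(B_1+B_2+B_3+B_4)\,\di t,
\end{equation*}
where $B_1\defas\int u c_V^{-1}[-\thetaepsnu\partial_{F\theta}W^{\rm cpl}]:\partial_t\nabla\yepsnu\,\di x$ (mechanical coupling), $B_2\defas\int\hcm\nabla\thetaepsnu\cdot\nabla\varphi\,\di x$ (flux), $B_3\defas -\kappa\int_\Gamma(\theta_{\flat,\eps}-\thetaepsnu)\varphi\,\di\haus^{d-1}$ (boundary), and $B_4\defas -\int\xiregnu\varphi\,\di x$ (dissipation). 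The initial datum contributes $\|u(0)\|_{L^2}^2\le C\eps^{2\alpha}$ by \eqref{linearization_initial_conditions}, and $B_4\le 0$ is discarded.

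\medskip

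\noindent The next three ingredients are the same as in the positivity proof, but with the crucial new input \ref{C_entropy_vanishes}. For $B_2$, I decompose $\nabla\varphi=-\nabla\thetaepsnu\indic_{\{\thetaepsnu\le\theta_c\}}/c_V+u\nabla(c_V^{-1})$: the first piece gives the ``good'' contribution $-\int_{\{\thetaepsnu\le\theta_c\}}\hcm\nabla\thetaepsnu\cdot\nabla\thetaepsnu/c_V\le -c\|\nabla u\|_{L^2}^2$. In the bad piece, writing $\nabla c_V=\partial_Fc_V\,\nabla^2\yepsnu+\partial_\theta c_V\,\nabla\thetaepsnu$, the $\partial_\theta c_V$-contribution is pointwise controlled via \ref{C_entropy_vanishes} since $u|\partial_\theta c_V|/c_V\le\theta_c\cdot\frac{1}{2\theta_c}=\frac{1}{2}$, so it absorbs at most half of the good term. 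The $\partial_F c_V$-contribution is treated by Young's inequality and a Sobolev embedding of the type used to derive \eqref{A_3estimate2.5} (which requires the assumption $p\ge 2d$), producing an absorbable piece plus $C\|u\|_{L^2}^2$. For $B_3$, I split $\theta_{\flat,\eps}-\thetaepsnu=(\theta_c-\thetaepsnu)+\eps^\alpha\mu_\flat$; the first summand yields the good boundary term $-\kappa\int_\Gamma u^2/c_V$, while the second is handled via Young's to yield $\le\delta\|u\|_{L^2(\Gamma)}^2+C\eps^{2\alpha}\|\mu_\flat\|_{L^2(\Gamma)}^2$.

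\medskip

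\noindent The key novel step is the treatment of $B_1$, where \ref{C_adiabatic_term_vanishes} finally enters. Using the identity $\partial_FW^{\rm in}-\partial_FW^{\rm cpl}=-\theta\,\partial_{F\theta}W^{\rm cpl}$ and Taylor-expanding $\partial_{F\theta}W^{\rm cpl}(F,\cdot)$ around $\theta_c$, with \ref{C_adiabatic_term_vanishes} controlling the value at $\theta_c$ and \ref{C_more_third_order_bounds} controlling the remainder via $|\partial_{F\theta\theta}W^{\rm cpl}|$, one obtains on $\{\thetaepsnu\le\theta_c\}$
\begin{equation*}
|\thetaepsnu\,\partial_{F\theta}W^{\rm cpl}(\nabla\yepsnu,\thetaepsnu)|\le C(1+|\nabla\yepsnu|)(\eps_{\alpha,\Lambda}+u).
\end{equation*}
Combined with the $L^\infty$-bound on $\nabla\yepsnu$ from \eqref{pos_det} and the Korn-type estimate \eqref{avoidKorn} (which turns $|\partial_t\nabla\yepsnu|$ into $\xi^{1/2}$), this yields $|B_1|\le C\eps_{\alpha,\Lambda}\int u\,\xi^{1/2}\di x+C\int u^2\xi^{1/2}\di x$. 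Integration in time, Cauchy--Schwarz in both space and time, and the uniform $L^\infty$-bound $u\le\theta_c$ to handle the quadratic remainder then give an overall bound of the form $C\eps_{\alpha,\Lambda}\sup_I\|u\|_{L^2}\|\xi\|_{L^1}^{1/2}$ plus terms absorbable by the good controls via Young's inequality.

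\medskip

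\noindent Summing the contributions, taking the supremum over $t_2\in I$, absorbing terms proportional to $\sup_I\|u\|_{L^2}^2$, $\|\nabla u\|_{L^2(I\times\Omega)}^2$, and $\|u\|_{L^2(I\times\Gamma)}^2$ on the left, and closing the residual $\int_0^T\|u\|_{L^2}^2\di t$ via Gronwall's inequality, yields simultaneously all three estimates \eqref{lowerboundtemperature}--\eqref{lowerboundtemperaturegrad} after taking square roots. The main technical obstacle is the balancing act in $B_1$: the Taylor remainder of order $u$ (not $\eps_{\alpha,\Lambda}$) must be absorbed without sacrificing the $\eps_{\alpha,\Lambda}$-scaling in the final estimate, which requires careful application of \ref{C_entropy_vanishes} to preserve the good flux term and the exponential Gronwall closure rather than direct Young absorption.
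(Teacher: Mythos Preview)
Your overall strategy is correct and matches the paper: apply the chain rule with $\lambda\equiv\theta_c$, test \eqref{weak_limit_heat_equation_nu} with $\varphi=(\theta_c-\thetaepsnu)_+/c_V$, estimate the resulting terms, and close via Gronwall. Your treatment of the flux term (using \ref{C_entropy_vanishes} to absorb the $\partial_\theta c_V$-contribution into half of the good term, and Sobolev/Young for the $\partial_F c_V$-contribution) and of the boundary term is essentially the paper's.

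There is, however, a genuine gap in your handling of the adiabatic term. You split the mechanical coupling $B_1$ from the dissipation $B_4=-\int\xiregnu\varphi$ and immediately discard $B_4\le 0$. After your Taylor expansion you are left with $\int_0^{t_2}\!\int_\Omega u^2\,\xi^{1/2}\,\di x\,\di t$, and no combination of Cauchy--Schwarz, the bound $u\le\theta_c$, and Young's inequality will close this with the required $\eps_{\alpha,\Lambda}^2$-prefactor on $\Vert\xi\Vert_{L^1}$: any attempt yields either $C\Vert\xi\Vert_{L^1}$ or $C\Vert\xi\Vert_{L^1}^{1/2}$ without the small factor, destroying the scaling in \eqref{lowerboundtemperature}--\eqref{lowerboundtemperaturegrad}. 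The claim that this term is ``absorbable by the good controls'' is not justified.

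The paper's remedy is precisely to keep $B_1$ and $B_4$ together. Writing $(\partial_FW^{\rm in}-\partial_FW^{\rm cpl}):\partial_t\nabla y-\xiregnu$ as one integrand, one bounds the adiabatic part via \eqref{est:couplatthetacwithxi} by $C\big(\theta_c|\partial_{F\theta}W^{\rm cpl}(\nabla y,\theta_c)|+|\theta-\theta_c|\wedge 1\big)\xi^{1/2}$ and then performs a case distinction on the size of $\xi^{(\alpha)}$ relative to $\Lambda$ and $\nu^{-1}$. In each regime, Young's inequality with powers $\alpha/(\alpha-1)$ and $\alpha$ (or a direct estimate for $\alpha=1$ with $\Lambda_0$ large) produces $C|\theta-\theta_c|$ plus a piece bounded by $\xiregnu$ itself, and this latter piece is absorbed by the \emph{retained} $-\xiregnu$. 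What survives after multiplication by $\varphi$ is $C\!\int u^2$, which feeds Gronwall, plus the genuinely small contribution $C\eps_{\alpha,\Lambda}^2\Vert\xi\Vert_{L^1}$ coming from the $\theta_c\partial_{F\theta}W^{\rm cpl}(\cdot,\theta_c)$-part via \ref{C_adiabatic_term_vanishes}. This is also where $\nu_0$ small and (for $\alpha=1$) $\Lambda_0$ large are actually used; your argument never invokes the structure of $\xiregnu$ and so does not explain the role of these thresholds in the statement.
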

\MMMMM In view of  the second term on the right-hand side in \eqref{lowerboundtemperature}--\eqref{lowerboundtemperaturegrad} which depends on the fixed modeling parameter  $\Lambda$, by  \eqref{bound:dissipationepsnu}  we obtain \III a suboptimal scaling  \AAA $\eps^{\alpha-1}$. \EEE This will \lll be \EEE improved to the scaling $\eps^\alpha$ in the proof of Proposition \ref{lem:fineapriori-new}. \EEE  Using  the identity 
 $|a|^{2/\alpha} = a_+^{2/\alpha} + (-a)_+^{2/\alpha}$ for $a \in \R$ as well as  \eqref{inten_mon-new} and  \eqref{lowerboundtemperature},  as a direct consequence of Proposition \ref{prop:lowerbound}   we obtain  \EEE
%
\begin{align}
\textstyle  \hspace{-0.2cm} \Vert \frac{2}{\alpha} \mathcal{W}^{\rm in}_{\alpha,\theta_c}(\yepsnu,\thetaepsnu) - \int_\Omega  ( \inten(\nabla \yepsnu, \thetaepsnu) - \inten(\nabla \yepsnu, \theta_c) )_+^{2/\alpha}  \di x  \Vert_{L^\infty(I)}  &\leq   C\eps^2 + C    \eps_{\alpha,\Lambda}^{2/\alpha}      \Vert \xi \Vert_{L^1(I\times\Omega)}^{1/\alpha}, \label{relationtotencritical1} \\
\textstyle \hspace{-0.2cm}  \Vert  \int_{\Omega}   ( \inten(\nabla \yepsnu, \theta_c) - \inten(\nabla \yepsnu, \lll \thetaepsnu \EEE))_+^{2/\alpha} \di x  \Vert_{L^\infty(I)} &\leq   C\eps^2 + C    \eps_{\alpha,\Lambda}^{2/\alpha}   \Vert \xi \Vert_{L^1(I\times\Omega)}^{1/\alpha}. \label{relationtotencritical2} 
\end{align}
\AAA Once \EEE Proposition \ref{prop:lowerbound} is shown, we can follow the strategy in \ZZZ \cite[Sections 3.2--3.3]{BFK} to \ZZZ control \EEE the positive part which leads to the following statement.   
 
\begin{proposition}[\MMM Auxiliary \EEE bound on the   \MMM shifted \EEE  total energy]\label{prop:scalingtotalenergy}
Let $(\yepsnu, \thetaepsnu)$ be a solution \ZZZ to  the $\nu$-regularized evolution \EEE in the sense of Definition~\ref{def:weak_solutions_regularized},   and suppose that \MMM \ref{C_third_order_bounds}--\ref{C_entropy_vanishes},  \ref{W_prefers_id}, and \ref{H_prefers_id} hold.  \AAA Then, there exist some $\eps_0,\nu_0,\Lambda_0>0$ (with $\Lambda_0=1$ for $\alpha \in (1,2]$) and a constant $C>0$, independent of $\eps$, $\nu$,   such that for all $\eps \le\eps_0$, $\nu \le \nu_0$,  and   $\Lambda \ge \Lambda_0$ it holds that  \EEE   
\begin{equation}\label{toten_bound_schemeimprovedsec}
\MMM \esssup_{t \in I} \, \EEE \totenalpha(  \yepsnu(t),\thetaepsnu(t)) \leq C \eps^2 +     C \MMM  \eps_{\alpha,\Lambda}^{2/\alpha} \EEE \Vert \xi \Vert_{L^1(I\times\Omega)}^{1/\alpha},
\end{equation} 
\MMM where $ \xi   \defas     \xi(\nabla \yepsnu, \dotnablayepsnu, \thetaepsnu)$ in $I\times \Omega$ and $\eps_{\alpha,\Lambda} \defas \eps^{\alpha -1} \wedge \Lambda^{-1}$. \EEE 
\end{proposition}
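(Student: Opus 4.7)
The proof proceeds by separately controlling the positive and negative contributions to $\mathcal{W}^{\rm in}_{\alpha,\theta_c}$ and combining them with the mechanical energy balance. Since the negative part is already handled by \eqref{relationtotencritical2}, the heart of the argument is to estimate the positive-part functional
\begin{equation*}
\mathcal{W}^{\rm in,+}_{\alpha,\theta_c}(\yepsnu,\thetaepsnu) \defas \frac{\alpha}{2} \int_\Omega \big(\inten(\nabla \yepsnu,\thetaepsnu) - \inten(\nabla \yepsnu,\theta_c)\big)_+^{2/\alpha} \di x
\end{equation*}
together with $\mechen(\yepsnu)$ and $\mathcal{W}^{\rm cpl}(\yepsnu,\theta_c)$.

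The first step is to invoke the mechanical energy balance \eqref{energybalanceregularized}, which rearranges to
\begin{equation*}
\mechen(\yepsnu(t)) \leq \mechen(\yepsnu(0)) + \int_0^t \langle \ell_\eps, \dotyepsnu \rangle \di s - \int_0^t \int_\Omega \pl_F \cplpot(\nabla \yepsnu,\thetaepsnu) : \dotnablayepsnu \di x \di s.
\end{equation*}
The second, more delicate step is an identity for $\mathcal{W}^{\rm in,+}_{\alpha,\theta_c}$ derived by setting $\tilde w \defas \inten(\nabla \yepsnu,\thetaepsnu) - \inten(\nabla \yepsnu,\theta_c)$ and formally testing the heat equation \eqref{weak_limit_heat_equation_nu} with $(\tilde w)_+^{2/\alpha - 1}$, which reduces to $\indic_{[0,t]}$ when $\alpha = 2$; a rigorous justification requires a chain-rule argument in the spirit of Theorem~\ref{thm:chainrule}, applied to a smooth approximation of $(\cdot)_+^{2/\alpha}$ and passed to the limit. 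Summing this identity with the mechanical energy balance, the coupling terms $\int \pl_F \cplpot : \dotnablayepsnu$ cancel up to a residue involving $\pl_F(\cplpot - \inten) = \theta \pl_{F\theta}\cplpot$, whose smallness is governed precisely by \ref{C_adiabatic_term_vanishes}. The boundary contribution from $\kappa(\theta_{\flat,\eps} - \thetaepsnu)$ splits into a favorable-sign term and an $O(\eps^\alpha)$ contribution from $\theta_{\flat,\eps} - \theta_c = \eps^\alpha \mu_\flat$, while the external loadings $\ell_\eps$ contribute $O(\eps^2)$ after using geometric rigidity and \ref{W_prefers_id} to control $\|\nabla \yepsnu - \Id\|_{L^2}^2$ by $\mechen(\yepsnu)$ and then applying Young's inequality. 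Combined with the initial energy estimate \eqref{eq: initial energy}, this yields \eqref{toten_bound_schemeimprovedsec}.

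The main obstacle lies in the rigorous treatment of the positive-part identity for $\alpha \in [1,2)$, where the test function $(\tilde w)_+^{2/\alpha - 1}$ is neither bounded nor regular enough to plug directly into \eqref{weak_limit_heat_equation_nu}; it must be approximated by a smooth concave modification analogous to $\phi_\beta$ from \eqref{def:phi_beta}, with the limit passage relying on the regularity encoded in $\mathcal{S}_{\rm chain}$ that is made available through the $\nu$-regularization. Equally delicate is the absorption of the dissipation source: the contribution of $\xiregnu$ tested against $(\tilde w)_+^{2/\alpha - 1}$ must be estimated by $\eps_{\alpha,\Lambda}^{2/\alpha}\|\xi\|_{L^1(I \times \Omega)}^{1/\alpha}$, which is where \ref{C_adiabatic_term_vanishes} and the truncation in \eqref{def_xi_alpha_reg} simultaneously come into play; the exponent $1/\alpha$ arises naturally from a Hölder interpolation between the $L^1$ control of $\xi$ furnished by \eqref{bound:dissipationepsnu} and the $L^\infty$ control on the test function provided by the truncation threshold $\Lambda$.
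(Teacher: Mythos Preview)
Your overall strategy---test the heat equation with a suitable power of the positive part of the shifted internal energy, combine with the mechanical energy balance, and exploit \ref{C_adiabatic_term_vanishes}---matches the paper's architecture. However, there are two concrete gaps.

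\textbf{Missing Gronwall step.} You write that combining the identity for $\mathcal{W}^{\rm in,+}_{\alpha,\theta_c}$ with the mechanical balance and the initial energy estimate ``yields \eqref{toten_bound_schemeimprovedsec}'' directly. This is not the case. When you test with (a regularization of) $(\tilde w)_+^{2/\alpha-1}$, several terms on the right---the heat-conduction term $\int\hcm\nabla\theta\cdot\nabla\varphi$, the dissipation source $\int\xiregnu\,\varphi$, and the adiabatic residue---can only be bounded by expressions of the form $C\int_0^t\totenalpha(\yepsnu(s),\thetaepsnu(s))\di s$, not by $C\eps^2$. Likewise, after integrating $\int_0^t\langle\ell_\eps,\partial_t\yepsnu\rangle\di s$ by parts you pick up $\int_0^t\langle\partial_s\ell_\eps,\yepsnu-\id\rangle\di s$, which again contributes a $\int_0^t(\|\partial_s f\|+\|\partial_s g\|)\totenalpha^{1/2}\di s$ term. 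The paper therefore first establishes a differential inequality of the form \eqref{for gronwall} and then applies Gronwall (in integral form); without this step the argument does not close.

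\textbf{Origin of the $\eps_{\alpha,\Lambda}^{2/\alpha}\|\xi\|_{L^1}^{1/\alpha}$ term.} You attribute this term to a H\"older interpolation between the $L^1$-control of $\xi$ and an $L^\infty$-bound on the test function coming from the truncation threshold $\Lambda$, arising when $\xiregnu$ is paired with $(\tilde w)_+^{2/\alpha-1}$. This is not how the estimate works. The pairing $\int\xiregnu\,\varphi$ is handled by Young's inequality with powers $2/\alpha$ and $2/(2-\alpha)$, producing $\frac{1}{3}\int\xi$ (absorbed by the dissipation in the mechanical balance) plus $C\int_0^t\mathcal{W}^{\rm in}_{\alpha,\theta_c}\di s$ (feeding into Gronwall). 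The factor $\eps_{\alpha,\Lambda}^{2/\alpha}\|\xi\|^{1/\alpha}$ enters instead through the \emph{negative-part} estimates \eqref{relationtotencritical1}--\eqref{relationtotencritical2} from Proposition~\ref{prop:lowerbound}: once when bounding $\mathcal{W}^{\rm in}_{\alpha,\theta_c}$ above by $\mathcal{W}^{\rm in,+}_{\alpha,\theta_c}$, and once when estimating the residual coupling term $\int(\partial_F\cplpot(\cdot,\theta)-\partial_F\cplpot(\cdot,\theta_c)):\partial_t\nabla y$ that appears after subtracting the $\theta_c$-shifted energy.

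A minor point on the regularization: rather than a $\phi_\beta$-type smoothing with a subsequent $\beta\to 0$ passage, the paper uses the simpler $\eps^\alpha$-shift $\chi(s)=\frac{\alpha}{2}(\eps^\alpha+s_+)^{2/\alpha}-\frac{\alpha}{2}\eps^2$, whose derivative $\chi'(s)=(\eps^\alpha+s_+)^{2/\alpha-1}$ is globally Lipschitz and hence directly admissible as a test function once $\tilde w\in L^2(I;H^1(\Omega))$. The chain rule needed is then the standard convex one of \cite[Proposition~3.5]{MielkeRoubicek2020}, not the positive-part chain rule of Theorem~\ref{thm:chainrule}.
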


From a technical point of view, Proposition \ref{prop:scalingtotalenergy} is more delicate compared to the corresponding result in \AAA \cite[Theorem~3.13]{BFK} since  in  \cite{BFK} the adiabatic term $\theta \pl_{F \theta} \cplpot(\nabla y, \theta) : \partial_t \nabla y$ in \eqref{strong_formulation_thermal} is easily handled by using  $\theta_c\partial_{F\theta} W^{\rm cpl} (F,\theta_c)  = 0$ for $\theta_c = 0$ whereas the latter does not hold any longer in the present setting $\theta_c >0$.  \EEE  Note that we call this an \emph{auxiliary} bound on the energy as  the dissipation still appears on the right-hand side of \eqref{toten_bound_schemeimprovedsec}.

We defer the proofs of Propositions \ref{prop:lowerbound}--\ref{prop:scalingtotalenergy} to Subsection \ref{sec: two pro} below and proceed with the proof of Proposition \ref{lem:fineapriori-new}. \EEE

\begin{proof}[Proof of  Proposition \ref{lem:fineapriori-new}]
We first focus on \eqref{boundres:dissipation-neu}.   By the fundamental theorem of calculus we have, \ZZZ for a.e.~$t \in I$, \EEE
$$ \int_\Omega \int_0^t   \partial_F W^{\rm cpl} (\nabla \yepsnuu(s),\theta_c)    : \dotnablayepsnuu\di s \di x  = \int_\Omega W^{\rm cpl}(\nabla \yepsnu(t),\theta_c) \di x  -  \int_\Omega W^{\rm cpl}(\nabla \yepsnu(0),\theta_c) \di x .
 $$
This, along with \EEE the energy balance \MMM in \EEE \eqref{energybalanceregularized}, implies that
\begin{align}\label{LLLLLL}
& \mathcal{M}(\yepsnu(t)) + \int_\Omega W^{\rm cpl}(\nabla \yepsnu(t),\theta_c) \di x    + \ZZZ \int_0^t \EEE \int_\Omega \xi(\nabla \yepsnu, \dotnablayepsnu, \thetaepsnu)  \di x \di s \nonumber \\
&= \mathcal{M}(\yepsnu(0))+ \int_\Omega W^{\rm cpl}(\nabla \yepsnu(0),\theta_c) \di x    + \ZZZ \int_0^t \EEE \langle   \ell_\eps(s), \dotyepsnu \AAA (s) \EEE \rangle  \di s\notag \\ &\quad \qquad  - \ZZZ \int_0^t \EEE \int_\Omega \big( \partial_F W^{\rm cpl} (\nabla \yepsnu, \thetaepsnu) - \partial_F W^{\rm cpl} (\nabla \yepsnu, \theta_c) \big) : \dotnablayepsnu\di x \di s
\end{align}
 for \ZZZ  a.e.~$t \in I$. \EEE
Since the sum of the first two terms on the left-hand side of  \MMM \eqref{LLLLLL}  is \EEE  nonnegative, see \ref{W_prefers_id}, \MMM by \eqref{eq: initial energy} \EEE we discover that
\begin{align} \ZZZ
 \quad \Vert \xi  \Vert_{L^1(I \times \Omega)}  \notag  &\leq C \eps^2 + C\int_I \langle   \ell_\eps(s), \dotyepsnu \MMMMM (s) \EEE \rangle \di s \notag \\ &\rb\phantom{\leq}\quad+\ee C \left\vert \int_I \int_\Omega \big( \partial_F W^{\rm cpl} (\nabla \yepsnu, \thetaepsnu) - \partial_F W^{\rm cpl} (\nabla \yepsnu, \theta_c) \big) : \dotnablayepsnu\di x \di s \right\vert, \label{bounds:regularizedterms}
\end{align}
\MMM where we write for shorthand $\xi =  \xi(\nabla \yepsnu, \dotnablayepsnu, \thetaepsnu)$. \EEE Our next goal is to bound the last two terms of the inequality above.  \MMM As a preparation, we control \ZZZ $ \partial_t \yepsnu$ \EEE in terms of the dissipation term.  To this end, we apply \EEE the generalized version of Korn's inequality, \ZZZ as stated in \EEE Theorem~\ref{pompe}, \MMM for  \EEE     $u= \dotyepsnu$ and $F = \nabla \yepsnu$, where   $F$ satisfies the assumptions due to \eqref{pos_det}.
 \MMM In view of \ref{D_quadratic}--\ref{D_bounds} and \eqref{diss_rate},   this shows
\begin{align}\label{pompi}
\Vert \dotyepsnu  \Vert^2_{L^2(I;H^1(\Omega))} \le C\Vert \ZZZ \partial_t \nabla  \yepsnu \EEE   \Vert_{L^2(I\times \Omega)}^2 \le C   \int_I \int_\Omega \xi(\nabla \yepsnu, \dotnablayepsnu, \thetaepsnu)  \di x \di s, 
\end{align}
where in the first step we  used Poincaré's inequality \ZZZ as $\dotyepsnu = 0$ a.e.~in $I \times \Gamma_D$. \EEE Now, on the one hand, we discover by \ZZZ \eqref{def:externalforces}, \EEE Hölder's inequality, a trace estimate, Young's inequality \ZZZ with constant $\frac{1}{3}$, and  \eqref{pompi} \EEE that  \EEE  
\begin{align}\label{forceestimate5}
\int_I \langle   \ell_\eps(s), \dotyepsnu  \AAA (s) \EEE\rangle \di s &\leq C \eps \int_I \left( \Vert  f(s) \Vert_{L^2(\Omega)}  +  \Vert  g(s) \Vert_{L^2(\Gamma_N)} \right) \Vert \dotyepsnu (s) \Vert_{H^1(\Omega)} \di s \notag \\
&\leq C \eps^2 + \frac{1}{3} \int_I \int_\Omega \xi (\nabla \yepsnu, \partial_t \nabla \yepsnu, \thetaepsnu ) \di x \di s.
\end{align}
 \ZZZ On the other hand,  
 using  \eqref{est:couplatthetacwithxi2}, \eqref{pos_det},   Young's  inequality with constant $\frac{1}{3}$,   $s \wedge 1 \leq \MMMMM ( s \wedge 1)^{1/\alpha} \EEE$ for $s \geq 0$,   and \eqref{relationtotencritical1}--\eqref{relationtotencritical2} along with the Lipschitz estimate   \eqref{inten_mon-new}, we can estimate the last term in \eqref{bounds:regularizedterms} \AAA by \EEE
\begin{align}\label{est:neededlater}
&\left\vert \int_I \int_\Omega \big( \partial_F W^{\rm cpl} (\nabla \yepsnu, \thetaepsnu) - \partial_F W^{\rm cpl} (\nabla \yepsnu, \theta_c) \big) : \dotnablayepsnu\di x \di s \right\vert \notag \\ 
&\rb\quad  \leq\ZZZ C \int_I \int_\Omega  ( |\thetaepsnu - \theta_c| \wedge 1 )^{2/\alpha} \di x \di s +  \frac{1}{3} \int_I \int_\Omega  \xi(\nabla \yepsnu, \dotnablayepsnu, \thetaepsnu) \di x \di s  \notag \\
&\rb\quad \leq \ZZZ C \int_I \int_\Omega  (\thetaepsnu - \theta_c)_+ ^{2/\alpha} + (\theta_c - \thetaepsnu)_+ ^{2/\alpha}\di x \di s +  \frac{1}{3} \int_I \int_\Omega  \xi(\nabla \yepsnu, \dotnablayepsnu, \thetaepsnu) \di x \di s  \\
&  \rb\quad\leq \ZZZ C \int_I  \mathcal{W}^{\rm in}_{\alpha,\theta_c}(\yepsnu(s),\thetaepsnu(s)) \di s  +  \frac{1}{3} \int_I \int_\Omega  \xi(\nabla \yepsnu, \dotnablayepsnu, \thetaepsnu) \di x \di s  + C \eps^{2}+ C   \MMM  \eps_{\alpha,\Lambda}^{2/\alpha} \EEE \Vert \xi \Vert^{1/\alpha}_{L^1(I\times\Omega)} .  \notag
\end{align}    
Combining \eqref{bounds:regularizedterms}, \eqref{forceestimate5}--\eqref{est:neededlater}, \ZZZ \eqref{toten_shifted}, \ref{W_prefers_id}, \EEE and Proposition~\ref{prop:scalingtotalenergy} we find that
\begin{align*}
& \frac{1}{3} \Vert \xi  \Vert_{L^1(I \times \Omega)}  \leq \MMM  C \eps^{2}+ C      \eps_{\alpha,\Lambda}^{2/\alpha} \Vert \xi \Vert^{1/\alpha}_{L^1(I\times\Omega)}.   \EEE
\end{align*}
Consider the case $\alpha \in (1,2]$, \ZZZ i.e., $\eps_{\alpha,\Lambda} = \eps^{\alpha-1}$ \AAA for $\eps$ small. \EEE \ZZZ Choosing $\eps_0>0$ small enough, \EEE Young's inequality with powers  $\alpha / (\alpha-1)$ and $\alpha$ and \MMM \rb constant \MMM $\frac{1}{6}$ \EEE  yields
\eqref{boundres:dissipation-neu}.   If $\alpha = 1$, we have \MMM $\eps_{\alpha,\Lambda} = \frac{1}{\Lambda} \rb \leq \frac{1}{\Lambda_0}$. Thus, \eqref{boundres:dissipation-neu} follows for $\Lambda_0$ large enough. Eventually, \eqref{boundres:dissipation-neu} along with \eqref{toten_bound_schemeimprovedsec} shows the energy bound \eqref{toten_bound_schemeimprovedsecfinal-neu}. \EEE 
\end{proof}

\MMM

\subsection{Proofs of Propositions \ref{prop:lowerbound}--\ref{prop:scalingtotalenergy}}\label{sec: two pro} 
\martin In this subsection, we prove the two key auxiliary statements. \EEE 
 
 \EEE

\begin{proof}[Proof of Propositions \ref{prop:lowerbound}]
The proof \rb follows along similar lines as the proof of \ee Proposition~\ref{prop:positivity_of_temp}.
\ZZZ According to Definition~\ref{def:weak_solutions_regularized}, we have $ \theta_{0,\eps} = \thetaepsnu(0)$, implying that
 by \EEE \eqref{linearization_initial_conditions}  \MMM there exists $C>0$ such that 
 \begin{align}\label{tempinitscaling}
\int_\Omega (\theta_c - \thetaepsnu(0))_+^2 \di x \leq \eps^{2\alpha} \Vert \mu_0 \Vert_{L^2(\Omega)}^2 \leq \MMM C \EEE \eps^{2\alpha}.
\end{align}
\MMM Note that by Theorem~\ref{thm:chainrule} \AAA for $\lambda = \theta_c$ and Remark \ref{rem:consequencechainrule}(i)  \EEE we have,   for any $t_2 \in I$, 
   \begin{align*}
 \Pi  &\defas  \frac{1}{2} \int_\Omega ((\theta_c - \thetaepsnu)_+(t_2))^2 \di x - \frac{1}{2} \int_\Omega ((\theta_c-\thetaepsnu)_+(0))^2 \di x \notag \\   &=  \int_{0}^{t_2} \int_\Omega  \frac{(\theta_c  - \thetaepsnu )_+ }{c_V(\nabla \yepsnu, \thetaepsnu)} \partial_F W^{\rm in}(\nabla \yepsnu, \thetaepsnu) : \dotnablayepsnu  \di x - \rb\Big\langle \partial_t \wepsnu, \frac{(\theta_c  - \thetaepsnu )_+ }{c_V(\nabla \yepsnu, \thetaepsnu)} \Big\rangle\MMM \, \di t.
   \end{align*}
\MMM The main step of the proof is to show that  there exists $C= C(M,\alpha)>0$ depending on both $M$ \rb from \ZZZ \III Proposition~\ref{thm:existence_positivity_regularized}(iii)  \EEE and $\alpha\in [1,2]$, but independent of \AAA $\nu $ and $\eps$, \EEE such that 
 \begin{align}\label{main showlin}
 \Pi   &\leq   C   \int_0^{t_2}\int_\Omega  (\theta_c-\thetaepsnu)_+^2 \di x   \di t +  C \MMM  \eps_{\alpha,\Lambda}^2 \EEE  \Vert \xi \Vert_{L^1(I \times \Omega)} +  C \eps^{2\alpha}  - \frac{\kappa} {2C_0} \int_0^{t_2}\int_{\Gamma  }    (\theta_c  - \thetaepsnu )^2_+  \di \haus^{d-1} \di s   \notag \\
&\rb\phantom{\leq}\quad - \ZZZ  \frac{1}{4} \EEE \int_{0}^{t_2}  \int_\Omega \hcm(\nabla \yepsnu, \thetaepsnu) \nabla \thetaepsnu \cdot \nabla \thetaepsnu \frac{ {\lenni \indic \EEE }_{\{\thetaepsnu \leq \theta_c\}}}{ c_V(\nabla \yepsnu, \thetaepsnu)}
    \di x \di t .
   \end{align}
Then, since the last two terms in \eqref{main showlin} are nonpositive \lll due to  \eqref{spectrum_bound_K}, \eqref{hcm}, and \eqref{inten_mon}, \EEE Gronwall's inequality  \MMM  (in integral form) \EEE   and \eqref{tempinitscaling} \III imply \EEE that  
\begin{align}\label{implicationgronwall}
\sup_{t \in I} \int_\Omega (\theta_c - \thetaepsnu(t))_+^2 \di x \leq   C e^{  C  T} \big( \MMM  \eps_{\alpha,\Lambda}^2 \EEE  \Vert \xi \Vert_{L^1(I \times \Omega)} +  \eps^{2\alpha} \big) ,
\end{align}
 where \AAA we recall that \EEE $T>0$  denotes the length of the interval $I = [0,T]$. \MMM This shows \eqref{lowerboundtemperature}. Then, combining \lll \eqref{tempinitscaling}--\eqref{implicationgronwall} \EEE we also find
\begin{align*}
  \ZZZ  &\frac{1}{4} \EEE \int_{0}^{T}  \int_\Omega \hcm(\nabla \yepsnu, \thetaepsnu) \nabla \thetaepsnu \cdot \nabla \thetaepsnu \frac{ {\lenni \indic \EEE }_{\{\thetaepsnu \leq \theta_c\}}}{ c_V(\nabla \yepsnu, \thetaepsnu)} \AAA \di x \di t  \EEE + \frac{\kappa} {2C_0} \int_0^{T}\int_{\Gamma  } (\theta_c  - \thetaepsnu )^2_+  \di \haus^{d-1} \di s \\ &\rb\quad  \le\MMM  \big( \ZZZ C+  \MMMMM C \EEE T e^{  C  T}  \EEE \big)\big(   \eps_{\alpha,\Lambda}^2 \Vert \xi \Vert_{L^1(I \times \Omega)} +  \eps^{2\alpha} \big) , 
\end{align*}
which along with \eqref{spectrum_bound_K}--\eqref{hcm},  \eqref{inten_mon},  and \eqref{pos_det} 
  \ZZZ shows \EEE \eqref{lowerboundtemperatureGamma} and \eqref{lowerboundtemperaturegrad}.  

Let us now come to the proof of  \eqref{main showlin}. \EEE \ZZZ Due to  Remark~\ref{rem:consequencechainrule}(ii),  we can test \eqref{weak_limit_heat_equation_nu} with \EEE $\varphi{\lenni \indic \EEE }_{(0,t_2)}$, \AAA where \EEE $\varphi \defas  (\theta_c  - \thetaepsnu )_+ c_V(\nabla \yepsnu, \thetaepsnu)^{-1} $.
 \MMM By repeating the argument in \eqref{timederivfunc2} for $\lambda = \theta_c$ and for $\xi_{\nu, \alpha}^{\rm reg}$ in place of $\xi_{\nu,2}^{\rm reg}$ we find\EEE 
\begin{align}\label{timederivfunc2lin}
\Pi & =   -   \int_{0}^{t_2}  \int_\Omega \hcm(\nabla \yepsnu, \thetaepsnu) \nabla \thetaepsnu \cdot \nabla \thetaepsnu \frac{ {\lenni \indic \EEE }_{\{\thetaepsnu \leq \theta_c\}}}{ c_V(\nabla \yepsnu, \thetaepsnu)}
    \di x \di t    \notag\\
  &\quad   +   \int_{0}^{t_2}   \kappa\int_{\Gamma} (\thetaepsnu -\theta_{\flat,\eps}) \varphi \di \haus^{d-1}  \di t \notag \\
  &\quad+  \int_{0}^{t_2}  \int_\Omega (\theta_c  - \thetaepsnu )_+ \hcm(\nabla \yepsnu, \thetaepsnu) \nabla \thetaepsnu \cdot   \nabla \big(c_V(\nabla \yepsnu, \thetaepsnu)^{-1} \big)    \di x \di t   \notag \\
  &\quad  +  \int_{0}^{t_2} \int_\Omega   \big(
   \big( \partial_F W^{\rm in}(\nabla \yepsnu, \thetaepsnu) -\partial_F\cplpot(\nabla \yepsnu, \thetaepsnu) \big): \dotnablayepsnu - 
        \xiregnu  (\nabla \yepsnu, \dotnablayepsnu, \thetaepsnu) \big) \varphi \di x \di t \notag \\
        &=:  \int_{0}^{t_2}   B_1(t) + B_2(t) + B_3(t) + B_4(t)   \, \di t ,
 \end{align}
 where each $B_i$, $i = 1,...,4$, corresponds to a term involving exactly one integrand in its respective order. \MMM As  in the proof of Proposition~\ref{prop:positivity_of_temp}, for \MMMMM notational \EEE convenience,   we sometimes \EEE drop the integration in time and  estimate the terms for \AAA a.e.\   \EEE fixed time $t \in (0,t_2)$.
 
 For $B_1$,    \ZZZ due to \EEE \eqref{spectrum_bound_K}--\eqref{hcm},  \eqref{inten_mon},  and \eqref{pos_det}   \EEE  \MMM we find a  constant $c >0$ such that 
\begin{align}\label{forB1}
B_1(t) \le - c\int_{\{\thetaepsnu \leq \theta_c\}} |\nabla \AAA  \theta_{\eps,\nu} \EEE (t)|^2 \di x.
\end{align} \EEE 
Our next goal is to bound $ \sum_{i=2}^4 B_i  $\lll. \EEE Due to  \eqref{def:externalforces}, \eqref{inten_mon}, and Young's inequality with a constant $\gamma_1>0$, we derive that
\begin{align*}
\int_0^{t_2} B_2(t) \di t & =   \kappa \int_0^{t_2}\int_{\Gamma  } (\thetaepsnu -\theta_{\flat,\eps}) \frac{(\theta_c  - \thetaepsnu )_+}{c_V(\nabla \yepsnu, \thetaepsnu)}  \di \haus^{d-1} \di t \notag \\
& \leq   \kappa c_0^{-1}\int_0^{t_2}\int_{\Gamma  }  \eps^\alpha \vert \mu_\flat \vert  (\theta_c  - \thetaepsnu )_+ \di \haus^{d-1} \di t - \kappa C_0^{-1} \int_0^{t_2}\int_{\Gamma  }    (\theta_c  - \thetaepsnu )^2_+  \di \haus^{d-1} \di t \notag  \\
&\leq  \frac{\kappa} {2c_0 \gamma_1} \eps^{2\alpha}\Vert \mu_\flat\Vert_{L^2([0,t_2] \times \Gamma)}^2   +  \rb\Big(\ee  \frac{\kappa}{ 2 c_0}\gamma_1  - \kappa C_0^{-1} \rb\Big)\ee \int_0^{t_2}\int_{\Gamma  } (\theta_c  - \thetaepsnu )_+^2 \di \haus^{d-1} \di t.  
\end{align*}
\MMM Choosing $\gamma_1$ such that \lll $\gamma_1  c_0^{-1} \leq   C_0^{-1}$ \EEE and using   the integrability of $\mu_\flat$, \ZZZ see \eqref{def:externalforces}, \MMM we get
\begin{align}\label{A_2estimatelin}
\int_0^{t_2} B_2(t) \di t & \leq  C \eps^{2\alpha}      - \frac{\kappa}{2 C_0}  \int_0^{t_2}\int_{\Gamma  } (\theta_c  - \thetaepsnu )_+^2 \di \haus^{d-1} \di t. 
\end{align}
 \EEE We proceed by estimating $B_3$ \MMM for fixed time $t$. Using  \eqref{gradcVinverse} we first calculate  
$$
B_3 =  -    \int_{\Omega} (\theta_c  - \thetaepsnu )_+ \hcm(\nabla \yepsnu, \thetaepsnu) \nabla \thetaepsnu \cdot   
   \frac{  \partial_\theta^2 W^{\rm in} (\nabla \yepsnu,\thetaepsnu )      \, \nabla \thetaepsnu  \AAA - \EEE \thetaepsnu \partial_{\theta \theta F} W^{\rm cpl} (\nabla \yepsnu, \thetaepsnu) : \nabla^2 \yepsnu }{c_V(\nabla \yepsnu,\thetaepsnu)^2}\di x.
$$
\lll Then, \EEE \eqref{pos_det} and \eqref{spectrum_bound_K}--\eqref{hcm} \AAA together  \EEE with \III   \ref{C_third_order_bounds}\rb, \ZZZ \ref{C_entropy_vanishes}\rb, and \eqref{inten_mon} imply that  \EEE
\begin{align}\label{A_3estimate1lin}
\frac{B_1}{2} + B_3 &\le      \int_{\lbrace \thetaepsnu \le \theta_c\rbrace}   \hcm(\nabla \yepsnu, \thetaepsnu) \nabla \thetaepsnu \cdot   \nabla \thetaepsnu \frac{-\frac{1}{2} + (\theta_c  - \thetaepsnu )_+ \frac{1}{2\theta_c} }{c_V(\nabla y,\theta)}   
     \notag\\ 
& \quad  \quad + C   \int_{\lbrace \thetaepsnu \le \theta_c\rbrace} (\theta_c  - \thetaepsnu )_+   \vert \nabla \thetaepsnu \vert \vert \nabla^2 \yepsnu \vert  \di x \notag \\
&\leq C  \int_{\lbrace \thetaepsnu \le \theta_c\rbrace} (\theta_c  - \thetaepsnu )_+  \vert \nabla \thetaepsnu \vert \vert \nabla^2 \yepsnu \vert  \di x.
\end{align}
  Employing Young's inequality \MMM and \eqref{forB1} \EEE  we derive
\begin{align*}
C  \int_\Omega (\theta_c - \thetaepsnu)_+ \vert \nabla^2 \yepsnu \vert     \vert \nabla \thetaepsnu \vert   \di x  \leq -\frac{B_1}{8} + C \int_\Omega (\theta_c - \thetaepsnu)^2_+ \vert \nabla^2 \yepsnu \vert^2  \di x.
\end{align*}
 By Hölder's inequality with exponents $p/(p-2)$ and $p/2$\lll, \EEE and \AAA by \EEE \eqref{higherorerbounds} \EEE   we then deduce 
\begin{align}\label{A_3estimate2lin}
 C \int_\Omega (\theta_c  - \thetaepsnu )_+ \vert \nabla \thetaepsnu \vert \vert \nabla^2 \yepsnu \vert  \di x \leq -\frac{B_1}{8}  +  C\Vert (\theta_c - \thetaepsnu)^2_+  \Vert_{L^{p/(p-2)}(\Omega) }. 
\end{align}
\MMMMM As \EEE $p \geq 2d$, we have $p/(p-2) \le d/(d-1)$, and thus  the Sobolev \EEE inequality implies together with Young's inequality with \rb constant \ee $\gamma_2>0$ that
\begin{align}\label{lennitrick}
  C\Vert (\theta_c - \thetaepsnu)^2_+  \Vert_{L^{p/(p-2)}(\Omega) } & \leq C \Vert (\theta_c - \thetaepsnu)^2_+    \Vert_{L^{d/(d-1)}(\Omega) }  \leq   C \Vert (\theta_c - \thetaepsnu)^2_+  \Vert_{W^{1,1}(\Omega) } \notag \\
& =  C \int_\Omega (\theta_c - \thetaepsnu)^2_+  \di x + 2   C \int_\Omega (\theta_c  - \thetaepsnu )_+ \vert \nabla \thetaepsnu \vert  \di x  \notag \\
& \leq   C \rb\Big(\ee 1+ \frac{1}{\MMMMM \gamma_2 }\rb\Big)\ee\int_\Omega (\theta_c - \thetaepsnu)^2_+ \di x  +  C \gamma_2  \int_{ \{\thetaepsnu \leq \theta_c\}} \vert \nabla \thetaepsnu \vert^2   \di x.
\end{align}
 Then, choosing   $\gamma_2$ sufficiently small and using \MMM \eqref{forB1} \EEE  we discover that
\begin{align*} C \Vert (\theta_c - \thetaepsnu)^2_+  \Vert_{L^{p/(p-2)}(\Omega) }  \leq   C \int_\Omega (\theta_c - \thetaepsnu)_+^2  \di x  - \frac{B_1}{8}.
\end{align*}
Combining this estimate with \eqref{A_3estimate1lin} and  \eqref{A_3estimate2lin} we derive  
\begin{align}\label{A_3estimate3lin}
B_3 \le   C \int_\Omega (\theta_c - \thetaepsnu  )_+^2 \di x  - \ZZZ \frac{3 B_1}{4}. \EEE
\end{align}
We now estimate the term $B_4$. In view of \ZZZ \eqref{Wint}, \EEE \eqref{est:couplatthetacwithxi}, and \eqref{pos_det} we obtain \AAA pointwise a.e.\ \EEE
\begin{align}\label{B4_estimate1}
  \begin{aligned}
&\vert \big(\partial_F W^{\rm in}(\nabla \yepsnu, \thetaepsnu) -\partial_F\cplpot(\nabla \yepsnu, \thetaepsnu) \big): \dotnablayepsnu \vert \\ &\quad\leq   \MMM C \EEE (\theta_c |\partial_{F\theta} W^{\rm cpl}(\nabla \yepsnu,\theta_c)| + \vert \thetaepsnu - \theta_c \vert \wedge 1 ) \xi(\nabla \yepsnu, \partial_t \nabla \yepsnu, \thetaepsnu)^{1/2}.
  \end{aligned}
\end{align}
Recall the definition of $\xi^{(\alpha)}$ and  $\xiregnu$ in   \eqref{def_xi_alpha} and \eqref{def_xi_alpha_reg}, respectively. \AAA  In the case  $\alpha \in (1,2]$, we choose   $\nu_0$ small enough  such that $\nu \le  \nu_0  <\Lambda^{-1}$. Then, \AAA possibly passing to a smaller $\nu_0$, \EEE  we find \AAA pointwise a.e.\ \EEE 
\begin{align*}
C  ( \vert \thetaepsnu-\theta_c \vert \wedge 1) \xi(\nabla \yepsnu, \dotnablayepsnu, \thetaepsnu)^{1/2} & \leq \begin{cases}
\MMM C^2\EEE \vert \thetaepsnu-\theta_c \vert +  \xiregnu, & \ZZZ \xi^{(\alpha)}\EEE \leq  \ZZZ \Lambda, \EEE  \\
\ZZZ \Lambda^{(\alpha -2)/ (2\alpha -2)} \EEE C^{\alpha/(\alpha-1)}    \vert \thetaepsnu-\theta_c \vert  +   \xiregnu, & \xi^{(\alpha)} \in \ZZZ (\Lambda, \EEE \nu^{-1}], \\
   \xiregnu, & \xi^{(\alpha)}>\nu^{-1},
\end{cases}
\end{align*}
 where $ \xiregnu$ is evaluated at $(\nabla \yepsnu, \dotnablayepsnu, \thetaepsnu)$. Indeed,  if $\AAA \xi^{(\alpha)} \EEE \leq \Lambda$, \ZZZ we have $\xi^{(\alpha)} = \xi \AAA =  \xiregnu$ and \AAA we \EEE use  $s \wedge 1 \leq s^{1/2}$ along with \rb Young's inequality\ee.  \EEE  If $\xi^{(\alpha)} \in (\ZZZ \Lambda \EEE,\nu^{-1}]$, we employ  $s \wedge 1 \leq s^{(\alpha -1) /\alpha}$ and   Young's inequality with \rb powers \ee $\alpha/(\alpha-1)$  and $\alpha$. The last case follows by the definition of $\xiregnu$ along with the fact that \ZZZ $C\nu^{1-1/\alpha} \Lambda^{(1-2/\alpha) /2} \leq 1$ \EEE for \AAA $\nu \le \nu_0$ and $\nu_0$ \EEE small enough\rb, where we used that $\alpha > 1$\ee.

The \MMM corresponding \EEE estimates for $\alpha = 1$ follow if we choose \MMMMM $\Lambda_0^{1/2} \geq   C$. \EEE  (Note that $C$ is independent of $\Lambda$ as it only depends on the constant $M$ in \III Proposition \ref{thm:existence_positivity_regularized}(iii).) \EEE  Indeed, we have \AAA pointwise a.e.\ \EEE
\begin{align*}
C ( \vert \thetaepsnu-\theta_c \vert \wedge 1) \xi(\nabla \yepsnu, \dotnablayepsnu, \thetaepsnu)^{1/2} & \leq \begin{cases}
 C^2\EEE \vert \thetaepsnu-\theta_c \vert +   \xiregnu, & \ZZZ \xi^{(1)} \EEE \leq \Lambda, \\
 \xiregnu, & \ZZZ \xi^{(1)} \EEE >\Lambda \\
\end{cases}
\end{align*}
\AAA for each choice $\Lambda \ge \Lambda_0$ \MMMMM in \eqref{def_xi_alpha}.  \EEE 
\MMM In all cases \ZZZ $\alpha \in [1,2]$, \EEE in \EEE view of \lll \ref{C_adiabatic_term_vanishes} and \eqref{pos_det}, \EEE we find by Young's inequality, \MMM  the definition of $\varphi$, and \eqref{inten_mon} \EEE
\begin{align}\label{B4_estimate4}
&\int_0^{t_2} \int_\Omega C \theta_c \vert \partial_{F\theta} W^{\rm cpl}(\nabla \yepsnu,\theta_c)  \vert \xi(\nabla \yepsnu, \dotnablayepsnu, \thetaepsnu)^{1/2} \varphi \di x \di t \notag \\&\qquad \leq  C \MMM  \eps_{\alpha,\Lambda}^2 \EEE  \Vert \xi \Vert_{L^1(I \times \Omega)} +   C \int_0^{t_2} \int_\Omega (\theta_c - \thetaepsnu)_+^2 \di x \di t.
\end{align}
In view of \eqref{B4_estimate1}--\eqref{B4_estimate4}, \MMM again using  \eqref{inten_mon}, \EEE we find for any $\alpha \in [1,2]$
\begin{align}\label{A_5estimatelin}
\int_0^{t_2} B_4(t) \di t \leq C   \MMM  \eps_{\alpha,\Lambda}^2 \EEE \Vert \xi \Vert_{L^1(I \times \Omega)} +  C \int_0^{t_2} \int_\Omega (\theta_c - \thetaepsnu)_+^2 \di x \di t
\end{align}
for a constant $\MMM   C  \EEE$ depending only on $M$ \MMM in \III Proposition~\ref{thm:existence_positivity_regularized}(iii) \EEE and $\alpha$, \MMM but \EEE  not on $\nu$. 
By \MMM collecting \EEE   \eqref{timederivfunc2lin}, \eqref{A_2estimatelin}, \eqref{A_3estimate3lin}, and   \eqref{A_5estimatelin}, we  
conclude the proof of \eqref{main showlin}. \MMM As seen above, this implies   \eqref{implicationgronwall}, and then eventually \eqref{lowerboundtemperature}--\eqref{lowerboundtemperaturegrad}.
\end{proof}

\MMM Having \EEE derived bounds on $\eps^{-\alpha} (\theta_c - \thetaepsnu)_+$, we address the \ZZZ auxiliary bound on $\totenalpha$ in Proposition~\ref{prop:scalingtotalenergy}. \EEE As a preparation, we relate the external forces \ZZZ (see \eqref{def:forcefunctional}) \EEE with the   shifted total energy. \EEE

\begin{lemma}\label{lem:EF}
\III Let $(\yepsnu, \thetaepsnu)$ be a solution to  the $\nu$-regularized evolution in the sense of Definition~\ref{def:weak_solutions_regularized},   and suppose that  \ref{W_prefers_id}  holds.
\ZZZ Then, 
  there exists a constant $C > 0$  such that \ZZZ for all~$t \in I$ \EEE   
  \begin{equation}\label{forceestimate}
       |\langle \ell_\eps(t), \yepsnu(t)    - \id   \rangle|
    \le \min \big\{ \totenalpha(\yepsnu(t), \thetaepsnu(t))-   \AAA \langle \ell_\eps(t), \yepsnu(t)    - \id   \rangle, \EEE \totenalpha(\yepsnu(t), \thetaepsnu(t)) \big\}
      +  C \lp^2    
  \end{equation}
  and
    \begin{equation}\label{forceestimate3}
\Vert \yepsnu  \MMM (t) \EEE -\id \Vert_{H^1(\Omega)}^2 \leq  C \totenalpha(\yepsnu(t), \thetaepsnu(t)) . 
  \end{equation}
\end{lemma}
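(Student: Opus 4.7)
The plan is to first establish \eqref{forceestimate3} via the Friesecke-James-M\"uller geometric rigidity theorem combined with \ref{W_prefers_id}, and then to deduce \eqref{forceestimate} by a duality estimate with Young's inequality to absorb the resulting $H^1$-norm into the shifted total energy.

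For \eqref{forceestimate3}, fix $t \in I$. Since $\yepsnu(t) - \id$ vanishes on $\Gamma_D$ with $\haus^{d-1}(\Gamma_D) > 0$, geometric rigidity \cite[Theorem~3.1]{FrieseckeJamesMueller} combined with Poincar\'e's inequality, where the rotation provided by rigidity can be normalized to the identity by virtue of the Dirichlet datum (cf.\ standard linearization arguments in \cite{DalMasoNegriPercivale02Linearized, FiredrichKruzik18Onthepassage}), yields
\begin{equation*}
  \norm{\yepsnu(t) - \id}_{H^1(\Omega)}^2 \leq C \int_\Omega \dist(\nabla \yepsnu(t), SO(d))^2 \di x.
\end{equation*}
Applying \ref{W_prefers_id} to $F = \nabla \yepsnu(t)$, using the decomposition $W(F,\theta_c) = \elpot(F) + \cplpot(F,\theta_c)$, and exploiting the nonnegativity of $\hypot$ (see \ref{H_bounds}) and of $\mathcal{W}^{\rm in}_{\alpha,\theta_c}$, we obtain
\begin{equation*}
  \ac \int_\Omega \dist(\nabla \yepsnu(t), SO(d))^2 \di x \leq \int_\Omega \elpot(\nabla \yepsnu(t)) + \cplpot(\nabla \yepsnu(t), \theta_c) \di x \leq \mechen(\yepsnu(t)) + \cplen(\yepsnu(t), \theta_c) \leq \totenalpha(\yepsnu(t), \thetaepsnu(t)),
\end{equation*}
which combined with the previous inequality proves \eqref{forceestimate3}.

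For \eqref{forceestimate}, H\"older's inequality, the trace inequality, and the scaling $f_\eps = \eps f$, $g_\eps = \eps g$ from \eqref{def:externalforces} give
\begin{equation*}
  \abs{\sprod{\ell_\eps(t)}{\yepsnu(t) - \id}} \leq C \eps \big(\norm{f(t)}_{L^2(\Omega)} + \norm{g(t)}_{L^2(\Gamma_N)}\big) \norm{\yepsnu(t) - \id}_{H^1(\Omega)}.
\end{equation*}
Since $f \in W^{1,1}(I;L^2(\Omega;\R^d)) \hookrightarrow L^\infty(I;L^2(\Omega;\R^d))$ and similarly for $g$, Young's inequality with a sufficiently small weight followed by absorption of the $H^1$-norm via \eqref{forceestimate3} yields, after renaming the constant,
\begin{equation*}
  \abs{\sprod{\ell_\eps(t)}{\yepsnu(t) - \id}} \leq \tfrac{1}{2} \totenalpha(\yepsnu(t), \thetaepsnu(t)) + C \eps^2.
\end{equation*}
The desired form \eqref{forceestimate} then follows by a case distinction on the sign of $\sprod{\ell_\eps(t)}{\yepsnu(t) - \id}$: if it is nonnegative, doubling produces $\sprod{\ell_\eps(t)}{\yepsnu(t) - \id} \leq \totenalpha(\yepsnu(t), \thetaepsnu(t)) - \sprod{\ell_\eps(t)}{\yepsnu(t) - \id} + C\eps^2$, matching the first entry in the minimum; if it is negative, the bound $\abs{\sprod{\ell_\eps(t)}{\yepsnu(t) - \id}} \leq \totenalpha(\yepsnu(t), \thetaepsnu(t)) + C\eps^2$ matches the second entry trivially.

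The argument is essentially standard for linearization at a stress-free reference configuration; the only conceptual step is pairing \ref{W_prefers_id} with rigidity in the presence of the coupling term $\cplpot(\cdot,\theta_c)$, which is handled by the decomposition above, with no input required from the heat equation or from the dissipation estimates.
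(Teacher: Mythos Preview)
Your proof is correct and follows essentially the same route as the paper: the paper derives \eqref{forceestimate3} via Poincar\'e's inequality and rigidity (citing \cite[Lemma~4.2]{FiredrichKruzik18Onthepassage} and \cite[Theorem~3.1]{FrieseckeJamesMueller}) together with \ref{W_prefers_id}, and then defers \eqref{forceestimate} to \cite[Lemma~3.10]{BFK}, whose content is precisely the duality/Young's inequality argument you spell out. Your case distinction on the sign of $\langle \ell_\eps(t), \yepsnu(t) - \id\rangle$ correctly identifies which entry realizes the minimum in each case.
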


\begin{proof}

\AAA As \EEE $W^{\rm el}(\cdot) + W^{\rm cpl}(\cdot,\theta_c)$ is nonnegative \lll due to \EEE growth condition \ZZZ  \ref{W_prefers_id}, \EEE  Poincaré's inequality, \MMMMM the fact that $ \yepsnu \in  \Wid $, \EEE   and \cite[Lemma 4.2]{FiredrichKruzik18Onthepassage} \MMM (relying on the rigidity estimate  \ZZZ \cite[Theorem~3.1]{FrieseckeJamesMueller}) \AAA imply \EEE that
\begin{align*}
\Vert \yepsnu \ZZZ (t) \EEE -\id \Vert_{H^1(\Omega)}^2 &\leq C   \Vert \nabla \yepsnu \ZZZ (t) \EEE - \Id \Vert_{L^2(\Omega)}^2 \leq C \int_\Omega \dist(\nabla \yepsnu \ZZZ (t) \EEE , SO(d))^2 \di x \\ & \leq \frac{C}{c_0} \int_\Omega W^{\rm el} (\nabla \yepsnu \ZZZ (t) \EEE ) + W^{\rm cpl} (\nabla \yepsnu \ZZZ (t) \EEE , \theta_c) \di x \leq \frac{C}{c_0}\totenalpha(\yepsnu \ZZZ (t) \EEE , \thetaepsnu \ZZZ (t) \EEE ) 
\end{align*}
for \ZZZ a.e.~$t \in I$. 
\AAA At this point,  \EEE the rest of the argument follows \rb along \MMM the lines of \cite[Lemma~3.10]{BFK}. \EEE
\end{proof}

\begin{proof}[Proof of Proposition \ref{prop:scalingtotalenergy}]
\martin For notational convenience, \EEE  we write $(y,\theta)$ in place of $(\yepsnu,\thetaepsnu)$ in the proof. \EEE The proof follows along the lines of \cite[Proposition 3.7]{RBMFLM}, where related bounds on thin domains were shown, which itself is based on \cite[Lemma 6.2]{MielkeRoubicek2020}. 
\ZZZ In contrast to the results in \cite{RBMFLM, MielkeRoubicek2020},  the internal energy density is shifted by the \ZZZ nonzero \EEE critical energy $\theta_c>0$, see \eqref{toten_shifted}, which requires nontrivial adaptations. In this regard, we frequently use \ZZZ \eqref{relationtotencritical1}--\eqref{relationtotencritical2}. \EEE The core of the proof consists in showing 
\begin{align}\label{for gronwall}
  \totenalpha (\yepsnuu(t), \thetaepsnuu(t))  
&\leq \totenalpha( \ZZZ y(0), \theta(0) \EEE)+ C \int_0^t \totenalpha (\yepsnuu(s), \thetaepsnuu(s)) \di s \notag \\ &\qquad +  \int_0^t \langle \ell_\eps(s), \partial_t y \ZZZ (s) \EEE \rangle \di s  + C \eps^2 +  C \MMM  \eps_{\alpha,\Lambda}^{2/\alpha} \EEE  \Vert \xi \Vert_{L^1(I\times\Omega)}^{1/\alpha}  
\end{align}
 for a.e.~$t \in I$. Then\rb, \ee the result follows by a Gronwall argument. We first suppose that \eqref{for gronwall} \ZZZ holds  \EEE and conclude the argument \ZZZ (Step 1). \EEE Afterwards, we show \eqref{for gronwall} by \ZZZ distinguishing \EEE the cases $\alpha = 2$ \ZZZ(Step 2) \EEE and $\alpha <2$ \ZZZ (Step 3 and 4), \EEE where as in \cite{RBMFLM} the latter is considerably more delicate.

\emph{Step 1 (Conclusion):}   For shorthand,  \EEE  we define for \ZZZ $t \in I$ \EEE
\begin{equation*}
  E^{(\alpha)}(t) \defas \totenalpha(\yepsnuu(t),\thetaepsnuu(t))  -   \langle \ell_\eps(t), \yepsnuu \ZZZ (t) \EEE -\id \rangle .
\end{equation*}
Then, by \ZZZ  an integration by parts in \eqref{for gronwall} \EEE we find that
\begin{align}\label{Ebound}
\AAA E^{(\alpha)}  \EEE (t)   &\leq  \AAA E^{(\alpha)}  \EEE(0) +  C \eps^2 +  C \MMM  \eps_{\alpha,\Lambda}^{2/\alpha} \EEE  \Vert \xi \Vert_{L^1(I\times\Omega)}^{1/\alpha}   \AAA  + C \int_0^t \totenalpha (\yepsnuu(s), \thetaepsnuu(s)) \di s \EEE   \notag\\
   &\qquad \AAA - \EEE  \int_0^t \int_\Omega \partial_s  f_\eps(s) ({\yepsnuu}  \ZZZ (s) \EEE -\id) \di x  \di s  \AAA - \EEE  \int_0^t \int_{\Gamma_N} \partial_s  g_\eps(s) ({\yepsnuu}  \ZZZ (s) \EEE -\id) \di \mathcal{H}^{d-1}   \di s .
\end{align}
\MMM By Hölder's inequality, a trace estimate, and \eqref{forceestimate3} we derive that
\begin{align}\label{forceboundcalc}
&\quad \int_0^t \int_\Omega \partial_s  f_\eps(s) ({\yepsnuu}  \ZZZ (s) \EEE -\id) \di x  \di s  + \int_0^t \int_{\Gamma_N} \partial_s  g_\eps(s) ({\yepsnuu} \ZZZ (s) \EEE -\id) \di \mathcal{H}^{d-1}   \di s \notag\\
& \leq  \int_0^t   \Vert  \partial_s  f_\eps(s) \Vert_{L^2(\Omega)} \Vert \yepsnuu(s) - \id \Vert_{L^2(\Omega)} \di s +\int_0^t   \Vert  \partial_s  g_\eps(s) \Vert_{L^2(\Gamma_N)} \Vert \yepsnuu(s) - \id \Vert_{L^2(\Gamma_N)} \di s \notag \\
&\leq C \int_0^t   \big( \Vert  \partial_s  f_\eps(s) \Vert_{L^2(\Omega)} + \Vert  \partial_s  g_\eps(s) \Vert_{L^2(\Gamma_N)} \big) \Vert \yepsnuu(s) - \id \Vert_{H^1(\Omega)} \di s \notag \\
  &\leq  C \int_0^t \big( \Vert  \partial_s  f_\eps(s) \Vert_{L^2(\Omega)} + \Vert  \partial_s  g_\eps(s) \Vert_{L^2(\Gamma_N)} \big) \sqrt{\totenalpha(\yepsnuu(s),\thetaepsnuu(s))} \di s .
\end{align}
It is elementary to check that  $\sqrt{m} \leq \eps^{-1} m + \eps$ for all  $m \geq 0$, by distinguishing the cases $m \geq \eps^2$ and  $m \leq \eps^2$. \EEE Therefore, \AAA by  \eqref{forceestimate} \EEE \III we find that \EEE
\begin{equation*}
   \sqrt{\totenalpha(\yepsnuu(s),\thetaepsnuu(s))}   \leq \frac{\totenalpha(\yepsnuu(s),\thetaepsnuu(s))}{\eps} + \eps \AAA \le   \frac{2E^{(\alpha)}  (s)}{\eps} + C\eps \EEE
\end{equation*}
for a.e.~$s \in I$. Thus,  in view of  \eqref{def:externalforces},  \eqref{forceestimate}, \eqref{Ebound}, and \eqref{forceboundcalc} we discover that  
\begin{equation*}
  E^{(\alpha)} (t) \leq  E^{(\alpha)}(0) +  C \MMM  \eps_{\alpha,\Lambda}^{2/\alpha} \EEE    \Vert \xi \Vert_{L^1(I\times\Omega)}^{1/\alpha}  + \AAA C\eps^2 \EEE  + C \int_0^t  \ZZZ \big( \AAA 1 +\EEE \Vert  \partial_s  f(s) \Vert_{L^2(\Omega)} + \Vert  \partial_s  g(s) \Vert_{L^2(\Gamma_N)} \big) \EEE (E^{(\alpha)}(s) + \eps^2) \di s .  
\end{equation*}
\lenni \AAA By \EEE 
 \eqref{forceestimate}, \eqref{eq: initial energy}, and $y(0) = y_{0,\eps}$, $\theta(0) = \theta_{0,\eps}$ a.e.~in $\Omega$ \AAA we get \EEE that $E^{(\alpha)}(0) \leq   C \eps^2$. \EEE
Then, by  \ZZZ Gronwall's inequality (in integral form), and the fact that  $ f  \in W^{1, 1}(I; L^2(\Omega; \R^d))$ \rb and \ZZZ $g  \in W^{1, 1}(I; L^2(\Gamma_N; \R^d))$,  \EEE  we derive that 
\begin{equation*}
 E^{(\alpha)} (t)
  \leq   C \eps^2  +  C \MMM  \eps_{\alpha,\Lambda}^{2/\alpha} \EEE  \Vert \xi \Vert_{L^1(I\times\Omega)}^{1/\alpha}   .
\end{equation*}
The above estimate together with \eqref{forceestimate} \MMM yields \EEE \eqref{toten_bound_schemeimprovedsec}. To conclude the proof, we need to show \eqref{for gronwall}.

\emph{Step 2 (Case $\alpha =2$):}  We first deal with the case $\alpha = 2$.
 Given $t \in I$,   we test \eqref{weak_limit_heat_equation_nu} with $\varphi(s, x) \defas \indic_{[0,t]}(s)$ resulting in
\begin{align}\label{balancereg:testwith1}
  & \mathcal{W}^{\rm in}   ( \yepsnuu(t), \thetaepsnuu(t))   - \int_0^t \int_\Omega
      \xi_{\nu, 2}^{\rm reg} (\nabla \yepsnuu, \dotnablayepsnuu, \thetaepsnuu)
      + \partial_F W^{\rm{cpl}}(\nabla \yepsnuu, \thetaepsnuu) : \dotnablayepsnuu\di x \di s  =
    \MMM \mathcal{W}^{\rm in} \EEE    (\yepsnuu(0), \thetaepsnuu(0))  + A_1, 
\end{align}  
where for convenience we have set $A_1 \defas \kappa \int_0^t \int_\Gamma (\theta_{\flat, \eps} - \thetaepsnuu)  \di \haus^{\rb d-1\ee}  \di s$ and  $\mathcal{W}^{\rm in} =  \mathcal{W}^{\rm in}_{2,0}$, \ZZZ see \eqref{toten_shifted} and \eqref{inten_lipschitz_bounds}, \EEE i.e., $   \mathcal{W}^{\rm in} ( \yepsnuu(t), \thetaepsnuu(t)) = \int_\Omega  {W}^{\rm in}   ( \yepsnuu(t), \thetaepsnuu(t)) \di x $. \ZZZ Recalling \eqref{couplenergy}, \EEE by the fundamental theorem of calculus and \eqref{Wint} we find
\begin{align}\label{fundamentalforcpl}
&\big( \mathcal{W}^{\rm cpl}(\yepsnuu(t),\theta_c) - \mathcal{W}^{\rm in}(\yepsnuu(t),\theta_c) \big) -  \big( \mathcal{W}^{\rm cpl}(\yepsnuu(0),\theta_c) - \mathcal{W}^{\rm in}(\yepsnuu(0),\theta_c)\big) \notag \\ 
 &=  \int_\Omega \int_0^t \big(  \partial_F W^{\rm cpl} (\nabla \yepsnuu(s),\theta_c)  - \partial_F W^{\rm in} (\nabla \yepsnuu(s),\theta_c)\big) : \dotnablayepsnuu \ZZZ ( s ) \EEE \di s \di x \notag \\ & = \int_0^t \int_\Omega \theta_c \partial_{F\theta} W^{\rm cpl} (\nabla \yepsnuu (s), \theta_c): \dotnablayepsnuu(s) \di x \di s  \ZZZ \eqcolon \EEE A_2.
\end{align}
Summing \eqref{balancereg:testwith1}--\eqref{fundamentalforcpl} and using \eqref{relationtotencritical1}--\eqref{relationtotencritical2} we get 
\begin{align}\label{balancereg:testwith1-new}
  & \mathcal{W}^{\rm in}_{2,\theta_c}   ( \yepsnuu(t), \thetaepsnuu(t)) + \mathcal{W}^{\rm cpl}(\yepsnuu(t),\theta_c) - \int_0^t \int_\Omega
       \xi_{\nu, 2}^{\rm reg}(\nabla \yepsnuu, \dotnablayepsnuu, \thetaepsnuu)
      + \partial_F W^{\rm{cpl}}(\nabla \yepsnuu, \thetaepsnuu) : \dotnablayepsnuu\di x \di s \nonumber\\
    &\quad \le 
    \MMM \mathcal{W}^{\rm in}_{2,\theta_c} \EEE    (\yepsnuu(0), \thetaepsnuu(0))  + \mathcal{W}^{\rm cpl}(\yepsnuu(0),\theta_c)  + A_1 + A_2 +   C\eps^2 + C \eps_{2,\Lambda}   \Vert \xi \Vert_{L^1(I\times\Omega)}^{1/2}.
\end{align} 
 Thus, we compute the sum of \eqref{energybalanceregularized} and \eqref{balancereg:testwith1-new}, and use   $\ZZZ \xi^{\rm reg}_{\nu, 2} \EEE \leq \xi$    to derive
\begin{align}
\toten_{2,\theta_c}(\yepsnuu(t), \thetaepsnuu(t))    &\leq \toten_{2,\theta_c}(\yepsnuu(0),\thetaepsnuu(0))  + \int_0^t  \langle \ell_\eps(s),   \ZZZ \partial_t y (s) \EEE \rangle \di s  + A_1  + A_2 + C\eps^2  + C \eps_{2,\Lambda}   \Vert \xi \Vert_{L^1(I\times\Omega)}^{1/2}. \label{boundsfornureg1}
\end{align}
It \ZZZ remains \EEE to bound the terms $A_1$ and $A_2$. 
In view of \eqref{def:externalforces}, \eqref{lowerboundtemperatureGamma}, the regularity of  $\mu_\flat$, \MMMMM and H\"older's inequality  \EEE we get
\begin{align}\label{dont remove}
A_1 & =  \AAA \kappa \EEE   \int_0^t \int_\Gamma (\theta_{\flat, \eps} - \thetaepsnuu)  \di \haus^{\rb d-1\ee}  \di s =  \AAA \kappa \EEE \int_0^t \int_\Gamma ( \theta_c  - \thetaepsnuu ) \di \haus^{\rb d-1\ee}  \di s  + \AAA \kappa \EEE \eps^{2}   \int_0^t \int_\Gamma \mu_\flat \di \haus^{\rb d-1\ee}  \di s \notag \\
 &\leq  \AAA \kappa \EEE  \int_0^t \int_{\Gamma  } ( \theta_c  - \thetaepsnuu )_+ \di \haus^{\rb d-1\ee}  \di s   + C \eps^{2} \Vert \mu_\flat \Vert_{L^1(I\times  \Gamma )} \leq  C \eps^2  + C \eps_{2,\Lambda} \Vert \xi \Vert_{L^1(I\times\Omega)}^{1/2} . 
\end{align} 
By \eqref{est:couplatthetacwithxi}  with $\theta=\theta_c$,  \eqref{pos_det},   \lll \ref{C_adiabatic_term_vanishes},  \EEE and Hölder's inequality   we derive that  
\begin{align}\label{strangeterm}
A_2 = \int_0^t \int_\Omega \theta_c \partial_{F\theta} W^{\rm cpl} (\nabla \yepsnuu (s), \theta_c): \dotnablayepsnuu(s) \di x \di s \leq C \eps_{2,\Lambda} \Vert \xi \Vert_{L^1(I\times\Omega)}^{1/2}.
\end{align}
Combining \eqref{boundsfornureg1}--\eqref{strangeterm}\ZZZ, we \EEE obtain \eqref{for gronwall} in the case $\alpha = 2$.

\emph{Step 3 (Cases $\alpha \in [1,2)$):}
We now show   \eqref{for gronwall}    in the case    $\alpha \in [1,2)$.
Let $ \chi(s) \defas  \frac{\alpha}{2} ( \eps^\alpha + s_+)^{2/\alpha} - \frac{\alpha}{2} \eps^2$ for $s \in \R$ and
\begin{align}\label{def:testfunctionalphanot2}
\varphi \defas \chi'(\meps) \indic_{[0,t]} = \indic_{\{\thetaepsnuu \geq \theta_c\}} \indic_{[0,t]}  (\eps^\alpha +   \meps_+ \EEE )^{2/\alpha -1}   \text{ for } \meps \defas \inten(\nabla \yepsnuu, \thetaepsnuu)-\inten(\nabla \yepsnuu, \theta_c)\ZZZ  \ \ \text{ and } t \in I, \EEE
\end{align}
\MMM where we use that $\inten$ is increasing in the temperature variable\ZZZ, see \eqref{inten_mon-new}. \EEE We show that $ \varphi$ is an admissible test function for \eqref{weak_limit_heat_equation_nu}. In this regard, \MMM we \EEE write $\chi'(\meps) = \tilde \chi(\meps) + \eps^{2-\alpha} $ for 
\begin{align*}
\tilde \chi(s) = (\eps^\alpha + s_+)^{2/\alpha-1} - \eps^{2-\alpha}.
\end{align*}
Since $\tilde \chi$ is  Lipschitz,    it suffices to show that $\meps \in L^2(I; H^1(\Omega))$.
  \MMMMM In fact, \EEE   the regularity of $(\yepsnuu,\thetaepsnuu)$ (see Definition~\ref{def:weak_solutions_regularized}),  \MMM \eqref{inten_mon}, \EEE \eqref{inten_lipschitz_bounds}, the relation 
  \begin{align}\label{def:gradmeps}
    \nabla \meps =   \partial_F W^{\rm in}(\nabla \yepsnuu, \thetaepsnuu) \nabla^2 \yepsnuu -  \partial_F W^{\rm in}(\nabla \yepsnuu, \theta_c) \nabla^2 \yepsnuu + \partial_\theta W^{\rm in} (\nabla \yepsnuu, \thetaepsnuu) \nabla \thetaepsnuu  ,
  \end{align}
   \eqref{est:coupl}, and \eqref{pos_det}  imply that $\meps \in L^2(I; H^1(\Omega))$. \AAA This shows that    $ \varphi$ \III is an \EEE admissible \EEE test function in \eqref{weak_limit_heat_equation_nu}. \AAA For later purposes, we calculate \EEE  
\begin{align}\label{def:testfunctionalphanot2grad}
 \nabla  \varphi = \indic_{[0,t]}  \nabla \chi'(\meps)
  =   \indic_{\{\thetaepsnuu \geq \theta_c\}}\indic_{[0,t]} \frac{2-\alpha}{\alpha}(\eps^\alpha +\meps_+)^{(2-2\alpha)/\alpha} \nabla \meps.
  \end{align}  
Consider the convex functional $ \mathcal{J}(m) =   \int_\Omega \chi(m) \di x   $ \MMM  on the space \EEE  $X \defas (H^{1}(\Omega))^*$. Since $W^{\rm in} (\nabla \yepsnuu,\thetaepsnuu) \in H^1(I;(H^{1}(\Omega))^* )$ and $\nabla \yepsnuu \in   L^\infty(I\times \Omega;\R^{d \times d}) \EEE \cap   H^1(I;L^2(\Omega;\R^{d \times d}))$, we get that $\meps \MMM \in \EEE H^1(I; (H^{1}(\Omega))^*)$.  
\AAA As   $\varphi \in  L^2(I; H^1(\Omega))$, we have  \EEE  $\ZZZ \chi'(\meps) \EEE \in L^2(I; X^*)$, where \MMM $X^* = H^1(\Omega)$.  Thus,  by applying the chain rule   from \cite[Proposition 3.5]{MielkeRoubicek2020} we get 
$${ \int_\Omega \chi(\meps(t)) \di x
   -  \int_\Omega \chi(\meps(0)) \di x = \ZZZ \int_0^t \EEE \big\langle  \MMMMM  \chi'(\meps  (s)) ,  \partial_t \meps (s)  \EEE \big\rangle \ZZZ \di s . \EEE } $$   
Using $\varphi = \chi'(\meps) \ZZZ \indic_{[0,t]} \EEE$ in \eqref{weak_limit_heat_equation_nu}, \ZZZ where $w$ is \EEE given by $\meps + \inten(\nabla \yepsnuu, \theta_c)$, \EEE  we discover \MMMMM by the fundamental theorem of calculus \EEE that 
\begin{align}\label{balancetestwithchi}
\int_\Omega \chi(\meps(t)) \di x
   -  \int_\Omega \chi(\meps(0)) \di x & = - \int_0^t \int_\Omega \partial_F W^{\rm in} (\nabla \yepsnuu, \theta_c ) : \dotnablayepsnuu \, \chi'(\meps) \di x \di s
 \notag  \\
   &\quad + \int_0^t \int_\Omega
            \partial_F W^{\rm{cpl}}(\nabla \yepsnuu, \thetaepsnuu) : \dotnablayepsnuu \, 
     \chi'(\meps) \di x \di s \notag \\
   &\quad + \kappa \int_0^t \int_{  \Gamma}
      (\theta_{\flat, \eps}    - \thetaepsnuu)   \chi'(\meps) \di \haus^{\rb d-1\ee} \di s   + \int_0^t \int_{\ZZZ \Omega \EEE}
    \xi^{\rm reg}_{\nu,\alpha}(\nabla \yepsnuu , \dotnablayepsnuu   , \thetaepsnuu   ) \ZZZ
    \chi'(\meps ) \EEE
       \di x \di s \notag \\
  & \quad - \int_0^t \int_\Omega \hcm(\nabla \yepsnuu, \thetaepsnuu) \nabla \thetaepsnuu \cdot \nabla \big( \chi'(\meps) \big) \di x \di s \notag \\
   &  \III \eqcolon\EEE     B_1 + B_2 + B_3 + B_4  + B_5,
 \end{align}
 where each $B_i$, $i = 1,...,5$, corresponds to \ZZZ exactly one integral \EEE in its respective order.
  
By the definition of \ZZZ $\chi$ \EEE we have  
\begin{align*} 
  \int_\Omega \chi(\meps(0)) \di x
  &= \int_\Omega \frac{\alpha}{2} \big(\eps^\alpha + \big( W^{\rm in} (\nabla \yepsnuu(0), \thetaepsnuu(0) ) - W^{\rm in} (\nabla \yepsnuu(0), \theta_c) \big)_+ \big)^{2/\alpha} -  \frac{\alpha}{2}\eps^2 \di x  \notag \\
  &\leq \lll C    \eps^2 \EEE +  \mathcal{W}^{\rm in}_{\alpha,\theta_c}(\ZZZ  y (0) , \theta (0) \EEE )    . 
\end{align*}
 In a similar fashion, using also \eqref{relationtotencritical1} we get 
\begin{align*} 
  \int_\Omega \chi(\meps(t)) \di x \ge    \mathcal{W}^{\rm in}_{\alpha,\theta_c}( \yepsnuu(t), \thetaepsnuu(t)) -  C\eps^2 - C \MMM  \eps_{\alpha,\Lambda}^{2/\alpha} \EEE   \Vert \xi \Vert_{L^1(I\times\Omega)}^{1/\alpha}
\end{align*}
for \ZZZ a.e.~$t \in I$. \EEE
Plugging this into \eqref{balancetestwithchi}, we derive 
\begin{align}\label{balancetestwithchi-neu}
\mathcal{W}^{\rm in}_{\alpha,\theta_c}(\ZZZ \yepsnuu(t), \thetaepsnuu(t) \EEE) & \le      \mathcal{W}^{\rm in}_{\alpha,\theta_c}(\ZZZ  y (0) , \theta (0) \EEE )+  C\eps^2 + C \MMM  \eps_{\alpha,\Lambda}^{2/\alpha} \EEE   \Vert \xi \Vert_{L^1(I\times\Omega)}^{1/\alpha} +   B_1 + B_2 + B_3 + B_4  + B_5
 \end{align}
 for \ZZZ a.e.~$t \in I$. \EEE
By the fundamental theorem of calculus we find for \ZZZ a.e.~$t \in I$ \EEE
\begin{align}\label{balancetestwithchi-neu2}
 \mathcal{W}^{\rm cpl}(\yepsnuu(t),\theta_c)  & =     \mathcal{W}^{\rm cpl}(\yepsnuu(0),\theta_c) + \int_0^t \int_\Omega \partial_F W^{\rm cpl} (\nabla \yepsnuu, \thetaepsnuu) : \dotnablayepsnuu\di x \di s + B_6 
 \end{align}
where
\begin{align}\label{eq: last one}
B_6 \defas - \int_0^t \int_\Omega \big( \partial_F W^{\rm cpl} (\nabla \yepsnuu, \thetaepsnuu) - \partial_F W^{\rm cpl} (\nabla \yepsnuu, \theta_c) \big) : \dotnablayepsnuu\di x \di s. 
\end{align}
In Step 4 below, we will check that
\begin{align}\label{eq: the b}
\sum_{i=1}^{ \ZZZ 6 \EEE } B_i \le C\eps^2  +  C  \MMM  \eps_{\alpha,\Lambda}^{2/\alpha} \EEE    \Vert \xi \Vert_{L^1(I\times\Omega)}^{1/\alpha} + C \int_0^t \totenalpha (\yepsnuu(s), \thetaepsnuu(s)) \di s + \int_0^t \int_{\ZZZ \Omega \EEE}
      \xi(\nabla \yepsnuu \ZZZ (s) \EEE , \dotnablayepsnuu \ZZZ (s) \EEE , \thetaepsnuu \ZZZ (s) \EEE  )
    \di x \di s.
\end{align}
\MMMMM Once this is shown, \EEE summing \eqref{balancetestwithchi-neu}, \eqref{balancetestwithchi-neu2}, and   \eqref{energybalanceregularized},  we conclude  \MMMMM that \EEE
\begin{align*}
  \totenalpha (\yepsnuu(t), \thetaepsnuu(t))  
&\leq \totenalpha(\MMMMM y(0), \theta(0)) \EEE + C \int_0^t \totenalpha (\yepsnuu(s), \thetaepsnuu(s)) \di s \\ & \qquad+  \int_0^t \langle \ell_\eps(s), \partial_t y  \ZZZ (s) \EEE \rangle \di s  + C \eps^2 +  C \MMM  \eps_{\alpha,\Lambda}^{2/\alpha} \EEE  \Vert \xi \Vert_{L^1(I\times\Omega)}^{1/\alpha}  
\end{align*}
 for a.e.~$t \in I$. This is \eqref{for gronwall} in the case $\alpha \in [1,2)$.  
 
\emph{Step 4  (Proof of \eqref{eq: the b}):}  \rb It remains to show \ee the auxiliary estimate \eqref{eq: the b} by deriving an upper bound for every term appearing on the right-hand side of \eqref{balancetestwithchi} and the term defined in \eqref{eq: last one}. More precisely, we bound $B_3$, $B_4$, $B_5$, $B_6$, and eventually $B_1 + B_2$.

We start with $B_3$. Let $C_0$ be the constant in \eqref{inten_mon}.
Given $s \in [0,t]$, consider the set $\tilde \Gamma_s \defas \{ x \in \Gamma : \meps(s,x)_+ \leq C_0  \mu_\flat (s,x) \eps^\alpha \}$ with complement $\tilde \Gamma_s^c$ such that $\Gamma = \tilde \Gamma_s \cup \tilde \Gamma_s^c$. By the definition of $\chi'$   we find
\begin{align} \label{estimatemuflat}
  \mu_\flat   \eps^\alpha \chi'(\ZZZ \meps \EEE) \leq  \MMMMM |\mu_\flat| \EEE \eps^\alpha  (\eps^\alpha + C_0  \MMMMM |\mu_\flat| \EEE \eps^\alpha)^{2/\alpha -1} \leq C (  \MMMMM |\mu_\flat| \EEE +  \MMMMM |\mu_\flat|^{2/\alpha} \EEE ) \eps^2 \quad \text{\AAA on  $\tilde \Gamma_s$.}
\end{align}
Notice that on its complement \AAA we have \EEE
\begin{align}\label{trivialestimatemuflat}
  \mu_\flat  \eps^\alpha \chi'(\ZZZ \meps \EEE) - C_0^{-1}\meps_+ \chi'(\ZZZ \meps \EEE) \leq  0 \quad \text{\AAA on  $\tilde \Gamma_s^c$.}
\end{align}
\ZZZ As shown in \eqref{inten_mon-new}, the function $W^{\rm in} (F, \cdot )$ is monotonously increasing for any $F \in GL^+(d)$ \EEE and thus $(\theta_c - \thetaepsnuu)_+ \chi'(\meps) = 0$. \MMM Moreover, by \eqref{inten_mon-new} we get \EEE  $(\thetaepsnuu - \theta_c)_+ \geq C_0^{-1} \meps_+$.   This  together with \eqref{estimatemuflat}, \eqref{trivialestimatemuflat}, and  the fact that $ \mu_\flat  \in  L^2 \lll (I\times \Gamma) \EEE$ leads to 
  \begin{align}\label{reg:est:2}
 B_3 &= \AAA \kappa \EEE   \int_0^t \int_{  \Gamma}
    (\theta_{\flat,\eps} - \thetaepsnuu)   \chi'(\meps)
  \di \haus^{\rb d-1\ee} \di s =   \AAA \kappa \EEE  \int_0^t \int_{  \Gamma  }
    \left( \eps^\alpha \mu_\flat - (\thetaepsnuu  - \theta_c)_+) \right) \chi'(\ZZZ \meps \EEE)
  \di \haus^{\rb d-1\ee} \di s \notag \\
  &\leq   \AAA \kappa \EEE   \int_0^t \int_{  \tilde \Gamma_s}
  ( \mu_\flat  \eps^\alpha  
 - C_0^{-1}    \meps_+  ) \chi'(\ZZZ \meps \EEE)
  \di \haus^{\rb d-1\ee} \di s  \leq   C  \eps^2 \int_0^t \int_{   \Gamma}
  ( \vert \mu_\flat \vert   + \vert  \mu_\flat \vert^{2/\alpha} )
  \di \haus^{\rb d-1\ee} \di s  \leq C \eps^2.
\end{align}
 Next, we address $B_4$. Recall   the definition   of $\xi^{\rm reg}_{\nu,\alpha}$ in \eqref{def_xi_alpha} and \eqref{def_xi_alpha_reg}. \lll As $\Lambda \geq 1$,  \EEE we have $(\xi^{\rm reg}_{\nu,\alpha})^{2 / \alpha} \leq (\xi^{(\alpha)})^{2 / \alpha} \leq  \Lambda \xi$. Hence,    by Young's inequality with powers $2/\alpha$ and $2/(2-\alpha)$, and constant $\frac{1}{3\Lambda}$, \ZZZ we \EEE get
\begin{align}\label{reg:est:3}
B_4 =   &\int_0^t \int_{\ZZZ \Omega \EEE}
    \xi^{\rm reg}_{\nu,\alpha}(\nabla \yepsnuu \ZZZ   \EEE , \dotnablayepsnuu \ZZZ   \EEE , \thetaepsnuu \ZZZ   \EEE )
    \chi'(\ZZZ \meps \EEE)
  \di x \di s  \notag  \\
  &   \leq  \frac{1}{3}   \int_0^t \int_{\ZZZ \Omega \EEE}
      \xi(\nabla \yepsnuu \ZZZ   \EEE , \dotnablayepsnuu \ZZZ   \EEE , \thetaepsnuu \ZZZ   \EEE )
    \di x \di s
    +   C    \int_0^t \int_{\ZZZ \Omega \EEE}
      (\eps^2+ (\meps)_+^{2/\alpha})
    \di x \di s  \nonumber  \\
  & \leq  C\eps^2  \ZZZ + \EEE C  \int_0^t
     \mathcal{W}^{\rm in}_{\alpha,\theta_c}(\yepsnuu ,\thetaepsnuu )
    \di s   + \frac{1}{3}\int_0^t \int_{\ZZZ \Omega \EEE}
      \xi(\nabla \yepsnuu \ZZZ   \EEE , \dotnablayepsnuu \ZZZ   \EEE , \thetaepsnuu \ZZZ   \EEE )
    \di x \di s,  
\end{align} 
\MMMMM where $C$ depends on $\Lambda$. \EEE  We move on to $B_5$. In view of \eqref{spectrum_bound_K}--\eqref{hcm}  and  \eqref{pos_det}, $\hcm(\nabla \yepsnuu,\thetaepsnuu)$ is uniformly bounded   from below    (in the eigenvalue sense).  
Thus, we find by \eqref {def:gradmeps}, \eqref{def:testfunctionalphanot2grad},  \eqref{est:internalatthetac},  \eqref{inten_mon}, \MMM and \eqref{pos_det} \EEE that  
\begin{align}\label{hcm_lower_bound}
  & \hcm(\nabla \yepsnuu, \thetaepsnuu) \nabla \thetaepsnuu \cdot \nabla  \big(\chi'(\meps) \big)
  \notag \\
  &\qquad \geq \indic_{[0,t] \times \{\thetaepsnuu \geq \theta_c \}}(\tfrac{2}{\alpha} - 1) (\eps^\alpha + (\meps)_+)^{2/\alpha-2}
    \big(
      C^{-1} \vert \nabla \thetaepsnuu \vert^2
      -C ( \ZZZ (\thetaepsnuu - \theta_c)_+ \EEE \wedge 1)
        \vert \nabla^2 \yepsnuu \vert
        \vert \nabla \thetaepsnuu \vert
    \big).
\end{align}
\AAA Here, we also used that $\indic_{[0,t] \times \{\thetaepsnuu \geq \theta_c \}} (\thetaepsnuu - \theta_c)_+  = \indic_{[0,t] \times \{\thetaepsnuu \geq \theta_c \}} |\thetaepsnuu - \theta_c|$. \EEE By $s \wedge 1 \leq s^{1-\MMM 2 \EEE /(\alpha p)}$ for all $s \geq 0$, \eqref{inten_lipschitz_bounds}, Young's inequality twice (firstly  with power $2$ and constant $\gamma$ and secondly  with powers $p/(p-2)$ and $p/2$), \MMMMM and \eqref{inten_mon-new} \EEE we derive that
\begin{align}\label{needed_for_imporoved_weighted_l2}
  \big((\thetaepsnuu -\theta_c)_+\wedge 1\big)
  \vert \nabla^2 \yepsnuu \vert
  \vert \nabla \thetaepsnuu \vert
 & \leq \gamma \vert \nabla \thetaepsnuu \vert^2 + \rb C_\gamma \ee  (\meps)_+^{2-\MMM 4 \EEE /(\alpha p)}  \vert \nabla^2 \yepsnuu \vert^2 \notag\\
  &\leq \gamma \vert \nabla \thetaepsnuu \vert^2
    + \AAA  C_\gamma\EEE (\meps)_+^{2(p-2)/p} (\meps)_+^{4(\alpha-\MMM 1 \EEE )/(\alpha p)}
    \vert \nabla^2 \yepsnuu \vert^2 \notag \\
  &\leq \gamma \vert \nabla \thetaepsnuu \vert^2
    + \rb C_\gamma\ee 
    \left(
      (\meps)_+^2
      + (\meps)_+^{2 - \MMM 2 \EEE /\alpha} \vert \nabla^2 \yepsnuu \vert^p
    \right),
\end{align}
where $\rb C_\gamma >0$ depends on \ZZZ $\gamma$. \EEE Choosing $\gamma \le     C^{-2}   $   with $C$ as in \eqref{hcm_lower_bound}, we can
combine \eqref{hcm_lower_bound}--\eqref{needed_for_imporoved_weighted_l2} and discover by $2/\alpha -2 \leq 0$, \AAA \eqref{toten_shifted}\asdf, and \EEE \ref{H_bounds} that 
\begin{align}\label{reg:est:5}
B_5 = &  - \int_0^t \int_\Omega
    \hcm(\nabla \yepsnuu, \thetaepsnuu) \nabla \thetaepsnuu \cdot \nabla  \big(\chi'(\ZZZ \meps \EEE) \big) \di  x \di s \notag \\
 &  \leq C \int_0^t \int_\Omega
    (\eps^\alpha + \meps_+)^{2/\alpha-2}
    (
      (\meps)_+^2
      + (\meps)_+^{2 - \MMM 2 \EEE /\alpha} \vert \nabla^2 \yepsnuu \vert^p
    )
  \di x \di s \leq C \int_0^t \int_\Omega
 (\meps)_+^{2/\alpha}
      + \vert \nabla^2 \yepsnuu \vert^p
    \di x \di s \nonumber \\
  & \leq C \int_0^t \totenalpha (\yepsnuu , \thetaepsnuu ) \di \asdf s  \EEE
\end{align}
We proceed with $B_6$. \ZZZ  As in \eqref{est:neededlater} (replacing $\int_I$ by $\int_0^t$), we derive  
\begin{align}\label{est:neededlater2}
B_6& \ZZZ \le  \left\vert \int_0^t \int_\Omega \big( \partial_F W^{\rm cpl} (\nabla \yepsnuu, \thetaepsnuu) - \partial_F W^{\rm cpl} (\nabla \yepsnuu, \theta_c) \big) : \dotnablayepsnuu\di x \di s \right\vert \notag \\ 
&\quad  \leq C \int_0^t  \mathcal{W}^{\rm in}_{\alpha,\theta_c}(\yepsnuu ,\thetaepsnuu ) \di s  +  \frac{1}{3} \int_0^t \int_\Omega  \xi(\nabla \yepsnuu \ZZZ   \EEE , \dotnablayepsnuu \ZZZ   \EEE , \thetaepsnuu \ZZZ   \EEE ) \di x \di s  \ZZZ + C  \EEE \eps^{2} \EEE + C    \eps_{\alpha,\Lambda}^{2/\alpha} \EEE \Vert \xi \Vert^{1/\alpha}_{L^1(I\times\Omega)} . \EEE
\end{align}   
We finally control $B_1 + B_2$. \rb In this regard, we \ee first show that 
\begin{align}\label{apriori_adiabatic}
\hat{B}_\alpha \defas  &\int_0^t \int_\Omega
      |( \partial_F W^{\rm cpl}(\nabla \yepsnuu, \thetaepsnuu) - \partial_F W^{\rm in}(\nabla \yepsnuu, \thetaepsnuu) ) : \dotnablayepsnuu
    \, \ZZZ  \chi'(\meps) \EEE |
    \di x \di s \notag \\
 &\leq   C    \eps^2  \ZZZ
    +  \EEE C     \int_0^t   \mathcal{W}^{\rm in}_{\alpha,\theta_c}(\yepsnuu ,\thetaepsnuu ) \di s  + \AAA \frac{1}{6} \EEE \int_0^t \int_\Omega
      \xi(  \nabla    \yepsnuu \ZZZ   \EEE  ,   \dotnablayepsnuu \ZZZ   \EEE , \thetaepsnuu \ZZZ   \EEE  )
    \di x \di s   
\end{align} 
\AAA For this, we split \EEE the proof into the cases $\alpha  \in (1,2)$ and \ZZZ $\alpha = 1$. \EEE If   $\alpha  \in (1,2)$, by \eqref{Wint}, \eqref{est:couplatthetacwithxi}, \eqref{pos_det}, \MMM \eqref{def:testfunctionalphanot2}, \EEE  Young's inequality with powers $\alpha$ and $\alpha/(\alpha-1)$, and \ref{C_adiabatic_term_vanishes}  \rb it follows \ee that  
\begin{align*}
 \hat{B}_\alpha &\leq C  \int_0^t \int_{\{ \thetaepsnuu > \theta_c \}} \left(
       ( \ZZZ (\thetaepsnuu - \theta_c)_+ \EEE \wedge 1)^{\alpha/(\alpha-1)}
      + \vert \theta_c \partial_{F\theta} W^{\rm cpl}(\nabla \yepsnuu,\theta_c) \vert^{\alpha/(\alpha-1)} 
    \right)
    (\eps^\alpha +\meps_+)^{2/\alpha-1}
  \di x \di s \\
  &  \rb\phantom{\leq}\quad + C\ee  \int_0^t \int_{\{ \thetaepsnuu > \theta_c \}}
          \xi(\nabla \yepsnuu, \dotnablayepsnuu, \thetaepsnuu)^{\alpha/2}  
       (\eps^\alpha +(\meps)_+)^{2/\alpha-1}    
    \di x \di s  \\
    & \leq C  \int_0^t \int_{\ZZZ \Omega \EEE} \left( \eps^\alpha +
         \ZZZ (m_+ \wedge 1)^{\alpha/(\alpha-1)} \EEE
      +   \xi(  \nabla    \yepsnuu,   \dotnablayepsnuu, \thetaepsnuu)^{\alpha/2}
    \right)
  (\eps^{2-\alpha} + (\meps)_+^{2/\alpha-1})
  \di x \di s,
\end{align*}
where in the last step we have also used the Lipschitz estimate in \eqref{inten_mon-new}.   Eventually, using $s \wedge 1 \leq s^{(\alpha-1)/\alpha}$ for \ZZZ $s \geq 0$, \MMMMM and \EEE
   Young's inequality with powers $2/\alpha$ and $2/(2-\alpha)$,
    we discover that  
\begin{align*}
  \hat{B}_\alpha &\leq C \int_0^t \int_{\ZZZ \Omega \EEE }
        \big( C  (\eps^2 +  \ZZZ (\meps)_+^{2/\alpha}) \EEE  
      +  \AAA \frac{1}{6} \EEE  C^{-1}\xi(  \nabla    \yepsnuu,   \dotnablayepsnuu, \thetaepsnuu)
    \big)\di x \di s \\
  &\leq   C    \eps^2
    +   C     \int_0^t   \mathcal{W}^{\rm in}_{\alpha,\theta_c}(\yepsnuu ,\thetaepsnuu ) \di s  \ZZZ + \AAA \frac{1}{6} \EEE \int_0^t \int_\Omega
      \xi(  \nabla    \yepsnuu,   \dotnablayepsnuu, \thetaepsnuu)
    \di x \di s .  
\end{align*}
This is \eqref{apriori_adiabatic} if $\alpha \in (1, 2)$.  On the other hand, \AAA for \EEE $\alpha = 1$, we get  \ZZZ by \eqref{Wint}, \eqref{est:couplatthetacwithxi}, \eqref{pos_det}, \ref{C_adiabatic_term_vanishes}, \eqref{def:testfunctionalphanot2}, \AAA and \EEE
 Young's inequality with \rb constant $ \AAA \frac{1}{6} \EEE $  that \EEE
  \begin{align*}
\hat{B}_1   & \leq C  \int_0^t \int_{\{ \thetaepsnuu > \theta_c \}} \left(
       (|\thetaepsnuu - \theta_c| \wedge 1) \xi(\nabla \yepsnuu, \dotnablayepsnuu, \thetaepsnuu)^{1/2} + \eps_{1,\Lambda} \xi(\nabla \yepsnuu, \dotnablayepsnuu, \thetaepsnuu)^{1/2}\right)
    (\eps  +\meps_+)
  \di x \di s   \\
    &  \AAA \le C  \int_0^t \int_{\{ \thetaepsnuu > \theta_c \}}  
         \xi(\nabla \yepsnuu, \dotnablayepsnuu, \thetaepsnuu)^{1/2} 
    (\eps  +\meps_+)
  \di x \di s  \EEE \leq \int_0^t \int_{\ZZZ \Omega \EEE} \left(  
         C \ZZZ (m_+)^2 \EEE  + \ZZZ C \eps^2 \EEE        +  \AAA \frac{1}{6} \EEE     \xi(  \nabla    \yepsnuu,   \dotnablayepsnuu, \thetaepsnuu) 
    \right)   \di x \di s.
\end{align*}
This  \ZZZ gives \eqref{apriori_adiabatic} \EEE in the case $\alpha =1$. 
  In \MMMMM a \EEE similar spirit to the proof of \eqref{apriori_adiabatic}, we obtain, by replacing \eqref{est:couplatthetacwithxi} with \eqref{est:couplatthetacwithxi3} in the above argument,   
  \begin{align}\label{reg:est:7}
  \lll \bar{B}_\alpha \EEE &\defas \int_0^t \int_\Omega \lll \vert \EEE \big(\partial_F W^{\rm in} (\nabla \yepsnuu, \thetaepsnuu ) - \partial_F W^{\rm in} (\nabla \yepsnuu, \theta_c ) \big) : \dotnablayepsnuu \, \ZZZ \chi'(\meps) \lll \vert \EEE \di x \di s \notag \\
   &    \leq   C    \eps^2
    +   C     \int_0^t   \mathcal{W}^{\rm in}_{\alpha,\theta_c}(\yepsnuu ,\thetaepsnuu ) \di s    \ZZZ + \EEE \AAA \frac{1}{6} \EEE \int_0^t \int_\Omega
      \xi(  \nabla    \yepsnuu,   \dotnablayepsnuu, \thetaepsnuu)
    \di x \di s .
  \end{align}
  Now, combining \eqref{apriori_adiabatic} and \eqref{reg:est:7} we get the bound 
  \begin{align}\label{b1b5}
  B_1 + B_2 \le   \lll   \hat{B}_\alpha  + \bar{B}_\alpha \leq \EEE C    \eps^2 \ZZZ
    +  \EEE  C     \int_0^t   \mathcal{W}^{\rm in}_{\alpha,\theta_c}(\yepsnuu ,\thetaepsnuu) \di s  + \frac{\AAA 1}{3}\int_0^t \int_\Omega
      \xi(  \nabla    \yepsnuu,   \dotnablayepsnuu, \thetaepsnuu) \III
    \di x   \di s   . \EEE
  \end{align}
 Eventually, collecting \eqref{reg:est:2}, \eqref{reg:est:3}, \eqref{reg:est:5}, \eqref{est:neededlater2}, and \eqref{b1b5} we get \eqref{eq: the b}, which concludes the proof. 
    \end{proof}

\MMM 
\subsection{Fine \ZZZ a priori bounds on deformation and temperature}\label{sec: a priori}
We now formulate all a priori bounds with optimal scaling in $\eps$ which are   needed \rb in order \MMM to pass to the linearized system.  \EEE

\begin{proposition}[\ZZZ Existence of solutions with fine a priori bounds\EEE]\label{lem:fineapriori} \MMM
 Suppose that   \ref{C_third_order_bounds}--\ref{C_entropy_vanishes},  \ref{W_prefers_id}, \AAA and \ref{H_prefers_id} \EEE hold.   \AAA Then, there exist some $\eps_0,\nu_0,\Lambda_0>0$ (with $\Lambda_0=1$ for $\alpha \in (1,2]$) and a constant $C>0$, independent of $\eps$, $\nu$,   such that for all $\eps \le\eps_0$, $\nu \le \nu_0$,  and  $\Lambda \ge \Lambda_0$ \EEE   there exists  a weak solution\EEE  $(\yeps, \thetaeps)$  in the sense of Definition~\ref{def:weak_formulation} satisfying  \EEE 
\begin{subequations}\label{boundres}
\begin{align}
 \esssup_{t \in I}\totenalpha(  \yeps(t),\thetaeps(t)) &\leq C\eps^2,\label{toten_bound_schemeimprovedsecfinal} \\
   \Vert \yeps - \id\Vert_{L^\infty(I;H^1(\Omega))} &\leq C\eps, \qquad  \Vert \nabla ^2\yeps \Vert_{L^\infty(I;L^{p}(\Omega))} \leq C\eps^{2/p},\label{boundres:mech} \\
     \Vert \yeps - \id \Vert_{L^\infty(I;W^{1,\infty}(\Omega))} & \leq C\eps^{2/p},\label{boundres:linfty} \\
  \Vert \thetaeps -\theta_c \Vert_{L^\infty(I;L^{2/\alpha}(\Omega))} &\leq C \eps^\alpha, \label{boundres:templ1} \\
  \int_I \int_{\Omega} \xi(\nabla \yeps, \partial_t \nabla y_\eps, \thetaeps) \di x \di t &\leq C \eps^2, \label{boundres:dissipation} \\
  \Vert \partial_t \nabla  y_\eps\Vert_{L^2(I\times \Omega)} &\leq C \eps. \label{boundres:strainrate}
\end{align}
\end{subequations}
Moreover,  for any $q \in [1,\frac{2}{\alpha} + \frac{4}{\alpha d})$ and $r \in [1,  \frac{2d+4}{\alpha d +2})$, we can find constants $C_q$ and $C_r$ independent of $\eps$ such that
\begin{subequations}\label{temp_apriori}
\begin{align}
  \Vert \thetaeps  -\theta_c \Vert_{L^q(I\times \Omega)} + \Vert m_\eps \Vert_{L^q(I\times \Omega)} &\leq C_q \eps^\alpha, \label{boundres:temperature} \\
  \Vert \nabla \thetaeps \Vert_{L^r(I \times \Omega)} + \Vert \nabla m_\eps \Vert_{L^r(I \times \Omega)} &\leq C_r \eps^\alpha,  \label{boundres:temperaturegrad} \\
  \Vert \partial_t  m_\eps \Vert_{L^1(I; H^{\MMM (d+3)/2}(\Omega)^*)} &\leq C \eps^\alpha, \label{forAubin-Lion3}
\end{align}
\end{subequations}
where $m_\eps \defas W^{\rm in}(\nabla \yeps,\thetaeps) -W^{\rm in}(\nabla \yeps,\theta_c) $.  
\end{proposition}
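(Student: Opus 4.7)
The plan is to work at the $\nu$-regularized level, where the chain rule of Theorem~\ref{thm:chainrule} is available, and then pass to the limit $\nu \to 0$. By Proposition~\ref{thm:existence_positivity_regularized}(i), for each $\eps$ and $\nu$ there exists a solution $(y_{\eps,\nu},\theta_{\eps,\nu})$ in the sense of Definition~\ref{def:weak_solutions_regularized}, and Proposition~\ref{lem:fineapriori-new} provides the shifted-energy and dissipation bounds with the optimal $\eps^2$-scaling, both uniformly in $\nu$. Proposition~\ref{thm:existence_positivity_regularized}(iv) then yields, up to a subsequence, a limit $(y_\eps,\theta_\eps)$ that is a weak solution in the sense of Definition~\ref{def:weak_formulation}, to which the bounds \eqref{toten_bound_schemeimprovedsecfinal} and \eqref{boundres:dissipation} transfer by weak lower semicontinuity.

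The mechanical bounds in \eqref{boundres:mech} are then standard consequences of the energy bound. The first follows from \ref{W_prefers_id}, which gives $\int_\Omega \dist(\nabla y_\eps,SO(d))^2 \di x \le C\eps^2$, combined with the geometric rigidity result and Poincar\'e's inequality exactly as in the proof of Lemma~\ref{lem:EF}. The second is immediate from the coercivity in \ref{H_bounds}: $c_0 \Vert\nabla^2 y_\eps\Vert_{L^p(\Omega)}^p \le \mechen(y_\eps) \le C\eps^2$. The bound \eqref{boundres:linfty} then follows by Gagliardo--Nirenberg interpolation between the $H^1$- and $W^{2,p}$-bounds, exploiting $p\ge 2d$. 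The temperature bound \eqref{boundres:templ1} is immediate from \eqref{toten_shifted}, \eqref{toten_bound_schemeimprovedsecfinal}, and the Lipschitz estimate \eqref{inten_mon-new}: the positive part $(\theta_\eps-\theta_c)_+$ is controlled by $\mathcal W^{\rm in}_{\alpha,\theta_c}$, while the negative part is controlled through Proposition~\ref{prop:lowerbound} (the $L^2$-bound \eqref{lowerboundtemperature} combined with the now-available $L^1$-estimate on $\xi$). Finally, \eqref{boundres:strainrate} follows from the dissipation bound together with the generalized Korn inequality (Theorem~\ref{pompe}), as already used in \eqref{pompi}.

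For the refined bounds \eqref{temp_apriori}, I would follow the scheme of \cite[Section~3.4]{BFK}: test the heat equation~\eqref{weak_limit_heat_equation_nu} with a regularized power of $\eps^\alpha + (m_{\eps,\nu})_+$ to obtain a weighted $L^2$-gradient estimate on $m_{\eps,\nu}$, and then apply Gagliardo--Nirenberg interpolation to deduce $L^r$-bounds on $\nabla m_{\eps,\nu}$ and $L^q$-bounds on $m_{\eps,\nu}$; the stated ranges $q\in[1,\tfrac{2}{\alpha}+\tfrac{4}{\alpha d})$ and $r\in[1,\tfrac{2d+4}{\alpha d+2})$ correspond to the optimal interpolation exponents. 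The $L^\infty(I;W^{2,p})$-bound on $y_\eps$ (requiring $p\ge 2d$) is essential to absorb the nonsimple-material corrections $\partial_F W^{\rm in}(\nabla y_\eps,\cdot):\nabla^2 y_\eps$ coming from the chain rule $\nabla m_\eps = \partial_F W^{\rm in}\cdot\nabla^2 y_\eps + \partial_\theta W^{\rm in}\,\nabla\theta_\eps$. The corresponding bounds on $\theta_\eps-\theta_c$ and $\nabla\theta_\eps$ transfer from those on $m_\eps$ via the Lipschitz inverse estimate in \eqref{inten_mon-new} and the $W^{2,p}$-bound on $y_\eps$. The dual bound \eqref{forAubin-Lion3} is obtained by identifying $\partial_t m_\eps$ with the distribution appearing on the right-hand side of the heat equation and bounding each term by the estimates already derived, using the embedding $H^{(d+3)/2}(\Omega) \hookrightarrow C^1(\overline\Omega)$ to accommodate the low-regularity source $\xi^{(\alpha)}$ and the boundary contribution.

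The main obstacle is already contained in Propositions~\ref{prop:lowerbound}--\ref{prop:scalingtotalenergy} and~\ref{lem:fineapriori-new}: once the shifted energy and dissipation are controlled with the sharp $\eps^2$-scaling at a \emph{positive} critical temperature --- a step that required the chain rule of Theorem~\ref{thm:chainrule} in order to treat the negative part $(\theta_c-\theta_\eps)_+$ --- the remaining derivation of \eqref{boundres:mech}--\eqref{forAubin-Lion3} parallels \cite[Section~3.4]{BFK}. The only genuinely delicate point is the simultaneous use of Gagliardo--Nirenberg interpolation and the $L^\infty(I;W^{2,p})$-bound on $y_\eps$ to reach the sharp exponents in~\eqref{temp_apriori} in the presence of the adiabatic coupling $\theta_\eps\,\partial_{F\theta}W^{\rm cpl}:\partial_t\nabla y_\eps$, which here no longer vanishes at $\theta_c$ and must be absorbed using \ref{C_adiabatic_term_vanishes}.
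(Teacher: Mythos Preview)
Your proposal is correct and follows essentially the same approach as the paper: establish all bounds at the $\nu$-regularized level using Proposition~\ref{lem:fineapriori-new} and Proposition~\ref{prop:lowerbound}, then pass to the limit $\nu\to 0$ via Proposition~\ref{thm:existence_positivity_regularized}(iv), with the remaining mechanical and temperature bounds derived from the shifted energy by rigidity, \ref{H_bounds}, and the scheme of \cite[Section~3.4]{BFK}. Two minor remarks: for \eqref{boundres:linfty} the paper simply invokes Morrey's inequality rather than Gagliardo--Nirenberg, and for \eqref{boundres:templ1} no split into positive and negative parts is needed, since $\mathcal{W}^{\rm in}_{\alpha,\theta_c}$ already carries the absolute value and \eqref{inten_mon-new} directly controls $|\theta_\eps-\theta_c|$.
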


\begin{proof} 
\MMM It suffices to establish all a priori bounds for $\nu$-regularized solutions in the sense  Definition~\ref{def:weak_solutions_regularized}\lll, which exist due to Proposition~\ref{thm:existence_positivity_regularized}(i). \EEE
 Then, \MMMMM in view of \EEE Proposition~\ref{thm:existence_positivity_regularized}(iv), \EEE  
all bounds are preserved in the limiting passage $\nu\to 0$.
  \MMM Note, however, that by this reasoning we \III cannot \EEE guarantee that  \emph{every} weak solution in the sense of \AAA Definition \ref{def:weak_formulation} \EEE satisfies the a priori bounds, but we  only prove the existence of such a solution.

The bounds \eqref{toten_bound_schemeimprovedsecfinal} and \eqref{boundres:dissipation} have already been established in Proposition \ref{lem:fineapriori-new}, and \eqref{boundres:strainrate} has been \lll deduced \EEE in its proof, see \eqref{pompi}.   As motivated at the beginning of the section, all remaining bounds of the statement can be derived thereof by following the strategy in \cite[Lemma~6.2, Proposition~6.3]{MielkeRoubicek2020} or \cite[Section~3.4]{BFK}. \AAA We give a sketch of the proof and refer to  \cite{BFK} for details. \EEE

By \eqref{toten_bound_schemeimprovedsecfinal} and \eqref{forceestimate3}, we immediately get the first inequality in \eqref{boundres:mech} whereas the second inequality follows by \eqref{toten_bound_schemeimprovedsecfinal} \lll  and \EEE \ref{H_bounds}.
\MMM Employing \EEE Morrey's inequality \MMM we get \EEE \eqref{boundres:linfty}. \MMM Next, \EEE \eqref{boundres:templ1} follows from \eqref{toten_bound_schemeimprovedsecfinal}, \AAA \eqref{toten_shifted}, \MMMMM and  \eqref{inten_mon-new}.    \EEE Following closely the lines of  \AAA \cite[Remark~3.17 and Lemma 3.19]{BFK}, \EEE we can derive the bounds  
\begin{align*} 
\int_I \int_\Omega \frac{\vert \nabla (m_\eps)_+ \vert^2}{(1+ \eps^{-\alpha} (m_\eps)_+)^{\ell_\alpha}} \di x \di t \leq C \eps^{2\alpha} 
\end{align*}
\AAA 
with $\ell_\alpha = 1+\eta$ for $\alpha=2$ and $\ell_\alpha =   2-2/\alpha $ for $\alpha \in [1,2)$, for some $\eta>0$. \EEE    
  An \MMM interpolation \EEE  provides \eqref{boundres:temperature} and \eqref{boundres:temperaturegrad} for the positive part of the corresponding functions for $q \in [1,\frac{d+2}{d})$ and $r \in [1,\frac{d+2}{d+1})$. In the case $\alpha \in [1,2)$, we derive \rb improved bounds for a bigger range of $q$ and $r$, \AAA namely \EEE for \ee  $q \MMM \in  [1,\frac{2}{\alpha} + \frac{4}{\alpha d})\EEE$ and $r \MMM \in [1,  \frac{2d+4}{\alpha d +2})\EEE$, see \cite[Remark 3.21]{BFK} \MMM for details. \EEE Employing  \eqref{lowerboundtemperature}  and \eqref{lowerboundtemperaturegrad} together with \eqref{boundres:dissipation} \MMMMM we get \EEE  the estimates for the negative parts, first for $q = r = 2$, and then \AAA by a Sobolev embedding \EEE  also for $q \MMM \in  [1,\frac{2}{\alpha} + \frac{4}{\alpha d})\EEE$ and $r \MMM \in [1,  \frac{2d+4}{\alpha d +2})\EEE$. \MMM Finally, \EEE   \eqref{forAubin-Lion3} follows along the lines \MMM  of \cite[Theorem  3.20]{BFK} or \EEE \cite[Lemma~4.10]{RBMFLM}.
\end{proof}

\MMM \subsection{Linearization}\label{subsec: lin}  
This final subsection is entirely devoted to the proof of  Theorem~\ref{thm:linearization_positive_temp}. \EEE

\begin{proof}[Proof of Theorem~\ref{thm:linearization_positive_temp}]
The proof follows along the lines of \cite[Section~5]{BFK}, where linearized models for small temperatures have been derived. \AAA The \EEE arguments there were explicitly given by starting from the time-discrete setting. \MMM Here, we provide the  adaptations for the setting of time-continuous evolutions and  for the linearization around a positive temperature $\theta_c$.    The proof \MMM is \EEE divided into \MMM five \EEE steps. We first address the compactness properties of the rescaled temperatures and \AAA the \EEE strains. In Step 2, we derive the linearized mechanical equation \eqref{linear_evol_mech} which helps us to prove strong convergence of the rescaled strain rates in Step~3. \MMM Afterwards, \EEE we derive the linearized heat equation \eqref{linear_evol_temp} in Step~4. \MMM Eventually, uniqueness of the limit is subject of Step~5. \EEE 

\emph{Step 1 (Compactness):}
We start with a sequence of weak solutions $((y_\eps,\theta_\eps))_\eps$  \MMM satisfying the a priori bounds stated in Proposition \ref{lem:fineapriori}. \EEE  Recalling  \eqref{def_Wzero},   we first show that there exists $u \in  H^1(I; H^1_{\Gamma_D}(\Omega; \R^d))$ with $u(0) = u_0$  \lll a.e.~in $\Omega$ \EEE such that, up to possibly taking a subsequence, it holds that
  \begin{align}
    u_\eps &\to u \text{ in } L^\infty(I; L^2(\Omega; \R^d)), &
    u_\eps &\weakly u \text{ weakly in } H^1(I; H^1(\Omega; \R^d)). \label{conv:linearization1} 
  \end{align}
  By the definition of $u_\eps \MMM = \eps^{-1}(y_\eps - \id) \EEE $ and \eqref{boundres:mech}, we derive \III that \EEE
  \begin{equation}\label{up and down}
    \norm{{u_\eps}}_{\III L^\infty(I;H^1(\Omega)) \EEE } = \eps^{-1} \norm{{y_\eps}  - \id}_{\III L^\infty(I;H^1(\Omega)) \EEE} \leq C.
  \end{equation}
  Moreover, using Poincaré's inequality  \MMM and \EEE \eqref{boundres:strainrate}  we have that
  \begin{equation}\label{up and down2}
    \norm{{\partial_t u_\eps}}_{L^2(I; H^1(\Omega))}
    \leq C \norm{\partial_t\nabla { u_\eps}}_{L^2(I; L^2(\Omega))}
    = \frac{1}{\eps}  \Vert \partial_t \nabla y_\eps \Vert_{L^2(I \times \Omega)} \leq C.
  \end{equation}
  Combining \eqref{up and down}--\eqref{up and down2} we discover that $({u_\eps})_\eps$ is bounded  in $\III  H^1(I; H^1(\Omega;\R^d)) \EEE$ and thus $({u_\eps})_\eps$ is compact in $C(I; L^2(\Omega; \R^d))$ by the Aubin-Lions' theorem.
 \ZZZ  This    shows \EEE (\ref{conv:linearization1}).  
  Finally, due to (\ref{conv:linearization1}), \MMM  by the boundary condition on $\Gamma_D$ \MMM and by \EEE $u_\eps(0) = u_0$  \lll a.e.~in $\Omega$ \EEE (see \eqref{linearization_initial_conditions}),  it  follows that $u \in  H^1(I; H^1_{\Gamma_D}(\Omega; \R^d))$ with $u(0)= u_0$  \lll a.e.~in $\Omega$. \MMM We note that the convergence in \eqref{conv:linearization1} will be improved below in Step 3, see \eqref{strong_rescaled_strain_rate_comp}, which will give the desired convergence stated in \eqref{convergence:u}. \EEE

Next, we address the existence of $\mu  \in L^1(I; W^{1,1}(\Omega))$ such that, up to possibly taking a subsequence, for any  \MMM $s \in [1,\frac{2}{\alpha} + \frac{4}{\alpha d})$ and $r \in [1,  \frac{2d+4}{\alpha d +2})$ \EEE it holds that 
  \begin{align}
    {\mu_\eps} &\to \mu \text{ in }  L^s(I \times \Omega), &
    {\mu_\eps} &\weakly \mu \text{ weakly in } L^r(I; W^{1, r}(\Omega)). \label{conv:linearizationmu1} 
  \end{align} 
  The proof of \eqref{conv:linearizationmu1} relies on the a priori bounds on the internal energy in  \eqref{boundres:temperature}--\eqref{forAubin-Lion3}. The strong convergence can be derived, e.g., as in \cite[Lemma~4.2]{BFK} \MMM or \cite[Proposition 6.4]{MielkeRoubicek2020}. \EEE In particular, \MMM \eqref{conv:linearizationmu1}  implies \EEE  \eqref{convergence:mu}.

\emph{Step 2 \MMM (Linearization \EEE of the mechanical equation):}
  Let \MMM $z \in C^\infty(I \times \overline{\Omega}; \R^d)$ with $z = 0$ on $I \times \Gamma_D$. \EEE  Using the definition of $f_\eps$ \AAA and $g_\eps$ \EEE in \eqref{def:externalforces}  and dividing \AAA \eqref{weak_formulation_mechanical_eps} \EEE by $\eps$, \MMM we get \EEE 
  \begin{equation}\label{cont-mech}
\begin{aligned}
  &\eps^{-1}\intQ \pl_G
    \hypot(\nabla^2 y_\eps) \cdddot \nabla^2 z
    + \Big(
      \pl_F \felpot(\nabla y_\eps, \theta_\eps)
      + \pl_{\dot F} \disspot(\nabla y_\eps, \partial_t \nabla y_\eps, \theta_\eps)
    \Big) : \nabla z \di x \di t \\
  &\quad=  \intQ f \cdot z \di x \di t
    +  \int_I \int_{\Gamma_N} g  \cdot z \di \haus^{d-1} \di t.
\end{aligned}
\end{equation} 
        Our goal now is to show that \eqref{linear_evol_mech} arises as the limit of the above equation as $\eps \to 0$.
  By \ref{H_bounds}, \eqref{boundres:mech}, and H\"older's inequality with powers $\frac{p}{p-1}$ and $p$ we derive that
  \begin{align}\label{linmech2}
    \frac{1}{\eps} \rb\Big|\ee
      \int_I \int_\Omega \pl_G \hypot(\nabla^2 {y_\eps}) \cdddot \nabla^2 z \di x \di t
    \rb\Big|\ee
    &\leq \frac{  C}{\eps} \intQ \abs{\nabla^2 {y_\eps}}^{p-1}
      \abs{\nabla^2 z} \di x \di t \\
    &\leq  \frac{  C}{\eps} \int_I \Vert \nabla^2 {y_\eps} \Vert^{p-1}_{L^p(\Omega)}
      \norm{\nabla^2 z}_{L^p(\Omega)}\di t  
      \leq C \eps^{\frac{2(p-1)}{p} - 1} = C \eps^{1-\frac{2}{p}} \to 0, \notag
  \end{align}
  as $p \MMM \ge 2d >\EEE  2$.   We now address the elastic stress.  A Taylor expansion at \MMM $(\Id,\theta_c)$ \EEE in the spirit of \eqref{est:cpltaylorlinearize} together with \ref{W_regularity}, \ref{C_regularity}, \ref{W_prefers_id}, \eqref{boundres:linfty}, \MMM and  \eqref{est:coupl} \ZZZ  implies  that \EEE
  \begin{align}\label{all not remove0}
 &\eps^{-1}\Big\vert  \Big( \partial_F W^{\rm el}(\nabla y_\eps)+\partial_F \cplpot(\nabla y_\eps, \theta_\eps) \Big) \notag \\ & \qquad \qquad- \Big( \eps \partial_{F}^2 W^{\rm el} (\Id)  \nabla u_\eps + \eps \partial_{F}^2 W^{\rm cpl} (\Id,\theta_c) \nabla u_\eps +  \partial_{F\theta} W^{\rm cpl} (\Id,\theta_c)  (\eps^\alpha\mu_\eps \wedge 1) \Big) \Big\vert \notag \\
 &\qquad\leq C  \eps \vert \nabla u_\eps \vert^2 + C \eps^{-1} (\eps^{2\alpha} \vert\mu_\eps\vert^2 \wedge 1)
  \end{align} 
   pointwise a.e.~in $I \times \Omega$.    Due to \eqref{conv:linearization1} and \eqref{conv:linearizationmu1},
  the right-hand side of \eqref{all not remove0} converges to $0$ a.e.~in $I \times \Omega$ as $\eps \to 0$. Further\rb more\ee, $s \wedge 1 \leq s^{1/2}$ for $s \geq 0$ and \eqref{conv:linearizationmu1} imply that the right-hand side of \eqref{all not remove0} is uniformly integrable in $L^1(I \times \Omega)$. Thus, by  Vitali's convergence theorem, the left-hand side of \eqref{all not remove0} converges strongly in $L^1(I \times \Omega)$ \AAA to $0$. \EEE
  Then,  \MMM in view of \III \eqref{eq: free energy}, \EEE \eqref{Bhatt}--\eqref{alpha_dep} and \eqref{conv:linearization1}, \EEE we find that
  \begin{equation}\label{all not remove}
      \frac{1}{\eps} \intQ
      \pl_F W(\nabla {y_\eps}, \theta_\eps) : \nabla z \di x \di t 
      \to \intQ \left(\big(\partial_{F}^2 W^{\rm el} (\Id)  +   \partial_{F}^2 W^{\rm cpl} (\Id,\theta_c) \big)\nabla u  + \mathbb{B}^{(\alpha)} \mu \right): \nabla z  \di x \di t
  \end{equation}
  as $\eps \to 0 $.
For the remaining term, we note that
   by  \eqref{chain_rule_Fderiv} \III and the symmetries in \ref{D_quadratic} \EEE we have
  \begin{align}\label{eq: auch noch}
    \pl_{\dot F} \disspot(\nabla {y_\eps},   \AAA \partial_t \nabla  y_\eps, \EEE \theta_\eps) : \nabla z
    = 2 \nabla {y_\eps} (D(C_\eps, \theta_\eps) \eps \dot C_\eps) : \nabla z
    = \eps \dot C_\eps : D(C_\eps, \theta_\eps)
      (\nabla z^T \nabla {y_\eps} + (\nabla {y_\eps})^T \nabla z),
  \end{align}
  where
  \begin{align}
    C_\eps &\defas (\nabla {y_\eps})^T \nabla {y_\eps}, &
    \dot C_\eps &\defas {(\partial_t { \nabla u_\eps})}^T \nabla {y_\eps}
      + (\nabla {y_\eps})^T {\partial_t { \nabla u_\eps}}. \label{def_Ck_dotCk}
  \end{align}
  By   \eqref{boundres:mech}   and \eqref{conv:linearization1} we   see that
  \begin{align}\label{ffflater}
  \dot C_\eps \weakly 2 e(\partial_t u) \quad \text{ weakly in } L^2(I\times \Omega;\R^{d \times d}_{\rm  sym}).
  \end{align}
  Using \ref{D_bounds} we also  have   
  \begin{equation*}
    \abs{D(C_\eps, \theta_\eps) (\nabla z^T \nabla {y_\eps} + (\nabla {y_\eps})^T \nabla z)}
    \leq 2 \aC \norm{\nabla z}_{L^\infty(\Omega)} \norm{\nabla {y_\eps}}_{L^\infty(\Omega)}.
  \end{equation*}
  Up to taking a subsequence (not relabeled), we can suppose that $\nabla {y_\eps} \to \Id$ and $\theta_\eps \to \theta_c$ a.e.~in $I \times \Omega$.
  Thus, the dominated convergence theorem implies
  \begin{equation*}
    D(C_\eps, \theta_\eps) (\nabla z^T \nabla {y_\eps}
      + (\nabla {y_\eps})^T \nabla z)
    \to D(\Id, \theta_c) (\nabla z + \nabla z^T)
    = 2 D(\Id, \theta_c) \nabla z
  \end{equation*}
  strongly in $L^2(I \times \Omega;\R^{d \times d})$.
  This along with   \eqref{eq: auch noch} and   \eqref{ffflater}  leads to
  \begin{align}   \label{linmech4}
    \eps^{-1} \int_I \int_\Omega \pl_{\dot F} \disspot(\nabla {y_\eps}, \AAA \partial_t \nabla  y_\eps, \EEE \theta_\eps)
      : \nabla z \di x \di t
    \to \intQ 4 D(\Id, \theta_c) e(\partial_t u) : \nabla z \di x \di t.
  \end{align}
  Recalling the definition of $\CD$ and $\C_W$ in \eqref{def_WD_tensors}, as well as  collecting \MMM \eqref{cont-mech}, \EEE \eqref{linmech2}, \eqref{all not remove},  and \eqref{linmech4}  we conclude \MMM that \eqref{linear_evol_mech} holds. \EEE 

\emph{Step 3 (Strong convergence of the rescaled strains and strain rates):}
For the limit passage in the heat-transfer equation,  we will need  the strong convergence of the strain rates $(\partial_t \nabla u_\eps)_\eps$ in $L^2(I\times \Omega;\R^{d \times d})$ since the dissipation rate is \MMM quadratic \EEE  in $\dot F$, see \ref{D_quadratic} and \eqref{diss_rate}.
  \MMM To this end, in this step we improve \EEE the compactness in \eqref{conv:linearization1} \AAA to \EEE
 \begin{align}\label{strong_rescaled_strain_rate_comp}
    u_\eps(t) &\to u(t) \text{ in } H^1(\Omega; \R^d)
      \text{ for a.e.~} t \in I, \, \quad \text{\ZZZ and \EEE}    \quad
    {\partial_t { \nabla u_\eps}} \to \partial_t \nabla u \text{ in } L^2(I\times \Omega;\R^{d \times d}).
  \end{align} 
  \MMM Note that this, along with the Arzelà–Ascoli theorem, also shows \eqref{convergence:u}.
For convenience, for any $v \in H^1(\Omega;\R^d)$, we define
  \begin{equation*}
    \mechenl(v) \defas \frac{1}{2} \int_\Omega \CW  e(v)  : e(v) \di x,
  \end{equation*}
  where $\CW$ is as in \eqref{def_WD_tensors}.
  Let us fix an arbitrary $t \in I$.
  By the nonnegativity of $\hypot$, a Taylor expansion, \MMM \ref{W_regularity}, \AAA \ref{W_prefers_id}, \EEE \ref{C_regularity}, \EEE  and   \eqref{boundres:linfty}   we derive that
  \begin{align}\label{taylor_mechen_lpk}
  \eps^{-2} \big( &\mathcal{M}(y_\eps(t)) +   \MMM  \mathcal{W}^{\rm cpl}(\MMMMM y_\eps(t), \EEE \theta_c)  \big) \EEE     \geq \eps^{-2} \int_\Omega \elpot(\nabla {y_\eps}(t)) +W^{\rm cpl}(\nabla y_\eps(t),\theta_c)\di x \notag \\
    &\geq \frac{1}{2} \int_\Omega \big(\partial^2_F \elpot(\Id)+\partial^2_F W^{\rm cpl}(\Id,\theta_c)\big)\nabla {u_\eps}(t)
      : \nabla {u_\eps}(t) \di x  - C \int_\Omega \abs{ \MMMMM \nabla \EEE  {y_\eps}(t) - \Id} \abs{\nabla {u_\eps}(t)}^2 \di x \notag \\
    &\geq \frac{1}{2} \int_\Omega\big(\partial^2_F \elpot(\Id)+\partial^2_F W^{\rm cpl}(\Id,\theta_c)\big)\nabla {u_\eps}(t)
      : \nabla {u_\eps}(t)  \di x  - C \eps^{2/p} \int_\Omega \abs{\nabla {u_\eps}(t)}^2 \di x
  \end{align}
  for a.e.~$t \in I$.
  Consequently, \AAA by \EEE using \eqref{conv:linearization1}, by   standard lower semicontinuity arguments for integral functionals, and \AAA by \EEE the fact that $\C_W$ \ZZZ only depends on $\R^{d \times d}_{\rm sym}$ \EEE it follows that
  \begin{equation}\label{liminf_rescaled_en}
    I_1 \defas \liminf_{\eps \to 0}  \eps^{-2} \big( \mathcal{M}(y_\eps(t)) +   \MMM  \mathcal{W}^{\rm cpl}(\MMMMM y_\eps(t), \EEE \theta_c)  \big) \EEE
    \geq \liminf_{\eps \to 0} \mechenl(u_\eps(t)) \geq \mechenl(u(t))
  \end{equation}
  for a.e.~$t \in I$.
  Let $C_\eps$ and $\dot C_\eps$ be as in \eqref{def_Ck_dotCk}.
  In \eqref{ffflater} we have  seen that $\dot C_\eps \weakly 2 e(\partial_t u)$ weakly in $L^2(I\times \Omega;\R^{d \times d})$.
  This along with the definition in \eqref{diss_rate}, $\CD = 4 D(\Id, \theta_c)$, the pointwise \rb a.e.~\ee convergences of $(\nabla {y_\eps})_\eps$ and $(\theta_\eps)_\eps$, and standard lower semicontinuity arguments (see e.g.~\cite[Theorem 7.5]{FonsecaLeoni07Modern}) show
  \begin{align}
    I_2 \defas \liminf_{\eps \to 0} \eps^{-2}
      \int_0^t \int_\Omega \drate(\nabla {y_\eps},  \AAA \partial_t \nabla  y_\eps, \EEE \theta_\eps) \di x \di s
    &= \liminf_{\eps \to 0} \int_0^t \int_\Omega D(C_\eps, \theta_\eps) \dot C_\eps
      : \dot C_\eps \di x \di s \nonumber \\
    &\geq \int_0^t \int_\Omega \CD e(\partial_t u) : e(\partial_t u) \di x \di s \label{liminf_rescaled_dissrate}
  \end{align}
  for every $t \in I$.
  Our next goal is to show the reverse inequalities for the $\limsup$. Recall the definition of $\ell_\eps$ in \eqref{def:forcefunctional}.
\AAA The  \EEE energy balance in \eqref{energybalanceregularized} also holds in the setting of \AAA Definition \ref{def:weak_formulation}, see e.g.~\cite[Equation~(4.11)]{BFK}. \EEE Then, \EEE we can use the fundamental theorem of calculus to \rb derive \MMM (see also \eqref{LLLLLL} \AAA for an analogous argument) \EEE
\begin{align}\label{cont_energy_balance-eps}
& \mathcal{M}(y_\eps(t)) + \mathcal{W}^{\rm cpl}(\MMMMM y_\eps(t), \EEE \theta_c)  + \int_0^t \int_\Omega \xi(\nabla y_\eps, \partial_t \nabla y_\eps, \theta_\eps) \di x \di s\nonumber \\
&= \mathcal{M}(y_\eps(0))+ \mathcal{W}^{\rm cpl}(\MMMMM y_\eps(0), \EEE \theta_c)    +    \int_0^t
      \langle \ell_\eps(s), \partial_t  y_\eps(s) \rangle \di s \notag \\ &\quad \qquad  - \int_0^t \int_\Omega \big( \partial_F W^{\rm cpl} (\nabla y_\eps, \theta_\eps) - \partial_F W^{\rm cpl} (\nabla y_\eps, \theta_c) \big) : \partial_t \nabla y_\eps \di x \di s
\end{align}  
for a.e.~$t \in I$.   \AAA Notice that $\mathbb{B}^{(\alpha)} \neq 0$  only for $\alpha = 1$ and that in the case $\alpha = 1$ we have $\mu \in L^2(I  \times   \Omega)$, see \eqref{conv:linearizationmu1}.  Thus, by approximation (see \cite[Proposition~6.2]{virginina}), \EEE  we can use $z = \partial_t u \MMMMM \indic_{[0,t]} \EEE \in L^2(I; H^1_{\Gamma_D}(\Omega;\R^d) )$ as a test function in \eqref{linear_evol_mech}. \rb Therefore, \ZZZ using a chain rule for the convex functional $\mechenl $, \EEE  we see  for a.e.~$t \in I$ that
  \begin{align}\label{lim_energy_balance-new}
    &\mechenl(u(t)) 
    + \int_0^t \int_\Omega \big( \CD e(\partial_t u) : e(\partial_t u)  \big)  \di x \di s =  \mechenl(u_0)+ \int_0^t \langle \ell (s), \partial_t {u}(s) \rangle \di s -  \int_0^t \int_\Omega \mathbb{B}^{(\alpha)} \mu : \partial_t \nabla u \di x \di s,
  \end{align}
  where we set for $s \in I$
  \begin{align*} 
\langle \ell (s), v \rangle \defas \int_\Omega f (s) \cdot v \di x + \int_{\Gamma_N} g (s) \cdot v \di \mathcal{H}^{d-1}.
\end{align*} 
  We now address the convergence of the various terms \MMM in \eqref{cont_energy_balance-eps}. \EEE
  First of all, by \eqref{conv:linearization1} and \ZZZ \eqref{def:externalforces} \EEE we \rb have\ee
  \begin{equation}\label{lindiss3}
    \frac{1}{\eps^2} \int_0^t
      \langle \ell_\eps(s), \partial_t  y_\eps(s) \rangle \di s
    = \int_0^t \langle \ell(s), \partial_t u_\eps(s) \rangle \di s
    \to \int_0^t \langle \ell(s), \partial_t {u}(s) \rangle \di s.
  \end{equation}
By \eqref{est:cpltaylorlinearize} and \eqref{boundres:linfty} we get
\begin{align}\label{needstrongl2conv}
&\eps^{-1} \big| \big( \partial_F W^{\rm cpl} (\nabla y_\eps, \theta_\eps) - \partial_F W^{\rm cpl} (\nabla y_\eps, \theta_c) \big)  -  \partial_{F\theta} W^{\rm cpl} (\nabla y_\eps, \theta_c) ( \eps^\alpha \mu_\eps  \wedge 1) \big|   \leq \eps^{-1} C  ( \eps^{2\alpha} \mu_\eps^2 \wedge 1 )  
\end{align}
pointwise a.e.~in $I \times \Omega$. 
 \MMM Due to   \eqref{conv:linearizationmu1}, \EEE
  the right-hand side of \eqref{needstrongl2conv} converges to $0$ a.e.~in $I \times \Omega$ as $\eps \to 0$.  \MMMMM The fact that \EEE  $ \MMM t \EEE \wedge 1 \leq t^{1/(2\alpha)}$ for $\MMM t \EEE \geq 0$ and \eqref{conv:linearizationmu1} for $s \MMM  > \EEE  2 \alpha^{-1}$ imply that the right-hand side of \eqref{needstrongl2conv} is uniformly integrable in $L^2(I \times \Omega)$.   Thus, by  Vitali's convergence theorem \rb the \ee left-hand side of \eqref{needstrongl2conv} converges strongly in $L^2(I \times \Omega)$ \MMMMM to $0$. \EEE
\MMMMM We conclude \EEE by \eqref{conv:linearization1}, weak-strong convergence, \III \ref{C_adiabatic_term_vanishes},   \eqref{boundres:linfty}, \EEE and \eqref{alpha_dep} that
\begin{align}\label{lindiss2XXX}
  \lim\limits_{\eps \to 0}\eps^{-2}\int_0^t \int_\Omega \big( \partial_F W^{\rm cpl} (\nabla y_\eps, \theta_\eps) - \partial_F W^{\rm cpl} (\nabla y_\eps, \theta_c) \big) : \partial_t \nabla y_\eps \di x \di s = \int_0^t \int_\Omega \mathbb{B}^{(\alpha)} \mu : \partial_t \nabla u \di x \di s \ZZZ. \EEE
\end{align}
 We \MMM now come to the term \ZZZ  $\mathcal{M}(y_\eps(0)) + \mathcal{W}^{\rm cpl}(\MMMMM y_\eps(\rb 0\ZZZ), \EEE \theta_c)$. \EEE We   note that \MMM the second bound in \EEE \ref{H_bounds} and \ref{H_prefers_id}    lead to  
\begin{equation}\label{H_upper_bound_spec}
  \abs{H(G)} \leq  \aC  \abs{G}^p \quad \text{ for all } G \in \R^{d \times d \times d}.
\end{equation}
Hence, for the  second-gradient  term we derive by \eqref{H_upper_bound_spec}, $u_0 \in W^{2, p}(\Omega; \R^d)$, and $p > 2$ that
  \begin{equation*}
    \eps^{-2} \Big|
      \int_\Omega \hypot(\eps \nabla^2 u_0) \di x
    \Big|
    \leq C \eps^{p-2} \int_\Omega \abs{\nabla^2 u_0}^p \di x
    \leq C \eps^{p-2} \to 0 \rb \qquad \text{as } \eps \to 0.\ee
  \end{equation*}
   The convergence of the elastic energy follows similarly to the Taylor expansion in \eqref{taylor_mechen_lpk}, where we can \MMM replace all  inequalities by equalities \EEE due to the definition of the initial datum in \eqref{linearization_initial_conditions}. \rb More precisely\MMM, we get \EEE 
  \begin{equation}\label{lindiss1}
    \lim_{\eps \to 0} \eps^{-2} \big( \mathcal{M}(y_\eps(0))+ \mathcal{W}^{\rm cpl}(\MMMMM y_\eps(0), \EEE \theta_c) \big) = \mechenl(u_0).
  \end{equation}  
  Combining \eqref{cont_energy_balance-eps}--\eqref{lim_energy_balance-new}, \MMMMM and \EEE the convergences \eqref{liminf_rescaled_en}, \eqref{liminf_rescaled_dissrate}, \eqref{lindiss3},   \eqref{lindiss2XXX},  and \eqref{lindiss1}, we \ZZZ discover that \EEE 
  \begin{equation*}
  \begin{aligned}
    \mechenl(u(t))
      + \int_0^t \int_\Omega &   \CD e(\partial_t u) : e(\partial_t u)    \di x \di s
    = \mechenl(u_0)
      + \int_0^t \langle \ell(s), \partial_t {u}(s) \rangle \di s -  \int_0^t \int_\Omega \mathbb{B}^{(\alpha)} \mu : \partial_t \nabla u \di x \di s \\
    &\ge I_1 + I_2    
    \ge \mechenl(u(t))
         + \int_0^t \int_\Omega    \CD e(\partial_t u) : e(\partial_t u)    \di x \di s. 
  \end{aligned}
  \end{equation*}
  Thus, all inequalities in \eqref{liminf_rescaled_en} and \eqref{liminf_rescaled_dissrate} are equalities.
  In particular, we derive for a.e.~$t \in I$
  \begin{align}
    \lim_{\eps \to 0} \frac{1}{2} \int_\Omega
      \CW e({u_\eps}(t)) : e({u_\eps}(t)) \di x
    &= \frac{1}{2} \int_\Omega \CW e(u(t)) : e(u(t)) \di x, \label{rescaled_diss_conv1} \\
    \lim_{\eps \to 0}  \frac{1}{\eps^2}  \int_0^t \int_\Omega
      \drate(\nabla {y_\eps}, \AAA \partial_t \nabla  y_\eps, \EEE \theta_\eps) \di x \di s
    &= \int_0^t \int_\Omega 4 D(\Id,0) e(\partial_t u) : e(\partial_t u) \di x \di s, \label{rescaled_diss_conv2}
  \end{align}
  where we also used the definition of $\CD$ in \eqref{def_WD_tensors}.

We now address \eqref{strong_rescaled_strain_rate_comp}.
  Strong convergence \AAA of \EEE $(u_\eps(t))_\eps$ in $H^1(\Omega;\R^d)$ for a.e.~$t \in I$, i.e.,  the first part of \eqref{strong_rescaled_strain_rate_comp},  follows directly from \eqref{rescaled_diss_conv1}, Korn's and Poincar\'e's inequality, and the fact that $\CW$ is positive definite on $\R^{d \times d}_{\rm sym}$, \ZZZ see \eqref{positivedefiniteness}. \EEE  For the second part of  \eqref{strong_rescaled_strain_rate_comp}, we will first show strong \ZZZ $L^2(I\times \Omega)$-convergence  \EEE of $(\dot C_\eps)_\eps$ defined in \eqref{def_Ck_dotCk}:
  by \ref{D_bounds} we estimate
  \begin{align*}
    &\ac \intQ \abs{\dot C_\eps - 2 e(\partial_t u)}^2 \di x \di t
    \leq \intQ D(C_\eps, \theta_\eps) (\dot C_\eps - 2e(\partial_t u))
      : (\dot C_\eps - 2 e(\partial_t u)) \di x \di t \\
    &\quad=  \eps^{-2}  \intQ \drate(\nabla {y_\eps}, \AAA \partial_t \nabla  y_\eps, \EEE \theta_\eps) \di x \di t
      - 2 \intQ 2 D(C_\eps, \theta_\eps) e(\partial_t u) : \dot C_\eps \di x \di t \\
    &\phantom{\quad=}\quad + \intQ 4D(C_\eps, \theta_\eps) e(\partial_t u) : e(\partial_t u) \di x \di t.
  \end{align*}
  By \eqref{rescaled_diss_conv2} for $t = T$, the pointwise convergence of $(\nabla {y_\eps})_\eps$ and $(\theta_\eps)_\eps$ to $\Id$ and $\theta_c$, respectively \AAA (see \eqref{boundres:mech} and \eqref{boundres:temperature}), \EEE and the already shown weak convergence of $\dot C_\eps$ towards $2 e(\partial_t u)$ \III (see~\eqref{ffflater}), \EEE we see that the above derived upper bound converges to $0$ as $\eps \to 0$.
  Then, the desired strong convergence of $({\partial_t { \nabla u_\eps}})_\eps$ is \rb shown \ee as follows: by \MMMMM using  Korn's inequality, \EEE  \eqref{conv:linearization1}, and \AAA  \eqref{boundres:linfty}  \EEE we get
  \begin{align*}
    &\intQ \abs{{\partial_t { \nabla u_\eps}} - \partial_t \nabla u}^2 \di x \di t
    \leq C \intQ \abs{\sym({\partial_t { \nabla u_\eps}} - \partial_t \nabla u)}^2 \di x \di t \\
    &\quad\leq C \intQ \abs{\dot C_\eps - 2 e(\partial_t u)}^2 \di x \di t
      + C \intQ \abs{\nabla {y_\eps} - \Id}^2 \abs{{\partial_t { \nabla u_\eps}}}^2 \di x \di t \\
    &\quad\leq C \intQ \abs{\dot C_\eps - 2 e(\partial_t u)}^2 \di x \di t
      + C \eps^{4/p} \intQ \abs{{\partial_t { \nabla u_\eps}}}^2 \di x \di t \to 0.
  \end{align*}
  This concludes the proof \MMM of  \eqref{strong_rescaled_strain_rate_comp}. \EEE

\emph{Step 4 \MMM (Linearization \EEE of the \MMM heat-transfer \EEE equation):} Let now $\vphi \in C^\infty(I \times \overline \Omega)$ with $  \varphi(T) = 0$. We first note that the regularity of $y_\eps$, see Definition~\ref{def:weak_formulation}, implies that $\nabla y_\eps \in C(I;L^2(\Omega))$ and we have $\nabla y_\eps(0) = \nabla y_{0,\eps}$ a.e.~in $\Omega$.  \MMM An integration by parts implies \AAA that
\begin{align*}
\intQ \eps^{-\alpha}\inten(\nabla y_\eps, \theta_c) \partial_t \vphi   \di x \di t & = - \intQ \eps^{-\alpha} \frac{\rm d}{\rm dt}  \inten(\nabla y_\eps, \theta_c)  \vphi   \di x \di t   - \int_\Omega \eps^{-\alpha}\inten(\nabla y_\eps(0), \theta_c)  \vphi  (0) \di x  ,
\end{align*}
where, using \eqref{Wint}, the integrand of the first term on the right-hand side is given by
$$\eps^{-\alpha}\frac{\rm d}{\rm dt}  \inten(\nabla y_\eps, \theta_c)    =   \eps^{1-\alpha}\partial_F W^{\rm cpl}(\nabla y_\eps, \theta_c) :\partial_t \nabla u_\eps  -   \eps^{1-\alpha} \theta_c \partial_{F\theta} W^{\rm cpl}(\nabla y_\eps, \theta_c) :\partial_t \nabla u_\eps  .  $$
Here, we note that    $ \frac{\rm d}{\rm dt}   W^{\rm in} (\nabla y_\eps,\theta_c) \in L^2(I \times \Omega)$ by \eqref{est:coupl}, \MMMMM \ref{C_adiabatic_term_vanishes}, \EEE \eqref{boundres:mech}, \MMMMM \eqref{boundres:linfty}, \EEE and \eqref{boundres:strainrate}. \MMM Define \EEE $m_\eps \defas W^{\rm in}(\nabla y_\eps,\theta_\eps) -W^{\rm in}(\nabla y_\eps,\theta_c) $.
Using $\varphi$ as a test function \AAA in \eqref{weak_limit_heat_equation_eps},  \EEE dividing the equation by $\eps^\alpha$, \MMM and plugging in the \EEE previous \MMMMM equations \MMM we deduce \EEE
\begin{align}\label{cont-heat-proof}
  &\rb\intQ \ee \hcm(\nabla y_\eps, \theta_\eps) \nabla \mu_\eps \cdot \nabla \vphi - \eps^{-\alpha}m_\eps\partial_t \vphi   \di x \di t  \MMM - \EEE  \intQ \eps^{1-\alpha} \theta_c \partial_{F\theta} W^{\rm cpl}(\nabla y_\eps, \theta_c) :\partial_t \nabla u_\eps \vphi  \di x \di t \notag \\
    &\rb\qquad - \ee \intQ\big(
     \eps^{-\alpha} \drate^{(\alpha)}(\nabla y_\eps, \partial_t \nabla y_\eps, \theta_\eps)
      + \eps^{1-\alpha} \big( \pl_F \cplpot(\nabla y_\eps, \theta_\eps)  - \partial_F W^{\rm cpl}(\nabla y_\eps,\theta_c) \big): \partial_t \nabla u_\eps
    \big) \vphi \di x \di t  
     \notag \\
  & \rb\quad
    = \ee\kappa \int_I \int_{\Gamma} (\mu_\flat - \mu_\eps) \vphi \di \haus^{d-1} \di t
     +\int_\Omega \eps^{-\alpha} \big(\inten(\nabla y_{0,\eps}, \theta_{0,\eps})  - \inten(\nabla y_{0,\eps}, \theta_c) \big)\, \vphi(0) \di x.
\end{align}
  We will now pass to the limit $\eps \to 0$ in each term above.
  Recall \eqref{inten_mon}, i.e., that $c_V(F, \theta) = \partial_\theta W^{\rm in} (F,\theta)= -\theta \pl_\theta^2 \cplpot(F, \theta)$ for any $F \in  GL^+(d) $ and  \MMM $\theta > 0$.  \EEE 
 By a change of variables and \eqref{inten_mon} we find that
  \begin{align*} 
    \rb\Big\vert\ee\eps^{-\alpha} \big(\inten(\nabla {y_\eps}, \theta_\eps) - \inten(\nabla {y_\eps}, \theta_c )\big)-
      \int_0^{\mu} c_V(\nabla {y_\eps}, \theta_c +\eps^{\alpha} s) \di s
      \rb\Big \vert\ee \leq C|{\mu_\eps} - \mu|
  \end{align*}
  pointwise a.e.~in $I \times \Omega$.  
    Due to the \rb pointwise \ee convergence of $(\nabla {y_\eps})_\eps$ to $\Id$ (see \eqref{boundres:linfty}) and  \MMM the \EEE $L^1(I\times \Omega)$-convergence of $({\mu_\eps})_\eps$ to $ \mu$ (see \eqref{conv:linearizationmu1}), \lenni we derive that \EEE
  \begin{equation*} 
   \lim\limits_{\eps \to 0} \int_I \int_\Omega \eps^{-\alpha} m_\eps \partial_t \vphi \di x \di t
   = \intQ c_V(\Id,\theta_c) \mu \partial_t \vphi \di x \di t
    = \intQ \bar c_V \mu \partial_t \vphi \di x \di t.
  \end{equation*}  
   \martin In a similar fashion, \III by \eqref{linearization_initial_conditions} \EEE we get \EEE
  \begin{equation*}
  \lim\limits_{\eps \to 0} \int_\Omega \eps^{-\alpha} \big(\inten(\nabla y_{0,\eps}, \theta_{0,\eps})  - \inten(\nabla y_{0,\eps}, \theta_c) \big)\, \vphi(0) \di x = \int_\Omega \bar c_V \MMM \mu_0  \EEE \vphi(0) \di x.
  \end{equation*}
Notice that $\rb(\ee\hcm(\nabla {y_\eps}, \theta_\eps)\rb)\ee_\eps$  is uniformly bounded due to \eqref{spectrum_bound_K}--\eqref{hcm} and \eqref{boundres:linfty}, and that $(\nabla {y_\eps})_\eps$ and $(\theta_\eps)_\eps$ converge to $\Id$ and $\theta_c$ for a.e.~$(t,x)\in I \times \Omega$, respectively (see \eqref{boundres:linfty} \AAA  and \eqref{boundres:temperature}). \EEE
Combining these facts with  \eqref{conv:linearizationmu1} \AAA and a trace \III estimate, \EEE we find that 
  \begin{align*}
    &\int_I  
      \int_\Omega \hcm(\nabla {y_\eps}, \theta_\eps) \nabla {\mu_\eps} \cdot \nabla \vphi \di x \di t
    + \kappa\int_I  \int_{ \Gamma} {\mu_\eps} \vphi \di \haus^{d-1} \di t  \to \int_I   \int_\Omega  \mathbb{K}(\theta_c)  \nabla \mu \cdot \nabla \vphi \di x \di t
      + \kappa  \MMMMM \int_I \EEE \int_\Gamma   \mu \vphi \di \haus^{d-1} \di t,
  \end{align*}
  \rb as $\eps \to 0$, \ee where  $\mathbb{K}(\theta_c)$   is defined in \eqref{spectrum_bound_K}.
By \eqref{est:couplatthetac2},  \eqref{boundres:mech},  \eqref{boundres:temperature},  \eqref{conv:linearization1},  $\rb t \ee \wedge 1 \le t^{s/2}$ for some
 $s \in (1,  \frac{d+2}{d})$, and the Cauchy-Schwarz   inequality we \MMM derive \rb that \EEE
  \begin{align*}
 &  \Big| \intQ \eps^{1-\alpha} \big( \pl_F \cplpot(\nabla y_\eps, \theta_\eps)  - \partial_F W^{\rm cpl}(\nabla y_\eps,\theta_c) \big): \partial_t \nabla u_\eps
    \, \vphi \di x \di t \Big| \\   
   &   \leq \eps^{1-\alpha} \int_I \int_\Omega
      C (\vert\theta_\eps -\theta_c \vert\wedge 1) (1 + \MMM \abs{\nabla  {y_\eps}  }) \EEE
      \abs{{\partial_t { \nabla u_\eps}}} |\varphi| \di x \di t \nonumber \\
    &\leq C \eps^{1-\alpha} \Vert \vert \theta_\eps-\theta_c\vert^{s/2} \Vert_{L^2(\AAA I \times \Omega)}
      \Vert {\partial_t { \nabla u_\eps}} \Vert_{L^2(\AAA I \times \Omega)} \Vert \varphi \Vert_{L^\infty(\AAA I \times \Omega)}
  \\&   \leq C \eps^{1 - \alpha + \alpha s / 2} \Vert {\mu_\eps} \Vert^{s/2}_{L^s(\AAA I \times \Omega)}   \Vert {\partial_t { \nabla u_\eps}} \Vert_{L^2(\AAA I \times \Omega)} \Vert \varphi \Vert_{L^\infty(\AAA I \times \Omega)}  \to \rb 0 \ee
  \end{align*}
  \rb as $\eps \to 0$, \ee
  where we have used that $s > \frac{2(\alpha-1)}{\alpha}$.
\MMM  Next,  \EEE by \eqref{def_Ck_dotCk},  \III the \EEE second convergence in \eqref{strong_rescaled_strain_rate_comp},  \eqref{diss_rate}, \MMM \eqref{def_WD_tensors}, \eqref{def_xi_alpha}, \EEE   and the continuity of $D$ one can show for $\alpha = 2$ that  
  \begin{equation*}
      \int_I \int_\Omega
      \eps^{-\alpha} \MMM \xi^{(\alpha)} \EEE (\nabla {y_\eps},  \AAA \partial_t \nabla  y_\eps, \EEE \theta_\eps) \vphi =  \int_I \int_\Omega
      \eps^{-\alpha} \drate(\nabla {y_\eps},  \AAA \partial_t \nabla  y_\eps, \EEE \theta_\eps) \vphi
    \to \intQ \CD e(\partial_t u) : e(\partial_t u) \vphi \di x \di \rb t\ee
  \end{equation*}
  \rb as $\eps \to 0$. \ee
  For $\alpha < 2$  instead, the term vanishes as $\eps \to 0$ due to $\rdrate \leq \drate$, \eqref{diss_rate}, and \eqref{boundres:dissipation}.
\MMM Lastly, \EEE using \eqref{boundres:linfty}, \MMM \eqref{Bhatt}, \EEE and \ref{C_adiabatic_term_vanishes}, we see by a Taylor expansion 
\begin{align*}
\vert \eps^{1-\alpha} \theta_c \partial_{F\theta} W^{\rm cpl}(\nabla y_\eps, \theta_c) - \theta_c \hat{\mathbb{B}} \vert \leq C \vert \nabla y_\eps - \Id \vert \to 0
\end{align*}
for a.e.\ $(t,x) \in I \times \Omega$.  
Thus, \MMM  recalling \EEE \eqref{boundres:linfty}, we can use the dominated convergence theorem and weak-strong convergence to conclude that
\begin{align*}
 \intQ \eps^{1-\alpha} \theta_c \partial_{F\theta} W^{\rm cpl}(\nabla y_\eps, \theta_c) :\partial_t \nabla u_\eps \vphi  \di x \di t \to  \intQ \theta_c \hat{\mathbb{B}} :\partial_t \MMM \nabla u \EEE \vphi  \di x \di \rb t \ee \AAA = \intQ \theta_c \hat{\mathbb{B}} :\AAA e(\partial_t u) \EEE \vphi  \di x \di \rb t \ee \EEE
\end{align*}
\rb as $\eps \to 0$. \AAA Here, we used the symmetry of $\hat{\mathbb{B}}$, see the discussion before \eqref{positivedefiniteness}. \EEE
  Collecting all convergences and recalling the definition of  $\CD^{(\alpha)}$   in \eqref{alpha_dep}, \ZZZ the limit of \eqref{cont-heat-proof} is precisely \eqref{linear_evol_temp}. \EEE

  \MMM 
\emph{Step 5 (Uniqueness):} In this final step, we \MMMMM prove   the \EEE uniqueness  of the limiting evolution  $(u,\mu)$. Once this is shown, every subsequence of $(u_\eps,\mu_\eps)_\eps$ considered above converges to the same limit, so that \eqref{convergence:u} and \eqref{convergence:mu} actually hold for any sequence by Urysohn's subsequence principle.

\MMM 
In the case $\alpha \in (1,2]$, uniqueness follows by \cite[Lemma 5.6]{BFK} which relies on the observation that \EEE  the mechanical equation in Definition~\ref{def:weak_form_linear_evol} \MMMMM does not depend on $\mu$, \EEE and \MMM allows for applying \EEE   standard theory for parabolic equations. \MMM The case $\alpha = 1$, however,  is \MMMMM more subtle \EEE due to the nontrivial coupling of \eqref{linear_evol_mech} and \eqref{linear_evol_temp}.  \MMM We therefore provide a detailed argument. As a preparation, we notice  \EEE that $\CD^{(\alpha)}= 0$ \MMM by \EEE \eqref{alpha_dep}, and that we can use $z \in L^2(I; H^1_{\Gamma_D}(\Omega;\R^d)\III ) \EEE$ and $\varphi \in C^\infty(I; H^1(\Omega) )$ with $\varphi(T)=0$  as test functions in \eqref{linear_evol_mech} and \eqref{linear_evol_temp}, respectively, due to \MMM the density of $C^\infty(\Omega)$ in $H^1(\Omega)$, \EEE and the regularity $u\in H^1(I; H^1_{\Gamma_D}(\Omega;\R^d))$ and $\mu \in L^2(I; H^1(\Omega))$, \AAA see also \cite[Proposition~6.2]{virginina}. \EEE

\MMM As an auxiliary step, we address the regularity of $\mu$ by showing  \EEE $\partial_t \mu \in L^2(I; (H^1(\Omega))^*)$. \MMM  To this end, we define for \III a.e.~$s \in I$ \MMMMM and $v \in H^1(\Omega)$ \EEE
\begin{align}\label{eqBochnerderivative}
 \langle
\sigma(s),v \rangle \defas  \int_\Omega  (\bar c_V)^{-1}\left( \mathbb{K}(\theta_c) \nabla \mu(s) \cdot \nabla v
      -  \theta_c \hat{\mathbb{B}} : e(\partial_t u(s)) \, v \right)\di x  -   \int_\Gamma (\bar c_V)^{-1} \kappa (\mu_\flat(s) - \mu(s)) v \di \haus^{d-1} .
\end{align}
 Then, $\sigma \in L^2(I;(H^1(\Omega))^*)$ since   $\mu \in L^2(I; H^1(\Omega))$ and $\partial_t \nabla u \in L^2(I \times \Omega;\R^d)$. Now, choosing $\varphi(s,x) \defas \tilde \varphi(s) v(x)$ for $\AAA \tilde\varphi \EEE \in C^\infty_c(\rb (0, T) \ee ) $ and $v \in C^\infty(\III \overline{\Omega} \EEE)$ in \eqref{linear_evol_temp}, and dividing the latter equation by $\bar c_V>0$ (see \eqref{inten_mon} and \eqref{linearized_heat_capacity}) implies that
\begin{align*}
  \int_I  \langle \sigma(s), v \rangle \tilde{\varphi}(s)    \di s   = \MMM  \int_I  \int_\Omega   \mu \,  v \di x  \, \partial_t \tilde \vphi  \di s .
\end{align*}
By the definition of the derivative in Bochner spaces, this shows  $\mu \in H^1(I;(H^1(\Omega))^*)$ with $\partial_t \mu = -\sigma$ for a.e.\ $t \in I$. \EEE Notice that an interpolation implies that $\mu \in L^2(I;H^1(\Omega)) \cap H^1(I;(H^1(\Omega))^*) \subset C(I;\MMM L^2(\Omega)) \EEE $, \AAA see  \cite[Lemma 7.3]	{Roubicek-book}. \EEE \MMM Using test functions $\III \hat \varphi  \EEE \in C^\infty(\AAA I \times \overline{\Omega} \EEE )$ with $\III \hat \varphi  \EEE (T) =0$, \MMMMM by \EEE the integration by parts $\int_I \langle \mu, \partial_t \III \hat \varphi  \EEE \rangle \di t = \int_I \langle \sigma , \III \hat \varphi  \EEE \rangle \di t - \AAA \int_\Omega \mu(0) \III \hat \varphi  \EEE(0)\di x \EEE$, and the weak formulation \eqref{linear_evol_temp} we \lll find \EEE $\mu(0) = \mu_0$  \lll a.e.~in $\Omega$. \EEE
We are now in the position to prove uniqueness in the case $\alpha = 1$. To this end, we consider two weak solutions $(u_i,\mu_i)$ for $i = 1,2$ with the same  \MMM initial datum $u_i(0) = u_0$  \lll a.e.~in $\Omega$ \EEE for $i=1,2$.   As shown above, we also have $\mu_i(0) = \mu_0$  \lll a.e.~in $\Omega$ \EEE for $i=1,2$. \AAA We plug  $v=\mu_2(s) - \mu_1(s)$  into \EEE \eqref{eqBochnerderivative},  where $(u,\mu)$ is replaced by $(u_i, \mu_i)$ for $i = 1,2$. \EEE  Subtracting the corresponding identities from each other\III, using $\sigma = - \partial_t \mu$, \EEE  and taking the integral from $a$ to $b$ for   $0<a<b<T$, we deduce 
\begin{align*}
 &\int_a^b \langle \partial_t \mu_2  -\partial_t \mu_1   , \mu_2 - \mu_1 \rangle \di t   +  \int_a^b  \int_\Gamma (\bar c_V)^{-1} \kappa (\mu_2 - \mu_1) (\mu_2 - \mu_1) \di \haus^{d-1} \di s \\
  &\rb\quad= \ee \int_a^b \int_\Omega
     (\bar c_V)^{-1}\left( - \mathbb{K}(\theta_c) \nabla (\mu_2 - \mu_1 )\cdot \nabla (\mu_2 - \mu_1)
      + \theta_c \hat{\mathbb{B}} : (e(\partial_t u_2)- e(\partial_t u_1) )\, (\mu_2 - \mu_1) \right)\di x \di s.
\end{align*}  
Using  $\mu \in L^2(I;H^1(\Omega)) \cap H^1(I;(H^1(\Omega))^*) \rb \subset \ee C(I; \MMM L^2(\Omega))\EEE$ once again, we can employ the chain rule as well as the   \MMM sign \AAA of \EEE the  second and the third term, \EEE which  \MMM in the limit $a \to 0$ and $b \to t$ yields \EEE  
 \begin{align}\label{uniquenessproof1}
 \frac{\bar c_V}{2 \theta_c}\int_\Omega \vert \mu_2(\AAA t \EEE ) - \mu_1( \AAA t \EEE ) \vert^2 \di x - \frac{\bar c_V}{2 \theta_c}\int_\Omega \vert \mu_2(0) - \mu_1(0) \vert^2 \di x  
  \MMM \le \EEE \int_0^t \int_\Omega
     \hat{\mathbb{B}} : (e(\partial_t u_2)- e(\partial_t u_1) )\, (\mu_2 - \mu_1)  \di x \di s.
\end{align}
We proceed similarly with the mechanical equation. Testing \eqref{linear_evol_mech} with the function $\varphi(s,x) = {\lenni \indic \EEE }_{[0,t]}(s) \partial_t (u_2 - u_1)$, where $(u,\mu)$ are replaced by $(u_i,\mu_i)$, subtracting the latter identities from each other, \MMM using \EEE the chain rule, and  
  $\mathbb{B}^{(1)} = \hat{\mathbb{B}}$ (see \eqref{alpha_dep}),  \MMMMM we obtain \EEE
\begin{align}\label{uniquenessproof2}
&\frac{1}{2} \int_\Omega \CW \MMMMM (e(u_2(t)- u_1(t)))  :  (e(u_2(t) -u_1(t)))  \EEE \di x - \frac{1}{2} \int_\Omega \CW (e(u_2(0)- u_1(0))) \MMM : \EEE (e(u_2(0)- u_1(0))) \di x \notag \\
 &\rb\quad= \ee \int_0^t \int_\Omega \big( \CW (e(u_2) - e(u_1)) \big) :  e (\partial_t (u_2 - u_1) ) \di x \di s \notag \\
   &\rb\quad= \ee -  \int_0^t \int_\Omega  \CD  e(\partial_t (u_2- u_1))  :  e (\partial_t (u_2 - u_1) ) \di x \di s - \int_0^t \int_\Omega  \hat{\mathbb{B}} (\mu_2 -\mu_1)  :  e (\partial_t (u_2 - u_1) ) \di x \di s  \notag \\
   &\rb\quad\leq \ee  - \int_0^t \int_\Omega  \hat{ \mathbb{B}} (\mu_2 -\mu_1)  :  e (\partial_t (u_2 - u_1) ) \di x \di s .
\end{align}
Using  $u_1(0) = u_2(0)$, $\mu_1(0) = \mu_2(0)$  \lll a.e.~in $\Omega$ \MMM and summing \eqref{uniquenessproof1}--\eqref{uniquenessproof2} yields \EEE  the uniqueness.
\end{proof}

\noindent \textbf{Acknowledgements} This work was funded by  the DFG project FR 4083/5-1 and  by the Deutsche Forschungsgemeinschaft (DFG, German Research Foundation) under Germany's Excellence Strategy EXC 2044 -390685587, Mathematics M\"unster: Dynamics--Geometry--Structure. \martin The work was further supported by the DAAD project 57600633 /  DAAD-22-03. M.K.~acknowledges support by  GA\v{C}R-FWF project 21-06569K and GA\v{C}R project  23-04766S.      
 The authors gratefully thank Tom\'{a}\v{s} Roub\'{\i}\v{c}ek for helpful discussions.

\appendix
\section{\MMM Auxiliary \AAA estimates \EEE}\label{sec:appendix}

\AAA In this section, we provide some estimates used throughout the paper. We assume the setting of \MMMMM Subsection \ref{sec:setting}. \EEE The following \AAA generalized version of Korn's inequality has been shown in   \cite[Theorem 3.3]{MielkeRoubicek2020}, and is based on \cite {Pompe03Korns, Neff}. \EEE
\begin{theorem}[Generalized  Korn's inequality]\label{pompe}
Given fixed constants $\rho>0$ and $\lambda \in (0,1]$, there exists a constant $C>0$ depending on $\Omega$, $\rho$, and $\lambda$ such that for all $u \in H^1(\Omega;\R^d)$ 
  with $u = 0$ on \ZZZ $\Gamma_D$ \EEE  and $F \in C^{0,\lambda} (\Omega; \R^{d \times d})$ satisfying $\det F \geq \rho $ in $\Omega $ and $\Vert F \Vert_{C^{0,\lambda}(\Omega)} \leq \rho^{-1}$ it holds that
  \begin{equation*}
    \left\Vert \nabla u \right\Vert_{L^2(\Omega)} \leq	 C \Vert \sym (F^T \nabla u ) \Vert_{L^2(\Omega)}.
  \end{equation*}
\end{theorem}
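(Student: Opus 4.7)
The plan is to argue by contradiction combined with a compactness/rigidity dichotomy, ultimately reducing the inequality to a Korn--Pompe-type uniqueness statement for the overdetermined system $\sym(F^T\nabla u)=0$ with zero Dirichlet data on $\Gamma_D$. Assuming the claim fails, there would exist sequences $(u_n)\subset H^1(\Omega;\R^d)$ with $u_n=0$ on $\Gamma_D$ and $(F_n)\subset C^{0,\lambda}(\Omega;\R^{d\times d})$ satisfying $\det F_n\ge\rho$ and $\|F_n\|_{C^{0,\lambda}}\le\rho^{-1}$, normalized so that $\|\nabla u_n\|_{L^2(\Omega)}=1$ while $\|\sym(F_n^T\nabla u_n)\|_{L^2(\Omega)}\to 0$. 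By the compact embedding $C^{0,\lambda}(\Omega)\hookrightarrow C^0(\overline{\Omega})$ (Arzelà--Ascoli), a subsequence satisfies $F_n\to F$ uniformly with $F\in C^{0,\lambda}$ and $\det F\ge\rho$ pointwise.

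Since $\haus^{d-1}(\Gamma_D)>0$, Poincaré's inequality gives a uniform $H^1$-bound on $(u_n)$; extracting further, I would have $u_n\rightharpoonup u$ weakly in $H^1$ and strongly in $L^2$ (Rellich), with $u=0$ on $\Gamma_D$ in the trace sense. The uniform convergence $F_n\to F$ together with the weak $L^2$-convergence of $\nabla u_n$ yields $F_n^T\nabla u_n\rightharpoonup F^T\nabla u$ weakly in $L^2$, and because $\sym(F_n^T\nabla u_n)\to 0$ strongly in $L^2$, passing to the weak limit forces $\sym(F^T\nabla u)=0$ almost everywhere in $\Omega$.

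The key step, and what I expect to be the main obstacle, is the rigidity claim: any $u\in H^1(\Omega;\R^d)$ with $u|_{\Gamma_D}=0$ and $\sym(F^T\nabla u)=0$ must vanish identically, when $F\in C^{0,\lambda}$ with $\det F\ge\rho$. To establish this I would localize on small balls where $F$ is uniformly close to the constant matrix $F_0:=F(x_0)$; writing $F_0=R_0U_0$ with $R_0\in SO(d)$ and $U_0$ symmetric positive definite and setting $w=R_0^Tu$ reduces the frozen equation to $\sym(U_0\nabla w)=0$, whose solutions form a finite-dimensional subspace (the natural $U_0$-weighted analogue of classical infinitesimal rigid motions). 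A partition-of-unity argument, using the Hölder modulus of $F$ to control the mismatch $F-F_0$ in each patch, then upgrades the local frozen rigidity to a genuine local statement for variable $F$, and the Dirichlet condition on $\Gamma_D$, propagated through $\Omega$ by connectedness, eliminates the finite-dimensional ambiguity and forces $u\equiv 0$.

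Finally, to turn $u=0$ into a contradiction with $\|\nabla u_n\|_{L^2(\Omega)}=1$, I would strengthen the weak convergence $u_n\rightharpoonup 0$ to strong convergence in $H^1$. The cleanest route is to prove first a Korn inequality with a compact remainder, namely the existence of $C=C(\Omega,\rho,\lambda)$ such that
\begin{equation*}
\|\nabla v\|_{L^2(\Omega)}\le C\bigl(\|\sym(F_n^T\nabla v)\|_{L^2(\Omega)}+\|v\|_{L^2(\Omega)}\bigr)
\end{equation*}
for every admissible $F_n$ and every $v\in H^1(\Omega;\R^d)$; this follows from the same localization scheme applied quantitatively, without yet invoking the boundary condition. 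Applied to $u_n$, together with $\|\sym(F_n^T\nabla u_n)\|_{L^2}\to 0$ and $u_n\to 0$ in $L^2$, it yields $\|\nabla u_n\|_{L^2}\to 0$, contradicting the normalization and completing the proof.
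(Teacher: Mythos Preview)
The paper does not prove this theorem; it is stated in the appendix as a cited result from \cite[Theorem~3.3]{MielkeRoubicek2020}, itself based on \cite{Pompe03Korns, Neff}. Your contradiction-plus-compactness outline is essentially the strategy used in those references: normalize, extract a uniformly convergent subsequence of the coefficient fields via Arzel\`a--Ascoli, pass to the weak limit to obtain $\sym(F^T\nabla u)=0$, invoke a rigidity statement for this overdetermined system, and close the argument via a Korn inequality with a lower-order remainder. So there is no divergence in approach to compare---you are reconstructing the proof that the paper imports.

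One point to watch in your sketch: the rigidity step for variable H\"older coefficients is the genuine technical core and is not quite as light as ``partition of unity plus frozen coefficients''. In Pompe's argument the passage from the frozen-coefficient local Korn inequality to the global variable-coefficient one requires a careful covering by balls of radius small enough that the oscillation of $F$ is dominated by the ellipticity constant, followed by a summation that controls the overlaps; the connectedness argument to propagate the Dirichlet condition also needs the finite-dimensionality of the local kernel to be made uniform. Your outline gestures at all of this correctly, but if you were to write it out you would find that this step carries most of the length of the proof.
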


\ZZZ The following lemma contains \MMM helpful estimates on the \lll densities of the coupling energy and internal energy \MMMMM defined in \eqref{couplenergy} and \eqref{Wint}, respectively. \EEE   
\begin{lemma}[\ZZZ Estimates on the coupling potential\EEE]
Assume that \ZZZ \ref{C_regularity}--\ref{C_third_order_bounds} \EEE hold true.
Then, there exists a constant $C>0$ such that for all $F \in GL^+(d)$, $\dot F \in \R^{d \times d}$, and $\theta \geq 0$ it holds that
\begin{align} 
\vert \partial_F W^{\rm cpl} (F,   \theta   ) \vert + \vert \partial_F W^{\rm in}(F,  \theta   ) \vert &\leq C (  \theta    \wedge 1 ) (1 + \vert F \vert), \label{est:coupl} \\
      \vert \partial_F W^{\rm cpl} (F,  \theta   ): \dot F \vert +  \vert \partial_F W^{\rm in} (F,  \theta   ): \dot F \vert
    &\leq C (  \theta    \wedge 1) |F^{-1}| (1 + |F|) \xi(F, \dot F,   \theta   )^{\ZZZ 1/2 \EEE}, \label{avoidKorn}
  \end{align}
where $\xi$ is as in \eqref{diss_rate}.
Moreover, if we additionally assume \ref{C_more_third_order_bounds}, the following bounds \MMMMM hold \EEE true:
\begin{align}
\vert \theta\partial_{F\theta}\cplpot(F, \theta) - \theta_c \partial_{F\theta}\cplpot(F, \theta_c) \vert &\leq  C (1+ \vert F \vert) ( \vert \theta - \theta_c \vert \wedge 1), \label{est:couplatthetac} \\
\vert \partial_{F} \cplpot(F, \theta) - \partial_{F} \cplpot(F, \theta_c) \vert &\leq  C  (1+ \vert F \vert) ( \vert \theta - \theta_c \vert \wedge 1), \label{est:couplatthetac2} \\
\vert \partial_{F} W^{\rm in}(F, \theta) - \partial_{F} W^{\rm in}(F, \theta_c) \vert &\leq  C  (1+ \vert F \vert) ( \vert \theta - \theta_c \vert \wedge 1), \label{est:internalatthetac}\\
\vert \partial_{F} W^{\rm cpl}(F, \theta) - \partial_{F} W^{\rm cpl}(F, \theta_c) - \partial_{F\theta} W^{\rm cpl} (F,\theta_c)( (\theta - \theta_c) \wedge 1) \vert &\leq  C  (1+ \vert F \vert) ( \vert \theta - \theta_c \vert^2 \wedge 1), \label{est:cpltaylorlinearize}
\end{align}
as well as
\begin{align}
\vert \MMM \theta \partial_{F\theta} W^{\rm cpl}(F, \theta)  \EEE : \dot F \vert   \leq   C \vert F^{-1} \vert (1+ \vert F \vert) (\theta_c |\partial_{F\theta} W^{\rm cpl}(F,\theta_c)| + & \vert \theta - \theta_c \vert \wedge 1 ) \xi(F, \dot F, \theta)^{1/2}, \label{est:couplatthetacwithxi} \\
\vert \big( \partial_F W^{\rm cpl} (F, \theta ) - \partial_F W^{\rm cpl} (F, \theta_c) \big) : \dot F \vert \leq   C \vert F^{-1} \vert (1+ \vert F \vert) ( & \vert \theta - \theta_c \vert \wedge 1 ) \xi(F, \dot F, \theta)^{1/2}, \label{est:couplatthetacwithxi2} \\
\vert \big( \partial_F W^{\rm in} (F, \theta ) - \partial_F W^{\rm in} (F, \theta_c) \big) : \dot F \vert  \leq   C \vert F^{-1} \vert (1+ \vert F \vert) ( & \vert \theta - \theta_c \vert \wedge 1 ) \xi(F, \dot F, \theta)^{1/2}. \label{est:couplatthetacwithxi3}
\end{align}
\end{lemma}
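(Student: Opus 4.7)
My plan is to derive all the bounds by combining three basic ingredients: the vanishing $\partial_F W^{\rm cpl}(F,0)=0$ (which follows from \ref{C_zero_temperature} together with the extension discussed after \ref{C_bounds}), the fundamental theorem of calculus in $\theta$ combined with the pointwise bounds in \ref{C_bounds}, \ref{C_more_third_order_bounds}, and the Lipschitz estimate \ref{C_lipschitz}, and finally frame indifference \ref{C_frame_indifference} to convert contractions of the form $\partial_F W^{\rm cpl}(F,\theta):\dot F$ into expressions in $\dot C = \dot F^T F + F^T \dot F$ that are controlled by $\xi^{1/2}$ through \ref{D_bounds}.

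For \eqref{est:coupl} I plan to distinguish the regimes $\theta\le 1$ and $\theta>1$. In the first case I write $\partial_F W^{\rm cpl}(F,\theta) = \int_0^\theta \partial_{F\theta}W^{\rm cpl}(F,s)\,\di s$ and use that $s\vee 1=1$ in this range, so that \ref{C_bounds} gives the bound $C_0(1+|F|)\theta$. In the second case, letting $\tilde F\to F$ in \ref{C_lipschitz} yields the uniform-in-$\theta$ bound $|\partial_F W^{\rm cpl}(F,\theta)|\le C(1+|F|)$. The bound on $\partial_F W^{\rm in}$ then follows from $W^{\rm in}=W^{\rm cpl}-\theta\partial_\theta W^{\rm cpl}$, together with the key observation that $|\theta\,\partial_{F\theta}W^{\rm cpl}(F,\theta)|\le C_0(1+|F|)\cdot\theta/(\theta\vee 1) = C_0(1+|F|)(\theta\wedge 1)$. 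For \eqref{avoidKorn} I exploit frame indifference to write $W^{\rm cpl}(F,\theta)=\tilde W^{\rm cpl}(C,\theta)$ with $C=F^T F$, which gives $\partial_F W^{\rm cpl}(F,\theta) = 2F\,\partial_C\tilde W^{\rm cpl}(C,\theta)$ and hence the identity $\partial_F W^{\rm cpl}(F,\theta):\dot F = \partial_C\tilde W^{\rm cpl}(C,\theta):\dot C$ by the symmetry of $\partial_C\tilde W^{\rm cpl}$. Extracting $F^{-1}$ to recover $|\partial_C\tilde W^{\rm cpl}|\le \tfrac12|F^{-1}||\partial_F W^{\rm cpl}|$, using \eqref{est:coupl}, and bounding $|\dot C|^2\le c_0^{-1}\xi$ via \ref{D_bounds} completes the argument; the analogue for $W^{\rm in}$ is identical.

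The estimates \eqref{est:couplatthetac}--\eqref{est:cpltaylorlinearize} will follow from Taylor expansion in $\theta$ about $\theta_c$ combined with the same dichotomy. For \eqref{est:cpltaylorlinearize} I write $\partial_F W^{\rm cpl}(F,\theta) = \partial_F W^{\rm cpl}(F,\theta_c) + \partial_{F\theta}W^{\rm cpl}(F,\theta_c)(\theta-\theta_c) + \int_{\theta_c}^\theta(\theta-s)\partial_{F\theta\theta}W^{\rm cpl}(F,s)\,\di s$ and, when $|\theta-\theta_c|\le 1$, bound the remainder by $C(1+|F|)|\theta-\theta_c|^2$ using \ref{C_more_third_order_bounds}; when $|\theta-\theta_c|>1$ I estimate each term individually by the uniform bounds from \eqref{est:coupl} and \ref{C_bounds}, which gives the claim with $|\theta-\theta_c|^2\wedge 1$ on the right-hand side. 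The additional truncation of the Taylor term $(\theta-\theta_c)$ to $(\theta-\theta_c)\wedge 1$ is absorbed into the remainder, using once more that $|\partial_{F\theta}W^{\rm cpl}(F,\theta_c)|\le C(1+|F|)$. Estimates \eqref{est:couplatthetac}--\eqref{est:internalatthetac} proceed along the same lines, invoking in addition the product rule $\partial_\theta(\theta\partial_{F\theta}W^{\rm cpl}) = \partial_{F\theta}W^{\rm cpl}+\theta\,\partial_{F\theta\theta}W^{\rm cpl}$ for the first of them.

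Finally, the $\xi^{1/2}$-estimates \eqref{est:couplatthetacwithxi}--\eqref{est:couplatthetacwithxi3} combine the two techniques: differentiating the identity $W^{\rm cpl}(F,\theta)=\tilde W^{\rm cpl}(C,\theta)$ in $\theta$ yields $\partial_{F\theta}W^{\rm cpl}(F,\theta):\dot F = \partial_{C\theta}\tilde W^{\rm cpl}(C,\theta):\dot C$, so every Lipschitz-in-$\theta$ bound from the previous paragraph can be upgraded by multiplying by $|F^{-1}|(1+|F|)\xi^{1/2}$. For \eqref{est:couplatthetacwithxi} I will split $\theta\partial_{F\theta}W^{\rm cpl}(F,\theta):\dot F = \big[\theta\partial_{F\theta}W^{\rm cpl}(F,\theta)-\theta_c\partial_{F\theta}W^{\rm cpl}(F,\theta_c)\big]:\dot F + \theta_c\partial_{F\theta}W^{\rm cpl}(F,\theta_c):\dot F$ and apply \eqref{est:couplatthetac} to the bracket. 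The main technical obstacle, recurring throughout, is systematically reconciling the two regimes $|\theta-\theta_c|\le 1$ and $|\theta-\theta_c|>1$: small-deviation bounds come from Taylor expansions with integrable coefficients, whereas the large-deviation regime must be treated by the uniform-in-$\theta$ control implicitly contained in \ref{C_lipschitz} and in the factor $1/(\theta\vee 1)^k$ of the assumptions, so that the final right-hand sides come out with the truncated weights $(\,\cdot\,\wedge 1)$ as stated.
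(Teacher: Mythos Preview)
Your proposal is correct and follows essentially the same approach as the paper: the paper likewise cites \cite[Lemma 3.4]{BFK} for \eqref{est:coupl}--\eqref{avoidKorn}, derives \eqref{est:couplatthetac}--\eqref{est:cpltaylorlinearize} by a dichotomy in $|\theta-\theta_c|$ combined with the fundamental theorem of calculus and the bounds in \ref{C_bounds}, \ref{C_third_order_bounds}, \ref{C_more_third_order_bounds}, and obtains \eqref{est:couplatthetacwithxi}--\eqref{est:couplatthetacwithxi3} via the frame-indifference identity $\partial_{F\theta}W^{\rm cpl}(F,\theta):\dot F = \tfrac12 F^{-1}\partial_{F\theta}W^{\rm cpl}(F,\theta):\dot C$ together with \ref{D_bounds}. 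The only cosmetic differences are that the paper splits \eqref{est:cpltaylorlinearize} at $|\theta-\theta_c|=\theta_c/2$ rather than at $1$, and states the frame-indifference identity directly instead of introducing the auxiliary function $\tilde W^{\rm cpl}(C,\theta)$; both formulations are equivalent.
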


\begin{proof}
\AAA First, \EEE \eqref{est:coupl}--\eqref{avoidKorn}  have already been shown in \cite[Lemma 3.4]{BFK} and \cite[Lemma~4.4 and Lemma~4.5]{RBMFLM}.
\AAA The \EEE proof of the \AAA other \EEE  bounds follows similarly. \EEE 
\ZZZ In particular, we will employ the fundamental theorem of calculus for $\partial_{F\theta} W^{\rm cpl}(F,\theta)$ and $\partial_{F} W^{\rm cpl}(F,\theta)$ at $\theta = 0$ which \rb is well-defined due to \ref{C_third_order_bounds}. \EEE
We now show \eqref{est:couplatthetac}.
Consider first the case \AAA $|\theta - \theta_c| \le  1$. \EEE
Then, \AAA by \EEE \ref{C_regularity}, the fundamental theorem of calculus, the second bound in \ref{C_bounds}, \MMMMM and  \ref{C_more_third_order_bounds} \EEE we have 
\begin{align*}
&\quad \vert \theta\partial_{F\theta}\cplpot(F, \theta) - \theta_c \partial_{F\theta}\cplpot(F, \theta_c) \vert
\leq  \Big|\int_{\theta_c}^{\theta} \vert \partial_{F\theta}\cplpot(F, s)| + s|\partial_{F\theta\theta} W^{\rm cpl} (F, s) \vert \di s \Big| \\
&\leq \III C_0 (1+\vert F \vert) \left( \Big|\int_{\theta_c}^\theta \frac{1}{s \vee 1} \di s\Big| + \lll \Big| \III \int_{\theta_c}^\theta \frac{s }{(s \vee 1)^2} \di s \Big| \right) \EEE \leq 2 C_0 (1 + |F|) |\theta - \theta_c|.
\end{align*}
If $\AAA |\theta -\theta_c| >  1 \EEE$, we derive from the second bound in \ref{C_bounds} that
\begin{align*}
\vert \theta\partial_{F\theta}\cplpot(F, \theta) - \theta_c \partial_{F\theta}\cplpot(F, \theta_c) \vert
&\leq \theta\vert \partial_{F\theta}\cplpot(F, \theta)| + \theta_c |\partial_{F\theta} \cplpot(F, \theta_c) \vert \\
&\leq C_0 (1 + \vert F \vert) \left(\frac{\theta}{\theta \vee 1} + \frac{\theta_c}{\theta_c \vee 1}\right)
\AAA \le \EEE \lll 2   C_0 \EEE (1 + \vert F \vert).
\end{align*}
Combining the previous two estimates \AAA gives \EEE \eqref{est:couplatthetac} for any \AAA choice \EEE $C \geq 2C_0$.
\ZZZ The proof of \eqref{est:couplatthetac2} follows along the lines of the previous \rb argument\ee, where we only have to use the second bound in \ref{C_bounds} \MMMMM and not \ref{C_more_third_order_bounds}. \EEE
\AAA Then, \EEE \eqref{est:internalatthetac} is a consequence of \eqref{est:couplatthetac}, \eqref{est:couplatthetac2}, and \eqref{Wint}. \EEE
 \AAA We proceed with \EEE \eqref{est:cpltaylorlinearize}: if $\AAA |\theta - \theta_c| \le \theta_c/2\EEE$, we find by \ref{C_regularity}, the fundamental theorem of calculus \ZZZ (twice), \EEE \ZZZ and the fact that $\partial_{F\theta\theta} W^{\rm cpl}$ can be continuously extended to $\R_+$, see \rb \ref{C_third_order_bounds}, \ee that
\begin{align*}
\vert \partial_{F} W^{\rm cpl}(F, \theta) - \partial_{F} W^{\rm cpl}(F, \theta_c) - \partial_{F\theta} W^{\rm cpl} (F,\theta_c) (\theta - \theta_c) \vert &\leq \int_{\theta_c}^\theta \int_{\theta_c}^s \vert \partial_{F\theta\theta} W^{\rm cpl} (F, t) \vert \di t \di s \\ &
\III \leq \lll C_0 \EEE (1+ \vert F \vert) \vert \theta - \theta_c \vert^2.
\end{align*}
If $\AAA |\theta - \theta_c| > \theta_c/2$, \EEE we use \AAA \eqref{est:couplatthetac2} and \EEE \lll the second bound in \EEE \ref{C_bounds} to derive that
 \begin{align*}
 \vert \partial_{F} W^{\rm cpl}(F, \theta) - \partial_{F} W^{\rm cpl}(F, \theta_c) - \partial_{F\theta} W^{\rm cpl} (F,\theta_c) \vert \leq C (1+ \vert F \vert ).
 \end{align*}
The proof of \eqref{est:couplatthetacwithxi} follows similarly to the proof of \eqref{avoidKorn}.
Along the lines of \ZZZ \cite[Equation~(3.17)]{BFK}, \EEE we can derive from the frame indifference of $W^{\rm cpl}$ (see \ref{C_frame_indifference}) that
\begin{equation}\label{Lennasagtja}
  \partial_{F\theta} W^{\rm cpl}(F, \theta) : \dot F
  = \frac{1}{2} F^{-1} \partial_{F\theta} W^{\rm cpl}(F, \theta) : (\dot F^T F + F^T \dot F).
\end{equation}
Hence, \ZZZ using \eqref{est:couplatthetac}, \ref{D_bounds}, and \eqref{diss_rate}, \EEE we derive that
\begin{align*}
 \vert \theta \partial_{F\theta}\cplpot(F, \theta) : \dot F \vert &= \vert \frac{1}{\NNN 2\EEE} \theta F^{-1} \partial_{F\theta} \cplpot(F, \theta) : \big( \dot F^T F + F^T \dot F \big) \vert \\
&\quad\leq |F^{-1}| \big(\theta_c |\partial_{F\theta} W^{\rm cpl}(F,\theta_c)| + C (1 + |F|)(|\theta - \theta_c| \wedge 1)\big) \vert   \dot F^T F + F^T \dot F \vert \\
&\quad\leq   C |F^{-1}|(1+ \vert F \vert) \big(\theta_c |\partial_{F\theta} W^{\rm cpl}(F,\theta_c)| + \vert \theta - \theta_c \vert \wedge 1 \big) \xi(F, \dot F, \theta)^{1/2}.
\end{align*}
This shows \eqref{est:couplatthetacwithxi}. \lll As in \eqref{Lennasagtja}, \EEE we obtain the identity
\begin{align*}
\big( \partial_F W^{\rm cpl} (F, \theta ) - \partial_F W^{\rm cpl} (F, \theta_c) \big) : \dot F =   \frac{1}{2}   F^{-1} \big( \partial_F W^{\rm cpl} (F, \theta ) - \partial_F W^{\rm cpl} (F, \theta_c) \big)  : \big( \dot F^T F + F^T \dot F \big) .
\end{align*}
Using \eqref{est:couplatthetac2}, \ref{D_bounds}, and \eqref{diss_rate}, we can conclude \ZZZ \eqref{est:couplatthetacwithxi2}. \EEE
\ZZZ Finally, we derive \eqref{est:couplatthetacwithxi3} simply by replacing $\partial_F W^{\rm cpl}$  with $\partial_F W^{\rm in}$ in the calculation above and by using \eqref{est:internalatthetac} instead of \eqref{est:couplatthetac2}. \EEE
\end{proof}

\begin{lemma}\label{lemma: phii-neu}
\MMM  Assume that  \AAA \ref{C_third_order_bounds}--\ref{C_Wint_regularity} \MMM hold true. \III Recall   $c_V$ defined in \eqref{inten_mon} and
let  $\theta \in \rb H^1_+(\Omega)$ and  $y \in H^2(\Omega;\R^d)$. \EEE Then,  we have that
\begin{align}
\nabla \big( c_V(\nabla y,\theta)^{-1} \big) = \rb c_V(\nabla y,\theta)^{-2} \left(\theta  \partial_{F\theta \theta} W^{\rm cpl} (\nabla y, \theta) : \nabla^2 y -  \partial_\theta^2 W^{\rm in} (\nabla y, \theta) \nabla \theta  \right)\ee, \label{gradcVinverse}
\end{align}
and \AAA it holds that \EEE
\begin{align}\label{gradcVbound}
  \vert \nabla \big( c_V(\nabla y, \theta)^{-1} \big) \vert \leq \frac{C_0}{ c_0^2 \EEE} \left(\vert \nabla \theta \vert + \theta \vert \nabla^2 y \vert 
\right) \AAA (1 + |\nabla y|) \EEE
\end{align}
\MMM a.e.~in $\Omega$, \EEE where $C_0$  and $c_0$ are \EEE as in  \MMM Subsection \EEE \ref{sec:setting}.
\end{lemma}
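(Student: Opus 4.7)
The plan is to apply the chain rule to the composition $x \mapsto c_V(\nabla y(x), \theta(x))^{-1}$ and then to insert the pointwise bounds from \ref{C_third_order_bounds}--\ref{C_Wint_regularity} together with the lower bound on $c_V$ from \eqref{inten_mon}.

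First I will exploit the two equivalent expressions for $c_V$ supplied by \eqref{inten_mon}: on the one hand $c_V(F,\theta) = \partial_\theta W^{\rm in}(F,\theta)$, and on the other hand $c_V(F,\theta) = -\theta\,\partial_\theta^2 W^{\rm cpl}(F,\theta)$. By \ref{C_Wint_regularity}, the map $W^{\rm in}$ extends to a $C^3$ function on $GL^+(d)\times\R_+$, so $c_V$ is $C^2$ on this set, and \eqref{inten_mon} also guarantees the two-sided bound $c_0 \le c_V \le C_0$. Consequently, $(F,\theta) \mapsto c_V(F,\theta)^{-1}$ is itself $C^2$ on $GL^+(d)\times \R_+$ and bounded between $C_0^{-1}$ and $c_0^{-1}$.

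Second, since $y \in H^2(\Omega;\R^d)$ and $\theta \in H^1_+(\Omega)$, the standard Sobolev chain rule for a $C^1$ outer map yields, pointwise a.e.\ in $\Omega$,
\[
  \nabla\bigl(c_V(\nabla y,\theta)^{-1}\bigr) = -\,c_V(\nabla y,\theta)^{-2}\bigl(\partial_F c_V(\nabla y,\theta):\nabla^2 y + \partial_\theta c_V(\nabla y,\theta)\,\nabla\theta\bigr).
\]
I will then differentiate the first representation of $c_V$ with respect to $\theta$ to get $\partial_\theta c_V = \partial_\theta^2 W^{\rm in}$, and the second representation with respect to $F$ to get $\partial_F c_V = -\theta\,\partial_{F\theta\theta}W^{\rm cpl}$. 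Inserting these two identities into the displayed chain rule immediately produces the formula \eqref{gradcVinverse}.

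The pointwise estimate \eqref{gradcVbound} then follows by taking absolute values in \eqref{gradcVinverse} and using the three bounds $c_V^{-2} \le c_0^{-2}$ (from \eqref{inten_mon}), $|\partial_{F\theta\theta}W^{\rm cpl}(F,\theta)| \le C_0(1+|F|)$ (from \ref{C_third_order_bounds}), and $|\partial_\theta^2 W^{\rm in}(F,\theta)| \le C_0(1+|F|)$ (from \ref{C_Wint_regularity}), combined with the triangle inequality. The only delicate point is the behaviour on the possible set $\{\theta = 0\}$: thanks to the continuous extensions provided by \ref{C_third_order_bounds} and \ref{C_Wint_regularity}, the map $c_V^{-1}$ and the relevant derivatives extend continuously up to $\theta = 0$, so no singularity appears and the chain rule applies on the whole of $\Omega$. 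This minor regularity check is the main technical obstacle, and it is immediately resolved by the uniform bounds we assume.
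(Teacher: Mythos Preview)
Your proof is correct and follows essentially the same route as the paper: both apply the chain rule to $c_V^{-1}$, identify the partial derivatives via $c_V = \partial_\theta W^{\rm in} = -\theta\,\partial_\theta^2 W^{\rm cpl}$, and then insert the bounds from \ref{C_third_order_bounds}--\ref{C_Wint_regularity} and \eqref{inten_mon}. The only cosmetic difference is that the paper first expands $\nabla(\theta\,\partial_\theta^2 W^{\rm cpl})$ via the product rule and then regroups the $\theta$-terms into $\partial_\theta^2 W^{\rm in}$, whereas you shortcut this by using the two representations of $c_V$ for the $F$- and $\theta$-derivatives separately; the computations are equivalent.
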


\begin{proof}
\III By \EEE the chain rule we have
\begin{align*}
\nabla \left( c_V(\nabla y,\theta)^{-1} \right) &= \frac{\AAA -1}{c_V(\nabla y,\theta)^2} \nabla (c_V(\nabla y,\theta)) = \frac{\AAA 1}{c_V(\nabla y,\theta)^2} \nabla \left( \theta \partial_\theta^2 W^{\rm cpl} (\nabla y, \theta) \right) \\
&= \frac{\AAA 1}{c_V(\nabla y,\theta)^2} \Big( \left( \partial_\theta^2 W^{\rm cpl} (\nabla y, \theta)   + \theta  \partial_{\theta}^3 W^{\rm cpl} (\nabla y, \theta)  \right) \nabla \theta  + \theta  \partial_{F \theta \theta} W^{\rm cpl} (\nabla y, \theta) : \nabla^2 y   \Big).
\end{align*}
Then, \ZZZ \eqref{gradcVinverse} and \eqref{gradcVbound} follow \EEE from \ZZZ \eqref{Wint},  \ref{C_third_order_bounds}--\ref{C_Wint_regularity}, \EEE and \eqref{inten_mon}.
\end{proof}

\section{Example on phase transformation in shape-memory alloys}\label{expl:shapememory}

This section is devoted to the example mentioned at the end of Section~\ref{sec:model}.   More precisely,    we discuss the free energy potential   \eqref{freeenergyexample}   modeling austenite-martensite \MMMMM transformations \EEE in shape-memory alloys. 
 In view of \eqref{eq: free energy}, the elastic and coupling energy densities are then given by 
\begin{align}\label{defelandcpl}
  W^{\rm el}(F) &= W_M(F), \quad \quad \quad  W^{\rm cpl}(F, \theta) = a(\theta) (W_A(F) - W_M(F)) + C_1 \theta (1 -  \log\theta  ),
\end{align}
  for some fixed constant $C_1 > 0$.   Here, we consider   $a \in C^3(\R_+; [0, 1])$ with $a(0) = 0$ 
such that $a$ satisfies the bounds
\begin{equation}\label{a_requirements}
  |a'(\theta)| + |a'''(\theta)| \leq \frac{C_2}{\theta \vee 1}, \quad
  |a''(\theta)| \leq \frac{C_2}{(\theta \vee 1)^2}  
\end{equation}
for all $\theta \geq 0$, where $C_2 \geq 1$ denotes a constant.
\AAA Moreover, \EEE  $W_M \in C^3(GL^+(d))$ is a frame indifferent multi-well potential modeling the martensite state while $W_A \in C^3(GL^+(d))$ denotes a frame indifferent single-well potential with minimum on $SO(d)$, corresponding to the austenite state.
 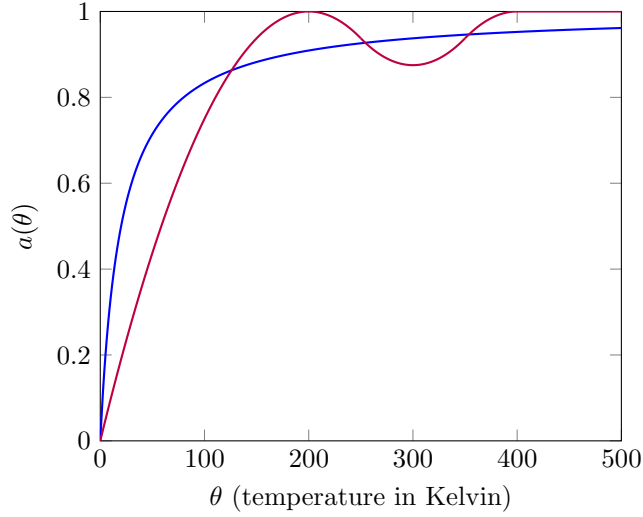
\begin{figure}
 \centering
 \begin{tikzpicture}
\begin{axis}[
    xmin = 0, xmax = 500,
    ymin = 0, ymax = 1.0,
    xlabel = {$\theta$ (temperature in Kelvin)},
    ylabel = {$a(\theta)$},]
    \addplot[
        domain = 0:500,
        samples = 200,
        smooth,
        thick,
        blue,
    ] {1- 1/(1+0.05*x)};
    \addplot[
    domain = 0:250,
    samples = 200,
    smooth,
    thick,
    purple,
] {-(x-200)*(x-200)/40000 +1};
    \addplot[
    domain = 250:350,
    samples = 200,
    smooth,
    thick,
    purple,
] {+(x-300)*(x-300)/40000 +0.875};
    \addplot[
    domain = 350:400,
    samples = 200,
    smooth,
    thick,
    purple,
] {-(x-400)*(x-400)/40000 +1};
    \addplot[
    domain = 400:500,
    samples = 200,
    smooth,
    thick,
    purple,
] { 1};
\end{axis}
\end{tikzpicture} 
\caption{Possible choices for $a$ in \eqref{a_requirements}. The blue \MMMMM curve \EEE indicates the function $a(\theta) = 1 - (1+0.05\theta)^{-1}$. The purple \MMMMM curve \EEE shows a function $a$ which \MMMMM satisfies \EEE  $  a(\theta_c) = 1$ \AAA and \EEE  $a'(\theta_c) = 0$ for  $\theta_c = 200$.  } 
 \label{fig:admissible_a}
 \end{figure} 
Furthermore, we assume that there exists a constant $C_3>0$ such that
\begin{align}\label{W_M_requirements1}
  W_M(F) \geq \frac{1}{C_3} (|F|^2 + \det(F)^{-q}) - C_3 \quad \text{for all $F \in GL^+(d)$,}
\end{align} 
where $q$ is given as in \ref{W_lower_bound}, \AAA and \EEE  we impose the existence of constants $C_4>0$ and $C_5>0$ such that
\begin{equation}\label{W_A_requirements_2}
  |W_A(F) - W_M(F)| \leq C_4 \quad \quad  \text{for all $F \in GL^+(d)$,}
\end{equation}
\begin{align}\label{W_M_requirements2}
  |\partial_F^2 (W_M(F) - W_A(F))| &\leq C_5 \quad \quad  \text{for all $F \in GL^+(d)$.}
\end{align}
Eventually, we require that there exists a constant $C_6>0$ such that
\begin{align}\label{W_A_requirements3}
  W_A(F) + C_1 \theta_c (1 - \log \theta_c) \geq \frac{1}{C_6} \dist(F, SO(d))^2, \text{ and } W_A(F) + C_1 \theta_c (1 - \log \theta_c) = 0 \quad \text{for all $F \in SO(d)$.}
\end{align}  
In the next lemma, we will appropriately tweak the constants $C_1, \dots, C_6$ in order to confirm  the compatibility of the above choices with the assumptions \III \ref{C_regularity}--\ref{C_entropy_vanishes} \EEE and  \ref{W_regularity}--\ref{W_prefers_id}.

\begin{lemma}[Compatibility with modeling assumptions]\label{lem:compatibilityshapememory}
Consider $a \in C^3(\R_+; [0, 1])$, $W_M\in C^3(GL^+(d) )$, and $W_A\in C^3(GL^+(d) )$ which satisfy the conditions \lll \eqref{a_requirements}--\eqref{W_M_requirements2} \EEE introduced above. \AAA Then, for suitable choices of $\rbb C_1\AAA, \ldots, \lll C_5 \EEE $ the following holds. \EEE

\begin{itemize}
\item[(i)]  \AAA The potentials \EEE $W^{\rm el}$ and $W^{\rm cpl}$ given in \eqref{defelandcpl} comply with \ref{W_regularity}--\ref{W_lower_bound} and \ref{C_regularity}--\ref{C_Wint_regularity}.

\item[(ii)] Assume additionally \MMMMM \eqref{W_A_requirements3} and  $  a(\theta_c) = 1$. \EEE 
Then, $W^{\rm el}$ and $W^{\rm cpl}$ also satisfy \ref{C_adiabatic_term_vanishes}--\ref{C_entropy_vanishes} and \ref{W_prefers_id}.
\end{itemize}

\end{lemma}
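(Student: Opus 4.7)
The strategy is to verify each listed condition by direct differentiation of \eqref{defelandcpl} and the induced internal energy, arranging the computation so that the role of the tunable constant $C_1$ is transparent. The key identities are
\begin{align*}
\partial_F^2\cplpot(F,\theta)&=a(\theta)\,\partial_F^2(W_A-W_M)(F),\qquad \partial_{F\theta}\cplpot(F,\theta)=a'(\theta)\,\partial_F(W_A-W_M)(F),\\
\partial_{F\theta\theta}\cplpot(F,\theta)&=a''(\theta)\,\partial_F(W_A-W_M)(F),\qquad -\theta\,\partial_\theta^2\cplpot(F,\theta)=C_1-\theta\,a''(\theta)\,(W_A-W_M)(F),\\
\inten(F,\theta)&=\bigl(a(\theta)-\theta a'(\theta)\bigr)(W_A-W_M)(F)+C_1\theta,\\
\partial_\theta^2\inten(F,\theta)&=-\bigl(a''(\theta)+\theta a'''(\theta)\bigr)(W_A-W_M)(F).
\end{align*}
The growth bound $|\partial_F(W_A-W_M)(F)|\le C(1+|F|)$ follows from \eqref{W_M_requirements2} by integrating along radial paths, while a case split at $\theta=1$ in \eqref{a_requirements} yields the uniform estimates $|\theta a''(\theta)|\le C_2$, $|\theta a'''(\theta)|\le C_2$, and $|a(\theta)-\theta a'(\theta)|\le 1+C_2$. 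Together with $a(0)=0$ and $\lim_{\theta\to 0}\theta(1-\log\theta)=0$, these are the only analytic ingredients needed.

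For part (i), the conditions \ref{W_regularity}--\ref{W_lower_bound} on $\elpot=W_M$ are immediate from the hypotheses on $W_M$ and from \eqref{W_M_requirements1}. The regularity, frame indifference, and zero-temperature properties \ref{C_regularity}--\ref{C_zero_temperature} follow from the corresponding properties of $a$, $W_A$, $W_M$ and the limits above, and \ref{C_lipschitz} combines $|a|\le 1$ with the linear growth of $\partial_F(W_A-W_M)$. The three inequalities in \ref{C_bounds} reduce to $|\partial_F^2\cplpot|\le C_5$, to $|a'(\theta)|\le C_2/(\theta\vee 1)$, and to the heat-capacity identity in the display above combined with $|\theta a''(\theta)(W_A-W_M)|\le C_2C_4$ via \eqref{W_A_requirements_2}; choosing $C_1>C_2C_4$ then yields admissible bounds $\ac:=C_1-C_2C_4$ and $\aC:=C_1+C_2C_4$. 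Condition \ref{C_third_order_bounds} follows from $|a''(\theta)|\le C_2$ together with the $F$-growth, and \ref{C_Wint_regularity} is read off from the explicit formula for $\inten$: the $C^3$-extension to $\theta=0$ is direct from $a\in C^3(\R_+)$, nonnegativity follows from $\inten(F,0)=0$ and $\partial_\theta\inten=c_V\ge\ac>0$, and $|\partial_\theta^2\inten|\le 2C_2C_4$ by the uniform estimates.

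For part (ii), the assumption $a(\theta_c)=1$ together with $a\le 1$ forces $\theta_c$ to be a global maximum of $a$, hence $a'(\theta_c)=0$; consequently both $\partial_{F\theta}\cplpot(\cdot,\theta_c)$ and $\partial_{FF\theta}\cplpot(\cdot,\theta_c)$ vanish identically, so \ref{C_adiabatic_term_vanishes} holds trivially for every admissible $\eps,\Lambda$. Condition \ref{C_more_third_order_bounds} is the formula for $\partial_{F\theta\theta}\cplpot$ combined with $|a''(\theta)|\le C_2/(\theta\vee 1)^2$ and the growth of $\partial_F(W_A-W_M)$. For \ref{W_prefers_id}, the choice $a(\theta_c)=1$ collapses the convex combination to $W(F,\theta_c)=W_A(F)+C_1\theta_c(1-\log\theta_c)$, so that \eqref{W_A_requirements3} supplies both the quadratic lower bound and the vanishing on $SO(d)$. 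The main obstacle, and what fixes the final value of $C_1$, is \ref{C_entropy_vanishes}: the pointwise bound $|\partial_\theta^2\inten|\le 2C_2C_4$ must be dominated by $c_V/(2\theta_c)\ge (C_1-C_2C_4)/(2\theta_c)$, which is equivalent to $C_1\ge C_2C_4(1+4\theta_c)$. Enlarging $C_1$ to satisfy this inequality preserves all bounds established in part (i) and simultaneously verifies every remaining condition.
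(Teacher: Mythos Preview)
Your proposal is correct and follows essentially the same route as the paper's proof: the same explicit identities for $\partial_F^2\cplpot$, $\partial_{F\theta}\cplpot$, $c_V$, and $\partial_\theta^2\inten$, the same choice $C_1>C_2C_4$ to secure the two-sided bound on the heat capacity, and the same sharpening $C_1\ge C_2C_4(1+4\theta_c)$ to obtain \ref{C_entropy_vanishes}. Two remarks: your phrase ``integrating along radial paths'' for the bound $|\partial_F(W_A-W_M)(F)|\le C(1+|F|)$ is imprecise since $GL^+(d)$ is not star-shaped about the origin (the paper just asserts this bound without detail; a clean justification connects $\Id$ to $(F^TF)^{1/2}$ by a straight line in the convex cone of positive-definite matrices and then rotates), and you are slightly more careful than the paper in noting that $a'(\theta_c)=0$ kills \emph{both} quantities in \ref{C_adiabatic_term_vanishes}.
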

Figure~\ref{fig:admissible_a} provides \AAA graphs of functions \EEE $a$ \MMMMM for \EEE both cases (i) and (ii) of the previous lemma.

\begin{proof}
\AAA (i) Properties \EEE \ref{W_regularity}--\ref{W_frame_invariace} and \ref{C_regularity}--\ref{C_frame_indifference} follow from the assumed frame indifference and the imposed regularity \AAA on \EEE $W_A$, $W_M$, and $a$.
Due to \eqref{W_M_requirements1}, the lower bound in
\ref{W_lower_bound} is satisfied for the choice $c_0 = 1/C_3$ and $C_0 = C_3$. As $a(0) = 0$, \AAA \ref{C_zero_temperature} holds, \EEE where we remark that $\theta \mapsto \theta (1 - \log \theta )$ can be continuously extended \rbb by \ee $0$ \rbb at \ee $\theta = 0$.
 Notice that from  \eqref{W_M_requirements2}  we derive that
\begin{equation}\label{W_M_W_A_grad_bound}
  |\partial_F (W_M - W_A)(F)| \leq C_8 (1 + |F|) \quad \quad \text{\MMMMM for all $F \in GL^+(d)$}
\end{equation}
for \AAA some \EEE $C_8>0$.
\MMMMM From this, \EEE we show a Lipschitz bound on $W_M - W_A$: the fundamental theorem of calculus and \eqref{W_M_W_A_grad_bound} imply that there exists a constant $C_9>0$ such that
\begin{align}\label{Lipschitzboundwmwa}
 \big|(W_M(F) - W_A(F)) - (W_M(\tilde F) - W_A (\tilde F))\big| \leq C_9 (1 + |F| + |\tilde F|) |F - \tilde F|
\end{align}
for all $F, \, \tilde F \in GL^+(d)$.
\rbb Possibly increasing $C_0$ such that $C_0 \geq C_9$ holds, \ee \ref{C_lipschitz} follows from \eqref{Lipschitzboundwmwa} and the fact that $a(\theta) \AAA \in [0,1] \EEE$\ee.
We now verify \ref{C_bounds}:
  by \eqref{W_M_requirements2} and $a(\theta) \AAA \in [0,1] \EEE $, we have
\begin{align}\label{CC5}
  |\partial_F^2 W^{\rm cpl}(F, \theta)| &\leq |a(\theta)| |\partial_F^2 (W_M(F) - W_A(F) )| \leq C_5. 
  \end{align}
\rbb Moreover\ee,
\eqref{W_M_W_A_grad_bound} and \eqref{a_requirements} \rbb lead to \ee
\begin{align}\label{CC52}
  |\partial_{F \theta} W^{\rm cpl}(F, \theta)| &\leq |a'(\theta)| |\partial_F ( W_M(F) - W_A(F))|  
  \leq \frac{C_2 C_8}{\theta \vee 1} (1 + |F|)\rbb, \ee
\end{align}
for all $F \in GL^+(d)$ and $\theta \geq 0$.
\rbb Moreover, an elementary computation yields \ee
\begin{equation}\label{derivativewcpl}
\begin{aligned}
  \partial_\theta W^{\rm cpl} (F,\theta) &= a'(\theta) ( W_A(F) - W_M(F) ) - C_1 \log \theta, \\ \theta \partial^2_{ \theta} W^{\rm cpl} (F,\theta) &= \theta a''(\theta) ( W_A(F) - W_M(F) ) - C_1 .
\end{aligned}
\end{equation}
Hence, \rbb by \ee \eqref{a_requirements} and \eqref{W_A_requirements_2} \rbb it follows that \ee
\begin{align}\label{heat_capacity_lower_bound}
  - \theta \partial_\theta^2 W^{\rm cpl}(F, \theta) &= C_1 - \theta a''(\theta) (W_A(F) - W_M(F))
  \AAA \geq C_1 \EEE - \theta \frac{C_2}{(\theta \vee 1)^2} \cdot C_4 \geq C_1 - C_2 C_4
\end{align}
for all $F \in GL^+(d)$ and $\theta \geq 0$.
Similarly, we can also show that $  - \theta \partial_\theta^2 W^{\rm cpl}(F, \theta) \leq C_1 + C_2 C_4$
for all $F \in GL^+(d)$ and $\theta \geq 0$.
Consequently, \AAA using \eqref{CC5}, \eqref{CC52}, and \eqref{heat_capacity_lower_bound}, we see that \EEE \ref{C_bounds} is satisfied, as long as we choose $C_1 > C_2 C_4$, $c_0 < C_1 - C_2 C_4$, and $C_0 \geq \max\{\AAA C_5, \III C_2C_8 , \EEE  \, C_1 + C_2 C_4\}$. \AAA Next, by \EEE  \eqref{a_requirements} and \eqref{W_M_W_A_grad_bound} we find
\begin{align}\label{CC9}
  |\partial_{F\theta \theta} W^{\rm cpl}(F, \theta)|
  &\leq |a''(\theta)| |\partial_F  (W_A(F) - W_M(F))|  
  \leq \frac{C_2 C_8}{(\theta \vee 1)^2} (1 + |F|).
\end{align}
This shows that \III $\partial_{F\theta \theta} W^{\rm cpl}(F, \theta)$ \EEE can be continuously extended to $\theta = 0$ and \ref{C_third_order_bounds} holds \AAA if $C_0 \ge C_2 C_8$. \EEE In view of \eqref{inten_mon} and \eqref{derivativewcpl}, we have
\begin{align}\label{computationderivative4}
\partial^2_\theta W^{\rm in} (F,\theta) =    - (\theta a''' (\theta)+ a''(\theta) ) (W_A(F) - W_M(F)).
\end{align}  
Thus, \eqref{a_requirements} and \eqref{W_A_requirements_2} imply that
\begin{align*}
  |\partial^2_\theta W^{\rm in} (F,\theta)|
  \leq C_4 \cdot \left(\frac{C_2}{(\theta \vee 1)^2} + \theta \frac{C_2}{\theta \vee 1}\right)
  \leq 2 C_2 C_4.
\end{align*}
In particular, the  bound  in \ref{C_Wint_regularity}   holds true, after possibly increasing $C_0$ such that $C_0 \geq 2 C_2 C_4$. This concludes the proof of (i).

\AAA (ii) \EEE As $a(\theta_c) = 1$, we have $a'(\theta_c) = 0$ since $\theta_c$ is maximum point of $a$.
This immediately gives
$ \vert \partial_{F\theta} W^{\rm cpl} (F,\theta_c) \vert = 0$  and thus \ref{C_adiabatic_term_vanishes} holds.
  The bound in \ref{C_more_third_order_bounds} has already beed verified in \eqref{CC9}. \EEE
\MMMMM Using \EEE \eqref{derivativewcpl}, \eqref{computationderivative4}, and the definition of $c_V$ in \eqref{inten_mon}
we deduce that  
\begin{align*}
  \partial^2_\theta W^{\rm in} (F,\theta)  \AAA = \EEE - c_V(F,\theta) \frac{ (\theta a''' (\theta)+ a''(\theta) ) (W_A(F) - W_M(F)) } {  C_1 -  \theta a'' (\theta)   (W_A(F) - W_M(F)) }.
\end{align*}  
 Taking   the absolute values \III on \EEE both sides above\ee, \AAA by \EEE \eqref{a_requirements}, \eqref{W_A_requirements_2}, \rbb and \ee \eqref{heat_capacity_lower_bound}   it follows that  
\begin{align*}
  \vert \partial^2_\theta W^{\rm in} (F,\theta) \vert
  &\leq \frac{2 C_2 C_4}{\rbb(C_1 - C_2 C_4) \rbb (2\theta_c)^{-1}\ee} c_V(F, \theta) \rbb\frac{1}{2\theta_c}\ee.
\end{align*}  
Consequently, \ref{C_entropy_vanishes} follows, as long as we choose $C_1 \geq \rbb ( 1 + 4 \theta_c )\ee C_2 C_4$.
 In view of \eqref{freeenergyexample} and $a(\theta_c) = 1$, we have
 \begin{align*}
 W(F,\theta_c) = W_A(F) + \AAA C_1 \theta_c (1- \log \theta_c). \EEE
 \end{align*}
 Thus, \ref{W_prefers_id} is derived by \eqref{W_A_requirements3} for $c_0 \leq C_6^{-1}$.
 This concludes the proof of (ii).
\end{proof}


\begin{lemma}[Specific choices of $W_A$ and $W_M$]\label{lem:specificchoice}
\III An admissible \EEE example for the densities $W_M$ and $W_A$ \AAA is \EEE given by  
\begin{align*}
W_M (F) &\defas \tilde W(F) + \left( \frac{1}{\det(F)} -1 \right)^q \III - \EEE  C_1 \theta_c (1 - \log \theta_c), \\ W_A(F) &\defas\hat W(F) + \left( \frac{1}{\det(F)} - 1 \right)^q  \III - \EEE C_1 \theta_c (1 - \log \theta_c)
\end{align*}
for any \AAA frame indifferent \EEE $\hat W \in C^\infty(GL^+(d))$ such that $\hat W(F) \geq \dist^2 (F, SO(d))$ for all $F \in GL^+(d)$, \III $ \hat W(F) = 0 $  for $F \in SO(d)$ and \AAA any frame indifferent \EEE $\tilde W \in C^\infty(GL^+(d))$ such that  $\tilde W-\hat W \in C^\infty_c(GL^+(d))$.
 \end{lemma}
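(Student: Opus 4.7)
\textbf{Proof plan for Lemma~\ref{lem:specificchoice}.} The plan is to apply Lemma~\ref{lem:compatibilityshapememory}(i)--(ii), which means verifying \eqref{W_M_requirements1}, \eqref{W_A_requirements_2}, \eqref{W_M_requirements2}, \eqref{W_A_requirements3}, frame indifference, and $C^3$-regularity of the proposed $W_M$ and $W_A$. Frame indifference and smoothness are immediate: by construction $\tilde W$ and $\hat W$ are frame indifferent, the map $F \mapsto (\det F)^{-1}$ depends only on $\det F$ (hence is $SO(d)$-invariant), and both functions inherit $C^\infty$-regularity on $GL^+(d)$ from their constituents.

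The bounds \eqref{W_A_requirements_2} and \eqref{W_M_requirements2} follow from the key observation that
\[
W_M(F) - W_A(F) = \tilde W(F) - \hat W(F),
\]
since the $\det(F)^{-q}$-type term and the constant $C_1 \theta_c(1 - \log\theta_c)$ cancel out. Because $\tilde W - \hat W \in C_c^\infty(GL^+(d))$ by assumption, both $|W_M - W_A|$ and $|\partial_F^2(W_M - W_A)|$ are globally bounded, yielding $C_4$ and $C_5$. Next, for \eqref{W_A_requirements3}, we simply rewrite
\[
W_A(F) + C_1 \theta_c(1 - \log\theta_c) = \hat W(F) + \Big(\tfrac{1}{\det F} - 1\Big)^q.
\]
For $F \in SO(d)$, $\det F = 1$ and $\hat W(F) = 0$ by hypothesis, so the expression vanishes. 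For general $F \in GL^+(d)$, nonnegativity of the determinant term together with $\hat W(F) \geq \dist^2(F, SO(d))$ yields $W_A(F) + C_1\theta_c(1 - \log\theta_c) \geq \dist^2(F, SO(d))$, so we may take $C_6 = 1$.

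The main obstacle is the lower bound \eqref{W_M_requirements1}. Writing $W_M = \hat W + (\tilde W - \hat W) + (\tfrac{1}{\det F} - 1)^q - C_1 \theta_c(1 - \log \theta_c)$, the term $\tilde W - \hat W$ is bounded since it has compact support in $GL^+(d)$, and the constant is harmless; so it suffices to control $\hat W(F) + (\tfrac{1}{\det F} - 1)^q$ from below by $\frac{1}{C_3}(|F|^2 + \det(F)^{-q}) - C_3$. For the $|F|^2$ part, I would use that $\hat W(F) \geq \dist^2(F, SO(d)) \geq \frac12|F|^2 - d$, which follows since any $R \in SO(d)$ satisfies $|R| = \sqrt{d}$, so $|F - R| \geq |F| - \sqrt{d}$ and hence $\dist^2(F, SO(d)) \geq (|F| - \sqrt{d})_+^2 \geq \frac{1}{2}|F|^2 - d$. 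For the $\det(F)^{-q}$ part, I would split on $\det F$: for $\det F \leq \tfrac12$ one has $\tfrac{1}{\det F} - 1 \geq \tfrac{1}{2\det F}$, giving $(\tfrac{1}{\det F} - 1)^q \geq 2^{-q}\det(F)^{-q}$; for $\det F \geq \tfrac12$, the quantity $\det(F)^{-q}$ is uniformly bounded by $2^q$, so the inequality is absorbed into the additive constant. Combining these two estimates with appropriate constants delivers \eqref{W_M_requirements1}. (Should the expression $(\tfrac{1}{\det F} - 1)^q$ fail to be defined for $\det F > 1$ when $q \notin \mathbb{N}$, one should interpret it as $((\tfrac{1}{\det F} - 1)_+)^q$; since $q \geq 2d$, this retains sufficient regularity for the arguments above, and all the bounds go through unchanged because the truncated term only matters in the regime $\det F \leq 1$.) Finally, Lemma~\ref{lem:compatibilityshapememory} applies and yields the desired compatibility with \ref{W_regularity}--\ref{W_prefers_id} and \ref{C_regularity}--\ref{C_entropy_vanishes}.
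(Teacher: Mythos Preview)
Your proposal is correct and follows exactly the route the paper intends: the paper's own proof is the single line ``The proof follows by elementary computations and Lemma~\ref{lem:compatibilityshapememory},'' and you have simply supplied those elementary computations—verifying frame indifference, regularity, and the bounds \eqref{W_M_requirements1}--\eqref{W_A_requirements3}—before invoking Lemma~\ref{lem:compatibilityshapememory}. Your key observation $W_M - W_A = \tilde W - \hat W \in C^\infty_c(GL^+(d))$ and your treatment of the lower bound \eqref{W_M_requirements1} via $\dist^2(F,SO(d)) \geq \tfrac12|F|^2 - d$ together with a case distinction on $\det F$ are exactly the kind of computations the paper leaves implicit.

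One small correction to your parenthetical remark: the assumption in \ref{W_lower_bound} is $q \geq \tfrac{pd}{p-d}$ with $p \geq 2d$, and since $p \mapsto \tfrac{pd}{p-d}$ is decreasing, this only guarantees $q \geq d$ in general (not $q \geq 2d$). Thus for the example to yield $C^3$ potentials when $q \notin \mathbb{N}$, one should additionally take $q > 3$ (or simply an even integer), which is compatible with \ref{W_lower_bound}. The paper does not address this point either; it is a harmless ambiguity in the formulation of the example rather than a gap in your argument.
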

 \begin{proof}
The proof follows \AAA by \EEE elementary computations and Lemma~\ref{lem:compatibilityshapememory}.  
 \end{proof}

\typeout{References}

\end{document}